\newcommand\reallywidehat[1]{%
\savestack{\tmpbox}{\stretchto{%
  \scaleto{%
    \scalerel*[\widthof{\ensuremath{#1}}]{\kern-.6pt\bigwedge\kern-.6pt}%
    {\rule[-\textheight/2]{1ex}{\textheight}}
  }{\textheight}%
}{0.5ex}}%
\stackon[1pt]{#1}{\tmpbox}%
}
\newcommand{\abs}[1]{\left| #1 \right|} 
\newcommand{\norm}[1]{\left\| #1 \right\|}
\newcommand{\R}{\mathbb R}
\newcommand{\C}{\mathbb C}
\newcommand{\N}{\mathbb N}
\newcommand{\Z}{\mathbb Z}
\renewcommand{\H}{\mathcal{H}}
\newcommand{\T}{\mathrm{T}}
\renewcommand{\S}{\mathbb{S}}
\newcommand{\eps}{\varepsilon}
\newcommand{\ip}[2]{\left\langle #1, \, #2\right\rangle}
\newcommand{\csubset}{\subset\!\subset}
\DeclareMathOperator{\vol}{vol}
\DeclareMathOperator{\On}{O}
\DeclareMathOperator{\End}{End}
\DeclareMathOperator{\D}{D}
\let\d\relax
\DeclareMathOperator{\d}{d}
\DeclareMathOperator{\spt}{spt}
\DeclareMathOperator{\Harm}{Harm}
\DeclareMathOperator{\dist}{dist}
\DeclareMathOperator{\inj}{inj}
\DeclareMathOperator{\EndSym}{End_{sym}}
\newcommand{\mres}{\mathbin{\vrule height 1.6ex depth 0pt width
0.13ex\vrule height 0.13ex depth 0pt width 1.3ex}}
\newtheorem{theorem}{Theorem}
\newtheorem*{theorem*}{Theorem}
\newtheorem{lemma}[theorem]{Lemma}
\newtheorem{prop}[theorem]{Proposition}
\newtheorem{corollary}[theorem]{Corollary}
\newtheorem{mainthm}{Theorem}           
\theoremstyle{definition}
\newtheorem{remark}[theorem]{Remark}
\theoremstyle{remark}
\newtheorem{step}{Step}
\title{The Yang-Mills-Higgs functional on complex line bundles:\\
asymptotics for critical points}
\date{\today}
\author{Giacomo Canevari, Federico Luigi Dipasquale and Giandomenico Orlandi}
\newcommand{\Addresses}{{
  \bigskip
  \footnotesize

  Giacomo~Canevari, \textsc{Dipartimento di Informatica, Universit\`{a} di Verona, Strada Le Grazie 15, 37134 Verona, Italy}\par\nopagebreak
  \textit{E-mail address}: \texttt{giacomo.canevari@univr.it}

  \medskip

  Federico Luigi Dipasquale (Corresponding author), \textsc{Dipartimento di Matematica e Applicazioni ``Renato Caccioppoli'', Universit\`{a} degli studi di Napoli ``Federico II'', Via Cintia, Monte S. Angelo, 80126 Napoli, Italy}\par\nopagebreak
  \textit{E-mail address}: \texttt{federicoluigi.dipasquale@unina.it}

  \medskip

  Giandomenico Orlandi, \textsc{\textsc{Dipartimento di Informatica, Universit\`{a} di Verona, Strada Le Grazie 15, 37134 Verona, Italy}}\par\nopagebreak
  \textit{E-mail address}: \texttt{giandomenico.orlandi@univr.it}

}}
\begin{document}

\maketitle

\begin{abstract}
	We consider a gauge-invariant Ginzburg-Landau functional 
	(also known as Abelian Yang-Mills-Higgs model), 
	on Hermitian line bundles over closed Riemannian manifolds
	of dimension $n \geq 3$. 
	Assuming a logarithmic energy bound in the coupling parameter, 
	we study the asymptotic behaviour of critical points 
	in the London limit.
	After a convenient choice of the gauge, we show compactness 
	of finite-energy critical points 
	in Sobolev norms. 
	Moreover, 
	thanks to a suitable monotonicity formula,
	we prove that the energy densities of critical points, 
	rescaled by the logarithm of the coupling parameter,  
	converge to 
	the weight measure of a stationary, rectifiable 
	varifold of codimension 2. 
\end{abstract}


\section*{Introduction}
Let $(M,\,g)$ be a smooth, compact, connected, oriented Riemannian manifold 
without boundary, of dimension $n \geq 3$. 
Let $E\to M$ be a Hermitian line bundle over $M$, equipped with a 
(smooth) reference metric connection $\D_0$. 
For any $\eps > 0$, we consider the Ginzburg-Landau-type functional
\begin{equation} \label{magneticGL}
 G_\eps(u, \, A) := \int_M\left(\frac{1}{2}\abs{\D_A u}^2 
 + \frac{1}{2}\abs{F_A}^2 + \frac{1}{4\eps^2}
 \left(1 - \abs{u}^2\right)^2\right)\vol_g.
\end{equation}
Here~$u\colon M\to E$ is a section of the bundle, $A$ is a real-valued 
$1$-form on~$M$,
$\D_A := \D_0 - iA$ and~$F_A$ is the curvature $2$-form of~$\D_A$.
We denote the integrand of~$G_\eps$ as
\begin{equation} \label{energydensity}
 e_\eps(u_\eps, \, A_\eps) := \frac{1}{2}\abs{\D_A u}^2 
 + \frac{1}{2}\abs{F_A}^2 + \frac{1}{4\eps^2}
 \left(1 - \abs{u}^2\right)^2\!.
\end{equation}
The functional $G_\eps$ is also known as the 
\emph{Abelian Yang-Mills-Higgs energy}. 
One prominent feature of~$G_\eps$ 
is \emph{gauge-invariance}: for any $\Phi \in W^{2,2}(M,\,\S^1)$ and any 
pair $(u,\,A)\in  W^{1,2}(M,\,E) \times W^{1,2}(M,\,\T^*M)$ with finite energy,
each term of the energy density~$e_\eps$ 
is invariant under the \emph{gauge transformation}
\begin{equation}\label{eq:gauge-transf}
	(u,\,A) \mapsto \left(\Phi u,\, \Phi \cdot A \right) := 
	\left(\Phi u,\, A -  i \Phi^{-1} \d \Phi \right) \!,
\end{equation}
where $\Phi u$ is defined by the fibre-wise action
of~$\S^1\simeq\mathrm{U}(1)$ on $E$.

For any~$u$ and~$A$, we define the forms 
(see e.g.~\cite{SS-book})
\begin{equation} \label{Jac}
 j(u, \, A) := \ip{\D_A u}{i u}, \qquad
 J(u, \, A) := \frac{1}{2}\d j(u, \, A) + \frac{1}{2} F_A,
\end{equation}
called, respectively, the \emph{gauge-invariant pre-Jacobian} and the 
\emph{gauge-invariant Jacobian} of the pair $(u,\,A)$. 
If~$(u, \, A)$ is a pair with finite energy, both $j(u,\,A)$ 
and $J(u,\,A)$ are well-defined, both in the sense of distributions and 
pointwise, and invariant under gauge transformation.

In the present paper, we consider  
\emph{finite-energy critical points} of $G_\eps$.  
More precisely, we notice that the functional $G_\eps$ 
is well-defined and finite for $(u,\,A)\in \mathcal{E}$, where 
\[
\mathcal{E} := (W^{1,2} \cap L^\infty)(M,\,E) \times W^{1,2}(M,\,\T^*M)
\]  
and critical points $(u_\eps,\,A_\eps)$ of $G_\eps$ in $\mathcal{E}$ satisfy 
the Euler-Lagrange equations
\begin{align}
 \D_A^*\D_A u_\eps 
  + \frac{1}{\eps^2} \left(\abs{u_\eps}^2 - 1\right) u_\eps &= 0 \label{EL-u}\\
 \d^* F_\eps &= j(u_\eps, \, A_\eps) \label{EL-A}
\end{align}
where~$\D^*_A$ is the~$L^2$-adjoint of~$\D_A$,
$F_\eps := F_{A_\eps}$ 
and~$\d^*$ denotes the codifferential. 
By taking the differential in both sides of~\eqref{EL-A}, 
we obtain the \emph{London equation} (at level $\eps$)
\begin{equation} \label{London}
 -\Delta F_\eps + F_\eps = 2J(u_\eps, \, A_\eps)
\end{equation}
Throughout the paper, we \emph{assume} 
that~$\{(u_\eps, \, A_\eps)\} \subset \mathcal{E}$ 
is a sequence of critical points of $G_\eps$ satisfying 
the logarithmic energy bound
\begin{equation} \label{hp:logenergy}
 G_\eps(u_\eps, \, A_\eps) \leq \Lambda \abs{\log\eps}
\end{equation}
for some constant~$\Lambda>0$ that does not depend on~$\eps$.
This assumption is natural, as the energy of minimisers 
of~$G_\eps$ is precisely of order $\abs{\log\eps}$,
whenever the bundle~$E\to M$ is non-trivial
\cite[Theorem~A and Remark~2]{CDO1}.
Under this assumption, we have this (preliminary) convergence result:

\setcounter{mainthm}{-1}
\begin{mainthm}[{\cite{CDO1}}] 
\label{th:zero}
 Let $\{(u_\eps,\,A_\eps)\} \subset \mathcal{E}$ be a sequence of 
 critical points of $G_\eps$ satisfying the logarithmic energy 
 bound~\eqref{hp:logenergy}. Then, there exist a (non-relabelled)
 subsequence,  $2$-forms~$J_*$, $F_*$ and a~$1$-form~$j_*$
 such that the following properties hold:
 \begin{enumerate}[label=(\roman*)]
  \item $J(u_\eps, \, A_\eps)\to J_*$ strongly 
  in~$W^{-1,p}(M, \ \Lambda^2 \T^*M)$
  for any~$p\in \left[1, \, \frac{n}{n-1}\right)$;
  \item $F_{A_\eps}\to F_*$ strongly 
  in~$W^{1,p}(M, \ \Lambda^2 \T^*M)$
  for any~$p\in \left[1, \, \frac{n}{n-1}\right)$;
  \item $j(u_\eps, \, A_\eps)\to j_*$ strongly 
  in~$L^p(M, \ \Lambda^2 \T^*M)$
  for any~$p\in \left[1, \, \frac{n}{n-1}\right)$;
  \item the Hodge dual~$\star J_*$ is a boundary-less,
  integer-multiplicity rectifiable $(n-2)$-current in~$M$, 
  whose integral homology class~$\mathcal{C}\subseteq H_{n-2}(M; \, \Z)$
  is Poincar\'e dual to the first Chern class of~$E\to M$;
  \item the limit curvature~$F_*$ satisfies
  \begin{equation} \label{London*}
   \d^* F_* = j_*, \qquad -\Delta F_* + F_*= 2\pi J_*,
  \end{equation}
  in the sense of distributions.
 \end{enumerate}
\end{mainthm}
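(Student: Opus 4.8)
\emph{Strategy.} The plan is to combine the gauge-invariant Jacobian machinery, in the spirit of Jerrard--Soner and Alberti--Baldo--Orlandi, which extracts the limiting rectifiable current from the vorticity of $(u_\eps,A_\eps)$, with elliptic regularity for the coupled system \eqref{EL-A}, \eqref{London}, which upgrades weak convergence of the Jacobians to the strong Sobolev convergences of $F_{A_\eps}$ and $j(u_\eps,A_\eps)$. First, record the elementary consequences of \eqref{hp:logenergy}: each of the three nonnegative terms of $G_\eps$ is $\le\Lambda\abs{\log\eps}$; the maximum principle for \eqref{EL-u} gives $\norm{u_\eps}_{L^\infty(M)}\le 1$; hence $\vol_g(\{\abs{u_\eps}\le 1/2\})\le C\eps^2\Lambda\abs{\log\eps}$. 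Since $J(u_\eps,A_\eps)$, $F_{A_\eps}$ and $j(u_\eps,A_\eps)$ are gauge-invariant, no gauge fixing is needed for the statement itself, although placing $(u_\eps,A_\eps)$ in Coulomb gauge $\d^*A_\eps=0$ (with $A_\eps$ orthogonal to harmonic $1$-forms) is convenient for the Jacobian estimate.

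\emph{The Jacobian estimate and compactness of currents: proof of (i) and (iv).} The core step is to produce integer-multiplicity polyhedral $(n-2)$-currents $\Gamma_\eps$ with $\mathbb M(\Gamma_\eps)+\mathbb M(\partial\Gamma_\eps)\le C\Lambda$ such that $\tfrac1\pi\star J(u_\eps,A_\eps)-\Gamma_\eps\to0$ in the flat norm, indeed in $W^{-1,p}$ for $p<\tfrac n{n-1}$. One first establishes an $\eta$-ellipticity (clearing-out) lemma for solutions of \eqref{EL-u}, \eqref{EL-A}: if the $\eps$-rescaled energy on a ball $B_{2r}(x)$ with $r\ge\eps$ lies below a universal threshold, then $\abs{u_\eps}\ge 1/2$ on $B_r(x)$, together with interior bounds on $e_\eps$. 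A Vitali covering and \eqref{hp:logenergy} then show that the concentration set $\Sigma_\eps$ (where this fails) is covered by $\lesssim\Lambda\, r^{-(n-2)}$ balls of radius $r$, for every $r\ge\eps$. On $\{\abs{u_\eps}\ge 1/2\}$ one writes $u_\eps=\rho_\eps\,\mathrm{e}^{i\varphi_\eps}$ locally, so that $j(u_\eps,A_\eps)=\rho_\eps^2(\d\varphi_\eps-A_\eps)$, and builds $\Gamma_\eps$ by slicing $\star J(u_\eps,A_\eps)$ via the coarea formula and projecting onto a fine grid; the mass bound $\mathbb M(\Gamma_\eps)\le C\Lambda$ comes from estimating the resulting slice integrals by the energy carried near $\Sigma_\eps$. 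Since a curvature form is closed, $\d J(u_\eps,A_\eps)=\tfrac12\d F_{A_\eps}=0$, whence $\partial\Gamma_\eps\to0$. The Federer--Fleming compactness and closure theorems yield a (non-relabelled) subsequence with $\Gamma_\eps\to\Gamma_*$ in the flat norm, $\Gamma_*$ an integer-multiplicity rectifiable $(n-2)$-current with $\partial\Gamma_*=0$; set $\star J_*:=\Gamma_*$. Because a mass-bounded sequence of currents is precompact in $W^{-1,p}(M)$ for every $p<\tfrac n{n-1}$ --- dualise the compact embedding $W^{1,p'}\hookrightarrow C^0(M)$, $p'>n$ --- the flat convergence upgrades to strong $W^{-1,p}$ convergence, so that $J(u_\eps,A_\eps)\to\pi J_*$ strongly in $W^{-1,p}$; this is (i). Finally, Chern--Weil gives $[F_{A_\eps}]=2\pi c_1(E)$ and $[J(u_\eps,A_\eps)]=\tfrac12[F_{A_\eps}]$ in de Rham cohomology; passing to the limit and using the degree information on $u_\eps$ recorded by $\Gamma_\eps$ identifies the integral homology class of $\star J_*$ with the Poincar\'e dual of $c_1(E)$, which completes (iv).

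\emph{Elliptic bootstrap: proof of (ii), (iii) and (v).} By (i) and \eqref{London}, elliptic estimates for $-\Delta+1$ on $2$-forms show that $F_{A_\eps}$ is bounded in $W^{1,p}$ for $p<\tfrac n{n-1}$, so up to a (non-relabelled) subsequence $F_{A_\eps}\rightharpoonup F_*$ weakly in $W^{1,p}$; passing to the limit in \eqref{London} gives $-\Delta F_*+F_*=2\pi J_*$ in the sense of distributions, the second identity of (v). Subtracting, $(-\Delta+1)(F_{A_\eps}-F_*)=2\bigl(J(u_\eps,A_\eps)-\pi J_*\bigr)\to0$ in $W^{-1,p}$ by (i), so elliptic regularity promotes the convergence to $F_{A_\eps}\to F_*$ strongly in $W^{1,p}(M,\Lambda^2\T^*M)$, which is (ii). Next, \eqref{EL-A} reads $j(u_\eps,A_\eps)=\d^*F_{A_\eps}$; since $\d^*$ maps $W^{1,p}$ boundedly into $L^p$, the sequence $j(u_\eps,A_\eps)$ is bounded in $L^p$ and converges strongly in $L^p$ to $j_*:=\d^*F_*$, which gives (iii) and the remaining identity $\d^*F_*=j_*$ of (v). It is worth stressing that the uniform $L^p$ bound on $j(u_\eps,A_\eps)$ --- a priori only bounded in $L^2$ with an $\abs{\log\eps}$ loss --- is obtained precisely from \eqref{EL-A}, so criticality enters in an essential way.

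\emph{Main obstacle.} The genuinely hard step is the gauge-invariant Jacobian estimate above, together with the uniform mass bound $\mathbb M(\Gamma_\eps)\le C\Lambda$ and the smallness of the error in the negative norm dictated by the exponent $p<\tfrac n{n-1}$. Two features make it more delicate than in the non-magnetic case: the magnetic potential $A_\eps$ is not bounded in $L^\infty$, so one must either work throughout with the gauge-invariant pair $\bigl(j(u_\eps,A_\eps),F_{A_\eps}\bigr)$ or exploit the Coulomb-gauge normalisation; and the $(n-2)$-dimensional slicing and projection that manufactures the integer current $\Gamma_\eps$ must be carried out quantitatively. The logarithmic bound \eqref{hp:logenergy} is used here decisively: it is exactly what forces $\mathbb M(\Gamma_\eps)=O(1)$ rather than $O(\abs{\log\eps})$, hence what makes the limit a rectifiable current rather than a diffuse measure. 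The clearing-out lemma for the coupled system \eqref{EL-u}, \eqref{EL-A} is the other technical cornerstone, resting on $\eps$-regularity estimates and a monotonicity-type identity for critical points.
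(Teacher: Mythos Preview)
Your derivation of (ii), (iii) and (v) from (i) via the London equation and elliptic regularity for $-\Delta+1$ is correct and is exactly what the paper does: immediately after the statement the paper says that (iii) ``is an immediate consequence of~\eqref{EL-A} and~(ii)'', and Equations~\eqref{compactnessF,J}--\eqref{limLondon} later record precisely your bootstrap.

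Your route to (i) and (iv), however, diverges from the source~\cite{CDO1} in a way that matters. The paper states explicitly that ``Property~(i) is completely independent of the Euler-Lagrange equations~\eqref{EL-u}--\eqref{EL-A}; it holds true for any sequence~$\{(u_\eps, \, A_\eps)\}$ that satisfies~\eqref{hp:logenergy}.'' The Jacobian compactness in~\cite{CDO1} is a purely variational statement in the Jerrard--Soner/Alberti--Baldo--Orlandi tradition: the approximating integer currents and their mass bound come from a grid-deformation and degree argument that needs only the energy bound, not criticality, monotonicity, or clearing-out. Your sketch instead builds on an $\eta$-ellipticity lemma and the covering bound for~$\Sigma_\eps$, both of which \emph{do} require criticality---indeed, in this very paper they are obtained from the monotonicity formula (Theorem~\ref{thm:monotonicity}) and are the technical core of Section~\ref{sec:clearing-out}, proved \emph{after} Theorem~\ref{th:zero} is already in hand. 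Even granting the covering bound, you have not explained how it produces~$\Gamma_\eps$: knowing that the bad set has controlled $(n-2)$-content does not by itself yield an integer current close to~$\star J(u_\eps,A_\eps)$ in~$W^{-1,p}$; you still need the full degree/deformation machinery, at which point the clearing-out step is superfluous. So for (i) and (iv) you should drop the $\eta$-ellipticity detour and invoke directly the gauge-invariant Jacobian compactness theorem of~\cite{CDO1} (Theorem~A there), which rests only on~\eqref{hp:logenergy}.
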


The proof of Theorem~\ref{th:zero} is contained in~\cite{CDO1}
(see, in particular, Theorem~A and Remark~3.12).
Property~(iii) is not mentioned explicitly in~\cite{CDO1},
but it is an immediate consequence of~\eqref{EL-A} and~(ii).
Note that Property~(i) is completely independent of 
the Euler-Lagrange equations~\eqref{EL-u}--\eqref{EL-A};
it holds true for any sequence~$\{(u_\eps, \, A_\eps)\}$
that satisfies~\eqref{hp:logenergy}.

\paragraph*{Main results.}
In this paper, we will address two more questions about 
the asymptotic behaviour, as~$\eps\to 0$, of critical points 
satisfying~\eqref{hp:logenergy}.
Our first main result 
describes the concentration of the \emph{rescaled energy densities}.
We denote by~$\spt\mu$ the support of a Radon measure~$\mu$
and by~$\H^{n-2}$ the $(n-2)$-dimensional Hausdorff measure.

\begin{mainthm}\label{th:energy_density}
	Let $\{(u_\eps,\,A_\eps)\} \subset \mathcal{E}$ be any sequence of 
	critical points of $G_\eps$ satisfying the logarithmic energy 
	bound~\eqref{hp:logenergy}.
	Then, up to extraction of a subsequence, we have
	\[
	 \frac{e_\eps(u_\eps,\,A_\eps)}{\abs{\log\eps}} \,\vol_g
	 \rightharpoonup^* \mu_* \qquad \textrm{as } \eps\to 0
	\]
	in the sense of Radon measures in $M$.
	The limit measure~$\mu_*$ satisfies~$\H^{n-2}(\spt\mu_*) < +\infty$
	and, moreover, it is the weight measure of a stationary, 
	rectifiable $(n-2)$-varifold in~$M$. 
\end{mainthm}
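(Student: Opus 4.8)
The plan is to follow the now-classical strategy for concentration of Ginzburg–Landau energies (as in Lin–Rivière, Bethuel–Brezis–Orlandi, Stern, and the gauged analogues in Baldo–Orlandi and the authors' work), adapted to the bundle setting. First I would use the logarithmic energy bound~\eqref{hp:logenergy} to extract a subsequence along which the rescaled energy density $\mu_\eps := |\log\eps|^{-1} e_\eps(u_\eps,A_\eps)\,\vol_g$ converges weakly-$*$ to some Radon measure $\mu_*$ on $M$, with total mass $\leq \Lambda$; this is automatic from weak-$*$ compactness of bounded sequences of measures. The content is in identifying $\mu_*$ and establishing its structure.

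The key tool is a \emph{monotonicity formula}. I would establish, using the Euler–Lagrange equations~\eqref{EL-u}--\eqref{EL-A} together with a Pohozaev/stress-energy argument (testing the equations against the radial vector field $x\cdot\nabla$ in a geodesic ball, controlling the curvature terms of $M$ and the lower-order contributions of $\D_0$ by the injectivity radius), an almost-monotonicity statement of the form: for $x_0\in M$ and $0<r<\rho<\inj(M)$,
\[
 e^{Cr}\,\frac{G_\eps(u_\eps,A_\eps;B_r(x_0))}{r^{n-2}}
 \;\leq\; e^{C\rho}\,\frac{G_\eps(u_\eps,A_\eps;B_\rho(x_0))}{\rho^{n-2}} + (\text{controlled error}),
\]
possibly with an extra term involving $\int |F_\eps|^2$ that is handled using Theorem~\ref{th:zero}(ii) (which gives strong $W^{1,p}$ convergence of $F_\eps$, hence the curvature part of the energy does \emph{not} contribute to the concentration at scale $|\log\eps|$). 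Dividing by $|\log\eps|$ and passing to the limit, this yields that $\Theta(r) := r^{-(n-2)}\mu_*(B_r(x_0))$ is monotone in $r$ (up to the exponential factor), so the density $\Theta^{n-2}(\mu_*,x_0) := \lim_{r\to0}\Theta(r)$ exists everywhere. Standard density-bound arguments then give $\H^{n-2}(\spt\mu_*)<+\infty$: the set where $\Theta^{n-2}(\mu_*,\cdot)\geq\theta_0>0$ is $\H^{n-2}$-finite, and one shows $\mu_*$ is concentrated there, with a lower density bound $\theta_0$ coming from a clearing-out/$\eta$-ellipticity lemma (if the rescaled energy in a ball is below a threshold, then $|u_\eps|\to1$ there and the energy density converges to zero — proved via the monotonicity formula plus the equation for $|u_\eps|$).

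Next I would promote $\mu_*$ to a varifold. By the density bounds and $\H^{n-2}$-finiteness of the support, $\mu_*$ is $(n-2)$-rectifiable: one uses Preiss-type rectifiability (existence of approximate tangent planes) — in this elliptic PDE setting the cleaner route is to blow up at $\mu_*$-a.e.\ point, show (again via monotonicity, which forces the blow-up limit to be a cone) that tangent measures are $(n-2)$-planes with integer multiplicity, the integrality coming from the current $\star J_*$ in Theorem~\ref{th:zero}(iv), whose mass controls $\mu_*$ from below with the right constant $2\pi$. Associating to $\mu_*$ the varifold $V_* := \mathbf{v}(\spt\mu_*, \Theta^{n-2}(\mu_*,\cdot))$, I would prove stationarity by passing to the limit in the \emph{first variation} identity: the stress-energy tensor $T_\eps := e_\eps\, g - (\text{quadratic terms in }\D_A u, F_A)$ is divergence-free by~\eqref{EL-u}--\eqref{EL-A} (a gauge-invariant Noether identity), so $\int_M \langle T_\eps, \nabla X\rangle = 0$ for every vector field $X$; dividing by $|\log\eps|$, the curvature contributions vanish in the limit (Theorem~\ref{th:zero}(ii)) and the remaining terms converge to $\int \mathrm{div}_{T_x\mu_*} X \, d\mu_*$, giving $\delta V_* = 0$.

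The main obstacle I expect is the rigorous derivation and uniform control of the monotonicity formula in the bundle setting: unlike the flat model case, one must absorb the geometry of $(M,g)$ and the reference connection $\D_0$ (whose curvature $F_0$ contributes to $F_A$) into controlled error terms, and crucially one must show that the magnetic energy $\tfrac12\int|F_\eps|^2$ does not interfere with the concentration — this is where Theorem~\ref{th:zero}(ii) is essential, since it says $F_\eps$ stays bounded in $W^{1,p}$ and hence its $L^2$-energy is $o(|\log\eps|)$ on small balls, so the concentrating part of the energy is carried entirely by $\tfrac12|\D_A u|^2 + \tfrac1{4\eps^2}(1-|u|^2)^2$, which behaves like a (covariant) Ginzburg–Landau density. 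A secondary difficulty is the identification of the multiplicity as an integer and the matching of the constant $2\pi$ between $\mu_*$ and $\star J_*$; this requires a lower-bound inequality relating the energy density to the Jacobian (a gauge-invariant version of the $|J|\leq$ energy estimate), combined with the rectifiability of $\star J_*$ from Theorem~\ref{th:zero}(iv).
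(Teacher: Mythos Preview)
Your broad strategy---monotonicity, clearing-out, stress-energy tensor, stationarity---matches the paper's, but there are two substantive deviations, one of which is a genuine gap.

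\textbf{Monotonicity.} You propose to absorb the curvature term $\int|F_\eps|^2$ using Theorem~\ref{th:zero}(ii). That is a global \emph{limit} statement (strong $W^{1,p}$ convergence of $F_\eps$, $p<n/(n-1)$); it does not give the local, $\eps$-level control needed to upgrade the crude $(n-4)$-Pohozaev monotonicity to $(n-2)$. The paper instead exploits that, in local Coulomb gauge, \eqref{EL-A} becomes $-\Delta A_\eps = j(u_\eps,A_\eps)$, a uniformly elliptic equation with right-hand side bounded pointwise by $|\D_{A_\eps}u_\eps|$; elliptic decay then yields $\int_{\mathcal{B}_r}|F_\eps|^2 \lesssim (r/R)^{n-2+\alpha}E_\eps(x_0,R)$ for some $\alpha>0$ (Lemma~\ref{lemma:decay-Feps}). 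Feeding this back into the Pohozaev identity gives Theorem~\ref{thm:monotonicity}, which is what drives the clearing-out and hence the lower density bound. Without a quantitative curvature decay at \emph{fixed} $\eps$, your clearing-out step would not go through. The global fact that the curvature energy is $o(|\log\eps|)$ (Lemma~\ref{lemma:curvature}) is correct and is used---but only later, to get the trace bound $\mathrm{Tr}\,T_*\geq (n-2)\mu_*$ in the stress-energy limit, not for monotonicity.

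\textbf{Rectifiability and integrality.} You propose blow-up analysis: tangent measures are cones, hence planes, with integer multiplicity inherited from $\star J_*$. The paper does something much softer: it passes the divergence-free stress-energy tensors $T_\eps$ to a limit $T_*$, checks $-g\leq T_*/\mu_*\leq g$ and $\mathrm{Tr}(T_*/\mu_*)\geq n-2$, so that $T_*$ defines a \emph{generalised} $(n-2)$-varifold with weight $\mu_*$ and zero first variation; then it invokes the Ambrosio--Soner criterion (bounded first variation plus positive lower density $\Rightarrow$ rectifiable). No blow-up is needed. More importantly, your plan to extract integer multiplicity from $\star J_*$ is not carried out in the paper and is explicitly listed as open (Remark~\ref{rk:mu*}): one only knows $\mu_*\geq\pi|J_*|$, and whether the density of $\frac{1}{\pi}\mu_*$ is integral for non-minimising critical points remains unresolved. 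Theorem~\ref{th:energy_density} does not claim integrality, so that part of your proposal is both unnecessary for the stated result and, as written, unproven.
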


Recall that \emph{varifolds} are measure-theoretic generalisations of 
smooth submanifolds (see e.g.~\cite{Simon-GMT} and Section~\ref{sec:varifold}
below for details). 
In particular, stationary rectifiable varifolds are generalisations of 
minimal surfaces. Constructing non-trivial stationary, rectifiable varifolds in 
Riemannian manifolds is a fundamental problem in Geometric Measure 
Theory~\cite{Almgren,Pitts}.

\begin{remark} \label{rk:mu*}
 By $\Gamma$-convergence~\cite[Theorem~A and Remark~3.13]{CDO1},
 the limit measure~$\mu_*$ satisfies~$\mu_*\geq\pi\abs{J_*}$
 where~$J_*$ is the limit Jacobian, given by Theorem~\ref{th:zero}.
 In particular, $\mu_*$ it is always nonzero,
 so long as the bundle~$E\to M$ is nontrivial.
 Moreover, if~$\{(u_\eps, \, A_\eps)\}$ is a sequence of minimisers,
 then~$\mu_* = \pi\abs{J_*}$ and the Hodge dual~$\star J_*$
 is area-minimising in its integral homology class~$\mathcal{C}$
 \cite[Corollary~B]{CDO1}.
 However, we do not know whether the inequality
 $\mu_*\geq \pi\abs{J_*}$ may be strict, in general.
 A related question, which we do not address here, 
 is whether or not~$\mu_*$ is an integral varifold
 (see the paragraph on open questions below).
\end{remark}
\begin{remark}
 A more detailed description of~$\mu_*$ can be found 
in Theorem~\ref{thm:rectifiability} in Section~\ref{sec:varifold},
of which Theorem~\ref{th:energy_density} is an abridged version.
Contrary to what is observed in related problems
(e.g.~\cite{BethuelOrlandiSmets-Annals, Stern2021}),
the measure~$\mu_*$ contains no diffuse part;
see Remark~\ref{rk:nodiffuse}.
\end{remark}

The proof of Theorem~\ref{th:energy_density} relies 
on a suitable monotonicity formula 
(see Theorem~\ref{thm:monotonicity} below). 



In our second main result, 
we prove compactness for the sequence  
$\{(u_\eps,\, A_\eps)\}$ in
$W^{1,p}(M,\,E) \times W^{2,p}(M,\,\T^*M)$, for any $p$ with 
$1 \leq p < \frac{n}{n-1}$, 
so long as $A_\eps$ is in Coulomb gauge, that is, 
\begin{equation} \label{globalCoulomb}
 \begin{split}
  A_\eps = \d^*\psi_\eps + \zeta_\eps 
 \end{split}
\end{equation}
where~$\psi_\eps$ is an exact~$2$-form, $\zeta_\eps$
is a harmonic~$1$-form and $\norm{\zeta_\eps}_{L^\infty(M)}\leq C_M$
for some constant~$C_M$ that depends on~$M$ only
(see e.g.~\cite[Lemma~2.10]{CDO1}).
%

 \begin{mainthm} \label{th:uA}
  Let $\{(u_\eps,\,A_\eps)\} \subset \mathcal{E}$ 
  be a sequence of critical points of $G_\eps$. 
  Assume that $\{(u_\eps,\,A_\eps)\}$ satisfies the logarithmic energy bound 
  \eqref{hp:logenergy} and 
  that each $A_\eps$ satisfies~\eqref{globalCoulomb}.
  Then, up to extraction of a (non-relabelled) subsequence, there holds
  \begin{equation*}
   u_\eps\to u_* \quad \textrm{strongly in } W^{1,p}(M, \, E), \qquad 
   A_\eps\to A_* \quad \textrm{strongly in } W^{2,p}(M, \, \T^*M) 
  \end{equation*}
  as~$\eps\to 0$, for any~$p$ with~$1\leq p < \frac{n}{n-1}$.
  Moreover, $u_*$ and~$A_*$ are smooth in~$M\setminus\spt\mu_*$,
  they satisfy~$\abs{u_*} = 1$ in~$M\setminus\spt\mu_*$ and
  \begin{equation} \label{weak-A*harmonic}
   j(u_*, \, A_*)= \d^* F_{A_*}
  \end{equation}
  in the sense of distributions in~$M$.
 \end{mainthm}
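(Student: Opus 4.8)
The plan is to upgrade the weak convergences of Theorem~\ref{th:zero} to strong Sobolev convergence by exploiting the Coulomb gauge for the $1$-forms, the structure of the limiting measure $\mu_*$ from Theorem~\ref{th:energy_density}, and an $\eta$-ellipticity (``clearing-out'') analysis near the concentration set $\spt\mu_*$; the limiting equation will then follow by passing to the limit in~\eqref{EL-A}. For the $1$-forms: writing $A_\eps=\d^*\psi_\eps+\zeta_\eps$ as in~\eqref{globalCoulomb} and using $\d\zeta_\eps=0=\d\psi_\eps$ one has $\Delta\psi_\eps=\d\d^*\psi_\eps=\d A_\eps=F_{A_\eps}-F_0$, where $F_0$ denotes the (fixed, smooth) curvature of $\D_0$. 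Since $F_{A_\eps}\to F_*$ in $W^{1,p}$ by Theorem~\ref{th:zero}(ii), and the limit is again exact, elliptic regularity on the closed manifold $M$ gives $\psi_\eps\to\psi_*$ in $W^{3,p}$ (the exact primitive being unique); the harmonic forms $\zeta_\eps$ lie in the finite-dimensional space $\Harm^1(M)$ and are bounded in $L^\infty$, so a subsequence converges, in every norm, to a harmonic $\zeta_*$. Hence $A_\eps\to A_*:=\d^*\psi_*+\zeta_*$ strongly in $W^{2,p}$, $A_*$ is again in Coulomb gauge, and from $\Delta\psi_*=F_*-F_0$ one checks $F_{A_*}=F_*$. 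Moreover $\star J_*=0$ on $M\setminus\spt\mu_*$ (since $\mu_*\geq\pi\abs{J_*}$, Remark~\ref{rk:mu*}), so the second identity in~\eqref{London*} reads $-\Delta F_*+F_*=0$ there and $F_*$, hence $\psi_*$, hence $A_*$, is smooth on $M\setminus\spt\mu_*$.

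For the local picture: the maximum principle applied to~\eqref{EL-u} gives $\abs{u_\eps}\leq 1$, so $\{u_\eps\}$ is bounded in $L^\infty$, and directly from~\eqref{hp:logenergy} one has $\abs{\{\abs{u_\eps}\leq\tfrac12\}}\leq C\eps^2\abs{\log\eps}$. On any ball $B\Subset M\setminus\spt\mu_*$ one has $\abs{\log\eps}^{-1}\int_B e_\eps\to 0$ by Theorem~\ref{th:energy_density}, so an $\eta$-ellipticity argument for the system~\eqref{EL-u}--\eqref{EL-A} yields, for $\eps$ small, $\abs{u_\eps}\geq\tfrac12$ on $\tfrac12 B$ together with a uniform local energy bound; an elliptic bootstrap in the Coulomb gauge then gives uniform $C^k_{\loc}$ bounds, whence (up to a subsequence) $u_\eps\to u_*$ and $A_\eps\to A_*$ in $C^k_{\loc}(M\setminus\spt\mu_*)$, with $\abs{u_*}=1$ there and $u_*,A_*$ smooth solving the limiting system. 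Since $\H^{n-2}(\spt\mu_*)<\infty$, the set $\spt\mu_*$ is Lebesgue-null, so $u_\eps\to u_*$ and $\D_{A_\eps}u_\eps\to\D_{A_*}u_*$ almost everywhere on $M$.

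The heart of the matter is a uniform bound for $\D_{A_\eps}u_\eps$ in $L^{p}(M)$, valid for every $p<\tfrac{n}{n-1}$ (then also $\D_0 u_\eps=\D_{A_\eps}u_\eps+iA_\eps u_\eps$ is bounded in $L^p$, as $A_\eps u_\eps\to A_* u_*$ in $L^p$ by the $W^{2,p}$-convergence of $A_\eps$, the Sobolev embedding, and $\abs{u_\eps}\leq1$). On $\{\abs{u_\eps}\leq\tfrac12\}$ one uses $\abs{\D_{A_\eps}u_\eps}^2\leq 2e_\eps$, Hölder's inequality and the measure estimate above to bound the contribution by $C\eps^{2-p}\abs{\log\eps}\to 0$ (recall $p<2$). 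On $\{\abs{u_\eps}>\tfrac12\}$ one splits $\abs{\D_{A_\eps}u_\eps}^2=\abs{\d\abs{u_\eps}}^2+\abs{u_\eps}^{-2}\abs{j(u_\eps,A_\eps)}^2$; the phase term is controlled since $j(u_\eps,A_\eps)=\d^* F_{A_\eps}\to j_*$ in $L^p$ by Theorem~\ref{th:zero}(iii). The delicate term is $\d\abs{u_\eps}$: testing the equation for $\abs{u_\eps}^2$ against $1-\abs{u_\eps}^2$ gives only $\norm{\d\abs{u_\eps}}_{L^2(M)}^2\leq C\abs{\log\eps}$, which is useless on a fixed neighbourhood of $\spt\mu_*$, and the improvement to a uniform $L^p$ bound must be obtained by covering the core $\{\abs{u_\eps}<\tfrac12\}$ by balls of radius $\sim\eps$ with $\sum_i r_i^{n-2}\leq C$ (from $\eta$-ellipticity and~\eqref{hp:logenergy}) and running a dilation/covering estimate on and around these balls, using that the concentration set is $(n-2)$-dimensional. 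This uniform $W^{1,p}$ bound is the principal obstacle; everything else is soft.

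Once $\{u_\eps\}$ is bounded in $W^{1,p'}$ for some $p<p'<\tfrac{n}{n-1}$, a subsequence converges weakly in $W^{1,p'}$ to some $u_*$; combined with the almost-everywhere convergence of $\D_0 u_\eps$ from the local analysis and the equi-integrability provided by the $L^{p'}$ bound, Vitali's theorem upgrades this to strong convergence $u_\eps\to u_*$ in $W^{1,p}(M,E)$. Finally, passing to the limit in~\eqref{EL-A} --- using $F_{A_\eps}=F_*=F_{A_*}$ and $j(u_\eps,A_\eps)\to j(u_*,A_*)$ in $L^p$, which follows from $u_\eps\to u_*$ in $W^{1,p}$, $A_\eps\to A_*$ and $\abs{u_\eps}\leq1$ --- yields $j(u_*,A_*)=\d^* F_{A_*}$ in the sense of distributions; the assertions $\abs{u_*}=1$ on $M\setminus\spt\mu_*$ and the smoothness of $u_*,A_*$ there were already obtained in the local analysis.
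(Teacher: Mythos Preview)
Your treatment of $A_\eps\to A_*$ in $W^{2,p}$, the passage to the limit in~\eqref{EL-A} to obtain~\eqref{weak-A*harmonic}, and the smoothness of $(u_*,A_*)$ on $M\setminus\spt\mu_*$ are correct and close to the paper's arguments. Your upgrade to strong convergence via Vitali (a.e.\ convergence from the local $C^k_{\loc}$ analysis plus equi-integrability from an $L^{p'}$ bound with $p'>p$) is a legitimate alternative to the paper's convergence-of-norms argument (Lemma~\ref{lemma:bound-DA_eps-u_eps}).

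The genuine gap is precisely where you locate it: the uniform $L^p$ bound on $\d\abs{u_\eps}$. The covering scheme you sketch cannot close it. First, a Vitali cover of $\{\abs{u_\eps}<\tfrac12\}$ by $\eps$-balls gives only $\sum_i r_i^{n-2}\lesssim\abs{\log\eps}$ from~\eqref{hp:logenergy}, not a uniform bound. More fundamentally, even granting such a cover, it controls only the contribution from a neighbourhood of the cores; on the large complementary region where $\tfrac12<\abs{u_\eps}<1$ the modulus still varies non-trivially, and the only a priori information is $\int\abs{\d\abs{u_\eps}}^2\lesssim\abs{\log\eps}$. Testing~\eqref{EL-abs} against $1-\abs{u_\eps}^2$ with a \emph{fixed} threshold yields at best $\int_{\{\abs{u_\eps}>1-\delta\}}\abs{\d\abs{u_\eps}}^2\lesssim\delta\abs{\log\eps}$, which diverges for every fixed $\delta>0$; a dyadic dilation around the cores likewise retains a factor $\abs{\log\eps}^{p/2}$.

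The paper's resolution (Proposition~\ref{prop:conv-Lp-abs-u-eps}, following~\cite[Lemma~X.13]{BBH}) uses an $\eps$-\emph{dependent} threshold: with $\widetilde\rho_\eps:=\max\{\abs{u_\eps},\,1-\abs{\log\eps}^{-2}\}$, testing~\eqref{EL-abs} against $1-\widetilde\rho_\eps$ gives directly
\[
\int_{\{\abs{u_\eps}\geq 1-\abs{\log\eps}^{-2}\}}\abs{u_\eps}\,\abs{\d\abs{u_\eps}}^2\,\vol_g
\;\leq\;\frac{2}{\abs{\log\eps}^{2}}\int_M\abs{\D_{A_\eps}u_\eps}^2\,\vol_g
\;\lesssim\;\frac{1}{\abs{\log\eps}}\;\to\;0,
\]
while the complementary set $N_\eps$ has measure $\lesssim\eps^2\abs{\log\eps}^5$, and there the $L^\infty$ gradient bound $\abs{\D_{A_\eps}u_\eps}\lesssim\eps^{-1}$ (Lemma~\ref{lemma:L-infty-bound-D_Au}) gives $\int_{N_\eps}\abs{\D_{A_\eps}u_\eps}^p\lesssim\eps^{2-p}\abs{\log\eps}^5\to 0$. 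This global, purely PDE-based estimate requires no geometric covering and yields $\norm{\d\abs{u_\eps}}_{L^p(M)}\to 0$ for every $p<2$; combined with the $L^p$-convergence of $j(u_\eps,A_\eps)$ from Theorem~\ref{th:zero}(iii) and the decomposition of Lemma~\ref{lemma:dec-D_Au}, it immediately delivers the $W^{1,p}$ bound you need.
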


\begin{remark} \label{rk:A*harmonic}
 By continuity of the Jacobian and the differential operator, 
 the limits~$u_*$ and~$A_*$ are compatible with~$J_*$, $F_*$, $j_*$
 given by Theorem~\ref{th:zero}
 --- that is, $F_* = F_{A_*}$, $J_* = J(u_*, \, A_*)$, 
 and~$j_* = j(u_*, \, A_*)$.
 Moreover, Equation~\eqref{weak-A*harmonic} implies that~$u_*$ 
 is an $A_*$-harmonic unit section away from the support of~$\mu_*$,
 where both~$u_*$ and~$A_*$ are smooth.
 In other words, there holds
 \begin{equation} \label{Aharmonic}
  \D_{A_*}^*\D_{A_*} u_* = \abs{\D_{A_*} u_*}^2 u_* 
  \qquad \textrm{pointwise in } M\setminus\spt\mu_*
 \end{equation}
 (see Remark~\ref{rk:strongA*harmonic} for details).
 Finally, by passing to the limit in~\eqref{globalCoulomb},
 it follows that 
 $A_* = \d^*\psi_* + \zeta_*$, where~$\psi_*$
 is an exact $2$-form and~$\zeta_*$ is a harmonic~$1$-form.
\end{remark}

\begin{remark}\label{rk:ThA-minimisers}
	Theorem~\ref{th:uA} applies, in particular, 
	to any sequence of \emph{minimisers} of~$G_\eps$
	in the class
	$W^{1,2}(M,\,E) \times W^{1,2}(M,\,\T^*M)$. 
	Indeed, by~\cite[Lemma~2.1]{CDO1}, minimisers belong 
	to~$\mathcal{E}$ and, by \cite[Remark~2]{CDO1},
	they satisfy the assumption~\eqref{hp:logenergy}.
\end{remark}


Before illustrating the ideas of the proof, let us 
motivate the interest towards our results.

\paragraph{Background and motivation.}
Ginzburg-Landau functionals of the form~\eqref{magneticGL} 
were originally proposed as a model of 
superconductivity. This theory gained 
much popularity, as it accounts for most commonly observed 
effects in superconductors,
and it gradually became relevant to other areas of physics,
such as particle physics and gauge theories.
The asymptotic regime as~$\eps\to 0$, which is known as
the \emph{London limit} in 
superconductivity theory or the \emph{strongly repulsive limit}
in particle physics, is characterised by the emergence of
\emph{topological singularities}:
due to topological obstructions set by the structure of the bundle~$E\to M$,
critical points develop singularities, in the asymptotic limit, 
which are supported on a set of dimension $n-2$.
The topologically-driven energy concentration phenomenon
can be detected by the distributional Jacobian,
which both enjoys remarkable compactness properties and 
identifies the topological singularities that emerge in the limit 
as~$\eps\to 0$ (see e.g.~\cite{JerrardSoner-GL, ABO1, ABO2}).

The asymptotic analysis of~\eqref{magneticGL} in 
the limit as~$\eps\to 0$ relies on analogous
results for the ``non-magnetic'' version of the functional,
in which the variable~$A$ is set to zero and the energy 
is considered as a functional of~$u$ only. 
The analysis of critical points of Ginzburg-Landau-type functionals without magnetic field was 
initiated by Bethuel, Brezis, and H\'{e}lein~\cite{BBH} in
the 2-dimensional, Euclidean setting. The analysis was then extended to critical points on $2$-dimensional Riemann surfaces~\cite{Baraket},
higher-dimensional Euclidean domains~\cite{LinRiviere2, BethuelBrezisOrlandi}
and, more recently, higher-dimensional manifolds~\cite{Cheng2020,Stern2021, ColinetJerrardSternberg, DePhilippisPigati}.
In particular, our Theorems~\ref{th:uA} and~\ref{th:energy_density}
generalise results that were first obtained in~\cite{BethuelBrezisOrlandi}
in the Euclidean, non-magnetic case.
In all these works, a main difficulty lies in the 
lack of uniform energy bounds, 
as the natural energy scaling considered is
the logarithmic one~\eqref{hp:logenergy}.



As for the gauge-invariant, ``magnetic'' version of the functional,
the first results addressed the asymptotic
behaviour of minimisers in two-di\-men\-sio\-nal Euclidean domains~\cite{BethuelRiviere} and on Riemann surfaces~\cite{Orlandi,Qing}.
A detailed analysis of the asymptotics 
of critical points for the gauge-invariant 
functional in 2 dimensions, in the Euclidean setting and
in the logarithmic energy regime, has been 
carried out by Sandier and Serfaty 
--- see e.g.~\cite[Chapter~13]{SandierSerfaty-book}
and the references therein. 
Apparently, 
less effort has been put in studying the asymptotic behaviour of critical
points in higher dimensions. Recently, this problem has been 
addressed in~\cite{PigatiStern},
in the context of Hermitian line bundles, for the 
\emph{self-dual} version $G_\eps^{\rm self}$ of $G_\eps$, in which the 
curvature term $\int_M \abs{F_\eps}^2\,\vol_g$ is replaced by 
$\eps^2 \int_M \abs{F_\eps}^2\,\vol_g$.
The main outcome of~\cite{PigatiStern} is that,
for a sequence $\{(u_\eps,\,A_\eps)\}$ of critical points with  
\emph{bounded} $G_\eps^{\rm self}$-energy, 
the energy densities $e_\eps^{\rm self}(u_\eps,\,A_\eps)$ 
converge, up to subsequences, to the weight measure of a 
limiting stationary, rectifiable, \emph{integral} 
varifold of codimension 2 in~$M$. 
Conversely, in~\cite{DePhilippisPigati}, it is shown that any 
non-degenerate minimal submanifold of codimension 2 in $M$ can be obtained as 
energy concentration set of a sequences of bounded energy critical points of 
$G_\eps^{\rm self}$.

\paragraph{Proofs of the main results: a sketch.}
In the paper, we first address the proof of Theorem~\ref{th:uA},
which ultimately relies
on the $L^p$-compactness for the prejacobians~$j(u_\eps, \, A_\eps)$
given by Theorem~\ref{th:zero}.
Instead, the proof of Theorem~\ref{th:energy_density} combines ideas from~\cite{BethuelBrezisOrlandi} 
in order to deal with the logarithmic energy regime  and from~\cite{Orlandi,PigatiStern} 
to deal with the Hermitian line bundle setting. 
The most difficult part is to identify $\mu_*$ as the weight measure of a 
stationary, rectifiable $(n-2)$-varifold. 
The key point is showing that, if $\mu_*(\mathcal{B}_R(x_0))$ is smaller 
than $\eta_0 \, R^{n-2}$, where $\eta_0$ is suitable
uniform ``small'' constant, 
then $\mu_*(\mathcal{B}_{R/2}(x_0)) = 0$. 
This is shown in Lemma~\ref{lemma:out-of-support}. The proof goes 
along the lines of that of \cite[Proposition~{VIII.1}]{BethuelBrezisOrlandi} 
and it is based on a \emph{clearing-out} result
(see the analysis in Section~\ref{sec:clearing-out},
in particular Proposition~\ref{prop:small-ball-small-energy} 
and Proposition~\ref{prop:clearingout}). 

A crucial r\^{o}le is played by the following (almost) 
$(n-2)$-\emph{monotonicity inequality}.
Let~$\inj(M)$ be the injectivity radius of~$M$.
For~$x_0 \in M$ and~$r \in (0, \inj(M))$, we denote
by~$\mathcal{B}_r(x_0)$ the geodesic ball of radius~$r$
centered at~$x_0$. Given~$(u_\eps, \, A_\eps)\in\mathcal{E}$,
we define
\begin{equation}\label{eq:def-E-rho-intro}
		E_\eps(x_0,\,\rho) := 
		\int_{\mathcal{B}_\rho(x_0)} e_\eps(u_\eps,\,A_\eps)\,\vol_g.
\end{equation} 
and
\begin{equation}\label{eq:def-X-rho-intro}
	X_\eps(x_0,\,\rho) := \int_{\partial \mathcal{B}_\rho(x_0)} 
	\left( \abs{\D_{A_\eps,\nu} u_\eps}^2 + \abs{{\rm i}_\nu F_{\eps}}^2 \right) \vol_{\hat{g}}
	+ \frac{1}{2\eps^2\, \rho} \int_{\mathcal{B}_\rho(x_0)} \left(1 - \abs{u_\eps}^2\right)^2 \vol_g
\end{equation}
where~$\nu$ is the exterior unit normal 
to~$\partial B_\rho(x_0)$, ${\rm i}_\nu$
denotes the contraction against~$\nu$ (see~\eqref{innerprod} below),
$F_\eps := F_{A_\eps}$ and~$\hat{g}$ is the metric induced by~$g$
on $\partial \mathcal{B}_\rho(x_0)$. 

\begin{mainthm}\label{thm:monotonicity} 
	There exist $\alpha \in (0,\,2)$, $R_0 \in (0,\,\inj(M))$, and $C > 0$, 
	all depending only on $M$ and $g$, such that the following holds. 
	If $(u_\eps,\,A_\eps) \in \mathcal{E}$ 
	is any critical point of $G_\eps$ and~$x_0$ is any point of $M$,
	then the inequality
	\begin{equation} \label{eq:monotonicity}
	\begin{split}
		\frac{e^{Cr^2}}{r^{n-2}} E_\eps(x_0,\,r)
		&+ \int_r^R \frac{e^{C\rho^2}}{\rho^{n-2}} X_\eps(x_0,\,\rho)\,\d \rho \\ 
		& \leq \frac{e^{CR^2}}{R^{n-2}} E_\eps(x_0,\,R) 
		+ \frac{C e^{CR^2}(R^\alpha -r^\alpha)}{R^{n-2+\alpha}} 
		E_\eps(x_0,\,R)
	\end{split}
	\end{equation}
	holds for any $0 < r < R < R_0$. 
\end{mainthm}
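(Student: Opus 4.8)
The plan is to derive an almost-monotonicity formula in the spirit of the classical stationary-varifold and Ginzburg-Landau monotonicity arguments, using as the sole input the Euler-Lagrange equations \eqref{EL-u}--\eqref{EL-A} and the interior stationarity (Pohozaev-type) identity they imply. Since $M$ is compact and smooth, on geodesic balls $\mathcal{B}_\rho(x_0)$ of radius $\rho < \inj(M)$ one has normal coordinates in which the metric $g$ is a $C^\infty$ perturbation of the Euclidean metric, with $g_{ij} = \delta_{ij} + O(\rho^2)$ and $\sqrt{\det g} = 1 + O(\rho^2)$; these error terms are what will ultimately produce the exponential weights $e^{C\rho^2}$ and the correction term with exponent $\alpha$. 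First I would record the \emph{stress-energy tensor} (or, equivalently, the inner variation) associated with $G_\eps$. Testing the Euler-Lagrange system against the infinitesimal generator of dilations centered at $x_0$ — that is, computing $\frac{d}{dt}\big|_{t=0} G_\eps((u_\eps,A_\eps)\circ \phi_t)$ for the flow $\phi_t$ of the radial vector field $X = \rho\,\partial_\rho$ — yields an identity of the form $\int_{\mathcal{B}_\rho} \langle S_\eps, \nabla X\rangle \,\vol_g = \int_{\partial\mathcal{B}_\rho} S_\eps(X,\nu)\,\vol_{\hat g}$, where $S_\eps$ is the stress-energy tensor whose trace part carries $e_\eps$ and whose normal-normal component on $\partial\mathcal{B}_\rho$ produces the boundary term $|\D_{A_\eps,\nu} u_\eps|^2 + |{\rm i}_\nu F_\eps|^2$ together with the potential term in $X_\eps$.

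Second, I would make this computation quantitative on geodesic spheres. In Euclidean space the dilation identity reads, schematically,
\[
(n-2)\int_{\mathcal{B}_\rho} e_\eps \,\vol
= \rho \int_{\partial\mathcal{B}_\rho}\! e_\eps\,\vol_{\hat g}
 - 2\rho\!\int_{\partial\mathcal{B}_\rho}\!\Big(|\D_{A_\eps,\nu}u_\eps|^2 + |{\rm i}_\nu F_\eps|^2\Big)\vol_{\hat g}
 - \frac{n-2}{\eps^2}\!\int_{\mathcal{B}_\rho}\!\frac{(1-|u_\eps|^2)^2}{4}\,\vol,
\]
the key sign being that the bulk potential term appears with a \emph{favourable} sign (because the potential scales like $\eps^{-2}$, not like a conformal density), which after dividing by $\rho^{n-1}$ and recognising $\frac{d}{d\rho}\big(\rho^{-(n-2)}E_\eps(x_0,\rho)\big)$ on the left gives exactly a differential inequality
\[
\frac{d}{d\rho}\!\left(\frac{E_\eps(x_0,\rho)}{\rho^{n-2}}\right)
\;\geq\; \frac{1}{\rho^{n-2}} X_\eps(x_0,\rho) \;-\; (\text{metric error}).
\]
On a general manifold the vector field $X=\rho\,\partial_\rho$ is no longer conformal Killing, but $\nabla X = g + O(\rho^2)$ in normal coordinates, so the defect from conformality is $O(\rho^2)\,e_\eps$ pointwise; absorbing it via Grönwall's lemma turns the clean inequality into \eqref{eq:monotonicity} with the weight $e^{C\rho^2}$. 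The extra additive term $\frac{Ce^{CR^2}(R^\alpha-r^\alpha)}{R^{n-2+\alpha}}E_\eps(x_0,R)$ accounts for the fact that, before Grönwall, one only controls the metric error by $E_\eps(x_0,\rho) \leq C(\rho/R)^{n-2-\delta} E_\eps(x_0,R)$-type bounds that hold a priori only with a loss; choosing $\alpha\in(0,2)$ small enough that $\int_r^R \rho^{\alpha}\,\rho^{-(n-1)}\,\rho^{n-2}\,d\rho$ converges makes this error term integrable and of the stated shape. The precise bookkeeping of which powers of $\rho$ multiply the error, and verifying that the constant $\alpha$ can be taken uniform over $x_0\in M$ using only $\inj(M)>0$ and two-sided curvature bounds, is the routine but lengthy part.

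The main obstacle I anticipate is handling the curvature term $\frac12|F_\eps|^2$ correctly in the inner variation. Unlike the Higgs field $u_\eps$, which is a section transformed by pullback, the connection form $A_\eps$ transforms as a $1$-form and $F_\eps$ as a $2$-form under the diffeomorphisms $\phi_t$, so its contribution to the stress-energy tensor is the Yang-Mills stress tensor $\langle {\rm i}_\nu F_\eps, {\rm i}_\nu F_\eps\rangle - \frac12|F_\eps|^2 g(\cdot,\nu)\otimes\nu$-type expression, and one must check that the normal component assembles exactly into $|{\rm i}_\nu F_\eps|^2$ as written in \eqref{eq:def-X-rho-intro} — in particular that the tangential part of $F_\eps$ contributes to the \emph{bulk} $e_\eps$ term with the right coefficient $(n-4)/2$ rather than $(n-2)/2$, which is the well-known subtlety that codimension-2 Yang-Mills monotonicity still works because the ``bad'' term has a sign or can be dominated. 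Once the stress-energy identity is pinned down with all signs verified (most cleanly by first doing the flat case carefully and then perturbing), the passage to \eqref{eq:monotonicity} is a Grönwall argument. I would also double-check that no gauge-fixing is needed here: every quantity in \eqref{eq:def-E-rho-intro}--\eqref{eq:def-X-rho-intro} is manifestly gauge-invariant, and the inner variation of a gauge-invariant functional along diffeomorphisms is gauge-invariant, so the formula holds for any critical point in $\mathcal{E}$ regardless of gauge — which is exactly what the statement asserts.
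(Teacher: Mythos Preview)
Your outline correctly identifies the Pohozaev/inner-variation identity as the starting point and the role of the $e^{C\rho^2}$ weight in absorbing the metric errors coming from $\nabla X = g + O(\rho^2)$. That much is standard and matches the paper's Lemma~\ref{lemma:PS-ineq}.

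However, there is a genuine gap in your handling of the curvature term, and it is precisely the heart of the theorem. The stress-energy computation you sketch does \emph{not} produce a clean $(n-2)$-scaling: as you yourself note, the tangential part of $F_\eps$ contributes to the bulk with coefficient $(n-4)/2$, and the resulting Pohozaev inequality carries an extra term $-\int_{\mathcal{B}_\rho}|F_\eps|^2\,\vol_g$ on the right-hand side (this is exactly \eqref{eq:PS-ineq} in the paper). This term has the \emph{wrong} sign and cannot be absorbed by any sign argument or ``domination''; in the non-Abelian Yang--Mills--Higgs setting the resulting $(n-4)$-monotonicity is in fact sharp. Your claim that the $\alpha$-correction arises from metric errors is therefore incorrect: the metric errors are fully accounted for by the exponential weight, and the additive term with $R^\alpha-r^\alpha$ has a completely different origin.

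What the paper does to bridge the gap from $n-4$ to $n-2$ is to exploit the specific structure of the \emph{Abelian} Euler--Lagrange equation \eqref{EL-A}. In Coulomb gauge on $\mathcal{B}_R$ this becomes a uniformly elliptic equation $-\Delta A_\eps = j(u_\eps,A_\eps)$ with right-hand side controlled by $|\D_{A_\eps}u_\eps|$. One first proves an estimate of the form $\|F_\eps\|_{L^2(\mathcal{B}_\rho)} \lesssim \rho\|\D_{A_\eps}u_\eps\|_{L^2(\mathcal{B}_\rho)} + \sqrt{\rho}\|{\rm i}_\nu F_\eps\|_{L^2(\partial\mathcal{B}_\rho)}$ (Lemma~\ref{lemma:est-Feps}), which feeds back into the Pohozaev inequality to yield an intermediate $(n-4+\alpha)$-monotonicity (Proposition~\ref{prop:n-4-monotonicity}). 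This in turn is combined with standard elliptic decay estimates for $-\Delta A_\eps = j$ and a Giaquinta-type iteration to obtain the decay $\int_{\mathcal{B}_r}|F_\eps|^2 \lesssim (r/R)^{n-2+\alpha}E_\eps(x_0,R)$ (Lemma~\ref{lemma:decay-Feps}). Only then can the bad curvature term in the Pohozaev inequality be replaced by a controlled error, producing the additive $(R^\alpha-r^\alpha)$ term in \eqref{eq:monotonicity}. Your proposal is missing this entire chain of PDE estimates, without which the argument stalls at $(n-4)$-monotonicity.
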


The proof of Theorem~\ref{thm:monotonicity} is delicate. 
A classical argument 
(see~e.g.~Proposition~\ref{prop:n-4-monotonicity} below),
based on the Pohozaev identity, proves that
\begin{equation} \label{monotonicity-base}
 \frac{\d}{\d\rho}\left(\frac{e^{C\rho^2} E_\eps(x_0, \, \rho)}{\rho^{n-4}}\right) \geq 0
\end{equation}
for any~$x_0\in M$, $\rho\in (0, \, \inj(M))$ and some uniform constant~$C>0$.
The exponent of~$n - 4$ at the denominator
appears for scaling reasons
(heuristically, the curvature~$F_\eps$ scales as
a second derivative, so the integral of~$\abs{F_\eps}^2$
scales as a length to the power of~$n - 4$).
The inequality~\eqref{monotonicity-base} is known 
to be sharp in the context of non-Abelian Yang-Mills-Higgs
theories \cite{SmithUhlenbeck}.
However, in our Abelian setting, Theorem~\ref{thm:monotonicity}
gives an (almost) monotonicity formula
for the energy rescaled by~$\rho^{n-2}$.
A similar result is obtained in~\cite[Section~4]{PigatiStern},
for the Ginzburg-Landau functional in the self-dual scaling.
However, the arguments of~\cite{PigatiStern} fall apart
in our setting, because we work with a different scaling 
of the energy. Instead, we exploit the properties of
the Euler-Lagrange equations. In particular, our arguments
rely on Equation~\eqref{EL-A}, which can be reformulated as a 
\emph{uniformly elliptic} equation for the variable~$A_\eps$.
By elliptic regularity estimates, we can control the 
curvature terms and improve on the monotonicity, 
obtaining in the end Theorem~\ref{thm:monotonicity}.
As a byproduct of our arguments, we obtain
a bound for the Morrey-Campanato $L^{q,2}$-norm
of~$F_\eps$, in terms of the rescaled 
energy~\eqref{eq:def-E-rho-intro}, for any~$q < n$
(see Proposition~\ref{prop:almostLinftycurvature} below).


Back on the proof of Theorem~\ref{th:energy_density},
the clearing-out property combined with the 
energy bound~\eqref{hp:logenergy} implies
a two-sided estimate for the $(n-2)$-density of $\mu_*$, see 
Lemma~\ref{lemma:mu_*}. 
Following ideas in \cite[pp.~498--499]{BethuelBrezisOrlandi} (extended first 
in the gauge-invariant setting in \cite{SandierSerfaty-book} and 
to the context of Hermitian line bundles in \cite[Section~6]{PigatiStern}), 
the conclusion is achieved once we characterise~$\mu_*$ 
in terms of the \emph{stress-energy tensor fields} $T_\eps$, defined 
in~\eqref{stresstensor} below. 
Indeed, by criticality of $(u_\eps,\, A_\eps)$, each tensor field $T_\eps$ 
is stationary (i.e., divergence-free). 
Since stationarity is a linear condition, 
their weak limit~$T_*$ is still stationary. 
On the other hand, $T_*$ is naturally 
identified with a $(n-2)$-generalised varifold in~$M$
whose weight measure~$\abs{T_*}$ is absolutely continuous 
with respect to~$\mu_*$. 
Then, by Lemma~\ref{lemma:mu_*}, we can apply a suitable version of 
the Ambrosio-Soner rectifiability criterion \cite[Theorem~3.1]{AmbrosioSoner} 
to deduce that $T_*$ is also rectifiable and that $\mu_* = \abs{T_*}$,
whence the conclusion of Theorem~\ref{th:energy_density} follows.

\paragraph{Further comments and open questions.} \label{openquestions}
We did not address specifically the problem 
of the existence, in full generality, of non-minimising 
critical points of $G_\eps$.
We believe that they could be constructed by adapting the general framework 
developed in~\cite{JerrardSternberg}.

Another interesting question that we have not addressed in this paper
is whether the varifold~$\frac{1}{\pi}\mu_*$ is \emph{integral} or not.
For the magnetic Ginzburg-Landau functional in the self-dual scaling,
it is known that the measure~$\frac{1}{\pi}\mu_*$ is integral~\cite{PigatiStern}.
By contrast, for critical points of the non-magnetic 
Ginzburg-Landau functional~\eqref{nonmagnetic_GL} in Euclidean domains,
the situation is more involved~\cite{PigatiStern-Integral}
--- that is, the density of the limiting varifold $\frac{1}{\pi}\mu_*$ at a generic point
is either~$1$ or possibly a non integer number larger or equal than 2.
Unfortunately, the arguments 
in~\cite{PigatiStern} are really tailored to the 
bounded energy regime and do not carry over to our setting.

Finally, it would be interesting to 
study, along the lines of~\cite{DePhilippisPigati}, 
whether any non-degenerate minimal codimension 2 
submanifold of~$M$ can be obtained as energy 
concentration set of critical points of~$G_\eps$, 
for an appropriate choice of the total space $E$ of the bundle $E \to M$.  
We leave all these questions to future investigation.

\paragraph*{Organisation of the paper.}

The paper is organised as follows. In Section~\ref{sec:estimates} we derive 
several \emph{a priori} estimates for gauge-invariant quantities, such as  
$\abs{u_\eps}$, $\abs{\D_{A_\eps} u_\eps}$, and $\abs{F_\eps}$, 
that are used throughout the whole paper. 
In Section~\ref{sec:compactness} we prove Theorem~\ref{th:uA}. 
In Section~\ref{sec:monotonicity} we prove the almost $(n-2)$-monotonicity 
inequality, i.e., Theorem~\ref{thm:monotonicity}. 
In Section~\ref{sec:clearing-out}, the longest 
and by far the most technical of the paper, we prove the
clearing-out property with its consequences
(Propositions~\ref{prop:small-ball-small-energy} and~\ref{prop:clearingout}).
In the last section, Section~\ref{sec:varifold},
we complete the proof of Theorem~\ref{th:energy_density}. 
 
The paper is completed by two appendices containing technical results 
concerning Poincar\'{e} and trace-type inequalities for differential forms 
(Appendix~\ref{app:Poincare}) 
and elliptic estimates for the Hodge Laplacian 
(Appendix~\ref{app:elliptic}). These results are certainly known to experts 
but we provide full proofs because they are crucial to our arguments and we 
could not find the exact statements we needed in the available literature.

\paragraph*{Notation.} 
In this paper, we use the symbol $\{X_\eps\}$ to denote a family of 
objects indexed by the parameter $\eps > 0$. Although in some instances 
$\{X_\eps\}$ may also be considered as a continuous family, we always look at 
it as a sequence. In other words, $\left\{ X_\eps \right\}$ is a shorthand for 
the sequence $\left\{X_{\eps_k}\right\}_k$, where $\eps_k \to 0$ as 
$k \to +\infty$. 
Usually, for the sake of a lighter notation, we do not relabel subsequences.

In inequalities like $A \lesssim B$, the symbol $\lesssim$ means that there 
exists a constant $C$, independent of $A$ and $B$, such that $A \leq C B$. 
In particular, 
dealing with sequences indexed by $\eps$, we use  
$\lesssim$ to denote inequality up to a constant independent of $\eps$. 
Whenever it is relevant, we keep track of the dependences
of the implicit constants. In most cases, they will depend on 
the manifold $(M,\,g)$ and on the constant $\Lambda$ in the
energy bound~\eqref{hp:logenergy}. 
Since we only consider Riemannian manifolds~$(M,\,g)$ as base spaces of 
$E \to M$ and the 
metric~$g$ on~$M$ is fixed from the very beginning, we will sometimes 
write~$M$ as a shorthand for~$(M,\,g)$.

We denote $\Lambda^k \T^* M$ the bundle of $k$-forms on $M$. 
Sobolev spaces of sections of a bundle $E \to M$ are denoted $W^{k,p}(M,\,E)$. 
In particular, $W^{1,2}(M,\,\T^*M)$ is the Sobolev space of one-forms on $M$ 
of class $W^{1,2}$. More details on such spaces can be found, for instance, 
in Appendix~A of~\cite{CDO1}.
If $\omega \in \Lambda^k \T^*M$ is a $k$-form on $M$ and 
$X$ is a vector field, we denote by ${\rm i}_X \omega$ the \emph{contraction} 
of $\omega$ with $X$, defined by setting 
\begin{equation} \label{innerprod}
 {\rm i}_X \omega(X_1,\dots,X_{k-1}) := \omega(X,X_1,\dots, X_{k-1})
\end{equation}
for any vector fields $X_1$, \dots{,}$X_{k-1}$ on $M$.

Finally, as mentioned above, we will always write~$\inj(M)$
for the injectivity radius of~$M$ and~$\mathcal{B}_r(x_0)$
for the geodesic ball in~$M$ of centre~$x_0\in M$ and~$r>0$.

\numberwithin{equation}{section}
\numberwithin{definition}{section}
\numberwithin{theorem}{section}
 
\section{Preliminary estimates for gauge-invariant quantities}\label{sec:estimates}

We recall from~\cite{CDO1} the following decomposition of~$\D_A u$ which we 
will use several times.

\begin{lemma}[{\cite[Lemma~2.2]{CDO1}}]\label{lemma:dec-D_Au}
	For any $u \in W^{1,2}(M,\,E)$ and any $A \in W^{1,2}(M,\,\T^*M)$, there 
	holds
	\begin{equation}\label{eq:dec-D_Au}
		\D_A u = \frac{\d(\abs{u})}{\abs{u}}u + \frac{j(u,\,A)}{\abs{u}^2}iu
	\end{equation}
	a.e. on the set $\{u \neq 0\}$.
\end{lemma}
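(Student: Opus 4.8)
The plan is to reduce the identity to a pointwise orthogonal decomposition in each fibre of $E$, supplemented by a Stampacchia-type chain rule to handle the $W^{1,2}$ regularity. Fix a point $x$ in the measurable set $\{u\neq 0\}$. The fibre $E_x$ is a one-dimensional Hermitian vector space, hence a two-dimensional real Euclidean space for the real inner product $\ip{\cdot}{\cdot} = \operatorname{Re} h(\cdot,\cdot)$, and $\{u(x)/\abs{u(x)},\, iu(x)/\abs{u(x)}\}$ is a real orthonormal basis of it (indeed $\ip{u}{u}=\abs{u}^2=\ip{iu}{iu}$ and $\ip{u}{iu}=0$). Since $\D_A u$ at $x$ is an $E_x$-valued covector, I expand it in this basis:
\[
 \D_A u = \ip{\D_A u}{\tfrac{u}{\abs u}}\,\tfrac{u}{\abs u} + \ip{\D_A u}{\tfrac{iu}{\abs u}}\,\tfrac{iu}{\abs u}.
\]
The second coefficient equals $\ip{\D_A u}{iu}/\abs u = j(u,A)/\abs u$ by the very definition~\eqref{Jac} of $j$, producing the term $j(u,A)\,iu/\abs{u}^2$ in~\eqref{eq:dec-D_Au}.

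Next I would identify the first coefficient. Because $\D_0$ is a metric connection and $A$ is a \emph{real} one-form, the correction $-iA$ acts fibrewise as multiplication by a purely imaginary scalar, which is skew-symmetric for $\ip{\cdot}{\cdot}$; hence $\D_A = \D_0 - iA$ is still compatible with the Hermitian metric. Concretely, $\ip{iu}{u}=\ip{u}{iu}=0$ forces the $A$-terms to cancel and gives $\d\abs u^2 = \d\ip{u}{u} = 2\ip{\D_A u}{u}$ (this can equivalently be checked in a local unitary trivialisation, where $\D_0 = \d + a_0$ with $a_0$ valued in $i\R$). On the other hand, $\abs u = \sqrt{\ip u u}\in W^{1,2}(M)$, and the chain rule for Sobolev functions yields $\d\abs u^2 = 2\abs u\,\d\abs u$. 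Comparing the two expressions gives $\ip{\D_A u}{u} = \abs u\,\d\abs u$ a.e.\ on $\{u\neq 0\}$, so the first coefficient is $\ip{\D_A u}{u}/\abs u = \d\abs u$; substituting both coefficients into the expansion above gives exactly~\eqref{eq:dec-D_Au}.

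The only genuinely delicate point is the regularity bookkeeping: one must verify that $\abs u \in W^{1,2}(M)$ with $\d\abs u = \ip{\D_A u}{u}/\abs u$ a.e.\ on $\{u\neq 0\}$ (and $\d\abs u = 0$ a.e.\ on $\{u=0\}$, so that the claim is correctly restricted to $\{u\neq 0\}$), and that the fibrewise manipulations above are legitimate identities between $L^1_{\loc}$-forms rather than merely pointwise statements for smooth data. This is a routine application of the Stampacchia chain rule together with $\D_A u \in L^2$ and $u\in W^{1,2}$, but it deserves to be spelled out since $\D_A u$ is only defined almost everywhere; everything else is linear algebra in the fibre. I expect this regularity step — not the algebra — to be the part requiring care.
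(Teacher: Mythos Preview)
Your proof is correct. Note, however, that the paper does not actually prove this lemma: it is merely recalled from the earlier work~\cite{CDO1} (as the attribution in the lemma header indicates), so there is no proof in the present paper to compare against. Your argument---decompose $\D_A u$ in the real orthonormal frame $\{u/\abs{u},\, iu/\abs{u}\}$ of the fibre, identify the tangential coefficient via $j(u,A)$ and the radial coefficient via metric compatibility $\d\abs{u}^2 = 2\ip{\D_A u}{u}$---is the standard one and is exactly what one would expect the cited proof in~\cite{CDO1} to contain.
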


Below, we collect some useful consequences of the equations 
\eqref{EL-u}, \eqref{EL-A}, and~\eqref{London} satisfied by 
critical points of the functionals $G_\eps$. All the results 
in this section are gauge-invariant and will be used repeatedly 
in the rest of the paper.

\paragraph{$L^\infty$-bounds.}

Let~$(u_\eps, \, A_\eps)\in \mathcal{E}$ be a critical point of~$G_\eps$.
Elliptic regularity theory implies that,
for each~$\eps > 0$, the pair~$(u_\eps, \, A_\eps)$
is smooth up to a suitable gauge transformation 
(see e.g.~\cite{JaffeTaubes} or~\cite[Appendix]{PigatiStern}). 
In particular, gauge-invariant quantities
such as~$\abs{u_\eps}$ or~$\abs{\D_{A_\eps} u_\eps}$ are continuous 
(and their squares are smooth).
Below, we prove several {\em a priori} estimates on $u_\eps$, 
$\D_{A_\eps}u_\eps$, and $F_{A_\eps}$. We will apply these estimates 
to obtain compactness results. 
For the reader's convenience, we provide 
self-contained proofs of the estimates we need.
We recall that~$\mathcal{E} := (W^{1,2}\cap L^\infty)(M, \, E)
\times W^{1,2}(M, \, \T^*M)$.
First, we prove that
\begin{prop}\label{prop:L-infty-bound-u}
Let $\eps > 0$ and let $(u_\eps,\,A_\eps) \in (W^{1,2}\cap L^\infty)(M,\,E) \times W^{1,2}(M,\, \T^*M)$ be any critical point of $G_\eps$.
Then, $\abs{u_\eps}$ satisfies
\begin{equation} \label{EL-abs}
 -\frac{1}{2}\Delta(\abs{u_\eps}^2) 
 + \frac{1}{\eps^2} \left(\abs{u_\eps}^2 - 1\right) \abs{u_\eps}^2
 + \abs{\D_{A_\eps} u_\eps}^2 = 0
\end{equation}
in the sense of distributions on $M$. As a consequence, 
\begin{equation} \label{maxprinc}
 \abs{u_\eps} \leq 1 \qquad \textrm{a.e. in } M.
\end{equation}
\end{prop}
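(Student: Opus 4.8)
The plan is to derive the distributional identity~\eqref{EL-abs} from the Euler--Lagrange equation~\eqref{EL-u}, and then obtain the pointwise bound~\eqref{maxprinc} by a maximum-principle argument applied to~$\abs{u_\eps}^2$.

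First I would establish~\eqref{EL-abs}. The key observation is the pointwise Bochner-type identity
\[
 -\tfrac12\Delta(\abs{u_\eps}^2) = -\operatorname{Re}\ip{\D_{A_\eps}^*\D_{A_\eps} u_\eps}{u_\eps} + \abs{\D_{A_\eps} u_\eps}^2,
\]
which follows from expanding~$\Delta(\abs{u_\eps}^2)$ in terms of the connection~$\D_{A_\eps}$ (the cross terms involving~$A_\eps$ cancel because the connection is metric, so~$\d\abs{u_\eps}^2 = 2\operatorname{Re}\ip{\D_{A_\eps} u_\eps}{u_\eps}$, and one differentiates once more). Since the pair is smooth up to a gauge transformation and both sides of~\eqref{EL-abs} are gauge-invariant, it suffices to argue in a good gauge where everything is smooth, and then the identity holds classically; alternatively one tests against a smooth function and integrates by parts, using~$u_\eps \in W^{1,2}\cap L^\infty$ and~\eqref{EL-u} in the distributional sense. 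Substituting the Euler--Lagrange equation~\eqref{EL-u}, namely~$\D_{A_\eps}^*\D_{A_\eps} u_\eps = -\tfrac{1}{\eps^2}(\abs{u_\eps}^2-1)u_\eps$, into the Bochner identity and taking the real part of~$\ip{\cdot}{u_\eps}$ yields exactly~\eqref{EL-abs}.

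Next I would deduce~\eqref{maxprinc}. Set~$w := \abs{u_\eps}^2$, which is continuous on~$M$ (its square, or rather~$w$ itself, is smooth in a good gauge). From~\eqref{EL-abs} and the non-negativity of~$\abs{\D_{A_\eps} u_\eps}^2$ we get the differential inequality
\[
 -\tfrac12\Delta w + \tfrac{1}{\eps^2}(w-1)w \le 0 \qquad\text{in } M.
\]
Since~$M$ is compact without boundary and~$w$ is smooth (in a good gauge, hence continuous everywhere), let~$x_0$ be a point where~$w$ attains its maximum. At~$x_0$ one has~$\Delta w(x_0) \le 0$, hence from the inequality~$\tfrac{1}{\eps^2}(w(x_0)-1)w(x_0) \le \tfrac12\Delta w(x_0) \le 0$, which forces~$w(x_0) \le 1$ (since~$w \ge 0$, the product is~$\le 0$ only if~$w(x_0)\le 1$, the case~$w(x_0)=0$ being trivial). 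Therefore~$w \le w(x_0) \le 1$ everywhere, i.e.~$\abs{u_\eps}\le 1$ a.e.\ in~$M$.

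The only mild subtlety — and the step I would be most careful about — is justifying the pointwise maximum-principle step at the level of regularity provided by~$\mathcal{E}$: one must invoke the smoothness of~$(u_\eps, A_\eps)$ up to a gauge transformation (as recalled in the paragraph preceding the proposition, citing~\cite{JaffeTaubes} or~\cite[Appendix]{PigatiStern}) so that~$\abs{u_\eps}^2$ is genuinely smooth and the classical maximum principle applies; alternatively, one can run a purely distributional argument by testing~\eqref{EL-abs} against~$(w-1)_+ \in W^{1,2}\cap L^\infty$, which gives~$\tfrac12\int_M \abs{\nabla (w-1)_+}^2 + \tfrac{1}{\eps^2}\int_M (w-1)w(w-1)_+ \le 0$; on the set~$\{w>1\}$ the integrand of the second term is non-negative, so both terms vanish, forcing~$(w-1)_+ \equiv 0$. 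Everything else is a routine computation.
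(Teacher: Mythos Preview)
Your approach is essentially the paper's: it establishes the Bochner-type identity of Lemma~\ref{lemma:maxprinc1}, combines it with~\eqref{EL-u} to obtain~\eqref{EL-abs}, and then tests against~$(\abs{u_\eps}^2-1)^+$ to conclude~\eqref{maxprinc} --- the paper in fact uses \emph{only} your distributional ``alternative'' for the last step, not the classical maximum-point argument. One correction: your Bochner identity has a sign error; the correct version (and the one actually yielding~\eqref{EL-abs} after substituting~\eqref{EL-u}) is
\[
 -\tfrac{1}{2}\Delta(\abs{u_\eps}^2) = \ip{\D_{A_\eps}^*\D_{A_\eps} u_\eps}{u_\eps} - \abs{\D_{A_\eps} u_\eps}^2,
\]
as in Lemma~\ref{lemma:maxprinc1}.
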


\begin{remark}\label{rk:test-functions}
	The assumption~$u_\eps\in (W^{1,2}\cap L^\infty)(M, \, E)$
	implies that~$\abs{u_\eps}^2\in W^{1,2}(M)$. Moreover,
	since $G_\eps(u_\eps,\,A_\eps) < +\infty$, we have 
	$\abs{u_\eps} \in L^4(M)$ and $\abs{\D_{A_\eps} u_\eps}^2 \in L^1(M)$.
	In particular, it makes sense to test~\eqref{EL-abs} 
	against test functions in $(L^\infty \cap W^{1,2})(M)$.
\end{remark}

In order to prove~\eqref{maxprinc}, we write the equation for~$\abs{u_\eps}$, 
as in~\cite[Proposition~2.4]{Orlandi},
then apply the maximum principle. 
To this purpose, we first establish the following auxiliary result.
\begin{lemma}\label{lemma:maxprinc1}
For any section $u \in L^2(M,\,E)$ and any 1-form $A$ on $M$, 
such that $\D_Au\in L^2(M, \, \T^*M\otimes E)$, there holds
\begin{equation} \label{maxprinc1}
 \begin{split}
  -\frac{1}{2}\Delta(\abs{u}^2)
  = \ip{\D^*_{A}\D_{A} u}{u}
   - \abs{\D_{A} u}^2
 \end{split}
\end{equation}
in the sense of distributions on $M$. 
\end{lemma}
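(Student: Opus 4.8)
The plan is to prove the Bochner-type identity $-\tfrac12\Delta(\abs{u}^2) = \ip{\D_A^*\D_A u}{u} - \abs{\D_A u}^2$ first in the smooth setting and then argue by density/regularity to reach the stated $L^2$ hypotheses. Let me think about how to structure this.

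For the pointwise identity with smooth $u$ and $A$: write $\abs{u}^2 = \ip{u}{u}$. Then $\d\abs{u}^2 = 2\ip{\D_A u}{u}$ — this uses that $\D_A$ is a metric connection (it is, since $\D_0$ is a metric connection and $-iA$ is skew-Hermitian with respect to the Hermitian metric), so $\d\ip{u}{v} = \ip{\D_A u}{v} + \ip{u}{\D_A v}$. Applying $-\tfrac12 \d^*$ (recall $\Delta = -\d^*\d$ on functions, or with the analyst's sign convention $\Delta = \d^*\d$ — I need to match the paper's convention, which from $-\Delta F + F = 2J$ and the heat-type structure appears to be the geometer's Hodge Laplacian $\Delta = -(\d\d^* + \d^*\d)$, so on functions $\Delta = -\d^*\d$): we get $-\tfrac12\Delta\abs{u}^2 = \tfrac12\d^*\d\abs{u}^2 = \d^*\ip{\D_A u}{u}$. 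Now expand $\d^*\ip{\D_A u}{u}$ in a local orthonormal frame $\{e_i\}$: $\d^*\alpha = -\sum_i (\nabla_{e_i}\alpha)(e_i)$ for a $1$-form $\alpha$, so $\d^*\ip{\D_A u}{u} = -\sum_i e_i\ip{\D_{A,e_i}u}{u} + \sum_i \ip{\D_{A,e_i}u}{u}(\nabla_{e_i}e_i)$, and choosing a normal frame at the point kills the connection-coefficient terms, giving $-\sum_i \big(\ip{\D_{A,e_i}\D_{A,e_i}u}{u} + \ip{\D_{A,e_i}u}{\D_{A,e_i}u}\big) = \ip{\D_A^*\D_A u}{u} - \abs{\D_A u}^2$, using the standard formula $\D_A^*\D_A u = -\sum_i \D_{A,e_i}\D_{A,e_i}u$ at the center of a normal frame. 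This establishes the identity pointwise, hence as distributions, in the smooth case.

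To remove the smoothness assumption, I would invoke the fact recalled just above in the paper: a critical point is smooth up to a gauge transformation, and the identity \eqref{maxprinc1} is gauge-invariant (each side transforms trivially: $\abs{\Phi u}^2 = \abs{u}^2$, $\abs{\D_{\Phi\cdot A}(\Phi u)}^2 = \abs{\D_A u}^2$, and $\ip{\D_{\Phi\cdot A}^*\D_{\Phi\cdot A}(\Phi u)}{\Phi u} = \ip{\D_A^*\D_A u}{u}$). However, since the lemma is stated for general $u, A$ with only $u, \D_A u \in L^2$ and not assumed critical, the cleaner route is a direct approximation argument: approximate $u$ in $W^{1,2}$ (with respect to $\D_A$) by smooth sections $u_k$ and $A$ by smooth forms $A_k$; the identity holds for each $(u_k, A_k)$ when tested against $\varphi \in C_c^\infty(M)$, i.e. $-\tfrac12\int \abs{u_k}^2\Delta\varphi = \int \ip{\D_{A_k}^*\D_{A_k}u_k}{u_k}\varphi - \int\abs{\D_{A_k}u_k}^2\varphi$, and one passes to the limit using $L^2$-convergence of $\D_{A_k}u_k \to \D_A u$ and $u_k \to u$ (the term $\ip{\D_A^*\D_A u}{u}\varphi$ should be understood in the duality sense, integrating by parts to move one derivative onto $\varphi u$: $\int\ip{\D_A^*\D_A u}{u}\varphi = \int\ip{\D_A u}{\D_A(\varphi u)} = \int \varphi\abs{\D_A u}^2 + \int \ip{\D_A u}{u\otimes\d\varphi}$, so the identity is really the trivial rearrangement $-\tfrac12\int\abs{u}^2\Delta\varphi = \int\ip{\D_A u}{u}\cdot\d\varphi$ stated in distributional form).

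The main obstacle is bookkeeping rather than conceptual: getting the sign conventions for $\d^*$, $\Delta$, and $\D_A^*$ consistent with those fixed in the paper, and stating precisely in what (distributional, duality-paired) sense the term $\ip{\D_A^*\D_A u}{u}$ is meant when $\D_A^*\D_A u$ need not be a function — which is exactly why the cleanest presentation integrates by parts once so that only $\D_A u$, $u$, and derivatives of the smooth test function appear. I would therefore phrase the proof around the equivalent identity $\d^*\!\ip{\D_A u}{u} = \ip{\D_A^*\D_A u}{u} - \abs{\D_A u}^2$ and carry out the normal-frame computation above, noting that it is the bundle-valued analogue of $-\tfrac12\Delta\abs{v}^2 = v\cdot\Delta v - \abs{\nabla v}^2$ (Bochner/Kato) and that no special structure beyond metric-compatibility of $\D_A$ is used.
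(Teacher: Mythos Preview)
Your proposal is correct, and the route you identify in your parenthetical remark and final paragraph is exactly the one the paper takes: for a smooth test function~$\varphi$, the paper writes $\int_M \ip{\D_A^*\D_A u}{u}\,\varphi = \int_M\ip{\D_A u}{\D_A(\varphi u)}$, subtracts $\int_M\abs{\D_A u}^2\varphi$ to leave $\int_M\ip{\D_A u}{\d\varphi\otimes u}$, recognises this as the pairing of~$\d\varphi$ with the real $1$-form $\ip{\D_A u}{u}$, and then uses metric-compatibility of~$\D_A$ to write $\ip{\D_A u}{u} = \tfrac12\,\d(\abs{u}^2)$ in the sense of distributions, so that the expression equals $\int_M\varphi\,\d^*(\tfrac12\d\abs{u}^2) = -\tfrac12\int_M\varphi\,\Delta(\abs{u}^2)$. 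The pointwise Bochner computation in a normal frame and the subsequent approximation step in your initial plan are therefore unnecessary (and the approximation would in any case require some care, since the lemma imposes no regularity on~$A$ beyond having $\D_A u\in L^2$); you can drop them and present directly what you already call ``the cleanest presentation.''
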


\begin{proof}
 Let~$u\in L^2(M, \, E)$ be a section and
 let~$A$ be~$1$-form on~$M$, such that $\D_Au\in L^2(M, \, \T^*M\otimes E)$.
 For any smooth function~$\varphi\colon M\to\R$, we have
 \begin{equation*} 
  \begin{split}
   \int_M \ip{\D^*_A\D_A u}{u} \, \varphi \, \vol_g
    - \int_M \abs{\D_A u}^2 \, \varphi \, \vol_g
   &= \int_M \ip{\D_A u}{\D_A(u\varphi)} \, \vol_g
    - \int_M \abs{\D_A u}^2 \, \varphi \, \vol_g \\
   &= \int_M \ip{\D_A u}{\d\varphi\otimes u} \, \vol_g 
  \end{split}
 \end{equation*}
 Let~$x\in M$, and let~$\{\tau_1, \, \ldots, \, \tau_n\}$
 be an orthonormal basis of~$\T_x M$. The scalar product
 on~$\T^*M\otimes E$ is the one induced by the Hermitian form on~$E$
 and the Riemannian metric on~$M$; therefore, at the given point~$x$,
 we have
 \begin{equation} \label{maxprinc2}
   \ip{\D_A u}{\d\varphi\otimes u} 
   = \sum_{j=1}^{n} \ip{\D_{A \, \tau_j} u}{u} \,
    \frac{\partial\varphi}{\partial\tau_j} 
 \end{equation}
 However, the right-hand side of~\eqref{maxprinc2}
 is also the scalar product between the (real-valued) $1$-forms
 $\ip{\D_A u}{u}$ and~$\d\varphi$, evaluated at~$x$.
 As a consequence, we have
 \begin{equation*} 
  \begin{split}
   \int_M \left(\ip{\D^*_A\D_A u}{u} - \abs{\D_A u}^2\right) \varphi \, \vol_g
   = \int_M \ip{\ip{\D_A u}{u}\!}{\d\varphi} \, \vol_g
   &= \int_M \varphi \, \d^*\ip{\D_A u}{u} \, \vol_g
  \end{split}
 \end{equation*}
 Since~$\varphi$ is arbitrary, we deduce $\ip{\D^*_A\D_A u}{u} - \abs{\D_A u}^2
 = \d^*\ip{\D_A u}{u}$. Finally, as $\ip{\D_A u}{u} \in L^1(M,\,\T^*M)$ 
 and the connection~$\D_A$ is compatible 
 with the Hermitian form on~$E$, we have
 $\ip{\D_A u}{u} = \d(\abs{u}^2/2)$ in the sense of distributions 
 and hence,
 \[
  \ip{\D^*_A\D_A u}{u} - \abs{\D_A u}^2
  = \frac{1}{2}\d^*\d\left(\abs{u}^2\right)
  = - \frac{1}{2}\Delta\left(\abs{u}^2\right)
 \]
 in the sense of distributions on $M$. The conclusion follows.
\end{proof}

\begin{proof}[Proof of Proposition~\ref{prop:L-infty-bound-u}]
Since $G_\eps(u_\eps,\,A_\eps) < +\infty$, 
it follows that $\D_{A_\eps} u_\eps \in L^2(M,\,\T^*M \otimes E)$. 
By Lemma~\ref{lemma:maxprinc1}, $(u_\eps,\,A_\eps)$ satisfies \eqref{maxprinc1} 
in the sense of distributions on $M$. 
On the other hand, being a critical point of 
$G_\eps$, $(u_\eps,\,A_\eps)$ satisfies 
\eqref{EL-u}, too. Combining \eqref{maxprinc1} with \eqref{EL-u}, 
we immediately obtain~\eqref{EL-abs}. 
Then, taking Remark~\ref{rk:test-functions} into 
account,~\eqref{maxprinc} follows 
by testing~\eqref{EL-abs} against 
$(\abs{u_\eps}^2-1)^+ := \max\{\abs{u_\eps}^2-1, \, 0\}$.
\end{proof}

Next, we prove a $L^\infty$-bound for $\D_{A_\eps} u_\eps$. 
Precisely, 
\begin{lemma} \label{lemma:L-infty-bound-D_Au}
Let $\eps > 0$ and let $(u_\eps,\,A_\eps) \in (W^{1,2}\cap L^\infty)(M,\,E) \times W^{1,2}(M,\, \T^*M)$ be a critical point of $G_\eps$.
Then,
	\begin{equation}\label{eq:L-infty-bound-D_Au}
		\norm{\D_{A_\eps} u_\eps}_{L^\infty(M)} \lesssim \eps^{-1}.
	\end{equation}
\end{lemma}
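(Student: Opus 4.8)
The plan is to reduce the statement to the uniform bound $\eps\,\norm{\D_{A_\eps} u_\eps}_{L^\infty(M)} \leq C$, with $C$ depending only on $(M,g)$ and the bundle data, and to establish this by a blow-up argument of the same flavour as the one used for the non-magnetic Ginzburg-Landau equation. Arguing by contradiction, suppose there exist parameters $\eps_k \in (0,1]$ and critical points $(u_k, A_k) := (u_{\eps_k}, A_{\eps_k})$ of $G_{\eps_k}$ with $M_k := \eps_k\,\norm{\D_{A_k} u_k}_{L^\infty(M)} \to +\infty$. Since each pair is smooth up to a gauge transformation and $M$ is compact, the gauge-invariant function $\abs{\D_{A_k} u_k}$ attains its maximum $L_k := \norm{\D_{A_k} u_k}_{L^\infty(M)} = M_k/\eps_k$ at some point $x_k \in M$, and $L_k \geq M_k \to +\infty$. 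Setting $\rho_k := L_k^{-1} \to 0$, the rescaled parameter $\hat\eps_k := \eps_k/\rho_k = M_k$ tends to $+\infty$. I would then work in geodesic normal coordinates centred at $x_k$ and a local trivialisation of $E$ over $\mathcal{B}_{R\rho_k}(x_k)$ (available for each fixed $R>0$ once $R\rho_k < \inj(M)$) and rescale by $\rho_k$: this produces sections $\hat u_k$ on the Euclidean ball $B_R \subset \R^n$, a rescaled connection, and rescaled metrics converging to the flat metric in $C^\infty_{\mathrm{loc}}$. By Proposition~\ref{prop:L-infty-bound-u} one has $\abs{\hat u_k} \leq 1$; by the choice of $x_k$ and $\rho_k$, the rescaled gauge-covariant gradient of $\hat u_k$ has pointwise norm $\leq 1$ on $B_R$, equal to $1$ at the origin; and the rescaled (gauge-covariant) form of~\eqref{EL-u} has right-hand side $\hat\eps_k^{-2}(1 - \abs{\hat u_k}^2)\hat u_k \to 0$ in $L^\infty(B_R)$.

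I expect the main obstacle to be the control of the connection in the rescaled picture: the curvature term is genuinely present in the scaling of $G_\eps$ (heuristically $\abs{F_\eps} \sim \eps^{-2}$) and cannot be discarded. I would deal with it through the global a priori bound
\[
 \norm{F_\eps}_{L^\infty(M)} \lesssim 1 + \norm{\D_{A_\eps} u_\eps}_{L^\infty(M)},
\]
valid for every critical point. To prove it, recall that $F_\eps$ is a closed $2$-form on $M$ (gauge-invariant, since the structure group is abelian) whose de Rham class is independent of $\eps$; hence its Hodge decomposition is $F_\eps = H + \d G_\eps$, with $H$ the ($\eps$-independent, smooth) harmonic $2$-form representing that class and $G_\eps$ coexact, and Equation~\eqref{EL-A} gives $-\Delta G_\eps = \d^* F_\eps = j(u_\eps, A_\eps)$, while $\abs{j(u_\eps, A_\eps)} \leq \abs{\D_{A_\eps} u_\eps}\,\abs{u_\eps} \leq \abs{\D_{A_\eps} u_\eps}$ by~\eqref{maxprinc}. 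Standard elliptic estimates for the Hodge Laplacian on the closed manifold $M$ (with $p > n$, using $W^{2,p} \hookrightarrow C^{0,\alpha}$) then yield $\norm{\d G_\eps}_{L^\infty(M)} \lesssim \norm{j(u_\eps, A_\eps)}_{L^p(M)} \lesssim \norm{\D_{A_\eps} u_\eps}_{L^\infty(M)}$, and the displayed bound follows. Applying it with $\eps = \eps_k$ and using $L_k \to +\infty$ gives $\norm{F_k}_{L^\infty(M)} \lesssim L_k$, so the rescaled curvature $\hat F_k$ on $B_R$ satisfies $\norm{\hat F_k}_{L^\infty(B_R)} \lesssim \rho_k^2\,\norm{F_k}_{L^\infty(M)} \lesssim \rho_k^2 L_k = \rho_k \to 0$. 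As $\hat F_k \to 0$ and $B_R$ is contractible, a further gauge transformation on $B_R$ puts the rescaled connection in Coulomb gauge, so that its connection form $\hat\beta_k$ in the trivialisation obeys $\norm{\hat\beta_k}_{W^{1,p}(B_R)} \lesssim \norm{\hat F_k}_{L^p(B_R)} \to 0$, hence $\hat\beta_k \to 0$ in $C^{0,\alpha}_{\mathrm{loc}}(\R^n)$; the $\hat u_k$ are transformed accordingly, still with $\abs{\hat u_k} \leq 1$, still solving the rescaled equation, and with rescaled gauge-covariant gradient still of norm $\leq 1$ on $B_R$ and $1$ at the origin.

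To conclude, since $\hat\beta_k \to 0$ the ordinary gradient $\nabla \hat u_k$, which in the trivialisation differs from the (gauge-invariant) rescaled covariant gradient by the lower-order term $-i\,\hat\beta_k \hat u_k$, is bounded in $L^\infty(B_R)$, with $\abs{\nabla \hat u_k}(0) \to 1$; and the rescaled equation becomes a uniformly elliptic scalar equation $-\Delta \hat u_k = g_k$ with $g_k$ bounded in $L^\infty(B_R)$ (the contributions being $\hat\eps_k^{-2}(1 - \abs{\hat u_k}^2)\hat u_k$, lower-order terms involving $\hat\beta_k$, and vanishing metric corrections). Interior $W^{2,p}$-estimates and Sobolev embedding give a uniform $C^{1,\alpha}$ bound on each ball, so, after passing to a subsequence (Arzel\`a--Ascoli together with a diagonal argument in $R$), $\hat u_k \to \hat u_\infty$ in $C^1_{\mathrm{loc}}(\R^n)$. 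Passing to the limit in the equation, using $\hat\eps_k \to +\infty$, $\hat\beta_k \to 0$ and the convergence of the metrics, the limit $\hat u_\infty \colon \R^n \to \C$ is a bounded harmonic map, hence constant by Liouville's theorem; therefore $\nabla \hat u_\infty \equiv 0$, which contradicts $\abs{\nabla \hat u_\infty}(0) = \lim_k \abs{\nabla \hat u_k}(0) = 1$. This contradiction proves the uniform bound, and hence~\eqref{eq:L-infty-bound-D_Au}. The elliptic estimates invoked above, for the Hodge Laplacian on $M$ and for the Coulomb gauge on $B_R$, are of the standard type and of the kind collected in the appendices.
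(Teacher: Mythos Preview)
Your proof is correct but takes a genuinely different route from the paper's. The paper argues directly at a Lebesgue point~$x_0$ of~$\abs{\D_{A_\eps} u_\eps}^2$: testing~\eqref{EL-u} against~$\varphi_s u_\eps$ for a cutoff~$\varphi_s$ supported on~$\mathcal{B}_{r+s}(x_0)$ and letting~$s\to 0$, it derives a differential inequality for~$y(r) := \int_{\mathcal{B}_r(x_0)} \abs{\D_{A_\eps} u_\eps}^2$ of the form $y'(r) \geq r^{1-n}\bigl((C_* y(r) - r^n\eps^{-2})^+\bigr)^2$, and a short ODE argument then forces $r_k^{-n}y(r_k) \lesssim \eps^{-2}$ along some sequence~$r_k\to 0$. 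This is entirely self-contained: it uses only the weak form of~\eqref{EL-u} and~\eqref{maxprinc}, and in particular never touches the curvature equation~\eqref{EL-A}, gauge fixing, or compactness. Your blow-up/Liouville strategy is the more standard paradigm and is arguably more transparent, but it requires an extra ingredient, the a~priori bound $\norm{F_\eps}_{L^\infty} \lesssim 1 + \norm{\D_{A_\eps} u_\eps}_{L^\infty}$, which you correctly obtain from~\eqref{EL-A} and global elliptic estimates on the closed manifold (this is essentially the proof of Lemma~\ref{lemma:Linfty-Feps}, stated so as not to presuppose the result being proved). The trade-off is that the paper's argument is leaner and keeps Lemma~\ref{lemma:Linfty-Feps} logically downstream, while yours generalises more readily to other settings.
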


\begin{proof}
	Let $\eps > 0$ be arbitrary and 
	let $(u_\eps,\,A_\eps)$ be any critical
	point of $G_\eps$. In particular,  
	$\D_{A_\eps} u_\eps \in L^2(M,\,\T^*M \otimes E)$, whence almost every 
	$x_0 \in M$ is a Lebesgue point for $\abs{\D_{A_\eps} u_\eps}^2$. 
	Testing~\eqref{EL-u} against any $v \in W^{1,2}(M,\,E)$, we get
	\begin{equation}\label{eq:EL-u-integral}
		\int_M \ip{\D_{A_\eps} u_\eps}{\D_{A_\eps} v}\,\vol_g 
		= \frac{1}{\eps^2} \int_M \ip{v}{u_\eps}(1-\abs{u_\eps}^2) \,\vol_g.
	\end{equation}	  
	Let~$x_0 \in M$ be a Lebesgue point for~$\abs{\D_{A_\eps} u_\eps}^2$.
	Let $\mathcal{B}_r(x_0)$ be the geodesic ball of  center $x_0$ 
	and radius $r > 0$, with $r$ smaller than the injectivity radius of $M$. 
	For any $s > 0$	such that $r+s$ is still smaller than the injectivity 
	radius of $M$, we pick $\varphi_s \in C^\infty(M)$ such that
	\begin{equation}\label{eq:phi_s}
		\spt(\varphi_s) \subset \mathcal{B}_{r+s}(x_0), \quad 0 \leq \varphi_s \leq 1, 
		\quad \varphi_s \equiv 1 \mbox{ on } \mathcal{B}_r(x_0), \quad 
		\abs{\d \varphi_s} \lesssim \frac{1}{s}. 
	\end{equation}	 
	Since $v := u_\eps \varphi_s$ belongs to $W^{1,2}(M,\,E)$, it 
	is admissible to test~\eqref{EL-u}. By \eqref{eq:EL-u-integral}, we get 
	\begin{multline}\label{eq:test-phi_s}
		\int_{\mathcal{B}_r(x_0)} \abs{\D_{A_\eps} u_\eps}^2 \,\vol_g 
		- \frac{1}{\eps^2} \int_{\mathcal{B}_s(x_0)} \varphi_s \abs{u_\eps}^2(1-\abs{u_\eps}^2) \, \vol_g \\
		= - \int_{\mathcal{B}_{r+s}(x_0) \setminus \mathcal{B}_r(x_0)} \ip{\D_{A_\eps} u_\eps}{\d\varphi_s \otimes u_\eps + \varphi_s \D_{A_\eps} u_\eps}\,\vol_g.
	\end{multline}
	Since $\norm{u_\eps}_{L^\infty(M)} \leq 1$ by 
	Proposition~\ref{prop:L-infty-bound-u}, \eqref{eq:phi_s} and Schwarz' 
	inequality imply that the last term at right hand side of~\eqref{eq:test-phi_s} tends 
	to zero as $s \to 0$. Hence, from \eqref{eq:test-phi_s} we obtain
	\[
	\begin{split}
		\int_{\mathcal{B}_r(x_0)} & \abs{\D_{A_\eps}u_\eps}^2\,\vol_g 
		- \frac{1}{\eps^2} \int_{\mathcal{B}_r(x_0)}\abs{u_\eps}^2(1-\abs{u_\eps}^2) \,\vol_g \\
		&\leq \liminf_{s \to 0} \frac{C}{s}
		\int_{\mathcal{B}_{r+s}(x_0) \setminus \mathcal{B}_r(x_0)} \abs{\D_{A_\eps}u_\eps} \,\vol_g \\
		&= \liminf_{s \to 0} \frac{C}{s} \int_{r}^{r+s} 
		\left( \int_{\partial \mathcal{B}_r(x_0)} 
		\abs{\D_{A_\eps}u_\eps} \,\d \mathcal{H}^{n-1} \right)\,\d t \\
		&=C \int_{\partial \mathcal{B}_r(x_0)} \abs{\D_{A_\eps} u_\eps} \,\d \mathcal{H}^{n-1} \\
		&\leq C (r^{n-1})^{1/2} \left( \int_{\partial \mathcal{B}_r(x_0)} 
		\abs{\D_{A_\eps}u_\eps}^2 \,\d \mathcal{H}^{n-1} \right)^{1/2}
	\end{split}
	\]
	Using once again $\norm{u_\eps}_{L^\infty(M)} \leq 1$, we estimate
	\begin{equation}\label{eq:est-loc-abs-cov-dev}
	\int_{\mathcal{B}_r(x_0)} \abs{\D_{A_\eps}u_\eps}^2\,\vol_g 
	\leq C r^n \eps^{-2} + C r^{n/2 -1/2} \left( \int_{\partial \mathcal{B}_r(x_0)} 
		\abs{\D_{A_\eps}u}^2 \,\d \mathcal{H}^{n-1} \right)^{1/2}
	\end{equation}
	Now, we define 
	\[
		y(r) \equiv y(x_0, r) := \int_{\mathcal{B}_r(x_0)} 
		\abs{\D_{A_\eps}u_\eps}^2 \,\vol_g.
	\]
	Recasting~\eqref{eq:est-loc-abs-cov-dev} in terms of $y$ and rearranging, 
	we see that have shown
	\[
		C_* y(r) \leq r^n \eps^{-2} + r^{n/2-1/2}(y^\prime(r))^{1/2},
	\]
	for some constant $C_* > 0$, whence
	\begin{equation}\label{eq:y}
		y^\prime(r) \geq \frac{1}{r^{n-1}}\left( \left( C_* y(r) - r^n \eps^{-2} \right)^{+} \right)^2.
	\end{equation}
	Suppose now
	\begin{equation}\label{eq:y-contradiction}
		C_* y(r) - r^n \eps^{-2} \geq \frac{C_*}{2} y(r) \quad \mbox{for any } 
		r > 0 \mbox{ small enough}.
	\end{equation}
	Then, for~$r>0$ small enough we obtain
	\[
		y'(r) \geq \frac{C}{r^{n-1}} y(r)^2,
	\]
	which implies 
	\[
		\frac{1}{y(r)} \geq \frac{1}{y(R)} + C \left( \frac{1}{r^{n-2}} 
		- \frac{1}{R^{n-2}}\right) \geq \frac{C}{r^{n-2}}
	\]
	for any $0 < r < R$, with $R > 0$ small enough (and, perhaps, a different 
	constant $C > 0$). In particular,
	\begin{equation}\label{eq:y-ineq-1}
		y(r) \leq C_1 r^{n-2} \quad \mbox{for any } r > 0 \mbox{ small enough}
	\end{equation}
	for some constant $C_1 > 0$; but~\eqref{eq:y-contradiction} implies 
	\begin{equation}\label{eq:y-ineq-2}
		y(r) \geq C_2 r^n \eps^{-2} \quad \mbox{for any } r > 0 \mbox{ small enough}. 
	\end{equation}
	Inequality~\eqref{eq:y-ineq-2} contradicts~\eqref{eq:y-ineq-1} for $0 < r \ll \eps$. 
	Therefore, \eqref{eq:y-contradiction} does not hold; i.e., there is a 
	sequence $r_k \to 0$ such that 
	\[
		C_* y(r_k) - r^n_k \eps^{-2} < \frac{C_*}{2} y(r_k).
	\]
	Hence, by definition of $y(r)$, 
	\[
		\frac{C_*}{2} \int_{\mathcal{B}_{r_k}(x_0)} \abs{\D_{A_\eps}u}^2 
		\leq r^n_k \eps^{-2}, \quad \mbox{i.e.,} \quad 
		\frac{1}{r^n}\int_{\mathcal{B}_{r_k}(x_0)} \abs{\D_{A_\eps}u}^2 
		\lesssim \eps^{-2}.
	\]
	Taking the limit as $k \to +\infty$, we obtain
	\[
		\abs{\D_{A_\eps}u_\eps(x_0)}^2 \lesssim \eps^{-2},
	\]
	Since almost every $x_0 \in M$ is a Lebesgue point for 
	$\abs{\D_{A_\eps} u_\eps}^2$, the conclusion follows.
\end{proof}

Now, we provide a bound on the $L^\infty(M)$-norm of~$\{F_\eps\}$.

\begin{lemma} \label{lemma:Linfty-Feps}
 Let~$\eps > 0$ and let $(u_\eps,\,A_\eps) \in (W^{1,2}\cap L^\infty)(M,\,E) \times W^{1,2}(M,\, \T^*M)$ be a critical point of $G_\eps$. Then,
 \[
  \norm{F_\eps}_{L^\infty(M)} 
   \lesssim \eps^{-1}
 \]
\end{lemma}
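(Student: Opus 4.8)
The plan is to derive a pointwise differential inequality for $\abs{F_\eps}^2$ and then apply a mean-value / De Giorgi type argument analogous to the one already used for $\abs{\D_{A_\eps} u_\eps}^2$ in Lemma~\ref{lemma:L-infty-bound-D_Au}, or alternatively to exploit directly the London equation~\eqref{London} together with the bounds already established. First I would recall that, by~\eqref{London}, $F_\eps$ solves $-\Delta F_\eps + F_\eps = 2 J(u_\eps,\,A_\eps)$, where $J(u_\eps,\,A_\eps) = \frac12 \d j(u_\eps,\,A_\eps) + \frac12 F_\eps$; however, it is cleaner to use~\eqref{EL-A} in the form $\d^* F_\eps = j(u_\eps,\,A_\eps)$ together with $\d F_\eps = \d F_{A_\eps} = 0$ (Bianchi identity, since $F_{A_\eps} = F_{\D_0} - i\,\d A_\eps$ differs from the fixed smooth curvature $F_{\D_0}$ by an exact form, so $\d F_\eps$ equals the fixed smooth form $\d F_{\D_0}$, which is actually $0$ as $F_{\D_0}$ is closed). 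Hence by the Weitzenböck/Bochner formula for the Hodge Laplacian on $2$-forms, $-\Delta F_\eps = (\d^*\d + \d\d^*)F_\eps = \d^* \d F_\eps + \d\, j(u_\eps,\,A_\eps)$, which gives control of $F_\eps$ in terms of $j(u_\eps,\,A_\eps) = \ip{\D_{A_\eps} u_\eps}{i u_\eps}$.

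Then I would estimate $\abs{j(u_\eps,\,A_\eps)} \leq \abs{\D_{A_\eps} u_\eps}\,\abs{u_\eps} \leq \abs{\D_{A_\eps} u_\eps} \lesssim \eps^{-1}$, using~\eqref{maxprinc} from Proposition~\ref{prop:L-infty-bound-u} and the bound~\eqref{eq:L-infty-bound-D_Au} from Lemma~\ref{lemma:L-infty-bound-D_Au}. The remaining task is to pass from an $L^\infty$ bound on the source term $j(u_\eps,\,A_\eps)$ (entering through $\d j$) to an $L^\infty$ bound on $F_\eps$ itself. Since only $\d^* F_\eps$ is controlled in $L^\infty$ while $\d F_\eps = 0$, one does not get $L^\infty$ of $\nabla F_\eps$ for free; instead I would run the same local energy argument as in Lemma~\ref{lemma:L-infty-bound-D_Au}. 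Concretely, at a Lebesgue point $x_0$ of $\abs{F_\eps}^2$, test a localised version of~\eqref{London} (or integrate $\ip{\d^* F_\eps}{\d^* (F_\eps \varphi_s)}$-type identities) against $F_\eps \varphi_s$ with cutoffs $\varphi_s$ as in~\eqref{eq:phi_s}, use $\abs{\d F_\eps} = 0$ to kill the $\d$-part, bound the source term using the $\eps^{-1}$ estimate above and $\abs{F_\eps}^2 \leq $ energy density, and obtain a differential inequality of the form $C_* y(r) \leq r^n \eps^{-2} + r^{(n-1)/2} (y'(r))^{1/2}$ for $y(r) := \int_{\mathcal{B}_r(x_0)} \abs{F_\eps}^2 \vol_g$. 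The dichotomy argument then yields $\abs{F_\eps(x_0)}^2 \lesssim \eps^{-2}$ at a.e.\ $x_0$, hence everywhere by continuity of $\abs{F_\eps}^2$.

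The main obstacle I anticipate is handling the $2$-form structure correctly when localising: multiplying a $2$-form by a scalar cutoff and applying $\d$ or $\d^*$ produces commutator terms ($\d(\varphi F_\eps) = \d\varphi \wedge F_\eps + \varphi\, \d F_\eps$ and similarly for $\d^*$), and one must check these are absorbed, exactly as the term $\d\varphi_s \otimes u_\eps$ was absorbed in the proof of Lemma~\ref{lemma:L-infty-bound-D_Au} via $\abs{\d\varphi_s} \lesssim 1/s$ and a Cauchy--Schwarz estimate on the thin annulus, which vanishes as $s\to 0$. A secondary subtlety is making sure the zeroth-order Weitzenböck curvature term (the Weitzenböck operator for $2$-forms on $(M,g)$, which is a bounded smooth endomorphism) only contributes a term $\lesssim \int \abs{F_\eps}^2$ on the ball, which is harmless since it is lower order in $r$ and already $\lesssim \eps^{-2} r^n$ after we know the energy bound; in fact the $+F_\eps$ on the left of~\eqref{London} together with this curvature term just shifts constants. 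Everything else is a routine repetition of the scheme already executed in Lemma~\ref{lemma:L-infty-bound-D_Au}.
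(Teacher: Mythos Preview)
Your proposed local mean-value approach has a genuine gap. The differential inequality $C_* y(r) \leq r^n \eps^{-2} + r^{(n-1)/2}(y'(r))^{1/2}$ you claim does \emph{not} arise from testing the London equation~\eqref{London} against $\varphi_s F_\eps$. Indeed, since $2J(u_\eps,A_\eps) = \d j(u_\eps,A_\eps) + F_\eps$, the London equation is equivalent to $-\Delta F_\eps = \d j(u_\eps,A_\eps)$; when you test against $\varphi_s F_\eps$ and use $\d F_\eps = 0$, the zeroth-order term $\int \varphi_s \abs{F_\eps}^2$ appears on both sides and cancels, leaving only the tautological integration-by-parts identity $\int \langle j, \d^*(\varphi_s F_\eps)\rangle = \int \langle \d j, \varphi_s F_\eps\rangle$. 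So no coercive control of $\int_{\mathcal{B}_r}\abs{F_\eps}^2$ is produced. If instead you try a Bochner-type argument on $\abs{F_\eps}^2$, you are forced to estimate $\langle \d j, F_\eps\rangle$ pointwise; but $\abs{\d j}$ involves products of first covariant derivatives of $u_\eps$ and is only bounded by $C(\abs{\D_{A_\eps}u_\eps}^2 + \abs{F_\eps}) \lesssim \eps^{-2} + \abs{F_\eps}$, which after Young's inequality yields at best $\norm{F_\eps}_{L^\infty}\lesssim \eps^{-2}$, not the claimed $\eps^{-1}$.

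The paper's proof is entirely different and much shorter: it is a \emph{global} elliptic regularity argument on the closed manifold~$M$. One observes that $F_\eps - F_0 = -\d A_\eps$ is exact, hence closed and $L^2$-orthogonal to harmonic $2$-forms; the global Gaffney inequality (e.g.~\cite[Theorem~4.11]{IwaniecScottStroffolini}) then gives $\norm{F_\eps - F_0}_{W^{1,p}(M)} \leq C_p \norm{\d^*(F_\eps - F_0)}_{L^p(M)}$ for any~$p>n$. Since $\d^* F_\eps = j(u_\eps,A_\eps)$ by~\eqref{EL-A} and $\norm{j(u_\eps,A_\eps)}_{L^\infty(M)} \leq \norm{u_\eps}_{L^\infty}\norm{\D_{A_\eps}u_\eps}_{L^\infty} \lesssim \eps^{-1}$ by Proposition~\ref{prop:L-infty-bound-u} and Lemma~\ref{lemma:L-infty-bound-D_Au}, and since $F_0$ is fixed and smooth, one obtains $\norm{F_\eps}_{W^{1,p}(M)}\lesssim \eps^{-1}$, and the conclusion follows by Sobolev embedding $W^{1,p}(M)\hookrightarrow L^\infty(M)$ for $p>n$. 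The point you were missing is that on a closed manifold, control of $\d^* F_\eps$ in $L^p$ together with $\d F_\eps = 0$ and orthogonality to harmonics already controls the full $W^{1,p}$-norm --- no local iteration is needed.
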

\begin{proof}
 The Euler-Lagrange equation~\eqref{EL-A},
 together with Proposition~\ref{prop:L-infty-bound-u}
 and Lemma~\ref{lemma:L-infty-bound-D_Au}, implies
 \begin{equation} \label{Linfty-Feps}
  \norm{\d^* F_\eps}_{L^\infty(M)}
  = \norm{j(u_\eps, \, A_\eps)}_{L^\infty(M)}
  \leq \norm{u_\eps}_{L^\infty(M)} \norm{\D_{A_\eps} u_\eps}_{L^\infty(M)}
  \lesssim \eps^{-1}
 \end{equation}
 Let~$F_0$ be the curvature of the reference connection~$\D_0$.
 The difference~$F_\eps - F_0 = \d A_\eps$ is an exact
 form and, in particular, it is orthogonal to all harmonic~$2$-forms in~$M$.
 Then, for any~$p\in (n, \, +\infty)$, the Gaffney
 inequality can be written in the form
 \[
  \norm{F_\eps - F_0}_{W^{1,p}(M)} 
  \leq C_p \norm{\d^* F_\eps - \d^* F_0}_{L^p(M)}
 \]
 for some constant~$C_p$ that depend only on~$p$ and~$M$
 (see e.g.~\cite[Theorem~4.11]{IwaniecScottStroffolini}).
 Now, the form~$F_0$ is smooth and does not
 depend on~$\eps$, so
 \[
  \begin{split}
   \norm{F_\eps}_{W^{1,p}(M)} 
   \leq C_p \left(\norm{\d^* F_\eps}_{L^p(M)} + 1\right)
   &\leq C_p \, \eps^{-1}
  \end{split}
 \]
 The lemma follows by Sobolev embedding.
\end{proof}


\paragraph*{Compactness for gauge-invariant quantities.}

In this section, we apply the bounds obtained above,
as well as results from~\cite{CDO1},
to prove compactness results for gauge-invariant quantitites,
such as the norm~$\abs{u_\eps}$ and the pre-Jacobian~$j(u_\eps, \, A_\eps)$ 
of a sequence of critical points~$(u_\eps, \, A_\eps)$
that satisfies~\eqref{hp:logenergy}.

%

\begin{prop}\label{prop:conv-Lp-abs-u-eps}
	Let $\{(u_\eps,\,A_\eps)\} \subset (W^{1,2}\cap L^\infty)(M,\,E) \times W^{1,2}(M,\, \T^*M)$ 
	be a sequence of critical point of $G_\eps$ satisfying the logarithmic energy 
	bound~\eqref{hp:logenergy}. Then, for any $p$ with 
	$1 \leq p < 2$, there exists a constant $C_p$, depending only 
	on $\Lambda$, $p$, $M$, and $g$, such that
	\begin{equation}\label{eq:Lp-bound-abs-du-eps}
		\norm{\d(\abs{u_\eps})}_{L^p(M)} \leq C_p.
	\end{equation}
	Moreover, there holds
	\begin{equation}\label{eq:conv-Lp-rho-eps}
		\int_M \abs{\d (\abs{u_\eps})}^p \,\vol_g \to 0 
		\quad \mbox{as } \eps \to 0.
	\end{equation}
\end{prop}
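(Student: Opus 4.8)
The plan is to prove the two assertions in sequence, using the pointwise decomposition of Lemma~\ref{lemma:dec-D_Au} as the bridge between~$\d(\abs{u_\eps})$ and the gauge-invariant covariant derivative. For~\eqref{eq:Lp-bound-abs-du-eps}, the starting observation is that, on the set~$\{u_\eps\neq 0\}$, Lemma~\ref{lemma:dec-D_Au} gives
\[
 \d(\abs{u_\eps}) = \frac{\abs{u_\eps}}{\abs{u_\eps}^2}\,\ip{\D_{A_\eps}u_\eps}{u_\eps}\cdot\frac{1}{\abs{u_\eps}}\,u_\eps,
\]
so in particular~$\abs{\d(\abs{u_\eps})}\leq\abs{\D_{A_\eps}u_\eps}$ a.e.\ on~$\{u_\eps\neq 0\}$; and~$\d(\abs{u_\eps})=0$ a.e.\ on~$\{u_\eps=0\}$ since~$\abs{u_\eps}^2\in W^{1,2}(M)$. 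Hence pointwise~$\abs{\d(\abs{u_\eps})}\leq\abs{\D_{A_\eps}u_\eps}$ a.e.\ in~$M$. Now the issue is that~\eqref{hp:logenergy} only controls~$\norm{\D_{A_\eps}u_\eps}_{L^2(M)}^2\lesssim\abs{\log\eps}$, which diverges. To beat the logarithm I would split, for a parameter~$\delta\in(0,1)$, the region~$M$ into~$\{\abs{u_\eps}>1-\delta\}$ and~$\{\abs{u_\eps}\leq 1-\delta\}$. On the latter, the potential term forces smallness: from~\eqref{hp:logenergy} we get~$\abs{\{\abs{u_\eps}\leq 1-\delta\}}\lesssim \eps^2\abs{\log\eps}\,\delta^{-2}$, and combining with~$\abs{\D_{A_\eps}u_\eps}\lesssim\eps^{-1}$ from Lemma~\ref{lemma:L-infty-bound-D_Au} via H\"older gives, for~$1\leq p<2$,
\[
 \int_{\{\abs{u_\eps}\leq 1-\delta\}}\abs{\d(\abs{u_\eps})}^p\,\vol_g
 \lesssim \eps^{-p}\bigl(\eps^2\abs{\log\eps}\,\delta^{-2}\bigr)^{1-p/2}
 = \eps^{2-2p+p}\abs{\log\eps}^{1-p/2}\delta^{p-2}
 = \eps^{2-p}\abs{\log\eps}^{1-p/2}\delta^{p-2},
\]
which tends to~$0$ for fixed~$\delta$. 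On the good region~$\{\abs{u_\eps}>1-\delta\}$ one uses the elementary inequality relating~$\abs{\D_{A_\eps}u_\eps}^2$ to~$\abs{\d(\abs{u_\eps})}^2+\abs{j(u_\eps,A_\eps)}^2/\abs{u_\eps}^2$ (again Lemma~\ref{lemma:dec-D_Au}); there~$\abs{\d(\abs{u_\eps})}^2\leq\abs{\D_{A_\eps}u_\eps}^2-\abs{j(u_\eps,A_\eps)}^2$, and one estimates~$\int\abs{\D_{A_\eps}u_\eps}^2-\abs{j(u_\eps,A_\eps)}^2$ using~\eqref{EL-abs}: integrating~\eqref{EL-abs} against a cutoff (or just against~$1$, since~$M$ is closed) shows~$\int_M\abs{\D_{A_\eps}u_\eps}^2\,\vol_g = \eps^{-2}\int_M(1-\abs{u_\eps}^2)\abs{u_\eps}^2\,\vol_g\lesssim\abs{\log\eps}$, while on~$\{\abs{u_\eps}>1-\delta\}$ the decomposition gives~$\abs{\D_{A_\eps}u_\eps}^2-\abs{\d\abs{u_\eps}}^2 = \abs{j(u_\eps,A_\eps)}^2/\abs{u_\eps}^2\geq\abs{j(u_\eps,A_\eps)}^2$; it is the quantity~$\int_M(\abs{\D_{A_\eps}u_\eps}^2-\abs{j}^2)$ that is~$O(1)$ rather than~$O(\abs{\log\eps})$, because the logarithmic divergence of the Ginzburg-Landau energy is carried entirely by the~$j$ term near the concentration set. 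This last bound is essentially the content of~\cite{CDO1} (it is where Theorem~\ref{th:zero}(iii) and the energy-splitting of~\cite[Theorem~A]{CDO1} enter), so I would cite it, obtaining~$\int_{\{\abs{u_\eps}>1-\delta\}}\abs{\d(\abs{u_\eps})}^2\,\vol_g\leq C$ uniformly, and then H\"older in~$p<2$ on a fixed-volume set finishes~\eqref{eq:Lp-bound-abs-du-eps}.

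For~\eqref{eq:conv-Lp-rho-eps}, I would run the same dichotomy but now send~$\delta\to 0$ after~$\eps\to 0$. The contribution of~$\{\abs{u_\eps}\leq 1-\delta\}$ already goes to~$0$ as~$\eps\to 0$ for each fixed~$\delta$, by the display above. For the contribution of~$\{\abs{u_\eps}>1-\delta\}$, the point is that the~$L^2$-bound~$\int_{\{\abs{u_\eps}>1-\delta\}}\abs{\d\abs{u_\eps}}^2\leq C$ is uniform in both~$\eps$ and~$\delta$, so by H\"older~$\int_{\{\abs{u_\eps}>1-\delta\}}\abs{\d\abs{u_\eps}}^p \leq C\abs{M}^{1-p/2}$ uniformly; this alone does not give decay, so one needs a genuine smallness statement. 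Here I would argue that~$\abs{u_\eps}\to 1$ in~$L^2(M)$ (which follows from~\eqref{hp:logenergy}: $\int_M(1-\abs{u_\eps}^2)^2\lesssim\eps^2\abs{\log\eps}\to 0$), hence~$1-\abs{u_\eps}\to 0$ in measure, and moreover by Theorem~\ref{th:zero}(ii)–(iii) together with Theorem~\ref{th:uA} (or directly~\cite{CDO1}) one has~$j(u_\eps,A_\eps)\to j_*$ in~$L^p$ and~$\abs{\D_{A_\eps}u_\eps}^2\,\vol_g$ has no mass in the~$\d\abs{u_\eps}$-direction in the limit; concretely, I expect~\cite{CDO1} already contains~$\int_M\abs{\d\abs{u_\eps}}^2\,\vol_g\to 0$ (this is exactly the statement that the limiting map has modulus~$1$ and the defect measure lives only in the phase), and~\eqref{eq:conv-Lp-rho-eps} then follows immediately from H\"older since~$M$ has finite volume: $\int_M\abs{\d\abs{u_\eps}}^p\leq\abs{M}^{1-p/2}\bigl(\int_M\abs{\d\abs{u_\eps}}^2\bigr)^{p/2}\to 0$. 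So the cleanest route is: (1) prove~$\int_M\abs{\d\abs{u_\eps}}^2\,\vol_g\to 0$, then (2) deduce both~\eqref{eq:Lp-bound-abs-du-eps} and~\eqref{eq:conv-Lp-rho-eps} by H\"older.

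The main obstacle is step~(1), the~$L^2$-decay of~$\d\abs{u_\eps}$. The inequality~$\abs{\d\abs{u_\eps}}^2\leq\abs{\D_{A_\eps}u_\eps}^2$ is too lossy on its own since the right side is only~$O(\abs{\log\eps})$; one must exploit the structure~$\abs{\D_{A_\eps}u_\eps}^2 = \abs{\d\abs{u_\eps}}^2 + \abs{j(u_\eps,A_\eps)}^2\abs{u_\eps}^{-2}$ and the fact that~$\int_M\abs{j(u_\eps,A_\eps)}^2\,\vol_g$ captures the full~$\abs{\log\eps}$-divergence up to~$O(1)$, so the difference~$\int_M\bigl(\abs{\D_{A_\eps}u_\eps}^2-\abs{j(u_\eps,A_\eps)}^2\abs{u_\eps}^{-2}\bigr)\,\vol_g$ is bounded; this requires the sharp energy estimates from~\cite{CDO1} (the ``$\Gamma$-liminf'' lower bound and the matching upper bound of~\cite[Theorem~A]{CDO1}, which together pin down the~$O(1)$ remainder), and then one upgrades boundedness to decay using that, near the support of~$\mu_*$, the potential term controls~$1-\abs{u_\eps}$ while away from it~$\abs{u_\eps}\to 1$ with good estimates. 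If instead one wishes to be self-contained, the alternative is to combine~\eqref{EL-abs} (tested against~$1-\abs{u_\eps}^2$ or a cutoff thereof) with a Cauchy–Schwarz/absorption argument to isolate~$\int_M\abs{\d\abs{u_\eps}}^2$; I expect the former (citing~\cite{CDO1}) to be the intended and much shorter path.
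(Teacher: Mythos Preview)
Your overall strategy---split into a ``good'' set where $\abs{u_\eps}$ is close to~$1$ and a ``bad'' set where it is not, use the $L^\infty$-bound on the covariant derivative on the bad set---is exactly right and matches the paper. But there is a genuine gap in the execution, and your ``cleanest route'' does not work.

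First, a small point: your displayed bad-set estimate has an arithmetic slip. From $\eps^{-p}(\eps^2\abs{\log\eps}\,\delta^{-2})^{1-p/2}$ the $\eps$-exponent is $-p+2-p = 2-2p$, not $2-p$; for $p>1$ this diverges. The fix is simply not to interpolate: use $\abs{\d\abs{u_\eps}}\lesssim\eps^{-1}$ pointwise and integrate over the bad set to get $\eps^{-p}\cdot\eps^2\abs{\log\eps}\,\delta^{-2}=\eps^{2-p}\abs{\log\eps}\,\delta^{-2}\to 0$.

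Second, and more seriously: your good-set bound and your ``cleanest route'' both rely on claims you attribute to~\cite{CDO1} (that $\int_M\abs{\D_{A_\eps}u_\eps}^2-\abs{j(u_\eps,A_\eps)}^2/\abs{u_\eps}^2$ is $O(1)$, or that $\int_M\abs{\d\abs{u_\eps}}^2\to 0$). Neither statement is in~\cite{CDO1}; this proposition is precisely where such control is first established, and the paper proves it here self-containedly. Moreover the global $L^2$-decay $\int_M\abs{\d\abs{u_\eps}}^2\to 0$ is \emph{not} what is proved and may well fail: on the bad set the available estimates only give $\int_{N_\eps}\abs{\d\abs{u_\eps}}^2\lesssim\eps^{-2}\abs{N_\eps}\lesssim\abs{\log\eps}^5$.

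The missing idea is the truncation trick from~\cite[Lemma~X.13]{BBH}. Take $\widetilde\rho_\eps:=\max\{\abs{u_\eps},\,1-\abs{\log\eps}^{-2}\}$ and test~\eqref{EL-abs} against $1-\widetilde\rho_\eps$. On the good set $M_\eps:=\{\widetilde\rho_\eps=\abs{u_\eps}\}$ you get
\[
 \int_{M_\eps}\abs{u_\eps}\,\abs{\d\abs{u_\eps}}^2\,\vol_g \;\lesssim\; \int_M(1-\widetilde\rho_\eps)\,\abs{\D_{A_\eps}u_\eps}^2\,\vol_g \;\leq\; \abs{\log\eps}^{-2}\cdot\Lambda\abs{\log\eps}\;\to\;0,
\]
because $1-\widetilde\rho_\eps\leq\abs{\log\eps}^{-2}$ uniformly. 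This is the key step: the test function is chosen so that its \emph{sup-norm} beats the logarithmic energy. On the complementary set $N_\eps$ you then use your (corrected) bad-set estimate. Testing against $1-\abs{u_\eps}^2$ as you suggest does not work: $1-\abs{u_\eps}^2$ is not uniformly small, so the right-hand side $\int_M(1-\abs{u_\eps}^2)\abs{\D_{A_\eps}u_\eps}^2$ cannot be controlled.
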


\begin{proof}
	We follow the argument of~\cite[Lemma~X.13]{BBH}.
	 Set $\rho_\eps := \abs{u_\eps}$ and rewrite~\eqref{EL-abs} in the form
	 \begin{equation}\label{eq:EL-rho}
	 	\frac{1}{2} \Delta \rho_\eps^2 = 
	 	\frac{1}{\eps^2} \rho^2_\eps(\rho_\eps^2 -1 ) 
	 	+ \abs{\D_{A_\eps} u_\eps}^2.
	 \end{equation}
	 Next, for any $0 < \eps < 1$, we define:
	 \begin{equation}\label{eq:def-rho-M-N}
	 	\widetilde{\rho}_\eps := \max\left\{ \rho_\eps, 1 - \frac{1}{\abs{\log\eps}^2} \right\}, \quad 
	 	M_\eps := \left\{ x \in M : \widetilde{\rho_\eps}(x) = \rho_\eps(x)\right\}, \quad N_\eps := M \setminus M_\eps.
	 \end{equation}
	 Notice that on $M_\eps$ there holds 
	 $\rho_\eps = \abs{u_\eps} \geq 1 - \frac{1}{\abs{\log\eps}^2}$ 
	 for any $\eps \in (0,1)$. Moreover, as $\rho_\eps$ belongs to 
	 $(L^\infty \cap W^{1,2})(M)$, it follows that $\widetilde{\rho}_\eps$ 
	 belongs to $(L^\infty \cap W^{1,2})(M)$ as well, for any $\eps \in (0,1)$.  
	 Since $M$ is compact, also $1 - \widetilde{\rho}_\eps$ belongs to 
	 $(L^\infty \cap W^{1,2})(M)$ and,
	 by Remark~\ref{rk:test-functions},
	 we can use $1 - \widetilde{\rho}_\eps$ as a test function 
	 in~\eqref{eq:EL-rho}. Since $M$ is compact, 
	 $\widetilde{\rho}_\eps$ is bounded in $M$ and moreover constant 
	 in $N_\eps$, we obtain
	 \begin{equation}\label{eq:Lp-bound-comp1}
	 	\int_{M_\eps} \rho_\eps \abs{\d \rho_\eps}^2 \,\vol_g  \leq 
	 	2 \int_M (1-\widetilde{\rho}_\eps) \abs{\D_{A_\eps} u_\eps}^2\,\vol_g.
	 \end{equation}
	 Since $1 - \widetilde{\rho}_\eps \leq \frac{1}{\abs{\log\eps}^2}$, 
	 by~\eqref{eq:Lp-bound-comp1} and~\eqref{hp:logenergy} we infer
	 \begin{equation}\label{eq:Lp-bound-comp2}
	 	\int_{M_\eps} \rho_\eps \abs{\d \rho_\eps}^2 \,\vol_g 
	 	\lesssim \frac{1}{\abs{\log\eps}} \longrightarrow 0 
	 	\quad \mbox{as } \eps \to 0.
	 \end{equation}
	 Being $\rho_\eps = \abs{u_\eps} \geq 1 - \frac{1}{\abs{\log\eps}^2}$ 
	 on $M_\eps$, \eqref{eq:Lp-bound-comp2} gives
	 	\begin{equation}\label{eq:Lp-bound-comp2-bis}
	 		\norm{\d(\abs{u_\eps})}_{L^p(M_\eps)} \lesssim 1
	 	\end{equation}
	 	(where the implicit constant depends only on $\Lambda$, $p$, $M$, 
	 	and $g$) and
	 \begin{equation}\label{eq:Lp-bound-comp3}
	 	\int_{M_\eps} \abs{\d \rho_\eps}^2 \,\vol_g \longrightarrow 0 
	 	\quad \mbox{as } \eps \to 0.
	 \end{equation}
	
	Now, we show that the measure of the ``bad'' set $N_\eps$ decreases 
	 at a quantitative rate as $\eps \to 0$. Indeed, the logarithmic energy 
	 bound~\eqref{hp:logenergy} and \eqref{maxprinc} imply
	 \[
	 	\abs{\log \eps} \gtrsim \frac{1}{\eps^2} \int_{N_\eps} \left(1-\rho_\eps^2\right)^2 \,\vol_g \geq \frac{1}{\eps^2} \int_{N_\eps} (1-\rho_\eps)^2 \,\vol_g \geq \frac{\abs{N_\eps}}{\abs{\log \eps}^4 \eps^2},
	 \]
	 whence 
	 \begin{equation}\label{eq:Lp-bound-comp6}
	 	\abs{N_\eps} \lesssim \abs{\log \eps}^5 \eps^2.
	 \end{equation}
	By~\eqref{eq:dec-D_Au}, it is straightforward to see that, 
	for any $\eps > 0$,
	\begin{equation}\label{eq:kato}
		\abs{\d \rho_\eps} \leq \abs{\D_{A_\eps} u_\eps} 
		\quad \mbox{a.e. on } M.
	\end{equation}
	 Combining~\eqref{eq:Lp-bound-comp6} with \eqref{eq:L-infty-bound-D_Au} 
	 (which holds a.e. on $M$), we get
	 \begin{equation}\label{eq:Lp-bound-comp7}
	 	\norm{\D_{A_\eps} u_\eps}_{L^p(N_\eps)} \lesssim \eps^{\frac{2}{p}-1} \abs{\log\eps}^{\frac{5}{p}} \longrightarrow 0 \quad \mbox{as } \eps \to 0.
	 \end{equation}
	 Thus, by~\eqref{eq:Lp-bound-comp7} and~\eqref{eq:kato}, 
	 \begin{equation}\label{eq:Lp-bound-comp8}
	 	\int_{N_\eps} \abs{\d \rho_\eps}^p \,\vol_g \longrightarrow 0
	 	\quad \mbox{as } \eps \to 0
	 \end{equation}
	 for any $p$ with $1 \leq p < 2$. 
	 Moreover, along with~\eqref{eq:Lp-bound-comp2} and the definition of 
	 $M_\eps$, \eqref{eq:kato} and~\eqref{eq:Lp-bound-comp7} 
	 imply~\eqref{eq:Lp-bound-abs-du-eps}.
	 Finally, combining~\eqref{eq:Lp-bound-comp8} with~\eqref{eq:Lp-bound-comp3} 
	 yields~\eqref{eq:conv-Lp-rho-eps}, completing the proof of the 
	 proposition.
\end{proof}

By combining the compactness result in~\cite[Theorem~A]{CDO1}
with the properties of the London equation,
and reasoning along the lines of~\cite[Corollary B]{CDO1},
we can extract a subsequence and find limits~$J_*$, $F_*$
such that
\begin{equation} \label{compactnessF,J}
 J(u_\eps, \, A_\eps)\to \pi J_* \quad \textrm{strongly in } W^{-1,p}(M), \qquad
 F_\eps\to F_* \quad \textrm{strongly in } W^{1,p}(M)
\end{equation}
for any~$p$ with~$1\leq p < n/(n-1)$. In fact,
$F_*$ and~$J_*$ satisfy
\begin{equation} \label{limLondon}
 -\Delta F_* + F_* = 2\pi J_*
\end{equation}
that is the London equation in~\eqref{London*}.
We claim that 
the curvature energy grows less than
logaritmically in~$\eps$. This fact will be useful 
to our analysis of the properties of the energy concentration measure,
in Section~\ref{sec:varifold}.

\begin{lemma} \label{lemma:curvature}
 Let~$\{(u_\eps, \, A_\eps)\}_{\eps>0}
 \subset (W^{1,2}\cap L^\infty)(M, \, E)\times W^{1,2}(M, \, \T^*M)$
 be a sequence of finite-energy solutions of~\eqref{EL-u}--\eqref{EL-A}
 that satisfies~\eqref{hp:logenergy}. Then, 
 \[
  \lim_{\eps\to 0} \frac{1}{\abs{\log\eps}}\int_M \abs{F_\eps}^2 \vol_g = 0.
 \]
\end{lemma}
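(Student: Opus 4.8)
The plan is to combine the Euler--Lagrange equation~\eqref{EL-A} with the Hodge decomposition of the curvature, so as to trade the (divergent) $L^2$-norm of $j(u_\eps,A_\eps)$ against a bounded quantity.

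First I would use that each $F_\eps=F_{A_\eps}$ is a \emph{closed} $2$-form: on a Hermitian line bundle the Bianchi identity reads $\d F_{A_\eps}=0$ (equivalently $F_\eps-F_0=\d A_\eps$, with $\d F_0=0$), and the cohomology class $[F_\eps]=[F_0]\in H^2_{\mathrm{dR}}(M)$ is fixed, being proportional to $c_1(E)$. Hence, in the Hodge decomposition $F_\eps=h_*+\d\alpha_\eps$, the harmonic part $h_*$ is the unique harmonic representative of $[F_0]$, independent of $\eps$, and $\alpha_\eps$ may be taken coexact. Since $\d^*h_*=0$, the two summands are $L^2$-orthogonal, so
\[
\int_M\abs{F_\eps}^2\,\vol_g=\int_M\abs{h_*}^2\,\vol_g+\int_M\abs{\d\alpha_\eps}^2\,\vol_g,
\]
and, $\int_M\abs{h_*}^2\,\vol_g$ being a fixed finite constant, it suffices to prove $\int_M\abs{\d\alpha_\eps}^2\,\vol_g=o(\abs{\log\eps})$. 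From~\eqref{EL-A} and $\d^*h_*=0$ we get $\d^*\d\alpha_\eps=\d^*F_\eps=j(u_\eps,A_\eps)$; integrating by parts (legitimate since $F_\eps\in W^{1,2}$, whence $\alpha_\eps\in W^{2,2}$, and $j(u_\eps,A_\eps)\in L^\infty$ by Lemma~\ref{lemma:Linfty-Feps}) gives
\[
\int_M\abs{\d\alpha_\eps}^2\,\vol_g=\int_M\ip{j(u_\eps,A_\eps)}{\alpha_\eps}\,\vol_g\leq\norm{j(u_\eps,A_\eps)}_{L^2(M)}\norm{\alpha_\eps}_{L^2(M)}.
\]
The first factor is controlled by the energy bound: since $\abs{j(u_\eps,A_\eps)}\leq\abs{u_\eps}\,\abs{\D_{A_\eps}u_\eps}\leq\abs{\D_{A_\eps}u_\eps}$ by~\eqref{maxprinc}, the assumption~\eqref{hp:logenergy} yields $\norm{j(u_\eps,A_\eps)}_{L^2(M)}^2\leq 2\,G_\eps(u_\eps,A_\eps)\leq 2\Lambda\abs{\log\eps}$. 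Thus everything reduces to a \emph{uniform} bound $\norm{\alpha_\eps}_{L^2(M)}\leq C$, which would give $\int_M\abs{\d\alpha_\eps}^2\,\vol_g\lesssim\abs{\log\eps}^{1/2}=o(\abs{\log\eps})$.

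For the uniform $L^2$-control of the curvature potential $\alpha_\eps$, I would argue as follows. Since $\alpha_\eps$ is coexact and $\d^*\d\alpha_\eps=j(u_\eps,A_\eps)$, the elliptic estimates of Appendix~\ref{app:elliptic} give $\norm{\alpha_\eps}_{W^{2,p}(M)}\lesssim\norm{j(u_\eps,A_\eps)}_{L^p(M)}\lesssim 1$ for every $p\in(1,\frac{n}{n-1})$, the last bound by Theorem~\ref{th:zero}(iii) (equivalently~\eqref{compactnessF,J}). For $n\leq 5$ the Sobolev embedding $W^{2,p}\hookrightarrow L^2$ is available in this range of $p$, so $\norm{\alpha_\eps}_{L^2(M)}\leq C$ at once; when $n=3$ one may even skip the decomposition entirely, since by~\eqref{compactnessF,J} and Sobolev embedding $F_\eps\to F_*$ strongly in $L^2(M)$, so $\int_M\abs{F_\eps}^2\,\vol_g$ is already bounded. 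For general $n$ I would supplement this with the regularity of the limit: by Theorem~\ref{th:zero}(iv) the dual $\star J_*$ is a finite-mass rectifiable $(n-2)$-current, so $F_*=2\pi(-\Delta+1)^{-1}J_*$ (from~\eqref{limLondon}) is a Bessel potential of an $(n-2)$-dimensional finite measure, with at worst logarithmic singularities along $\spt J_*$, hence $F_*\in L^2(M)$ and, $\alpha_*$ being coexact with $\d\alpha_*=F_*-h_*\in L^2$, also $\alpha_*\in W^{1,2}\subset L^2$; combined with the strong convergences in~\eqref{compactnessF,J} this upgrades the a priori $W^{2,p}$-bound into the uniform bound $\norm{\alpha_\eps}_{L^2(M)}\leq C$. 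I expect this last step to be the main obstacle: the a priori control of $F_\eps$ lives in $W^{1,p}$ only for $p<\frac{n}{n-1}$, which for $n\geq 6$ lies just below the exponent needed to land in $L^2$, so the transfer to a uniform $L^2$-estimate must genuinely exploit the $(n-2)$-dimensional concentration of the Jacobians $J(u_\eps,A_\eps)$ and not merely their weak-norm bounds.
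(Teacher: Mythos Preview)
Your decomposition and integration-by-parts identity are correct, and for $n\le 5$ the argument is complete: the Sobolev embedding $W^{2,p}\hookrightarrow L^2$ (with $p$ just below $\tfrac{n}{n-1}$) holds precisely when $n<6$, so $\norm{\alpha_\eps}_{L^2}\lesssim 1$ and you conclude. For $n\ge 6$, however, there is a genuine gap. Knowing that $F_*\in L^2$ (your Bessel-potential heuristic is correct) and that $\alpha_\eps\to\alpha_*$ in $W^{2,p}$ for $p<\tfrac{n}{n-1}$ does \emph{not} yield a uniform $L^2$-bound on~$\alpha_\eps$, because this Sobolev space does not embed into~$L^2$; strong convergence in a weak norm cannot by itself control a stronger norm along the sequence. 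Your final sentence correctly identifies this as the obstacle, but the proposed remedy (invoking the $(n-2)$-dimensional structure of~$J_*$) is not turned into an actual estimate on the \emph{sequence}~$\alpha_\eps$.

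The paper circumvents this difficulty by interpolation rather than by seeking a uniform bound. From $-\Delta\varphi_\eps=j(u_\eps,A_\eps)$ and $\norm{j(u_\eps,A_\eps)}_{L^2}\lesssim\abs{\log\eps}^{1/2}$ one obtains $\norm{\varphi_\eps}_{W^{2,2}}\lesssim\abs{\log\eps}^{1/2}$ and hence $\norm{F_\eps}_{L^{2^*}}\lesssim\abs{\log\eps}^{1/2}$ by Sobolev embedding; since $\norm{F_\eps}_{L^1}$ is bounded (from the $W^{1,p}$-compactness in Theorem~\ref{th:zero}(ii)), interpolating with $\tfrac12=\tfrac{\alpha}{2^*}+(1-\alpha)$ gives $\norm{F_\eps}_{L^2}\lesssim\abs{\log\eps}^{\alpha/2}$ with $\alpha=\tfrac{n}{n+2}<1$, which suffices in all dimensions. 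Note that the same interpolation would rescue your own scheme: you do not actually need $\norm{\alpha_\eps}_{L^2}$ bounded, only $\norm{\alpha_\eps}_{L^2}=o(\abs{\log\eps}^{1/2})$, and interpolating between $\norm{\alpha_\eps}_{L^r}\lesssim 1$ (any $r<\tfrac{n}{n-3}$, from your $W^{2,p}$-bound) and $\norm{\alpha_\eps}_{L^{2^{**}}}\lesssim\abs{\log\eps}^{1/2}$ (from $W^{2,2}$) gives $\norm{\alpha_\eps}_{L^2}\lesssim\abs{\log\eps}^{\theta/2}$ with $\theta<1$, hence $\int_M\abs{\d\alpha_\eps}^2\lesssim\abs{\log\eps}^{(1+\theta)/2}=o(\abs{\log\eps})$.
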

The proof of Lemma~\ref{lemma:curvature}
is very similar to arguments we used alredy
in~\cite[Section~4]{CDO1}.
\begin{proof}[Proof of Lemma~\ref{lemma:curvature}]
 The curvature form~$F_\eps$ is, by definition,
 closed. Therefore, by Hodge decomposition, we can write
 $F_\eps = \d\varphi_\eps + \xi_\eps$, where~$\xi_\eps$
 is a harmonic~$2$-form and~$\varphi_\eps$ is a co-exact $1$-form
 such that
 \begin{equation}
  \norm{\varphi_\eps}_{W^{1,2}(M)} 
   + \norm{\xi_\eps}_{L^2(M)} 
  \lesssim \norm{F_\eps}_{L^2(M)} 
   \stackrel{\eqref{hp:logenergy}}{\lesssim} \abs{\log\eps}^{1/2}
 \label{curvature1}
 \end{equation}
 (see all the discussion from~\cite[Proposition~A.14]{CDO1}).
 As the space of harmonic forms is finite-dimensional,
 the $W^{1,2}(M)$-norm of~$\xi_\eps$, too,
 is of order~$\abs{\log\eps}^{1/2}$ at most. 
 From~\eqref{EL-A}, we obtain
 \begin{equation*} 
  -\Delta\varphi_\eps = \d^*\d\varphi_\eps = \d^* F_\eps
  = j(u_\eps, \, A_\eps)
 \end{equation*}
 As~$\varphi_\eps$ is co-exact, hence
 orthogonal to all harmonic~$1$-forms, 
 elliptic regularity theory implies that
 the~$W^{2,2}(M)$-norm of~$\varphi_\eps$
 is bounded from above by the~$L^2(M)$-norm of~$-\Delta\varphi_\eps$.
 We deduce
 \begin{equation} \label{curvature3}
  \norm{\varphi_\eps}_{W^{2,2}(M)} 
  \lesssim \norm{j(u_\eps, \, A_\eps)}_{L^2(M)}
  \lesssim \norm{\D_{A_\eps} u_\eps}_{L^2(M)} \norm{u_\eps}_{L^\infty(M)}
  \lesssim \abs{\log\eps}^{1/2} 
 \end{equation}
 (the last inequality follows from~\eqref{hp:logenergy} and~\eqref{maxprinc}).
 Therefore, we have
 \begin{equation*} 
  \norm{F_\eps}_{W^{1,2}(M)} 
   \leq \norm{\varphi_\eps}_{W^{2,2}(M)}+ \norm{\xi_\eps}_{W^{1,2}(M)}
   \stackrel{\eqref{curvature1}, \, \eqref{curvature3}}{\lesssim}
    \abs{\log\eps}^{1/2}
 \end{equation*}
 and hence, by Sobolev embedding,
 \begin{equation} \label{curvature5}
  \norm{F_\eps}_{L^q(M)} \lesssim \abs{\log\eps}^{1/2}
  \qquad \textrm{with } q := 2^* = \frac{2n}{n-2}
 \end{equation}
 From~\eqref{compactnessF,J} and~\eqref{curvature5} we obtain,
 by interpolation,
 \begin{equation} \label{curvature6}
  \norm{F_\eps}_{L^2(M)} 
  \leq \norm{F_\eps}_{L^q(M)}^\alpha \norm{F_\eps}_{L^1(M)}^{1-\alpha} 
  \lesssim \abs{\log\eps}^{\alpha/2}
 \end{equation}
 with~$\alpha := n/(n+2) \in (0, \, 1)$. The lemma follows.
\end{proof}

Next, we notice that 
criticality and $L^p(M)$-compactness of $\{\d^* F_{A_\eps}\}$ for any $p$ 
with $1 \leq p < \frac{n}{n-1}$
(cf.~\cite[Proposition~3.11(i) and Remark~3.12]{CDO1}) imply compactness of 
the prejacobians $j(u_\eps,\,A_\eps)$ in $L^p(M,\,\T^*M)$, for any 
$1 \leq p < \frac{n}{n-1}$. 

\begin{lemma}\label{lemma:Lp-bound-prejac}
	Let 
	$\{(u_\eps,\,A_\eps)\} \subset 
	\mathcal{E}$ 
	be a sequence of critical points of $G_\eps$ 
	satisfying the logarithmic energy bound~\eqref{hp:logenergy}. 
	Then, there exists 
	$j_* \in L^p(M,\,\T^*M)$ for any $p$ with $1 \leq p < \frac{n}{n-1}$,  
	such that, 
	up to a (not relabelled) subsequence, there holds
	\begin{equation}\label{eq:Lp-conv-j_eps-j*}
		j(u_\eps,\,A_\eps) \longrightarrow j_* \quad \mbox{in } L^p(M, \,\T^*M)  
		\quad \mbox{as } \eps \to 0,
	\end{equation}
	for any $p$ with $1 \leq p < \frac{n}{n-1}$. In particular, there holds
	\begin{equation}\label{eq:Lp-bound-j_eps}
	  	\norm{j(u_\eps,\,A_\eps)}_{L^p(M)} \lesssim 1, 
	\end{equation}	 
	for any $p$ with $1 \leq p < \frac{n}{n-1}$, where the implicit constant in 
	front of the right-hand-side depends only on $\Lambda$, $p$, $M$ 
	and $g$.
\end{lemma}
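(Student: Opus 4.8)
The plan is to read off the conclusion directly from the second Euler--Lagrange equation \eqref{EL-A}, combined with the $W^{1,p}$-compactness of the curvatures already recorded in \eqref{compactnessF,J}. First I would recall that, since each $(u_\eps,\,A_\eps)$ is a critical point of $G_\eps$, equation \eqref{EL-A} gives the identity $j(u_\eps,\,A_\eps) = \d^* F_\eps$, valid both pointwise a.e.\ and in the sense of distributions on $M$. Next I would invoke \eqref{compactnessF,J} (equivalently, Theorem~\ref{th:zero}(ii), which itself rests on \cite[Theorem~A]{CDO1}): up to extraction of a non-relabelled subsequence, there exists a $2$-form $F_*$ with $F_\eps \to F_*$ strongly in $W^{1,p}(M,\,\Lambda^2 \T^*M)$ for every $p$ with $1 \leq p < \frac{n}{n-1}$.

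Then, since the codifferential $\d^*$ is a first-order linear differential operator with smooth coefficients on the compact manifold $M$, it maps $W^{1,p}(M,\,\Lambda^2 \T^*M)$ boundedly into $L^p(M,\,\T^*M)$; hence $\d^* F_\eps \to \d^* F_*$ strongly in $L^p(M,\,\T^*M)$ for the same range of exponents. Setting $j_* := \d^* F_*$ and using \eqref{EL-A} once more, this is precisely \eqref{eq:Lp-conv-j_eps-j*}, and $j_* \in L^p(M,\,\T^*M)$ for all such $p$. The uniform bound \eqref{eq:Lp-bound-j_eps} is then immediate, a strongly convergent sequence being bounded; tracking the constants in \eqref{compactnessF,J} (which depend only on $\Lambda$, $M$, $g$) together with the operator norm of $\d^*$ (which depends only on $M$, $g$, and on $p$), the implicit constant depends only on $\Lambda$, $p$, $M$ and $g$, as claimed.

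There is no genuine obstacle here: the lemma is a corollary of results already in place, and the only point requiring any care is to cite the correct compactness statement for $\{F_\eps\}$ --- either \eqref{compactnessF,J} above or \cite[Proposition~3.11(i) and Remark~3.12]{CDO1} --- and to note that applying $\d^*$ preserves strong convergence at the cost of one derivative. Alternatively, one may bypass $F_*$ entirely and argue directly from the $L^p$-compactness of $\{\d^* F_{A_\eps}\}$ asserted in \cite[Proposition~3.11(i) and Remark~3.12]{CDO1}, which, through \eqref{EL-A}, is literally the asserted compactness of $\{j(u_\eps,\,A_\eps)\}$.
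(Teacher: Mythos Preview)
Your proposal is correct and follows essentially the same route as the paper: both use \eqref{EL-A} to write $j(u_\eps,A_\eps)=\d^* F_\eps$ and then invoke the $W^{1,p}$-compactness of $\{F_\eps\}$ from \cite[Proposition~3.11(i) and Remark~3.12]{CDO1} (equivalently, \eqref{compactnessF,J}), the paper phrasing the passage via Cauchy sequences and you via the limit $F_*$ directly. One small point worth tightening: ``a strongly convergent sequence is bounded'' only controls the extracted subsequence, whereas \eqref{eq:Lp-bound-j_eps} is asserted for the full given sequence; the paper closes this by the obvious contradiction argument (an unbounded subsequence would still admit a convergent sub-subsequence), which you should state explicitly.
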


\begin{proof}
	 Set, for short, $j_\eps := j(u_\eps,\,A_\eps)$ and 
	 $F_\eps := F_{A_\eps}$. 
	 To prove~\eqref{eq:Lp-conv-j_eps-j*}, 
	 it is enough to show that $\{ j_\eps\}$ contains
	 a Cauchy subsequence in (the Banach space) 
	 $L^p(M,\,\T^*M)$, for any $p$ with $1 \leq p < \frac{n}{n-1}$.
	 Due to \cite[Proposition~3.11(i) and 
	 Remark~3.12]{CDO1}, we can extract a subsequence~$\eps_k\to 0$
	 such that~$\{F_{\eps_k}\}$ is a Cauchy sequence in 
	 $W^{1,p}(M,\,\Lambda^2 \T^*M)$, for any $p$ with 
	 $1 \leq p < \frac{n}{n-1}$.
	 But then, by~\eqref{EL-A}, for any $k$, $m \in \N$ 
	 we have 
	 \[
	 	\norm{j_{\eps_k} - j_{\eps_m}}_{L^p(M)} 
	 	= \norm{\d^* (F_{\eps_k} - F_{\eps_m})}_{L^p(M)}.
	 \]
	 This proves~\eqref{eq:Lp-conv-j_eps-j*}.
	 If~\eqref{eq:Lp-bound-j_eps} were false,
	 there would be a (countable) sequence of critical
	 points~$\{(u_\eps, \, A_\eps)\}\subset\mathcal{E}$ 
	 that satisfy~\eqref{hp:logenergy} (for some~$\eps$-independent 
	 constant~$\Lambda$) and are such
	 that~$\norm{j(u_\eps, \, A_\eps)}_{L^p(M)}\to+\infty$ as~$\eps\to 0$.
	 This contradicts~\eqref{eq:Lp-conv-j_eps-j*}. 
	 Therefore, \eqref{eq:Lp-bound-j_eps} is proved.
\end{proof}

We apply Proposition~\ref{prop:conv-Lp-abs-u-eps} and 
Lemma~\ref{lemma:Lp-bound-prejac} to prove a uniform $L^p(M)$-bound 
on $\{\D_{A_\eps} u_\eps\}$. 

\begin{lemma}\label{lemma:bound-DA_eps-u_eps}
	Let $\{(u_\eps,\,A_\eps)\} \subset 
	\mathcal{E}$ 
	be a sequence of critical points of $G_\eps$ 
	satisfying the logarithmic energy bound~\eqref{hp:logenergy}. 
	Then, 
	for any $p$ with $1 \leq p < \frac{n}{n-1}$ 
	there exists a constant $C_p > 0$ 
	(depending on~$\Lambda$, $p$, $M$ and $g$) such that 
	\begin{equation}\label{eq:Lp-bound-D_Au}
		\norm{\D_{A_\eps} u_\eps}_{L^p(M)} \leq C_p	
	\end{equation}	
	for any $\eps > 0$. Moreover, there exists a (non-relabelled)
	subsequence such that
	\begin{equation}\label{eq:conv-D_A_eps-u_eps-j_*}
		\norm{\D_{A_\eps} u_\eps}_{L^p(M)} \to \norm{j_*}_{L^p(M)} 
		\quad \mbox{for any } 1 \leq p < \frac{n}{n-1},
		\quad \mbox{as } \to 0,
	\end{equation}
	where $j_*$ is given by Lemma~\ref{lemma:Lp-bound-prejac}.
\end{lemma}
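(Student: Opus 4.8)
The plan is to exploit the pointwise decomposition of the covariant derivative provided by Lemma~\ref{lemma:dec-D_Au}, which on the set $\{u_\eps \neq 0\}$ gives
\[
 \D_{A_\eps} u_\eps = \frac{\d(\abs{u_\eps})}{\abs{u_\eps}}u_\eps + \frac{j(u_\eps,\,A_\eps)}{\abs{u_\eps}^2}\,iu_\eps,
\]
so that, since the two summands are pointwise orthogonal in $\T^*M \otimes E$,
\[
 \abs{\D_{A_\eps} u_\eps}^2 = \abs{\d(\abs{u_\eps})}^2 + \frac{\abs{j(u_\eps,\,A_\eps)}^2}{\abs{u_\eps}^2}
 \qquad \text{a.e. on } \{u_\eps \neq 0\}.
\]
On the (small, by~\eqref{eq:Lp-bound-comp6}) set where $u_\eps = 0$ we have $j(u_\eps,\,A_\eps) = \ip{\D_{A_\eps} u_\eps}{iu_\eps} = 0$ and $\d(\abs{u_\eps}) = 0$ a.e., but $\D_{A_\eps} u_\eps$ may be nonzero there; however this set is handled exactly as $N_\eps$ was in Proposition~\ref{prop:conv-Lp-abs-u-eps}, using the $L^\infty$-bound~\eqref{eq:L-infty-bound-D_Au} together with~\eqref{eq:Lp-bound-comp6}, so its contribution to any $L^p$-norm with $p < 2$ is $o(1)$ as $\eps \to 0$.

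First I would establish~\eqref{eq:Lp-bound-D_Au}. Split $M = M_\eps \cup N_\eps$ as in~\eqref{eq:def-rho-M-N}. On $M_\eps$, where $\abs{u_\eps} \geq 1 - \abs{\log\eps}^{-2} \geq 1/2$ for $\eps$ small, the identity above gives $\abs{\D_{A_\eps} u_\eps} \lesssim \abs{\d(\abs{u_\eps})} + \abs{j(u_\eps,\,A_\eps)}$ pointwise, and hence, by Minkowski's inequality,
\[
 \norm{\D_{A_\eps} u_\eps}_{L^p(M_\eps)} \lesssim \norm{\d(\abs{u_\eps})}_{L^p(M)} + \norm{j(u_\eps,\,A_\eps)}_{L^p(M)} \lesssim 1,
\]
the last bound coming from~\eqref{eq:Lp-bound-abs-du-eps} in Proposition~\ref{prop:conv-Lp-abs-u-eps} (valid since $p < n/(n-1) < 2$) and~\eqref{eq:Lp-bound-j_eps} in Lemma~\ref{lemma:Lp-bound-prejac}. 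On $N_\eps$, combining the $L^\infty$-bound~\eqref{eq:L-infty-bound-D_Au} with~\eqref{eq:Lp-bound-comp6} exactly as in~\eqref{eq:Lp-bound-comp7} gives $\norm{\D_{A_\eps} u_\eps}_{L^p(N_\eps)} \lesssim \eps^{2/p - 1}\abs{\log\eps}^{5/p} \to 0$. Adding the two pieces yields~\eqref{eq:Lp-bound-D_Au} with a constant depending only on $\Lambda$, $p$, $M$, $g$.

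For the convergence~\eqref{eq:conv-D_A_eps-u_eps-j_*}, pass to the subsequence along which $j(u_\eps,\,A_\eps) \to j_*$ in $L^p(M,\,\T^*M)$ (Lemma~\ref{lemma:Lp-bound-prejac}) and, simultaneously, $\d(\abs{u_\eps}) \to 0$ in $L^p(M)$ (Proposition~\ref{prop:conv-Lp-abs-u-eps}, equation~\eqref{eq:conv-Lp-rho-eps}). On $M_\eps$ we write
\[
 \bigl\lVert \abs{\D_{A_\eps} u_\eps} - \abs{j(u_\eps,\,A_\eps)} \bigr\rVert_{L^p(M_\eps)}
 \leq \norm{\d(\abs{u_\eps})}_{L^p(M)} + \Bigl\lVert \bigl(\tfrac{1}{\abs{u_\eps}} - 1\bigr) j(u_\eps,\,A_\eps) \Bigr\rVert_{L^p(M_\eps)},
\]
using $\bigl\lvert \abs{\D_{A_\eps} u_\eps} - \tfrac{\abs{j(u_\eps,\,A_\eps)}}{\abs{u_\eps}} \bigr\rvert \leq \abs{\d(\abs{u_\eps})}$ from the orthogonal decomposition and the reverse triangle inequality. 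The first term is $o(1)$; the second is bounded by $\abs{\log\eps}^{-2}\cdot\bigl(1 - \abs{\log\eps}^{-2}\bigr)^{-1}\norm{j(u_\eps,\,A_\eps)}_{L^p(M)} = o(1)$ by~\eqref{eq:Lp-bound-j_eps}. Hence $\norm{\D_{A_\eps} u_\eps}_{L^p(M_\eps)} = \norm{j(u_\eps,\,A_\eps)}_{L^p(M_\eps)} + o(1) = \norm{j_*}_{L^p(M)} + o(1)$, using $\abs{N_\eps} \to 0$ and the $L^p$-convergence $j(u_\eps,\,A_\eps) \to j_*$ to replace integration over $M_\eps$ by integration over $M$. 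Finally $\norm{\D_{A_\eps} u_\eps}_{L^p(N_\eps)} \to 0$ as above, so $\norm{\D_{A_\eps} u_\eps}_{L^p(M)} \to \norm{j_*}_{L^p(M)}$, which is~\eqref{eq:conv-D_A_eps-u_eps-j_*}.

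The only mildly delicate point — and the main obstacle — is the careful bookkeeping on the vanishing set $N_\eps$ (and the null set $\{u_\eps = 0\}$ inside it), where the pointwise identity for $\abs{\D_{A_\eps} u_\eps}^2$ is unavailable and one must instead lean entirely on the $\eps^{-1}$ pointwise bound~\eqref{eq:L-infty-bound-D_Au} against the polynomially-small measure estimate~\eqref{eq:Lp-bound-comp6}; everything else is a routine combination of the already-established $L^p$-bounds and convergences together with the elementary orthogonality $\abs{\D_{A_\eps} u_\eps}^2 = \abs{\d(\abs{u_\eps})}^2 + \abs{u_\eps}^{-2}\abs{j(u_\eps,\,A_\eps)}^2$.
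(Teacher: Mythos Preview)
Your proof is correct and follows essentially the same approach as the paper's: the split $M = M_\eps \cup N_\eps$, the use of the orthogonal decomposition~\eqref{eq:dec-D_Au} on $M_\eps$ combined with the bounds~\eqref{eq:Lp-bound-abs-du-eps}, \eqref{eq:Lp-bound-j_eps} and~\eqref{eq:conv-Lp-rho-eps}, and the $L^\infty$-plus-measure estimate~\eqref{eq:Lp-bound-comp7} on $N_\eps$. Your argument for~\eqref{eq:conv-D_A_eps-u_eps-j_*} via the direct pointwise inequality $\bigl|\,\abs{\D_{A_\eps} u_\eps} - \abs{j(u_\eps,A_\eps)}/\abs{u_\eps}\,\bigr| \leq \abs{\d(\abs{u_\eps})}$ is in fact slightly more streamlined than the paper's liminf/limsup squeeze, but the substance is identical.
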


\begin{proof}
	Define the sets $M_\eps$ and $N_\eps$ as in~\eqref{eq:def-rho-M-N}. 
	
	\medskip
	\noindent
	\emph{Proof of~\eqref{eq:Lp-bound-D_Au}.}
	 Let~$p$ be fixed, such that~$1 \leq p < \frac{n}{n - 1}$.
	 If~\eqref{eq:Lp-bound-D_Au} were false, we could
	 extract a (non-relabelled) subsequence)
	 in such a way that 
	 \begin{equation} \label{ciao1}
	  \norm{\D_{A_\eps} u_\eps}_{L^p(M)} \to +\infty
	  \qquad \textrm{as } \eps\to 0
	 \end{equation}
	 Let $\eps_2 > 0$ be such that 
	 $\frac{1}{\abs{\log \eps_2}^2} = \frac{1}{2}$ and notice that 
	 $\abs{u_\eps} > \frac{1}{2}$ on $M_\eps$ for any $\eps$ with 
	 $0 < \eps < \eps_2$. 
	 Then, by~\eqref{eq:dec-D_Au}, 
	 \[
	 	\abs{\D_{A_\eps} u_\eps} \leq 2 \left( \abs{\d(\abs{u_{\eps}})} + \abs{j(u_\eps,\,A_\eps)}) \right) \qquad \mbox{on } M_\eps,
	 \]
	 whence
	 \begin{equation}\label{eq:Lp-bound-D_Au-comp1}
	 	\abs{\D_{A_\eps} u_\eps}^p \leq 2^p \left( \abs{\d(\abs{u_{\eps}})}^p + \abs{j(u_\eps,\,A_\eps )}^p \right) \qquad \mbox{on } M_\eps,
	 \end{equation}
	 for any $\eps$ with $0 < \eps < \eps_2$. 
	 Combining~\eqref{eq:Lp-bound-D_Au-comp1} 
	 with~\eqref{eq:Lp-bound-abs-du-eps} and~\eqref{eq:Lp-bound-j_eps}, 
	 we obtain that for any $p$ with  
	 $1 \leq p < \frac{n}{n-1}$ there exists a constant $C_p^\prime > 0$ 
	 such that
	 \begin{equation}\label{eq:Lp-bound-comp9}
	 	\norm{\D_{A_\eps} u_\eps}_{L^p(M_\eps)} \leq C_p^\prime
	 \end{equation}
	 for any $\eps > 0$. 
	 By~\eqref{eq:Lp-bound-comp9} and \eqref{eq:Lp-bound-comp7}, 
	 we deduce that~$\D_{A_\eps}u_\eps$ is bounded in~$L^p(M)$.
	 This contradicts~\eqref{ciao1} and proves~\eqref{eq:Lp-bound-D_Au}.
	 
	 \medskip
	 \noindent
	 \emph{Proof of~\eqref{eq:conv-D_A_eps-u_eps-j_*}.}
	 By Lemma~\ref{lemma:Lp-bound-prejac}, \eqref{eq:Lp-bound-D_Au}, 
	 Lemma~\ref{lemma:conv-potential}, 
	 and the Dominated Convergence Theorem,
	 \begin{equation}\label{eq:j1}
	 \begin{split}
	 	\int_M \abs{j_*}^p \,\vol_g &\leq \liminf_{\eps \to 0} \int_{M_\eps} \frac{\abs{j(u_\eps,\,A_\eps)}^p}{\abs{u_\eps}^p}\,\vol_g \\
	 	&\leq\limsup_{\eps \to 0} \int_{M_\eps} \frac{\abs{j(u_\eps,\,A_\eps)}^p}{\abs{u_\eps}^p}\,\vol_g \\
	 	&\leq \limsup_{\eps \to 0} \left( 1 - \frac{1}{\abs{\log \eps}^2} \right) \int_{M_\eps} \abs{j(u_\eps,\,A_\eps)}^p\,\vol_g \\
	 	 &\leq \lim_{\eps \to 0} \int_M \abs{j(u_\eps,\,A_\eps)}^p\,\vol_g = \int_M \abs{j_*}^p\,\vol_g.
	 \end{split} 
	 \end{equation}
	 Thus, by \eqref{eq:dec-D_Au} and \eqref{eq:j1},
	 \begin{equation}
	 \begin{split}
	 	\norm{j_*}_{L^p(M)} &= \lim_{\eps \to 0} \left(\int_{M_\eps} \frac{\abs{j(u_\eps,\,A_\eps)}^p}{\abs{u_\eps}^p}\,\vol_g \right)^{1/p} \\
	 	&\leq \lim_{\eps \to 0} \left(\int_{M_\eps} \abs{\D_{A_\eps} u_\eps}^p\,\vol_g\right)^{1/p} \\
	 	&\leq \lim_{\eps \to 0} \left\{ \left(\int_{M_\eps} \abs{\d \rho_\eps}^p \,\vol_g\right)^{1/p} + \left( 1 - \frac{1}{\abs{\log\eps}^2} \right) \left(\int_{M_\eps} \abs{j(u_\eps,\,A_\eps)}^p\,\vol_g \right)^{1/p} \right\} \\
	 	&\stackrel{\eqref{eq:conv-Lp-rho-eps}}{=} \lim_{\eps \to 0} \left(\int_{M_\eps}  \abs{j(u_\eps,\,A_\eps)}^p\,\vol_g \right)^{1/p} \\
	 	&\stackrel{\eqref{eq:Lp-conv-j_eps-j*}}{=} \norm{ j_*}_{L^p(M)},
	 \end{split}
	\end{equation}
	for any $p$ with $1 \leq p < \frac{n}{n-1}$. 	 
	But, by \eqref{eq:Lp-bound-comp7} and the Dominated Convergence Theorem,
	\[
	\lim_{\eps \to 0} \left(\int_{M_\eps} \abs{\D_{A_\eps} u_\eps}^p\,\vol_g\right)^{1/p} = \lim_{\eps \to 0} \left(\int_{M} \abs{\D_{A_\eps} u_\eps}^p\,\vol_g\right)^{1/p},
	\]	 
	for any $p$ with $1 \leq p < \frac{n}{n-1}$, 
	hence \eqref{eq:conv-D_A_eps-u_eps-j_*} follows by~\eqref{eq:j1}.
\end{proof}


Finally, the following lemma is classical in Ginzburg-Landau theory.
\begin{lemma}\label{lemma:conv-potential}
	Let $\{ (u_\eps,\,A_\eps) \} \subset \mathcal{E}$
  be a sequence satisfying the logarithmic bound \eqref{hp:logenergy}. Then, 
  \begin{equation}\label{eq:conv-potential}
  	\lim_{\eps \to 0} \int_M \left(1-\abs{u_\eps}^2\right)^2\,\vol_g = 0.
  \end{equation}
  In particular, $\abs{u_\eps} \to 1$ in $L^p(M)$ as $\eps \to 0$, 
  for any finite $p \geq 1$.
\end{lemma}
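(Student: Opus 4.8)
The plan is straightforward, as this is a classical fact. The identity \eqref{eq:conv-potential} is immediate from the energy bound alone: the last term of the energy density \eqref{energydensity} equals $\frac{1}{4\eps^2}(1-\abs{u_\eps}^2)^2$, so \eqref{hp:logenergy} gives
\[
 \int_M \bigl(1-\abs{u_\eps}^2\bigr)^2\,\vol_g \le 4\eps^2\,G_\eps(u_\eps,\,A_\eps) \le 4\Lambda\,\eps^2\abs{\log\eps},
\]
and $\eps^2\abs{\log\eps}\to 0$ as $\eps\to 0$.

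For the $L^p$ statement I would deduce it from \eqref{eq:conv-potential} by elementary pointwise comparison. First I would record the pointwise inequality $\bigl|\abs{u_\eps}-1\bigr|\le\bigl|1-\abs{u_\eps}^2\bigr|$, which follows from the factorisation $1-\abs{u_\eps}^2=(1-\abs{u_\eps})(1+\abs{u_\eps})$ together with $1+\abs{u_\eps}\ge 1$. Integrating the square and using the first part gives $\abs{u_\eps}\to 1$ in $L^2(M)$; since $M$ is compact, Hölder's inequality then yields convergence in $L^p(M)$ for every $1\le p\le 2$.

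To reach exponents $p>2$, I would invoke the maximum principle \eqref{maxprinc} from Proposition~\ref{prop:L-infty-bound-u} --- available for all the sequences of critical points to which this lemma is applied in the paper --- which gives $\abs{u_\eps}\le 1$ a.e., hence $\bigl|\abs{u_\eps}-1\bigr|^p\le\bigl|\abs{u_\eps}-1\bigr|^2$ a.e.\ and the conclusion for every finite $p$. (For a general finite-energy sequence without the maximum principle one can instead split $M$ into the region where $\abs{u_\eps}\le 2$, on which again $\bigl|\abs{u_\eps}-1\bigr|^p\le\bigl|\abs{u_\eps}-1\bigr|^2$, and its complement, whose measure as well as $\int\abs{u_\eps}^4\,\vol_g$ is controlled by $\int(1-\abs{u_\eps}^2)^2\,\vol_g$ via Young's inequality, recovering $L^p$-convergence in the full range of exponents for which finiteness of the energy yields integrability.)

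I do not expect any real obstacle here; the only mildly delicate point is the passage from $L^2$ to larger $L^p$, which is handled either by the a priori $L^\infty$-bound for critical points or by the $L^4$-control that comes for free from the finiteness of $G_\eps$.
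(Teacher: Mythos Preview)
Your proposal is correct and follows essentially the same route as the paper: the bound \eqref{hp:logenergy} gives \eqref{eq:conv-potential} directly, and the $L^p$-convergence is obtained from this together with the pointwise bound \eqref{maxprinc} by interpolation (your inequality $\bigl|\abs{u_\eps}-1\bigr|^p\le\bigl|\abs{u_\eps}-1\bigr|^2$ when $\abs{u_\eps}\le 1$ is exactly that interpolation). Your observation that \eqref{maxprinc} strictly speaking requires criticality, and your sketch of an alternative argument without it, are both accurate and go slightly beyond what the paper writes.
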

\begin{proof}
 Equation~\eqref{eq:conv-potential} is an immediate consequence of~\eqref{hp:logenergy}. The $L^p(M)$-convergence $\abs{u_\eps}\to 1$
 follows by~\eqref{hp:logenergy} and~\eqref{maxprinc}, by interpolation.
\end{proof}


\section{Compactness up to gauge equivalence} \label{sec:compactness}

In this section, we prove Theorem~\ref{th:uA}. 
We assume that
$\{(u_\eps,\,A_\eps)\} \subset \mathcal{E}$
is a sequence of critical points of $G_\eps$ satisfying \eqref{hp:logenergy}  
and that each $A_\eps$ takes the form~\eqref{globalCoulomb}, i.e., 
that each $A_\eps$ writes as 
\[
	A_\eps = \d^* \psi_\eps + \zeta_\eps
\]
for an exact 2-form $\psi_\eps$ and a harmonic form $\zeta_\eps$ satisfying 
$\norm{\zeta_\eps}_{L^\infty(M)} \leq C_M$, where $C_M$ depends only on 
$M$.
As already mentioned in the Introduction, this is always possible. Indeed, a 
family of gauge transformations $\{\Phi_\eps\} \subset W^{2,2}(M,\,\S^1)$ 
with this property can be obtained by applying, for each $\eps > 0$, 
Lemma~2.10 in~\cite{CDO1}. As shown by (the proof of)
Proposition~3.1 in~\cite{CDO1}, if $A_\eps$ takes the 
form~\eqref{globalCoulomb}, we can find 
$A_* \in W^{2,p}(M,\,\T^*M)$ for any $p$ with 
$1 \leq p < \frac{n}{n-1}$ such that, possibly 
up to extraction of a (not relabeled) subsequence, 
\begin{equation}\label{eq:conv-gauged-A_eps-W^{2,p}}
	\Phi_\eps \cdot A_\eps \to A_* \quad \mbox{strongly in } W^{2,p}(M) \quad \mbox{as } \eps \to 0, \quad \mbox{for any } p \mbox{ with } 1 \leq p < \frac{n}{n-1}.
\end{equation}
Thus, to prove Theorem~\ref{th:uA}, 
it is enough to prove that there 
exists $u_*$ belonging to $W^{1,p}(M,\,E)$ for any $p$ with $1 \leq p < \frac{n}{n-1}$, 
such that, up to a subsequence, 
\begin{equation}\label{eq:conv-gauged-u_eps-W^{1,p}}
	\Phi_\eps u_\eps \to u_* \quad \mbox{strongly in } W^{1,p}(M) \quad \mbox{as } \eps \to 0, \quad \mbox{for any } p \mbox{ with } 1 \leq p < \frac{n}{n-1}.
\end{equation}
In~\eqref{eq:conv-gauged-A_eps-W^{2,p}}, \eqref{eq:conv-gauged-u_eps-W^{1,p}}, $\Phi_\eps u_\eps$, $\Phi_\eps \cdot A_\eps$ are defined as in~\eqref{eq:gauge-transf}; i.e., $\Phi_\eps u_\eps$  
is given by fibre-wise action of $\S^1$ on $E$ and 
$\Phi_\eps \cdot A_\eps = A_\eps -i \Phi_\eps^{-1} \d \Phi_\eps$.
Since in the rest of this section we will always work 
in the gauge~\eqref{globalCoulomb}, we will to  omit the gauge transformations 
$\Phi_\eps$ from the notation; i.e., while writing $A_\eps$ and $u_\eps$, we 
will actually mean $\Phi_\eps \cdot A_\eps$ and $\Phi_\eps u_\eps$, 
respectively.
 

The following corollary is a straightforward consequence of Lemma~\ref{lemma:bound-DA_eps-u_eps}.
\begin{corollary}\label{cor:conv-abs-u_eps}
	Let 
	$\{(u_\eps,\,A_\eps)\} \subset \mathcal{E}$ 
	be a sequence of critical points of $G_\eps$ 
	satisfying the logarithmic energy bound~\eqref{hp:logenergy}. 
	Assume, in addition, that~\eqref{globalCoulomb} holds.
	Then, there exists a section $u_* \in W^{1,p}(M,\,E)$ 
	for any $p$ with $1 \leq p < \frac{n}{n-1}$ satisfying 
	$\abs{u_*} = 1$ almost everywhere on $M$ such that, 
	up to extraction of a (not relabelled) subsequence, for any $p$ with
	$1 \leq p < \frac{n}{n-1}$ there holds
	\begin{align}
		& u_\eps \rightharpoonup u_*  \quad \mbox{weakly in } W^{1,p}(M), \label{eq:weak-conv-u_eps} \\
		& u_\eps \to u_* \quad \mbox{strongly in } L^p(M), \label{eq:strong-Lp-conv-u_eps} \\
		& \abs{u_\eps} \to 1 \quad \mbox{strongly in } W^{1,p}(M), \label{eq:strong-W1p-con-abs-u_eps}
	\end{align}
	as $\eps \to 0$.
\end{corollary}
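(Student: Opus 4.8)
The plan is to deduce everything from the uniform $L^p$-bound on $\D_{A_\eps}u_\eps$ in Lemma~\ref{lemma:bound-DA_eps-u_eps} together with the Coulomb-gauge convergence~\eqref{eq:conv-gauged-A_eps-W^{2,p}}, by a standard weak/strong compactness argument (recall that, by the convention adopted in this section, $u_\eps$ and $A_\eps$ already denote the gauged quantities, and that Lemma~\ref{lemma:bound-DA_eps-u_eps} applies to any sequence of critical points satisfying~\eqref{hp:logenergy}).

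First I would promote the bound on $\D_{A_\eps}u_\eps$ to a bound on $u_\eps$ in $W^{1,p}(M,E)$. Writing $\D_0 u_\eps = \D_{A_\eps}u_\eps + iA_\eps u_\eps$ and using $\norm{u_\eps}_{L^\infty(M)}\le1$ from Proposition~\ref{prop:L-infty-bound-u}, one has $\norm{A_\eps u_\eps}_{L^p(M)}\le\norm{A_\eps}_{L^p(M)}$, and the latter is bounded uniformly in $\eps$ for every $p$ with $1\le p<\frac{n}{n-1}$, since~\eqref{eq:conv-gauged-A_eps-W^{2,p}} forces $\{A_\eps\}$ to be bounded in $W^{2,p}(M)$, hence in $L^p(M)$. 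Together with~\eqref{eq:Lp-bound-D_Au} this gives $\norm{\D_0 u_\eps}_{L^p(M)}\lesssim1$, and since $\norm{u_\eps}_{L^p(M)}\le1$ as well, the sequence $\{u_\eps\}$ is bounded in $W^{1,p}(M,E)$ for every $p$ with $1\le p<\frac{n}{n-1}$.

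Next I would extract limits. Fixing $p_0\in(1,\frac{n}{n-1})$, reflexivity of $W^{1,p_0}(M,E)$ yields a subsequence with $u_\eps\rightharpoonup u_*$ weakly there, and Rellich--Kondrachov (note $p_0<n$) yields a further subsequence with $u_\eps\to u_*$ strongly in $L^{p_0}(M)$; interpolating the latter with $\norm{u_\eps}_{L^\infty(M)}\le1$ gives $u_\eps\to u_*$ strongly in $L^p(M)$ for every finite $p$, which is~\eqref{eq:strong-Lp-conv-u_eps}. Since $\{\D_0 u_\eps\}$ is bounded in $L^p(M)$ for every $p<\frac{n}{n-1}$ and converges to $\D_0 u_*$ in the sense of distributions, it converges weakly in each such $L^p(M)$ --- for $p>1$ by reflexivity, and for $p=1$ because boundedness in $L^{p_0}$ with $p_0>1$ forces equi-integrability; combined with the $L^p$-convergence of $u_\eps$ this gives $u_\eps\rightharpoonup u_*$ weakly in $W^{1,p}(M,E)$ for every $p$ with $1\le p<\frac{n}{n-1}$, that is~\eqref{eq:weak-conv-u_eps}.

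Finally I would identify $\abs{u_*}$ and prove~\eqref{eq:strong-W1p-con-abs-u_eps}. By Lemma~\ref{lemma:conv-potential}, $\abs{u_\eps}\to1$ in $L^p(M)$ for every finite $p$, while Proposition~\ref{prop:conv-Lp-abs-u-eps} gives $\d(\abs{u_\eps})\to0$ in $L^p(M)$ for every $p$ with $1\le p<2$; since $\frac{n}{n-1}\le2$ for $n\ge3$, both hold on $1\le p<\frac{n}{n-1}$, yielding~\eqref{eq:strong-W1p-con-abs-u_eps}. Passing to a further subsequence so that $u_\eps\to u_*$ and $\abs{u_\eps}\to1$ almost everywhere on $M$, one obtains $\abs{u_*}=1$ a.e. The only step requiring a little care is the endpoint $p=1$ in the weak $W^{1,p}$-convergence, handled by the equi-integrability remark above; everything else is routine.
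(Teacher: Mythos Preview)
Your proof is correct and follows essentially the same approach as the paper: both derive the $W^{1,p}$-bound on $u_\eps$ from Lemma~\ref{lemma:bound-DA_eps-u_eps}, the Coulomb-gauge convergence~\eqref{eq:conv-gauged-A_eps-W^{2,p}} and the $L^\infty$-bound on $u_\eps$, then extract weak/strong limits by Rellich--Kondrachov, identify $\abs{u_*}=1$ via Lemma~\ref{lemma:conv-potential}, and obtain~\eqref{eq:strong-W1p-con-abs-u_eps} from Proposition~\ref{prop:conv-Lp-abs-u-eps}. You are slightly more explicit than the paper about the $p=1$ endpoint (handling it via equi-integrability) and about interpolating with the $L^\infty$-bound to upgrade the strong $L^p$-convergence; these are minor refinements of the same argument.
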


\begin{proof}
	By Lemma~\ref{lemma:bound-DA_eps-u_eps}, 
	\eqref{eq:conv-gauged-A_eps-W^{2,p}}, and the uniform bound 
	$\norm{u_\eps}_{L^\infty(M)} \leq 1$ (given by 
	Proposition~\ref{prop:L-infty-bound-u}), it follows that
	the sequence $\{\D_0 u_\eps\}$ is uniformly bounded in 
	$L^p(M,\,\T^*M \otimes E)$, for any $p$ with 
	$1 \leq p < \frac{n}{n-1}$. Consequently, the sequence $\{u_\eps\}$ 
	is uniformly bounded in 
	$W^{1,p}(M,\,E)$, for any $p$ with 
	$1 \leq p < \frac{n}{n-1}$. Hence,
	there exists 
	a section $u_* \in W^{1,p}(M,\,E)$ for any $p$ with 
	$1 \leq p < \frac{n}{n-1}$ such that up to a 
	(not relabelled) subsequence 
	\[
		u_\eps \rightharpoonup u_*  \quad \mbox{weakly in } W^{1,p}(M), 
		\quad \mbox{for any } p \mbox{ with } 1 \leq p < \frac{n}{n-1}
		\quad \mbox{as } \eps \to 0,
	\]
	i.e., \eqref{eq:weak-conv-u_eps}. 
	Moreover, since $M$ is compact, by the Rellich-Kondrachov
	theorem it follows that, up to another (not relabelled) subsequence,
	\[
		u_\eps \to u_* \quad \mbox{strongly in } L^p(M), 
		\quad \mbox{for any } p \mbox{ with } 1 \leq p < \frac{n}{n-1}
		\quad \mbox{as } \eps \to 0,
	\]
	i.e., \eqref{eq:strong-Lp-conv-u_eps}, 
	and so, up to still another (not relabelled) subsequence, 
	$\abs{u_\eps} \to \abs{u_*}$ pointwise almost everywhere on $M$ as $\eps \to 0$. 
	Then, Lemma~\ref{lemma:conv-potential} 	implies
	\begin{equation}\label{eq:abs-u*}
		\abs{u_*} = 1 \quad \mbox{a.e. on } M.
	\end{equation}
	By~\eqref{eq:abs-u*}, $\d(\abs{u_*}) = 0$ almost everywhere on $M$. Therefore, 
	\[
		\norm{ \d(\abs{u_*}) - \d(\abs{u_\eps})}_{L^p(M)} 
		= \norm{\d(\abs{u_\eps})}_{L^p(M)} \stackrel{\eqref{eq:conv-Lp-rho-eps}}{\longrightarrow} 0 
		\quad \mbox{as } \eps \to 0,
	\]
	for any $p$ with $1 \leq p < \frac{n}{n-1}$. Along 
	with~\eqref{eq:strong-Lp-conv-u_eps}, this 
	proves~\eqref{eq:strong-W1p-con-abs-u_eps}, and the corollary follows.
\end{proof}

\begin{remark}
	By \cite[Proposition~3.11 and Remark~3.12]{CDO1}, 
	the uniform bound~\eqref{maxprinc}, 
	Corollary~\ref{cor:conv-abs-u_eps}, and the dominated 
	convergence theorem, it follows that 
	\begin{equation}\label{eq:strong-conv-A_eps-u_eps}
		A_\eps u_\eps \to A_* u_* \quad \mbox{strongly in } 
		L^p(M) \quad \mbox{as } \eps \to 0, 
		\quad \mbox{for any } p \mbox{ with }
		1 \leq p < \frac{n}{n-1}.
	\end{equation}
	Indeed, for any $p$ with $1 \leq p < \frac{n}{n-1}$, we have
	\[
	\begin{split}
		\norm{A_\eps u_\eps - A_* u_*}_{L^p(M)} &=  
		\norm{ (A_\eps - A_*) u_\eps - A_*( u_* - u_\eps)}_{L^p(M)} \\
		&\stackrel{\eqref{maxprinc}}{\leq} \norm{A_\eps - A_*}_{L^p(M)} + 
		\norm{\abs{A_*}\abs{u_* - u_\eps}}_{L^p(M)}.
	\end{split}
	\]
	The first term at right hand side vanishes as $\eps \to 0$, because 
	$A_\eps \to A_*$ in $W^{2,p}(M)$, for any $p$ with $
	1 \leq p < \frac{n}{n-1}$. On the other hand, since 
	$\abs{A_*}\abs{u_* - u_\eps} \leq 2 \abs{A_*}$ 
	(again by~\eqref{maxprinc}) and 
	$u_\eps \to u_*$ pointwise a.e.~in $M$ 
	(by~\eqref{eq:strong-Lp-conv-u_eps}), 
	the dominated convergence theorem yields that 
	$\norm{\abs{A_*}\abs{u_* - u_\eps}}_{L^p(M)} \to 0$ as $\eps \to 0$,
	proving~\eqref{eq:strong-conv-A_eps-u_eps}. 
\end{remark}

We are now in the position to prove Theorem~\ref{th:uA}.
\begin{proof}[Proof of Theorem~\ref{th:uA}]
	Since $A_\eps$ satisfies \eqref{globalCoulomb},
	Equations~\eqref{eq:weak-conv-u_eps} 
	and~\eqref{eq:strong-conv-A_eps-u_eps} imply
	\begin{equation}\label{eq:weak-conv-D_A_eps-u_eps}
		\D_{A_\eps} u_\eps \rightharpoonup \D_{A_*} u_* \quad \mbox{weakly in }
		L^p(M) \quad \mbox{for any } p \mbox{ with } 1 \leq p < \frac{n}{n-1}
		\quad \mbox{as } \eps \to 0.
	\end{equation}
	We claim that
	\begin{equation}\label{eq:strong-conv-D_A_eps-u_eps}
		\int_M \abs{\D_{A_\eps} u_\eps}^p \,\vol_g \to 
		\int_M \abs{\D_{A_*} u_*}^p \,\vol_g \quad \mbox{for any } p \mbox{ with }
		1 \leq p < \frac{n}{n-1}, \quad \mbox{as } \eps \to 0,
	\end{equation}
	The condition~\eqref{eq:strong-conv-D_A_eps-u_eps} 
	implies the strong convergence 
	$\D_{A_\eps} u_\eps \to \D_{A_*} u_*$ in $L^p(M)$ for any 
	$1 \leq p < \frac{n}{n-1}$,
	and, in view of~\eqref{eq:strong-conv-A_eps-u_eps}, that 
	$\D_0 u_\eps \to \D_0 u_*$ strongly in $L^p(M)$, for any 
	$1 \leq p < \frac{n}{n-1}$. 
	
	Towards the proof of~\eqref{eq:strong-conv-D_A_eps-u_eps}, 
	we observe that, 
	by Lemma~\ref{lemma:dec-D_Au} and since 
	$\abs{u_*} = 1$ a.e. on $M$ (by 
	Corollary~\ref{cor:conv-abs-u_eps}), there holds
	\[
		\D_{A_*} u_* = i j(u_*,\,A_*) u_* \quad \mbox{a.e. on } M,
	\]
	where $j(u_*,\,A_*) = \ip{\D_{A_*} u_*}{iu_*}$. 
	On the other hand, Equations~\eqref{eq:weak-conv-D_A_eps-u_eps}, 
	\eqref{eq:strong-Lp-conv-u_eps}, and the uniform 
	bound $\norm{\D_{A_\eps}u_\eps}_{L^p(M)} \leq C_p$ given, 
	for any $p$ with $1 \leq p < \frac{n}{n-1}$, by 
	Lemma~\ref{lemma:bound-DA_eps-u_eps}, yield
	\begin{equation} \label{conv-j}
		j(u_\eps,\,A_\eps) = \ip{\D_{A_\eps} u_\eps}{i u_\eps} 
		\rightharpoonup \ip{\D_{A_*} u_*}{i u_*} = j(u_*,\,A_*) 
	\end{equation}
	weakly in $L^p(M)$ for any $p$ with $1 \leq p < \frac{n}{n-1}$ as 
	$\eps \to 0$. By the uniqueness of the weak limit, we must have 
	$j(u_*,\,A_*) = j_*$, where $j_*$ is given by 
	Lemma~\ref{lemma:Lp-bound-prejac}. 
	Recalling~\eqref{eq:conv-D_A_eps-u_eps-j_*}, 
	we deduce~\eqref{eq:strong-conv-D_A_eps-u_eps}.
	
	By passing to the limit in the Euler-Lagrange equation~\eqref{EL-A},
	with the help of~\eqref{compactnessF,J} and~\eqref{conv-j},
	we see that 
	\begin{equation} \label{j*F*}
	 j(u_*, \, A_*) = \d^* F_*
	\end{equation}
	in the sense of distributions in~$M$ ---
	that is, Equation~\eqref{weak-A*harmonic} holds. 
	In order to complete the proof, it only remains to show
	that~$u_*$ and~$A_*$ are smooth in~$M\setminus\spt\mu_*$,
	where~$\mu_*$ is the limit of the rescaled energy densities,
	given by Theorem~\ref{th:energy_density}.
	By passing to the limit as~$\eps\to 0$ in~\eqref{globalCoulomb}
	(with the help of~\cite[Proposition~5.6]{Scott} 
	or, say, \cite[Proposition~A.13]{CDO1}),
	we see that~$A_*$ can be written in the form
	\begin{equation} \label{globalCOulomblimit}
	 A_* = \d^*\psi_* + \zeta_*,
	\end{equation}
	where~$\psi_*$ is an exact~$2$-form and~$\zeta_*$
	a harmonic $1$-form on~$M$. By taking the differential
	of both sides, we obtain
	\begin{equation*}
	-\Delta\psi_* = \d\d^*\psi_* = \d A_* = F_* - F_0,
	\end{equation*}
	where~$F_0$ is the curvature of the reference connection~$\D_0$.
	Now, $F_*$ is a solution of the London equation~\eqref{limLondon},
	whose right-hand side~$2\pi J_*$ is a measure
	concentrated on~$\spt\mu_*$, due to~\cite[Theorem~A]{CDO1}.
	By elliptic regularity, it follows that~$F_*$ and~$\psi_*$
	are smooth in~$M\setminus\spt\mu_*$
	and hence, $A_*$ is.
	Now, let~$B\csubset M\setminus\spt\mu_*$ be a ball.
	We claim that there holds
	\begin{equation} \label{localquadratic}
	 \int_B \abs{\D_{A_*} u_*}^2 \,  \vol_g < +\infty
	\end{equation}
	Equation~\eqref{localquadratic} is an almost immediate consequence
	of Proposition~\ref{prop:small-ball-small-energy},
	which we will prove later on in Section~\ref{sec:clearing-out},
	combined with a covering argument.
	As~$A_*$ is smooth in a neighbourhood of~$\bar{B}$,
	from~\eqref{localquadratic} we deduce that~$u_*\in W^{1,2}(B, \, E)$.
	Therefore, up to identifying~$u_{*|B}$ with a complex-valued map,
	we can write~$u_* = e^{i\theta_*}$ in~$B$ for some scalar 
	function~$\theta_*\in W^{1,2}(B, \, \R)$.
	(This claim follows by lifting results that were 
	originally proven in~\cite{BethuelZheng}; 
	see also~\cite[Theorem~1.1]{BrezisMironescu-book}.)
	The reference connection can locally be written 
	as~$\D_0 = \d - i\gamma_0$ on~$B$, for some (smooth)
	$1$-form~$\gamma_0$. Then, a direct computation shows that
	\[
	 j(u_*, \, A_*) = \d\theta_* - \gamma_0 - A_*
	\]
	However, $j(u_*, \, A_*)$ is smooth in~$B$, due to~\eqref{j*F*}.
	Therefore, $\d\theta_*$ is smooth in~$B$, 
	and hence~$u_* = e^{i\theta_*}$ is.
	This completes the proof of the theorem.
\end{proof}

\begin{remark} \label{rk:strongA*harmonic}
 Let~$\varphi\in C^\infty_{\mathrm{c}}(M\setminus\spt\mu_*)$
 be a test function. From~\eqref{j*F*}, we know that 
 $\d^*j(u_*, \, A_*) = 0$ and hence, by integrating
 by parts,
 \[
  \begin{split}
   0 = \int_M \ip{j(u_*, \, A_*)}{\d\varphi} \, \vol_g
   &= \int_M \ip{\D_{A_*} u_*}{\d\varphi\otimes iu_*} \, \vol_g \\
   &= \int_M \ip{\D_{A_*} u_*}{\D_{A_*}(i\varphi u_*)} \, \vol_g
   = \int_M \ip{\D_{A_*}^*\D_{A_*} u_*}{i u_*}\varphi \, \vol_g
  \end{split}
 \]
 where~$\D_{A_*}^*$ is the~$L^2$-adjoint of~$\D_{A_*}$.
 As~$\varphi$ is arbitrary, it follows
 that~$\D_{A_*}^*\D_{A_*} u_* = \lambda u_*$ in~$M\setminus\spt\mu_*$,
 for some (real-valued) function~$\lambda\colon M\setminus\spt\mu_*\to\R$.
 Since~$\abs{u_*} = 1$, we obtain that $\lambda = \ip{\D_{A_*}^*\D_{A_*} u_*}{u_*}$ and hence, by Lemma~\ref{lemma:maxprinc1},
 \[
  \D_{A_*}^*\D_{A_*} u_* = \abs{\D_{A_*} u_*}^2 u_*
  \qquad \textrm{in } M\setminus\spt\mu_*.
 \]
 In other words, $u_*$ is an $A_*$-harmonic
 unit section away from the singular set~$\spt\mu_*$.
\end{remark}


\section{Monotonicity formula}\label{sec:monotonicity}

This section is devoted to the proof of the monotonicity formula, 
Theorem~\ref{thm:monotonicity}. 
For the reader's convenience, we first recall some notation.
Let~$(u_\eps, \, A_\eps)\in\mathcal{E}$
be a critical point of~$G_\eps$, fixed once and
for all throughout this section. For~$x_0\in M$
and~$r\in (0, \, \inj(M))$, we define
\begin{equation}\label{eq:def-E-rho}
		E_\eps(x_0,\,\rho) := 
		\int_{\mathcal{B}_\rho(x_0)} e_\eps(u_\eps,\,A_\eps)\,\vol_g.
\end{equation} 
Moreover, if $\nu$ is the exterior unit normal field 
to $\partial\mathcal{B}_\rho(x_0)$, we set
\begin{equation}\label{eq:def-X-rho}
	X_\eps(x_0,\,\rho) := \int_{\partial \mathcal{B}_\rho(x_0)} 
	\left( \abs{\D_{A_\eps,\nu} u_\eps}^2 + \abs{{\rm i}_\nu F_\eps}^2 \right) \vol_{\hat{g}}
	+ \frac{1}{2\eps^2\, \rho} \int_{\mathcal{B}_\rho(x_0)} \left(1 - \abs{u_\eps}^2\right)^2 \vol_g
\end{equation}
where~$F_\eps := F_{A_\eps}$,
$\mathrm{i}_\nu$ denotes the contraction against~$\nu$
(as defined in~\eqref{innerprod}) 
and~$\hat{g}$ is the metric induced by $g$ on 
$\partial \mathcal{B}_\rho(x_0)$.

We will achieve the proof of Theorem~\ref{thm:monotonicity}
through a series of lemmas. The first one is a well-known 
consequence of the inner variation equation for critical points
(see e.g.~\cite[Section~4]{PigatiStern}).

\begin{lemma}\label{lemma:PS-ineq}
	Let $(u_\eps,\,A_\eps) \in \mathcal{E}$ be 
	any critical point of $G_\eps$. For any $x_0 \in M$ and any 
	$\rho \in (0,\,\inj(M))$, there holds 
	\begin{equation}\label{eq:PS-ineq}
		\rho \int_{\partial \mathcal{B}_\rho(x_0)} 
		e_\eps(u_\eps,\,A_\eps) \,\vol_{\hat{g}} 
		\geq \rho X_\eps(x_0,\,\rho) 
		+(n-2- C(M) \rho^2) E_\eps(x_0,\,\rho)\\ 
		-\int_{\mathcal{B}_\rho(x_0)} \abs{F_\eps}^2 \,\vol_g 
	\end{equation}	 
	where $C(M) > 0$ is a constant depending only on $M$ and $g$.
\end{lemma}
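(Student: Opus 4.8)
The plan is to derive \eqref{eq:PS-ineq} as the first inner variation (Pohozaev identity) of $G_\eps$ at the critical point $(u_\eps,\,A_\eps)$, tested against a radial vector field, in the spirit of~\cite[Section~4]{PigatiStern}. Since $(u_\eps,\,A_\eps)$ is smooth up to a gauge transformation and every quantity in \eqref{eq:PS-ineq} is gauge-invariant, I would work in a gauge where $(u_\eps,\,A_\eps)$ is smooth. The starting point is the \emph{stress--energy tensor} of \eqref{magneticGL}: in a local orthonormal frame $\{\tau_1,\dots,\tau_n\}$ this is the symmetric $2$-tensor
\[
 (T_\eps)_{ij} := e_\eps(u_\eps,\,A_\eps)\,\delta_{ij} - \ip{\D_{A_\eps,\tau_i}u_\eps}{\D_{A_\eps,\tau_j}u_\eps} - \sum_{k=1}^n (F_\eps)_{ik}(F_\eps)_{jk} .
\]
Because $G_\eps$ is diffeomorphism-natural --- replacing $(u,A,g)$ by $(u\circ\Psi,\Psi^*A,\Psi^*g)$ leaves the energy unchanged --- and $(u_\eps,\,A_\eps)$ solves \eqref{EL-u}--\eqref{EL-A}, differentiating $t\mapsto G_\eps(u_\eps\circ\Psi_t,\,\Psi_t^*A_\eps)$ at $t=0$ along the flow $\Psi_t$ of an arbitrary vector field $X$ yields the inner-variation identity $\int_M \ip{T_\eps}{\nabla X}\,\vol_g = 0$ (equivalently, $T_\eps$ is divergence-free; the same identity also follows from the classical Pohozaev argument of testing \eqref{EL-u} against $\nabla_X u_\eps$ and \eqref{EL-A} against ${\rm i}_X F_\eps$ and summing).

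Next I would take $X$ radial and localised: fix $x_0\in M$, $\rho\in(0,\inj(M))$, set $r:=\dist_g(\cdot,\,x_0)$, and for small $\delta>0$ let $\phi=\phi_{\rho,\delta}\colon[0,\infty)\to[0,\infty)$ be Lipschitz with $\phi(r)=r$ on $[0,\rho]$, affine decreasing on $[\rho,\rho+\delta]$, and $\phi\equiv0$ on $[\rho+\delta,\infty)$; put $X:=\phi(r)\,\nabla r$. Then $\nabla X=\phi'(r)\,\d r\otimes\d r+\phi(r)\,\nabla^2 r$, and the Hessian comparison theorem --- with constants uniform on the compact manifold $M$ --- gives $\bigl|\nabla^2 r-\tfrac1r(g-\d r\otimes\d r)\bigr|\le C(M)\,r$ for $0<r<\inj(M)$. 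Hence on $\mathcal B_\rho(x_0)$, where $\phi(r)=r$ and $\phi'=1$, one has $\nabla X=g+R$ with $|R|\le C(M)\,r^2$ (it is precisely the choice $\phi(r)=r$, not $\phi\equiv1$, that upgrades the geometric error from $\mathcal O(r)$ to $\mathcal O(r^2)$), while on the shell $\mathcal B_{\rho+\delta}(x_0)\setminus\mathcal B_\rho(x_0)$ one has $\nabla X=\phi'(r)\,\d r\otimes\d r$ plus a term that stays bounded as $\delta\to0$ while $\phi'$ has size $\rho/\delta$. Inserting $X$ into the inner-variation identity: on $\mathcal B_\rho(x_0)$ the integrand is $\ip{T_\eps}{\nabla X}=\operatorname{tr}T_\eps+\ip{T_\eps}{R}$, and since $|T_\eps|\lesssim e_\eps$ the correction is bounded by $C(M)\rho^2 E_\eps(x_0,\rho)$; on the shell only $\phi'(r)\,(T_\eps)_{\nu\nu}$ survives the limit $\delta\to0$ (the bounded part integrates over a set of vanishing measure), and since $\int_\rho^{\rho+\delta}\phi'(r)\,h(r)\,\d r\to-\rho\,h(\rho)$ for continuous $h$, one obtains $-\rho\int_{\partial\mathcal B_\rho(x_0)}(T_\eps)_{\nu\nu}\,\vol_{\hat g}$. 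Thus
\[
 \rho\int_{\partial\mathcal B_\rho(x_0)}(T_\eps)_{\nu\nu}\,\vol_{\hat g}=\int_{\mathcal B_\rho(x_0)}\operatorname{tr}T_\eps\,\vol_g+\mathcal R,\qquad |\mathcal R|\le C(M)\rho^2\,E_\eps(x_0,\rho) .
\]

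It then remains to identify the two sides. Using $e_\eps=\tfrac12|\D_{A_\eps}u_\eps|^2+\tfrac12|F_\eps|^2+\tfrac1{4\eps^2}(1-|u_\eps|^2)^2$ and the antisymmetry of $F_\eps$, a direct computation gives $\operatorname{tr}T_\eps=(n-2)\,e_\eps-|F_\eps|^2+\tfrac1{2\eps^2}(1-|u_\eps|^2)^2$ and $(T_\eps)_{\nu\nu}=e_\eps-|\D_{A_\eps,\nu}u_\eps|^2-|{\rm i}_\nu F_\eps|^2$. Substituting these into the displayed identity and recalling the definition \eqref{eq:def-X-rho} of $X_\eps(x_0,\rho)$, the two copies of $\tfrac1{2\eps^2}\int_{\mathcal B_\rho(x_0)}(1-|u_\eps|^2)^2\,\vol_g$ cancel and one is left with
\[
 \rho\int_{\partial\mathcal B_\rho(x_0)}e_\eps\,\vol_{\hat g}=\rho\,X_\eps(x_0,\rho)+(n-2)\,E_\eps(x_0,\rho)-\int_{\mathcal B_\rho(x_0)}|F_\eps|^2\,\vol_g+\mathcal R ,
\]
whence $\mathcal R\ge-C(M)\rho^2E_\eps(x_0,\rho)$ yields \eqref{eq:PS-ineq}. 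The only genuinely delicate points here are the rigorous justification of the inner-variation identity for a critical point that is a priori merely of class $\mathcal E$ (handled by smoothness up to gauge, together with the fact that pulling back by $\Psi_t$ preserves $\mathcal E$), and the bookkeeping of the curvature error terms so that they collapse into the single factor $C(M)\rho^2$; everything else is a routine, if lengthy, computation. The truly hard part of the monotonicity theorem --- absorbing $\int_{\mathcal B_\rho}|F_\eps|^2$ and upgrading the exponent from $n-4$ to $n-2$ by means of elliptic estimates for $A_\eps$ --- is carried out in the lemmas that follow, not in this one.
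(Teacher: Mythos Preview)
Your argument is correct and is exactly the approach the paper defers to: the paper skips the proof of this lemma entirely and simply points to~\cite[Section~4]{PigatiStern}, whose inner-variation computation with the stress--energy tensor against a radial cutoff you have reproduced faithfully. One cosmetic remark: the potential term $\tfrac{1}{2\eps^2}\int_{\mathcal B_\rho}(1-|u_\eps|^2)^2$ does not ``cancel'' --- there is a single copy coming from $\operatorname{tr}T_\eps$, and it is absorbed into $\rho\,X_\eps(x_0,\rho)$ via the definition~\eqref{eq:def-X-rho}; your final displayed identity is nonetheless correct.
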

Lemma~\ref{lemma:PS-ineq} follows by the
very same computations contained in, e.g., \cite{PigatiStern}
(cf.~the computations starting at the beginning of Section~4
and ending right above Equation~(4.6) there).
Therefore, we skip its proof.

The second lemma provides an estimate on the term 
$\int_{\mathcal{B}_\rho(x_0)} \abs{F_\eps}^2 \,\vol_g$ in \eqref{eq:PS-ineq}.

\begin{lemma}\label{lemma:est-Feps}
	There exists $\bar{\rho} \in (0, \inj(M))$ and a constant $\lambda > 0$, 
	both depending only on $M$ and $g$, such that the following holds. If 
	$(u_\eps,\,A_\eps) \in \mathcal{E}$ is 
	any critical point of $G_\eps$, then for
	any $x_0 \in M$ and any $\rho \in (0,\,\bar{\rho})$ there holds
	\begin{equation}\label{eq:est-Feps}
		\norm{F_\eps}_{L^2(\mathcal{B}_\rho(x_0))} 
		\leq K \rho \norm{\D_{A_\eps} u_\eps}_{L^2(\mathcal{B}_\rho(x_0))}
		+ \lambda \sqrt{\rho} \norm{{\rm i}_\nu F_\eps}_{L^2(\partial \mathcal{B}_\rho(x_0))},
	\end{equation}
	where $K > 0$ is a constant depending only on $M$ and $g$.
\end{lemma}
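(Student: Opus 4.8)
The plan is to estimate the $L^2$-norm of $F_\eps$ on a geodesic ball by exploiting the reformulation of the Euler--Lagrange equation \eqref{EL-A} as an elliptic equation for the curvature. The starting point is the observation that $F_\eps$ is closed (being a curvature form), so that on a geodesic ball $\mathcal{B}_\rho(x_0)$ — which, if $\rho$ is smaller than a suitable $\bar\rho$ depending only on $(M,g)$, is contractible and has smooth boundary — we may apply a Hodge-type decomposition/Poincaré inequality for forms with prescribed boundary data. More precisely, I would write $F_\eps$ on $\mathcal{B}_\rho(x_0)$ in terms of its differential $\d F_\eps = 0$, its codifferential $\d^* F_\eps = j(u_\eps,\,A_\eps)$ (by \eqref{EL-A}), and its normal trace ${\rm i}_\nu F_\eps$ on $\partial\mathcal{B}_\rho(x_0)$. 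The relevant estimate is a first-order $L^2$ elliptic estimate of the form
\[
 \norm{F_\eps}_{L^2(\mathcal{B}_\rho(x_0))}
 \lesssim \rho\,\norm{\d^* F_\eps}_{L^2(\mathcal{B}_\rho(x_0))}
 + \rho\,\norm{\d F_\eps}_{L^2(\mathcal{B}_\rho(x_0))}
 + \sqrt{\rho}\,\norm{{\rm i}_\nu F_\eps}_{L^2(\partial\mathcal{B}_\rho(x_0))},
\]
where the powers of $\rho$ are dictated by scaling (the Poincaré-type constant on a ball of radius $\rho$ scales like $\rho$, and the trace term picks up a factor $\sqrt{\rho}$). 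Since $\d F_\eps = 0$ and $\d^* F_\eps = j(u_\eps,\,A_\eps)$ with $\abs{j(u_\eps,\,A_\eps)} \leq \abs{u_\eps}\,\abs{\D_{A_\eps} u_\eps} \leq \abs{\D_{A_\eps} u_\eps}$ by \eqref{maxprinc} and the definition \eqref{Jac}, the right-hand side reduces exactly to the claimed bound \eqref{eq:est-Feps}, with $K$ and $\lambda$ absorbing the geometric constants.

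The key technical ingredient is therefore the scaled Poincaré/trace inequality for differential forms, which is precisely the content of the appendices advertised in the introduction (Appendix~\ref{app:Poincare} on Poincaré and trace-type inequalities for forms, and Appendix~\ref{app:elliptic} on elliptic estimates for the Hodge Laplacian). Concretely, I would invoke the following: on a geodesic ball $\mathcal{B}_\rho(x_0)$ with $\rho < \bar\rho$, any $2$-form $\omega$ admits a decomposition $\omega = \d\alpha + \d^*\beta + h$ with $h$ harmonic (in the appropriate boundary-adapted sense), control $\norm{\d\alpha}_{L^2} + \norm{\d^*\beta}_{L^2} \lesssim \rho(\norm{\d^*\omega}_{L^2} + \norm{\d\omega}_{L^2})$ after the boundary contribution is accounted for, and control of $h$ by its normal trace with the $\sqrt\rho$ factor. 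Passing from a fixed reference ball to $\mathcal{B}_\rho(x_0)$ by the rescaling $x \mapsto x_0 + \rho\,x$ in normal coordinates — and noting that, for $\rho < \bar\rho$, the metric coefficients in these coordinates are uniformly comparable to the Euclidean ones with uniformly bounded derivatives — produces the stated $\rho$-dependence with constants depending only on $(M,g)$. The constant $\bar\rho$ is chosen so that geodesic balls of radius $\bar\rho$ are uniformly bi-Lipschitz to Euclidean balls and contractible.

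The main obstacle is obtaining the elliptic estimate with the \emph{correct and uniform} powers of $\rho$ and with the normal-trace term appearing exactly as $\sqrt\rho\,\norm{{\rm i}_\nu F_\eps}_{L^2(\partial\mathcal{B}_\rho)}$, rather than, say, a full $W^{1,2}(\partial\mathcal{B}_\rho)$ boundary norm. This forces one to work with the boundary value problem for the Hodge Laplacian under the \emph{normal} (absolute) boundary condition — the natural one here, since it is the normal trace ${\rm i}_\nu F_\eps$ that arises in the Pohozaev/inner-variation identity of Lemma~\ref{lemma:PS-ineq}, and it is exactly the same quantity that appears in $X_\eps(x_0,\,\rho)$ in \eqref{eq:def-X-rho}. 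Tracking the scaling of this mixed boundary estimate and verifying that the harmonic part contributes only through ${\rm i}_\nu F_\eps$ (and not through its tangential part, which would be uncontrolled) is the delicate point; this is presumably why the authors isolate these inequalities in a dedicated appendix. Once this estimate is in hand, the proof of Lemma~\ref{lemma:est-Feps} is a one-line application of \eqref{EL-A} together with the pointwise bound on $j(u_\eps,\,A_\eps)$.
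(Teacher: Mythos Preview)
Your approach is correct and, once unpacked, coincides with the paper's. The abstract elliptic estimate you invoke for closed $2$-forms --- $\norm{\omega}_{L^2(\mathcal{B}_\rho)} \lesssim \rho\,\norm{\d^*\omega}_{L^2(\mathcal{B}_\rho)} + \sqrt{\rho}\,\norm{{\rm i}_\nu\omega}_{L^2(\partial\mathcal{B}_\rho)}$ --- is not stated as such in Appendix~\ref{app:Poincare}; what appears there (Proposition~\ref{prop:gaffney-rho}) is the Poincar\'e and trace inequality for a \emph{$1$-form} in Coulomb gauge on the ball. The paper therefore proves your black box concretely by passing to a primitive: write $F_\eps = \d A_\eps$ on $\mathcal{B}_\rho$ with $\d^* A_\eps = 0$ and ${\rm i}_\nu A_\eps = 0$, integrate $\int_{\mathcal{B}_\rho}\abs{F_\eps}^2 = \int_{\mathcal{B}_\rho}\ip{F_\eps}{\d A_\eps}$ by parts to obtain $\int_{\mathcal{B}_\rho}\ip{j(u_\eps,A_\eps)}{A_\eps} + \int_{\partial\mathcal{B}_\rho} A_\eps\wedge(\star F_\eps)_{\T}$, and then apply Proposition~\ref{prop:gaffney-rho} to $A_\eps$ (giving $\norm{A_\eps}_{L^2}\lesssim\rho\norm{F_\eps}_{L^2}$ and $\norm{A_\eps}_{L^2(\partial)}\lesssim\sqrt{\rho}\,\norm{F_\eps}_{L^2}$). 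A local computation shows $(\star F_\eps)_{\T} = \star({\rm i}_\nu F_\eps\wedge\nu^\flat)$, so the boundary term is bounded by $\norm{A_\eps}_{L^2(\partial)}\norm{{\rm i}_\nu F_\eps}_{L^2(\partial)}$; dividing through by $\norm{F_\eps}_{L^2}$ gives~\eqref{eq:est-Feps}.

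Your Hodge-decomposition picture with a ``harmonic piece controlled by its normal trace'' is imprecise: on a contractible ball the relevant harmonic cohomology is trivial, and the $\sqrt{\rho}\,\norm{{\rm i}_\nu F_\eps}_{L^2(\partial)}$ contribution arises not from a harmonic remainder but directly from the boundary term in the integration by parts above. This is the main point where your sketch is vague; the paper's argument via the Coulomb-gauge primitive is exactly the way to make it rigorous.
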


\begin{proof}
	The idea is to exploit both the Euler-Lagrange equations and gauge 
	transformations to rewrite 
	$\int_{\mathcal{B}_\rho(x_0)} \abs{F_\eps}^2 \,\vol_g$ 
	in terms of quantities that can be estimated 
	more easily. To this purpose, we proceed step by step.
	
	\medskip
	\noindent
	\emph{Step~1.}
	Let $(u_\eps,\,A_\eps) \in (W^{1,2}\cap L^\infty)(M,\,E) \times W^{1,2}(M,\,\T^* M)$ 
	be any critical point of $G_\eps$. Then, the curvature 
	$F_\eps := F_{A_\eps}$ satisfies the Euler-Lagrange equation 
	\begin{equation}\label{eq:Feps-comp0}
		\d^* F_\eps = j(u_\eps, \, A_\eps)
	\end{equation}
	in the sense of distributions on $M$, and hence in particular on 
	$\mathcal{B}_\rho(x_0)$, for any fixed $x_0 \in M$. 
	Recall that, by definition, $\D_{A_\eps} = \D_0 -i A_\eps$, and that 
	locally we may write 
	$\D_0 = \d - i \gamma_0$, where $\gamma_0$ is a closed 1-form. Thus, on the 
	geodesic ball $\mathcal{B}_\rho(x_0)$ we can write 
	$\D_{A_\eps} = \d - i \widetilde{A}_\eps$, where 
	$\widetilde{A}_\eps := A_\eps + \gamma_0$, 
	and it holds $\d \widetilde{A}_\eps = F_\eps$. 
	\textbf{From now on, we shall work locally and, for convenience, 
	we will drop the tilde, writing $A_\eps$ to mean, actually, $\widetilde{A}_\eps$.}
	
	We fix Coulomb gauge in $\mathcal{B}_\rho(x_0)$ (without changing
	notation), so that $A_\eps$ satisfies 
	\begin{equation}\label{eq:Feps-comp1}
		\left\{
			\begin{aligned}
				& \d^* A_\eps = 0 & & \mbox{in } \mathcal{B}_\rho(x_0), \\
				& {\rm i}_\nu A_\eps = 0 & & \mbox{on } \partial \mathcal{B}_\rho(x_0), \\
				& \d A_\eps = F_\eps.
			\end{aligned}
			\right.
	\end{equation}
	(Such a gauge exists: see, e.g.~\cite[Proposition~2.1]{Orlandi}.)
	Moreover, $A_\eps$ is smooth up to the boundary of $\mathcal{B}_\rho(x_0)$, 
	and hence so is $F_\eps$ (recall that $F_\eps$ is gauge-invariant, so it 
	is actually globally smooth). 
	
	\medskip
	\noindent
	\emph{Step~2.}
	Using~\eqref{eq:Feps-comp1}, we write
	\[
		\abs{F_\eps}^2 \,\vol_g = \ip{F_\eps}{F_\eps} = \ip{F_\eps}{\d A_\eps}.
	\]
	Then, by the well-known integration-by-parts 
	formula for differential forms (see, for instance, \cite[(2.22) 
	and~(2.34)]{IwaniecScottStroffolini}), we obtain
	\begin{equation}\label{eq:Feps-comp2}
		\int_{\mathcal{B}_\rho(x_0)} \abs{F_\eps}^2 \,\vol_g = 
		\int_{\mathcal{B}_\rho(x_0)} \ip{j(u_\eps,\,A_\eps)}{A_\eps}
		+\int_{\partial \mathcal{B}_\rho(x_0)} A_\eps \wedge 
		(\star F_\eps)_{\T}
	\end{equation}
	where $(\star F_\eps)_{\T}$ denotes the part of $\star F_\eps$ tangent 
	to $\partial \mathcal{B}_{\rho}(x_0)$. We are going to estimate the 
	terms at the right hand side of~\eqref{eq:Feps-comp2}. 
	Before doing this, we notice that the Poincar\'e and Gaffney inequalities
	yield the estimate 
	\begin{equation}\label{eq:Feps-comp3}
		\norm{A_\eps}_{L^2(\mathcal{B}_\rho(x_0))} \lesssim 
		\rho \norm{F_\eps}_{L^2(\mathcal{B}_\rho(x_0))},
	\end{equation}
	up to constant depending only on $M$ and $g$
	(see Proposition~\ref{prop:gaffney-rho} in the appendix for more details).
	
	\medskip
	\noindent
	\emph{Step~3.}
	Let $\varphi = (x^1,\dots,x^n) : \mathcal{B}_{\rho}(x_0) \to \R^n$ be 
	coordinates so that $\d{x^1}, \dots, \d{x^{n-1}}$ are 1-forms tangent to 
	$\partial \mathcal{B}_\rho(x_0)$ and $\d{x^n}$ is normal to 
	$\mathcal{B}_\rho(x_0)$, in the sense of~\cite{IwaniecScottStroffolini}. 
	We may further assume that $\{ \d{x^1},\dots,\d{x^n} \}$ is an orthonormal 
	frame for $\T^*_p M$ at some \emph{prescribed} $p \in \partial \mathcal{B}_\rho(x_0)$.
	(The existence of such 	coordinate systems is well-known; see, for instance, 
	\cite[p.~49]{IwaniecScottStroffolini}.) Then, we can split
	\begin{equation}\label{eq:Feps-comp4}
		\star F_\eps = \star \left(\sum_{1 \leq i < j \leq n-1} F_{ij} 
		\d x^i \wedge \d x^j \right)
		+ \star \left(\sum_{j=1}^{n-1} F_{jn} \d x^j \wedge \d x^n \right)
	\end{equation}
	In particular, at $p$,
	\begin{equation}\label{eq:Feps-comp5}
	\begin{split}
	\star F_\eps(p) =  &\sum_{1 \leq i < j \leq n-1} F_{ij}(p) 
	\, \reallywidehat{\d x^i_p \wedge d x^j_p} 
	+ \sum_{j=1}^{n-1} F_{jn}(p) 
	\, \reallywidehat{\d x^j_p \wedge d x^n_p},
	\end{split}
	\end{equation}
	where the caret denotes omission
	(i.e., $\reallywidehat{\d x^i_p \wedge d x^j_p}$
	is the wedge product of~$\d x^k_p$ over all indices~$k$,
	in increasing order, except~$k=i$ and~$k=j$).
	Thus, the first terms at right hand side of~\eqref{eq:Feps-comp5} is the 
	normal part of $\star F_\eps(p)$ to $\partial \mathcal{B}_\rho(x_0)$ 
	while the second one is its tangent part, and we have 
	\begin{equation}\label{eq:Feps-comp6}
		\sum_{j=1}^{n-1} F_{jn}(p) 
	\, \reallywidehat{\d x^j_p \wedge \d x^n_p} 
	= \star \left( {\rm i}_\nu F_\eps \wedge \nu^\flat \right)_p,
	\end{equation}
	Since $p$ can be prescribed arbitrarily on 
	$\partial\mathcal{B}_\rho(x_0)$ and $\star F_\eps$ is invariant under 
	changes of coordinates, we deduce that
	\begin{equation}\label{eq:Feps-comp6b}
		( \star F_\eps )_{\rm T} = \star \left( {\rm i}_\nu F_\eps \wedge \nu^\flat \right)
	\end{equation}
	on $\partial \mathcal{B}_\rho(x_0)$.
	
	\medskip
	\noindent
	\emph{Step~4.}
	By~\eqref{eq:Feps-comp6b}, we obtain
	\begin{equation}\label{eq:Feps-comp7}
	\begin{split}
		\int_{\partial \mathcal{B}_\rho(x_0)} A_\eps \wedge (\star F_\eps)_{\T} 
		&= \int_{\partial \mathcal{B}_\rho(x_0)} \ip{A_\eps}{({\rm i}_\nu F_\eps) 
		\wedge \nu^\flat} \\
		&\leq \norm{A_\eps}_{L^2(\partial \mathcal{B}_\rho(x_0))} 
		\norm{{\rm i}_\nu F_\eps}_{L^2(\partial \mathcal{B}_\rho(x_0))}.
	\end{split}
	\end{equation}
	On the other hand, the trace inequality
	and Gaffney's inequality imply that	
	there exists a constant $\lambda > 0$, depending only on $(M,\,g)$,
	such that 
	\begin{equation}\label{eq:Feps-comp9}
		\norm{A_\eps}_{L^2(\partial \mathcal{B}_\rho(x_0))} \leq 
		\lambda \sqrt{\rho} \norm{F_\eps}_{L^2(\mathcal{B}_\rho(x_0))}.
	\end{equation}
	(see~\eqref{eq:trace} in the appendix for details).
	Recalling the easy estimate
	\begin{equation}\label{eq:Feps-comp10}
		\norm{j(u_\eps,\,A_\eps)}_{L^2(\mathcal{B}_\rho(x_0))} 
		\leq \norm{\D_{A_\eps} u_\eps}_{L^2(\mathcal{B}_\rho(x_0))},
	\end{equation}
	and combining~\eqref{eq:Feps-comp2}, \eqref{eq:Feps-comp7},
	\eqref{eq:Feps-comp9}, and~\eqref{eq:Feps-comp10} 
	with~\eqref{eq:Feps-comp2}, we obtain
	\begin{equation}\label{eq:Feps-comp11}
		\norm{F_\eps}^2_{L^2(\mathcal{B}_\rho(x_0))} \leq \left(K \rho 
		\norm{\D_{A_\eps} u_\eps}_{L^2(\mathcal{B}_\rho(x_0))} 
		+ \lambda \sqrt{\rho} \norm{{\rm i}_\nu F_\eps}_{L^2(\partial B_\rho(x_0))} \right)
		\norm{F_\eps}_{L^2(\mathcal{B}_\rho(x_0))} .
	\end{equation}
	As~\eqref{eq:est-Feps} is obvious if 
	$\norm{F_\eps}_{L^2(\mathcal{B}_\rho(x_0))} = 0$, we may assume 
	$\norm{F_\eps}_{L^2(\mathcal{B}_\rho(x_0))} > 0$. Then, dividing both sides 
	of~\eqref{eq:Feps-comp11} by 
	$\norm{F_\eps}_{L^2(\mathcal{B}_\rho(x_0))} $ yields the conclusion.
\end{proof}

\begin{prop}\label{prop:n-4-monotonicity}
	There exist a number $\alpha \in (0,\,2]$, a number 
	$\bar{\rho} \in (0,\,\inj(M))$ and a constant~$C>0$,
	depending only on $M$ and $g$, 
	such that the following holds. 
	If $(u_\eps,\,A_\eps) \in \mathcal{E}$ is  
	any critical point of $G_\eps$, then for any $x_0$ in $M$ and any 
	$\rho \in (0,\,\bar{\rho})$, there holds 
	\begin{equation}\label{eq:n-4-monotonicity}
		\frac{\d}{\d \rho}\left( \frac{e^{C\rho^2} 
		\, E_\eps(x_0, \, \rho)}{\rho^{n-4+\alpha}} \right) > 0
	\end{equation}
	where~$E_\eps(x_0, \, \rho)$ is defined by~\eqref{eq:def-E-rho}.
\end{prop}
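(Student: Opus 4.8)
Proposition~\ref{prop:n-4-monotonicity} follows by turning~\eqref{eq:n-4-monotonicity} into a differential inequality for $\rho\mapsto E_\eps(x_0,\,\rho)$ and then combining the Pohozaev-type bound of Lemma~\ref{lemma:PS-ineq} with the curvature estimate of Lemma~\ref{lemma:est-Feps}. First I would note that, writing the volume in geodesic polar coordinates centred at $x_0$, one has $\frac{\d}{\d\rho}E_\eps(x_0,\,\rho)=\int_{\partial\mathcal{B}_\rho(x_0)}e_\eps(u_\eps,\,A_\eps)\,\vol_{\hat g}$ for a.e.\ $\rho$, so that
\[
 \frac{\d}{\d\rho}\!\left(\frac{e^{C\rho^2}\,E_\eps(x_0,\,\rho)}{\rho^{n-4+\alpha}}\right)
 =\frac{e^{C\rho^2}}{\rho^{n-3+\alpha}}\left(\rho\,\frac{\d}{\d\rho}E_\eps(x_0,\,\rho)-\bigl(n-4+\alpha-2C\rho^2\bigr)E_\eps(x_0,\,\rho)\right)\!.
\]
Thus it suffices to prove $\rho\,\frac{\d}{\d\rho}E_\eps(x_0,\,\rho)\geq\bigl(n-4+\alpha-C\rho^2\bigr)E_\eps(x_0,\,\rho)$ for a suitable choice of $C$, since then the bracket above is $\geq C\rho^2 E_\eps(x_0,\,\rho)$, which is positive as soon as $E_\eps(x_0,\,\rho)>0$.

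The core of the argument is to absorb the term $\int_{\mathcal{B}_\rho(x_0)}\abs{F_\eps}^2\,\vol_g$, which enters~\eqref{eq:PS-ineq} with a minus sign and, scaling like $E_\eps(x_0,\,\rho)$, a priori costs two powers in the exponent: inserting only the crude bound $\int_{\mathcal{B}_\rho(x_0)}\abs{F_\eps}^2\leq 2E_\eps(x_0,\,\rho)$ (valid since $e_\eps\geq\tfrac12\abs{F_\eps}^2$) into~\eqref{eq:PS-ineq} recovers exactly the exponent $n-4$ of~\eqref{monotonicity-base}. To gain a positive $\alpha$ I would interpolate this crude bound against Lemma~\ref{lemma:est-Feps}: squaring~\eqref{eq:est-Feps} and using $\abs{\D_{A_\eps}u_\eps}^2\leq 2e_\eps$ together with $\int_{\partial\mathcal{B}_\rho(x_0)}\abs{{\rm i}_\nu F_\eps}^2\,\vol_{\hat g}\leq X_\eps(x_0,\,\rho)$ gives
\[
 \int_{\mathcal{B}_\rho(x_0)}\abs{F_\eps}^2\,\vol_g\leq 4K^2\rho^2\,E_\eps(x_0,\,\rho)+2\lambda^2\rho\,X_\eps(x_0,\,\rho),
\]
and the convex combination with weight $\theta$ on the crude bound and $1-\theta$ on this one yields
\[
 \int_{\mathcal{B}_\rho(x_0)}\abs{F_\eps}^2\,\vol_g\leq 2\theta\,E_\eps(x_0,\,\rho)+4K^2(1-\theta)\rho^2\,E_\eps(x_0,\,\rho)+2\lambda^2(1-\theta)\rho\,X_\eps(x_0,\,\rho).
\]
I would then choose $1-\theta:=\min\{1,\,(2\lambda^2)^{-1}\}$, so that the coefficient $2\lambda^2(1-\theta)$ of $\rho\,X_\eps(x_0,\,\rho)$ is at most $1$, and set $\alpha:=2(1-\theta)=\min\{2,\,\lambda^{-2}\}\in(0,\,2]$.

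Substituting this into~\eqref{eq:PS-ineq}, the term $\rho\,X_\eps(x_0,\,\rho)$ furnished by Lemma~\ref{lemma:PS-ineq} dominates the one produced by the curvature estimate (its coefficient being $\leq 1$), so it may be discarded; using also $1-\theta\leq 1$, one is left with
\[
 \rho\,\frac{\d}{\d\rho}E_\eps(x_0,\,\rho)\geq\bigl(n-2-2\theta-(C(M)+4K^2)\rho^2\bigr)E_\eps(x_0,\,\rho)=\bigl(n-4+\alpha-C_1\rho^2\bigr)E_\eps(x_0,\,\rho),
\]
with $C_1:=C(M)+4K^2$. Taking $C:=C_1$ and $\bar\rho$ equal to the radius supplied by Lemma~\ref{lemma:est-Feps} then yields~\eqref{eq:n-4-monotonicity}, the inequality being strict at every $(x_0,\,\rho)$ with $E_\eps(x_0,\,\rho)>0$; by unique continuation for the (elliptic) Euler--Lagrange system this holds unless $(u_\eps,\,A_\eps)$ has identically vanishing energy density. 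The main obstacle is precisely this curvature term: the boundary contribution in Lemma~\ref{lemma:est-Feps} carries the constant $\lambda$ coming from the trace and Gaffney inequalities on geodesic balls, which one cannot expect to have $\lambda\leq 1/\sqrt{2}$ in general; this is why the exponent gain $\alpha$ may be small, and why upgrading to the full $(n-2)$-monotonicity of Theorem~\ref{thm:monotonicity} requires instead the much sharper elliptic (Morrey--Campanato) control of $F_\eps$ developed later.
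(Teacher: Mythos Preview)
Your proof is correct and follows essentially the same approach as the paper: both combine Lemma~\ref{lemma:PS-ineq} with the squared form of Lemma~\ref{lemma:est-Feps} (via $(a+b)^2\leq 2a^2+2b^2$) to absorb the curvature integral, trading part of the $\rho X_\eps$ term against $\int_{\partial\mathcal{B}_\rho}\abs{{\rm i}_\nu F_\eps}^2$. The only cosmetic difference is bookkeeping: the paper applies the curvature bound directly to the term $\tfrac{\alpha}{2\rho}\int_{\mathcal{B}_\rho}\abs{F_\eps}^2$ left over after splitting $E_\eps$ into its three pieces, and then tunes $\alpha=\min\{2,\tfrac{1}{2\lambda^2},\tfrac{C(M)}{2K^2}\}$ with $C=2C(M)$; your convex-combination framing instead pushes the $K^2$-contribution into the constant $C=C(M)+4K^2$, so your $\alpha=\min\{2,\lambda^{-2}\}$ is unconstrained by $K$. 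Both are equivalent.
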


\begin{proof}
	Assume, for the moment being, $\alpha$ is any (fixed) nonnegative number. 
	Using~\eqref{eq:PS-ineq}, we compute
	\[
	\begin{split}
		&\frac{\d}{\d \rho} \left( \frac{e^{C\rho^2} \, 
		E_\eps(x_0, \, \rho)}{\rho^{n-4+\alpha}} \right) \\
		&= \frac{e^{C\rho^2}}{\rho^{n-4+\alpha}} \left[ 
		\left( 2 C \rho - \frac{n-4+\alpha}{\rho} \right) E_\eps(x_0,\,\rho) 
		+ \int_{\partial \mathcal{B}_\rho (x_0)} e_\eps(u_\eps,\, A_\eps) \right] \\
		&\stackrel{\eqref{eq:PS-ineq}}{\geq} 
		\frac{e^{C\rho^2}}{\rho^{n-4+\alpha}} 
		\left\{ \vphantom{\int_{\mathcal{B}_\rho(x_0)} \frac{\left( 1 - \abs{u_\eps}^2 \right)^2}{4 \eps^2} \,\vol_g}
		 \left( 2 C - C(M) \right) \rho E_\eps(x_0,\,\rho) + X_\eps(x_0,\,\rho) \right. \\
		& \left. + \frac{2 -\alpha}{\rho} \int_{\mathcal{B}_\rho(x_0)} 
		\left( \frac{1}{2} \abs{\D_{A_\eps} u_\eps}^2
		+ \frac{\left( 1 - \abs{u_\eps}^2 \right)^2}{4 \eps^2} \right)\,\vol_g - \frac{\alpha}{2 \rho} \int_{\mathcal{B}_\rho(x_0)} \abs{F_\eps}^2\,\vol_g \right\},
	\end{split}
	\]
	for any $\rho \in (0,\,\inj(M))$. Then, by~\eqref{eq:est-Feps} and Young's 
	inequality we obtain
	\[
	\begin{split}
		& \frac{\d}{\d \rho} \left( \frac{e^{C\rho^2}}{\rho^{n-4+\alpha}} 
		\int_{B_\rho(x_0)} e_\eps(u_\eps,\,A_\eps)\,\vol_g \right) \\
		&\geq \frac{e^{C\rho^2}}{\rho^{n-4+\alpha}} 
		\left\{ \vphantom{\int_{\mathcal{\partial B}_\rho(x_0)} \abs{{\rm i}_\nu F_\eps}^2 \,\vol_{\hat{g}}}
		 \left( 2 C - C(M) \right) \rho E_\eps(x_0,\,\rho) + X_\eps(\rho,\, x_0) \right. \\
		 & \left. + \frac{2 -\alpha}{\rho} \int_{\mathcal{B}_\rho(x_0)} 
		\frac{1}{2} \abs{\D_{A_\eps} u_\eps}^2 + \frac{\left( 1 - \abs{u_\eps}^2 \right)^2}{4 \eps^2} \,\vol_g \right.\\
		&\left. - 2 \alpha K^2 \rho \int_{\mathcal{B}_\rho(x_0)} \abs{\D_{A_\eps} u_\eps}^2\, \vol_g 
		-2 \alpha \lambda^2 \int_{\mathcal{\partial B}_\rho(x_0)} \abs{{\rm i}_\nu F_\eps}^2 \,\vol_{\hat{g}} \right\},
	\end{split}
	\]
	for any $\rho \in (0,\,\bar{\rho})$, with $\bar{\rho}$ as in 
	Lemma~\ref{lemma:est-Feps}, and 
	where $K$ and $\lambda$ are the same constants as in 
	Lemma~\ref{lemma:est-Feps} (both of which depend only on $M$ and $g$). 
	Thus, recalling the definition~\eqref{eq:def-X-rho} of $X_\eps(x_0,\,\rho)$ 
	and taking $\alpha = \min\left\{2,\,\frac{1}{2\lambda^2},\,\frac{C(M)}{2K^2} \right\}$ 
	and (for instance) $C = 2 C(M)$, the proposition follows.
\end{proof}



With Proposition~\ref{prop:n-4-monotonicity} at hand, we can prove a 
decay estimate for the $L^2$-norm of the curvatures $F_\eps$ in sufficiently 
small geodesic balls. In turn, such decay property is the key point to obtain 
the $(n-2)$-monotonicity formula~\eqref{eq:monotonicity}.

\begin{lemma}\label{lemma:decay-Feps}
	There exist a number $\alpha \in (0,\,2)$ and a number $R_0 \in (0,\,\inj(M))$,
	both depending only on $M$ and $g$, such that the following holds. If 
	$(u_\eps,\,A_\eps) \in \mathcal{E}$ is any 
	critical point of $G_\eps$, 
	then for any $x_0 \in M$ and any $0 < r < R < R_0$, there holds 
	\begin{equation}\label{eq:decay-Feps}
		\int_{\mathcal{B}_r(x_0)} \abs{F_\eps}^2\,\vol_g \lesssim
		\frac{r^{n-2+\alpha}}{R^{n-2+\alpha}} 
		\int_{\mathcal{B}_{R}(x_0)} e_\eps(u_\eps,\,A_\eps)\,\vol_g,
	\end{equation}
	where the implicit constant in of the right-hand-side depends only on 
	$M$ and $g$. 
\end{lemma}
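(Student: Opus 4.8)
The strategy is to combine the estimate from Lemma~\ref{lemma:est-Feps} with the almost-monotonicity from Proposition~\ref{prop:n-4-monotonicity} in order to bootstrap a decay rate for $\int_{\mathcal{B}_r}\abs{F_\eps}^2$. First I would observe that Lemma~\ref{lemma:est-Feps} gives, for $\rho\in(0,\bar\rho)$,
\[
 \int_{\mathcal{B}_\rho(x_0)}\abs{F_\eps}^2\,\vol_g
 \lesssim \rho^2\int_{\mathcal{B}_\rho(x_0)}\abs{\D_{A_\eps}u_\eps}^2\,\vol_g
 + \rho\int_{\partial\mathcal{B}_\rho(x_0)}\abs{{\rm i}_\nu F_\eps}^2\,\vol_{\hat g}.
\]
The first term on the right is controlled by $2\rho^2 E_\eps(x_0,\rho)$; the second term is exactly (part of) $\rho X_\eps(x_0,\rho)$. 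So the task reduces to showing that $\rho^2 E_\eps(x_0,\rho)$ and $\int_0^{\cdot}X_\eps$-type quantities decay like $\rho^{n-2+\alpha}$ relative to their values at scale $R$.

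The key input is Proposition~\ref{prop:n-4-monotonicity}: the function $\rho\mapsto e^{C\rho^2}\rho^{-(n-4+\alpha)}E_\eps(x_0,\rho)$ is nondecreasing on $(0,\bar\rho)$. Hence for $0<r<R<R_0:=\bar\rho$ (shrinking $R_0$ if needed so that $e^{C\rho^2}\le 2$ there),
\[
 E_\eps(x_0,r)\lesssim \frac{r^{n-4+\alpha}}{R^{n-4+\alpha}}E_\eps(x_0,R),
\]
which gives $r^2 E_\eps(x_0,r)\lesssim \frac{r^{n-2+\alpha}}{R^{n-4+\alpha}}E_\eps(x_0,R)\lesssim \frac{r^{n-2+\alpha}}{R^{n-2+\alpha}}R^2 E_\eps(x_0,R)$, and since $R<R_0$ is bounded, $R^2\lesssim 1$ and the first term is already of the desired form. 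So the only real issue is the boundary term $\int_{\partial\mathcal{B}_r(x_0)}\abs{{\rm i}_\nu F_\eps}^2\,\vol_{\hat g}$ at the \emph{single} radius $r$, which the monotonicity formula controls only after integration in $\rho$.

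To handle this, I would not estimate $\int_{\mathcal{B}_r}\abs{F_\eps}^2$ directly at radius $r$, but rather average: apply the displayed estimate from Lemma~\ref{lemma:est-Feps} at a radius $s\in(r,2r)$ and integrate in $s$ over $(r,2r)$. The left-hand side only increases when $s$ grows, so $\int_{\mathcal{B}_r}\abs{F_\eps}^2\le \frac1r\int_r^{2r}\int_{\mathcal{B}_s}\abs{F_\eps}^2\,\d s$. On the right, the averaged curvature-flux term becomes $\frac1r\int_r^{2r}s\int_{\partial\mathcal{B}_s}\abs{{\rm i}_\nu F_\eps}^2\,\vol_{\hat g}\,\d s\lesssim r\int_r^{2r}\int_{\partial\mathcal{B}_s}\abs{{\rm i}_\nu F_\eps}^2\,\vol_{\hat g}\,\d s$, which is $\lesssim r\int_r^{2r}X_\eps(x_0,s)\,\d s$ by the definition~\eqref{eq:def-X-rho}. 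Now I feed this into the integrated form of the monotonicity: rerunning the proof of Proposition~\ref{prop:n-4-monotonicity} and integrating~\eqref{eq:n-4-monotonicity} from $r$ to $2r$, or more simply using~\eqref{eq:PS-ineq} together with the already-established $L^2$-bound for $F_\eps$ to show $\int_r^{2r}\frac{e^{C\rho^2}}{\rho^{n-4+\alpha}}X_\eps(x_0,\rho)\,\d\rho\lesssim \frac{e^{C(2r)^2}}{(2r)^{n-4+\alpha}}E_\eps(x_0,2r)$, one gets $\int_r^{2r}X_\eps(x_0,\rho)\,\d\rho\lesssim r^{n-4+\alpha}\,E_\eps(x_0,2r)/r^{n-4+\alpha}\cdot\ldots$ — more carefully, $r^{n-4+\alpha}\int_r^{2r}X_\eps\,\d\rho\lesssim r^{\,?}E_\eps(x_0,2r)$. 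Tracking the powers: $\int_r^{2r}X_\eps(x_0,\rho)\,\d\rho\lesssim \frac{E_\eps(x_0,2r)}{r^{n-4+\alpha}}\int_r^{2r}\rho^{n-4+\alpha-(n-4+\alpha)}\cdots$; the clean bookkeeping yields $r\int_r^{2r}X_\eps(x_0,\rho)\,\d\rho\lesssim r^{\,2}\cdot r^{n-4+\alpha}\cdot\frac{E_\eps(x_0,2r)}{r^{n-4+\alpha}}$ after using the monotone quantity at scale $2r$, i.e. $\lesssim \frac{r^{n-2+\alpha}}{R^{n-2+\alpha}}E_\eps(x_0,R)$ once one compares scale $2r$ with scale $R$ via monotonicity one more time. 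Combining the two contributions gives~\eqref{eq:decay-Feps}.

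\textbf{Main obstacle.} The delicate point is the precise power-counting in the averaging-plus-monotonicity step: one must verify that after (i) passing from radius $r$ to the annular average over $(r,2r)$, (ii) bounding the flux term by $\int X_\eps$, and (iii) using the monotone quantity $e^{C\rho^2}\rho^{-(n-4+\alpha)}E_\eps(x_0,\rho)$ evaluated at $2r$ and at $R$, the exponents assemble exactly to $r^{n-2+\alpha}R^{-(n-2+\alpha)}$ with no residual factor of $r$ or $R$ in the wrong direction. A subtlety is that the monotonicity of Proposition~\ref{prop:n-4-monotonicity} carries the denominator $\rho^{n-4+\alpha}$, not $\rho^{n-2+\alpha}$, so the two extra powers of $r$ must come from the $\rho^2$ factor in Lemma~\ref{lemma:est-Feps} (via $\rho^2\abs{\D_{A_\eps}u_\eps}^2$) and from the extra $\rho$ in front of the flux term; one has to make sure the boundary term genuinely supplies one of these. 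I would also need to shrink $R_0$ (depending only on $M,g$) so that the exponential weights $e^{C\rho^2}$ are comparable to $1$ throughout $(0,R_0)$, absorbing them into the implicit constant. Modulo this bookkeeping, the argument is a routine combination of the two preceding results.
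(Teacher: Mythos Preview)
Your plan has a genuine gap in the boundary-flux step, and the confused bookkeeping in your ``Main obstacle'' paragraph is a symptom of it rather than a detail to be cleaned up.

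After squaring Lemma~\ref{lemma:est-Feps} and averaging over $s\in(r,2r)$ as you suggest, the flux contribution is
\[
\frac{1}{r}\int_r^{2r} s\int_{\partial\mathcal{B}_s}\abs{{\rm i}_\nu F_\eps}^2\,\vol_{\hat g}\,\d s
\;\le\; 2\int_r^{2r}\int_{\partial\mathcal{B}_s}\abs{{\rm i}_\nu F_\eps}^2\,\vol_{\hat g}\,\d s,
\]
not $r\int_r^{2r}(\cdots)$ as you wrote. Bounding this by $\int_r^{2r}X_\eps(x_0,s)\,\d s$ and using even a strengthened version of Proposition~\ref{prop:n-4-monotonicity} (with $\alpha$ halved so as to keep $\tfrac12 X_\eps$ as a positive remainder) only yields
\[
\int_r^{2r}X_\eps(x_0,s)\,\d s\;\lesssim\; E_\eps(x_0,2r)\;\lesssim\;\frac{(2r)^{n-4+\alpha}}{R^{n-4+\alpha}}\,E_\eps(x_0,R),
\]
which is off from the target $r^{n-2+\alpha}/R^{n-2+\alpha}$ by a factor $(R/r)^2$ in the wrong direction. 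Alternatively, the coarea formula gives $\int_r^{2r}\int_{\partial\mathcal{B}_s}\abs{{\rm i}_\nu F_\eps}^2\,\d s\le\int_{\mathcal{B}_{2r}}\abs{F_\eps}^2$, so the inequality becomes $\int_{\mathcal{B}_r}\abs{F_\eps}^2\le C r^2 E_\eps(x_0,2r)+C\int_{\mathcal{B}_{2r}}\abs{F_\eps}^2$ with $C\ge 2$; iterating over dyadic scales diverges. Either way, Lemma~\ref{lemma:est-Feps} together with $(n-4+\alpha)$-monotonicity does not close at exponent $n-2+\alpha$.

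The paper's proof uses a different, and genuinely stronger, input: working in local Coulomb gauge one has $-\Delta A_\eps=j(u_\eps,A_\eps)$, and the elliptic decay estimate for this Poisson-type equation (Proposition~\ref{prop:decay-RHS}) gives
\[
\int_{\mathcal{B}_r}\abs{F_\eps}^2\;\lesssim\;\frac{r^n}{s^n}\int_{\mathcal{B}_s}\abs{F_\eps}^2+s^2\int_{\mathcal{B}_s}\abs{\D_{A_\eps}u_\eps}^2.
\]
The $r^n/s^n$ factor --- coming from the decay of the harmonic part of $A_\eps$ --- is what your argument lacks. Feeding the $(n-4+\alpha)$-monotonicity into the second term and applying Giaquinta's iteration lemma then upgrades $n-4+\alpha$ to $n-2+\alpha$. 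Lemma~\ref{lemma:est-Feps} by itself cannot produce this $r^n/s^n$ gain; it is essentially a first-order integration-by-parts identity, whereas what is needed is second-order elliptic regularity for $A_\eps$.
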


\begin{proof}
	We start by fixing $x_0 \in M$ and considering two geodesic balls 
	$\mathcal{B}_r := \mathcal{B}_r(x_0)$ and 
	$\mathcal{B}_s := \mathcal{B}_s(x_0)$, with $0 < r < s < R$ and 
	$R \in (0,\,\inj(M))$ small enough (to be further specified later). Up to 
	subtracting a (smooth, $\eps$-independent) 1-form to $A_\eps$, we can 
	write 
	\[
		\d A_\eps = F_\eps \qquad \mbox{in } \mathcal{B}_R.
	\]
	Moreover, we can fix Coulomb gauge (without changing notation), i.e., 
	we can assume that
	\[
		\d^* A_\eps = 0 \quad \mbox{in } \mathcal{B}_R,\quad 
		{\rm i}_\nu A_\eps = 0 \quad \mbox{on } \partial \mathcal{B}_R,
	\]
	where $\nu$ is the unit normal field to $\partial \mathcal{B}_R$. Then, 
	the Euler-Lagrange equation~\eqref{EL-A} becomes
	\[
		-\Delta A_\eps = j(u_\eps,\,A_\eps) \qquad \mbox{in } \mathcal{B}_R.
	\]
	Standard decay estimates for elliptic equations
	with a right-hand side (see e.g. Proposition~\ref{prop:decay-RHS}
	for more details) imply that there exists $R_* > 0$ (depending only on 
	$M$ and $g$) so that, for $0 < r < s < R < R_*$, there holds
	\[
		\int_{\mathcal{B}_r} 
		\left(\abs{\d A_\eps}^2 + \abs{\d^* A_\eps}^2\right)\vol_g \lesssim 
		\frac{r^n}{s^n} \int_{\mathcal{B}_s} 
		\left(\abs{\d A_\eps}^2 + \abs{\d^* A_\eps}^2\right)\vol_g 
		+ s^2 \int_{\mathcal{B}_s} \abs{j(u_\eps,\,\D_{A_\eps})}^2\,\vol_g, 
	\]
	up to a constant depending only on $M$ and $g$.
	Keeping in mind that~$\d^* A_\eps = 0$ and
	$\d A_\eps = F_\eps$ in $\mathcal{B}_R$,
	and recalling the pointwise estimate 
	$\abs{j(u_\eps,\,A_\eps)} \leq \abs{\D_{A_\eps} u_\eps}$, it follows 	
	\begin{equation}\label{eq:Feps-decay-1}
		\int_{\mathcal{B}_r} \abs{F_\eps}^2\,\vol_g \lesssim 
		\frac{r^n}{s^n} \int_{\mathcal{B}_s} \abs{F_\eps}^2\,\vol_g 
		+ s^2 \int_{\mathcal{B}_s} \abs{\D_{A_\eps} u_\eps}^2\,\vol_g, 
	\end{equation}
	up to a constant depending only on $M$ and $g$.
	
	Taking now $R_0 = \min\{R_*,\,\bar{\rho}\}$, 
	where $\bar{\rho}$ is the distinguished radius (depending only on $M$ 
	and $g$) given by Proposition~\ref{prop:n-4-monotonicity} and shrinking 
	$R$ so that $0 < r < s < R < R_0$, by~\eqref{eq:Feps-decay-1} 
	and~\eqref{eq:n-4-monotonicity} we obtain
	\begin{equation}\label{eq:Feps-decay-2}
	\begin{split}
		\int_{\mathcal{B}_r} \abs{F_\eps}^2\,\vol_g & \lesssim 
		\frac{r^n}{s^n} \int_{\mathcal{B}_s} \abs{F_\eps}^2\,\vol_g 
		+ s^2 \int_{\mathcal{B}_s} e_\eps(u_\eps,\,A_\eps)\,\vol_g \\
		& \lesssim \frac{r^n}{s^n} \int_{\mathcal{B}_s} \abs{F_\eps}^2\,\vol_g 
		+ \frac{s^{n-2+\alpha}}{R^{n-4+\alpha}} 
		\int_{\mathcal{B}_{R}} e_\eps(u_\eps,\,A_\eps)\,\vol_g,
	\end{split}
	\end{equation}
	for $\alpha \in (0,2]$ as given by Proposition~\ref{prop:n-4-monotonicity}. 
	
	Assume $\alpha \in (0,\,2)$. Then, by a classical iteration lemma 
	(e.g., \cite[Lemma~III.2.1]{Giaquinta-MultipleIntegrals} 
	or \cite[Lemma~B.3]{Beck}), it follows 
	\begin{equation}\label{eq:Feps-decay-3}
	\begin{split}
		\int_{\mathcal{B}_r} \abs{F_\eps}^2 \,\vol_g & \lesssim 
		\frac{r^{n-2+\alpha}}{s^{n-2+\alpha}} 
		\int_{\mathcal{B}_s} \abs{F_\eps}^2 \vol_g 
		+ \frac{r^{n-2+\alpha}}{R^{n-4+\alpha}} 
		\int_{\mathcal{B}_{R}} e_\eps(u_\eps,\,A_\eps)\,\vol_g \\
		& \lesssim \frac{r^{n-2+\alpha}}{s^{n-2+\alpha}} 
		\int_{\mathcal{B}_s} \abs{F_\eps}^2 \,\vol_g + \frac{r^{n-2+\alpha}}{R^{n-2+\alpha}}\int_{\mathcal{B}_{R}} e_\eps(u_\eps,\,A_\eps)\,\vol_g
	\end{split}
	\end{equation}
	for any $0 < r < s < R < R_0$. 
	(The second inequality holds because $R$ is 
	certainly bounded from above.) In particular, by taking 
	the limit $s \to R$, we get the desired decay 
	estimate~\eqref{eq:decay-Feps}. 
\end{proof}

\begin{remark}\label{rk:alpha-2}
	The restriction $\alpha < 2$ is necessary in the argument 
	from~\eqref{eq:Feps-decay-2} to~\eqref{eq:Feps-decay-3}, as otherwise we 
	could not use \cite[Lemma~III.2.1]{Giaquinta-MultipleIntegrals} or
	\cite[Lemma~B.3]{Beck}. This inconvenience will produce
	the ``error term'', i.e., the term depending on $\alpha$, 
	in~\eqref{eq:monotonicity}. 
	Nevertheless, the monotonicity formula~\eqref{eq:monotonicity} 
	is enough for our purposes. 
\end{remark}


We are now ready to prove Theorem~\ref{thm:monotonicity}.

\begin{proof}[Proof of Theorem~\ref{thm:monotonicity}]
	Thanks to Lemma~\ref{lemma:decay-Feps}, 
	the inequality~\eqref{eq:PS-ineq} yields
	\begin{equation}\label{eq:mon-comp-1}
		\rho E^\prime_\eps(x_0,\,\rho) \geq \rho X_\eps(x_0,\,\rho) + (n-2) E_\eps(x_0,\,\rho) 
		+ C \rho^2 E_\eps(x_0,\,\rho) 
		- \frac{C \rho^{n-2+\alpha}}{R^{n-2+\alpha}} E_\eps(x_0,\,R)
	\end{equation}
	for any $0 < \rho < R < R_0$, where $R_0$ is the distinguished radius 
	in $(0,\, \inj(M))$ given by Lemma~\ref{lemma:decay-Feps}, any 
	$x_0 \in M$, and some sufficiently large constant~$C$
	(depending on~$M$, $g$ only).
	
	Dividing both sides of~\eqref{eq:mon-comp-1} by $\rho^{n-1}$, we obtain 
	\begin{equation}\label{eq:mon-comp-2}
	\begin{split}
		\rho^{2-n} E^\prime_\eps(x_0,\,\rho) &- (n-2) \rho^{1-n} E_\eps(x_0,\,\rho) 
		+ C \rho^{3-n} E_\eps(x_0,\,\rho) \\
		& \geq \rho^{2-n} X_\eps(x_0,\,\rho) 
		- \frac{C \rho^{\alpha-1}}{R^{n-2+\alpha}} E_\eps(x_0,\,R)
	\end{split}
	\end{equation}
	Multiplying~\eqref{eq:mon-comp-2} by $\exp(C\rho^2/2)$, where $C$ is 
	exactly the same constant as in~\eqref{eq:mon-comp-2}, we get
	\begin{equation}\label{eq:mon-comp-3}
		\frac{\d }{\d \rho}\left\{ e^{C \rho^2/2} \rho^{2-n} E_\eps(x_0,\,\rho) \right\} \geq e^{C \rho^2 /2} \rho^{2-n} X_\eps(x_0,\,\rho) - \frac{C e^{C \rho^2/2} \rho^{\alpha-1}}{R^{n-2+\alpha}} E_\eps(x_0,\,R),
	\end{equation}
	and the conclusion follows by integrating both sides 
	of~\eqref{eq:mon-comp-3} over $\rho \in (r,\,R)$ and recalling that $\alpha$ depends only on $(M,\,g)$.
\end{proof}

\begin{remark}\label{rk:improved-monotonicity}
Multiplying~\eqref{eq:mon-comp-3} by $\rho$ and recalling the
definition~\eqref{eq:def-X-rho} of $X_\eps$, we obtain the following variant 
of~\eqref{eq:mon-comp-3}: for any~$0 < \rho < R < R_0$,
\begin{equation} \label{improvedmonotonicity}
 \rho \, \frac{\d}{\d\rho}\left(\frac{e^{C\rho^2} E(x_0,\,\rho)}
  {\rho^{n - 2}}\right)
 \geq \frac{1}{2\eps^2 \rho^{n-2}} 
  \int_{\mathcal{B}_\rho} \left(1 - \abs{u_\eps}^2\right)^2\vol_g
  - \frac{C^\prime \, \rho^\alpha \, E(x_0,\,R)}{R^{n-2+\alpha}}
\end{equation}
where~$\alpha\in (0, \, 2)$, $C$, $C^\prime$
are absolute constants that depend only on~$M$ and~$g$. We will 
use~\eqref{improvedmonotonicity} in the proof of the clearing-out, 
in Section~\ref{sec:clearing-out}.
\end{remark}

As a byproduct of our arguments, we obtain the following
bound on the curvature in terms of the rescaled energy:

\begin{prop} \label{prop:almostLinftycurvature}
	For any~$q\in (0, \, n)$ there exist constant~$C_q > 0$,
	depending only on~$(M, g)$ and~$q$,
	such that the follwing statement holds.
	If $(u_\eps,\,A_\eps) \in \mathcal{E}$ is any critical point of $G_\eps$, then
	\begin{equation}\label{almostLinfty-Feps}
		\int_{\mathcal{B}_r(x_0)} \abs{F_\eps}^2 \,\vol_g 
		\leq \frac{C_q \, r^{q}}{R^{n-2}} E_\eps(x_0, \, R)
	\end{equation} 
	holds for any~$x_0\in M$ and~$0 < r < R$.
\end{prop}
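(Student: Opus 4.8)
The plan is to derive \eqref{almostLinfty-Feps} by combining the decay estimate of Lemma~\ref{lemma:decay-Feps} with the $(n-4+\alpha)$-monotonicity of Proposition~\ref{prop:n-4-monotonicity}. The point is that Lemma~\ref{lemma:decay-Feps} already gives, for $0 < r < R < R_0$,
\[
 \int_{\mathcal{B}_r(x_0)} \abs{F_\eps}^2 \,\vol_g
 \lesssim \frac{r^{n-2+\alpha}}{R^{n-2+\alpha}} E_\eps(x_0, \, R),
\]
with a fixed exponent $\alpha = \alpha(M,g) \in (0,2)$; in particular the curvature energy in $\mathcal{B}_r$ is controlled by $r^{n-2+\alpha}$, hence \emph{a fortiori} by $r^q$ for \emph{any} $q \le n-2+\alpha$. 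So the only thing to prove is the statement for exponents $q$ in the range $(n-2+\alpha, \, n)$, and this must be obtained by iterating the decay once more, this time feeding back the now-known decay of the full energy.

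The key steps, in order, are as follows. First, fix $q \in (n-2+\alpha, \, n)$ and set $\beta := q \in (n-2+\alpha,\,n) \subseteq (0,2+\text{something})$; more precisely, since $q<n$ we have $q - (n-4) < 4$, so the relevant ``gain'' exponent $\beta' := q - (n-4) \in (\alpha+2, 4)$ is admissible for the iteration lemma only if $\beta' < $ the comparison exponent in \eqref{eq:Feps-decay-1}, which is $n$ (the first term there decays like $r^n/s^n$). Thus I would revisit inequality~\eqref{eq:Feps-decay-1}, namely
\[
 \int_{\mathcal{B}_r} \abs{F_\eps}^2\,\vol_g \lesssim
 \frac{r^n}{s^n} \int_{\mathcal{B}_s} \abs{F_\eps}^2\,\vol_g
 + s^2 \int_{\mathcal{B}_s} \abs{\D_{A_\eps} u_\eps}^2\,\vol_g,
\]
and now bound the second term using Proposition~\ref{prop:n-4-monotonicity} to get $s^2 \int_{\mathcal{B}_s}\abs{\D_{A_\eps}u_\eps}^2 \le 2 s^2 E_\eps(x_0,s) \lesssim s^{n-2+\alpha} R^{-(n-4+\alpha)} E_\eps(x_0,R)$, exactly as in the passage to~\eqref{eq:Feps-decay-2}. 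This already gives a source term of order $s^{n-2+\alpha}$. To push the exponent on the curvature term up to $q < n$, I would then apply the iteration lemma \cite[Lemma~III.2.1]{Giaquinta-MultipleIntegrals} (or \cite[Lemma~B.3]{Beck}) with the comparison exponent $n$ and source exponent $n-2+\alpha$: since $n-2+\alpha < n$, the lemma yields
\[
 \int_{\mathcal{B}_r}\abs{F_\eps}^2\,\vol_g \lesssim
 \frac{r^{\gamma}}{s^{\gamma}}\int_{\mathcal{B}_s}\abs{F_\eps}^2\,\vol_g
 + \frac{r^{\gamma}}{R^{n-4+\alpha}} E_\eps(x_0,R)
\]
for \emph{every} $\gamma < n$ — in particular for $\gamma = q$ after absorbing powers of $R$ (which is bounded) into the constant. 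Letting $s \to R$ and using $E_\eps(x_0,R) \ge \int_{\mathcal{B}_R}\abs{F_\eps}^2$ to merge the two terms on the right gives
\[
 \int_{\mathcal{B}_r(x_0)} \abs{F_\eps}^2\,\vol_g
 \le \frac{C_q\, r^q}{R^{n-2}} E_\eps(x_0,R),
\]
where the shift from $R^{n-4+\alpha}$ to $R^{n-2}$ is again free because $R$ is bounded above by $R_0$.

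The main obstacle, and the point requiring care, is verifying that the iteration lemma can legitimately be applied with the target exponent $q$ arbitrarily close to $n$: the lemma of Giaquinta type requires the source exponent ($n-2+\alpha$, coming from $E_\eps$) to be \emph{strictly smaller} than the target exponent $q$, which forces $q > n-2+\alpha$, and it requires $q$ to be \emph{strictly smaller} than the comparison exponent, which here is $n$ (the leading $r^n/s^n$ decay of the harmonic part). Both constraints are met precisely on the interval $q \in (n-2+\alpha, n)$, and the range $(0, n-2+\alpha]$ is already covered by Lemma~\ref{lemma:decay-Feps} directly; together these exhaust $q \in (0,n)$. A secondary technical point is that one must restrict to $0 < r < R < R_0$ with $R_0 = R_0(M,g)$ the radius of Lemma~\ref{lemma:decay-Feps}, and then remove this restriction in the statement by noting that for $R \ge R_0$ one simply applies the result on $\mathcal{B}_{R_0}$ and uses monotonicity of $E_\eps(x_0,\cdot)$ in $R$, at the cost of enlarging $C_q$ by a factor depending on $R_0$, hence still only on $(M,g)$ and $q$. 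Everything else is bookkeeping of powers of $R$, all of which are harmless since $R$ ranges over a bounded set.
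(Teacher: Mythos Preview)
There is a genuine gap in your argument, and it sits precisely in the sentence ``since $n-2+\alpha < n$, the lemma yields \ldots\ for \emph{every} $\gamma < n$''. This is a misreading of the Giaquinta iteration lemma. In that lemma, if
\[
 \phi(r) \le A\left(\frac{r}{s}\right)^{n}\phi(s) + B\,s^{\beta}
 \qquad (0 < r < s),
\]
with $\beta < n$, then the conclusion is $\phi(r) \lesssim (r/s)^{\beta}\phi(s) + B r^{\beta}$: the output exponent is the \emph{source} exponent $\beta$, not any exponent below the comparison exponent $n$. With your choice of input (Proposition~\ref{prop:n-4-monotonicity}) the source exponent is $\beta = n-2+\alpha$, so the iteration simply reproduces Lemma~\ref{lemma:decay-Feps} and gives nothing for $q \in (n-2+\alpha,\,n)$.

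The paper's proof avoids this by feeding in the stronger $(n-2)$-monotonicity of Theorem~\ref{thm:monotonicity}, already available at this point: it gives $s^{2}E_\eps(x_0,s) \lesssim s^{n}\,R^{-(n-2)}E_\eps(x_0,R)$, so the source term in~\eqref{eq:Feps-decay-1} is of order $s^{n}$ rather than $s^{n-2+\alpha}$. For any $q < n$ one then writes $s^{n} \le R_0^{\,n-q}\,s^{q}$ and applies the iteration lemma with source exponent $q < n$, yielding the desired decay $r^{q}$. Your reduction to $R < R_0$ and the bookkeeping of powers of~$R$ are fine; the only change needed is to replace Proposition~\ref{prop:n-4-monotonicity} by Theorem~\ref{thm:monotonicity} when bounding the energy term.
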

\begin{proof}
 By a covering argument, there is no loss
 of generality in assuming that~$R< R_0$, where~$R_0$
 is a uniform constant. For instance, we can take~$R_0$
 exactly equal to the constant given by Lemma~\ref{lemma:decay-Feps}.
 Let~$\alpha \in (0, \, 2]$ be given by Proposition~\ref{prop:n-4-monotonicity}, and let~$0 < r < s < R < R_*$.
 The estimate~\eqref{eq:Feps-decay-2},
 obtained in the proof of Lemma~\ref{lemma:decay-Feps},
 and the (almost) monotonicity formula~\eqref{eq:monotonicity} imply
 \[
	\begin{split}
		\int_{\mathcal{B}_r} \abs{F_\eps}^2\,\vol_g & \lesssim 
		\frac{r^n}{s^n} \int_{\mathcal{B}_s} \abs{F_\eps}^2\,\vol_g 
		+ s^2 \int_{\mathcal{B}_s} e_\eps(u_\eps,\,A_\eps)\,\vol_g \\
		& \lesssim \frac{r^n}{s^n} \int_{\mathcal{B}_s} \abs{F_\eps}^2\,\vol_g 
		+ \frac{s^{n}}{R^{n-2}} 
		\int_{\mathcal{B}_{R}} e_\eps(u_\eps,\,A_\eps)\,\vol_g,
	\end{split}
 \]
 The proposition follows by an iteration argument, 
 based on~\cite[Lemma~III.2.1]{Giaquinta-MultipleIntegrals}.
\end{proof}

Finally, we conclude this section by stating and proving a
corollary of the monotonicity formula, which will be useful
in Section~\ref{sec:clearing-out}.

\begin{corollary}\label{cor:monotonicity}
	Let $\eps > 0$ and let $(u_\eps,\,A_\eps) \subset \mathcal{E}$ be any critical point of $G_\eps$. Moreover, let 
	$\alpha \in (0,\,2)$, $R_0 \in (0,\,\inj(M))$ and $C > 0$ be the numbers 
	(depending only on $(M,\,g)$) given by Theorem~\ref{thm:monotonicity}.  
	Then, for any $x_0 \in M$ and any $R \in (0,\,R_0)$, there holds
	\begin{equation}\label{eq:cor-monotonicity}
		\frac{1}{\eps^2}\int_{B_R(x_0)} \frac{\left( 1 - \abs{u_\eps(y)}^2 \right)^2}{\dist^{n-2}(y,\,x_0)}
		\,\vol_g(y) \lesssim
		R^{2-n} E_\eps(x_0,\,R),
	\end{equation}
	where the implicit constant in front of the right hand side depends only on 
	$(M,\,g)$.
\end{corollary}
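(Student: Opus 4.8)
The plan is to deduce~\eqref{eq:cor-monotonicity} from the improved monotonicity inequality~\eqref{improvedmonotonicity} of Remark~\ref{rk:improved-monotonicity} --- which holds for \emph{every} critical point of $G_\eps$ and every $0<\rho<R<R_0$ --- by a layer-cake rewriting of the weight $\dist^{2-n}(\,\cdot\,,\,x_0)$.

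First I would set $g_\eps(\rho):=e^{C\rho^2}E_\eps(x_0,\,\rho)/\rho^{n-2}$ and divide~\eqref{improvedmonotonicity} by $\rho$, obtaining
\[
 \frac{1}{2\eps^2\,\rho^{n-1}}\int_{\mathcal{B}_\rho(x_0)}\left(1-\abs{u_\eps}^2\right)^2\vol_g
 \leq g_\eps'(\rho) + \frac{C^\prime\,\rho^{\alpha-1}}{R^{n-2+\alpha}}\,E_\eps(x_0,\,R).
\]
Since $(u_\eps,\,A_\eps)$ is smooth up to a gauge transformation, $e_\eps(u_\eps,\,A_\eps)$ is smooth and $g_\eps$ is of class $C^1$ on $(0,\,\inj(M))$, so I can integrate this over $\rho\in(r,\,R)$ for $0<r<R$; using $g_\eps(r)\geq 0$, $\int_r^R\rho^{\alpha-1}\,\d\rho\leq R^\alpha/\alpha$, and $e^{CR^2}\leq e^{CR_0^2}$ (as $R<R_0$), this yields
\[
 \frac{1}{\eps^2}\int_r^R\frac{1}{\rho^{n-1}}\left(\int_{\mathcal{B}_\rho(x_0)}\left(1-\abs{u_\eps}^2\right)^2\vol_g\right)\d\rho
 \lesssim R^{2-n}\,E_\eps(x_0,\,R),
\]
with an implicit constant depending only on $(M,\,g)$. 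Since $\abs{u_\eps}\leq 1$ forces $\int_{\mathcal{B}_\rho(x_0)}(1-\abs{u_\eps}^2)^2\vol_g\lesssim\rho^n$, the integrand is $\lesssim\rho$ near $\rho=0$, so letting $r\to 0^+$ by monotone convergence keeps the bound with $r=0$.

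Next I would use, for $0<d<R$, the elementary identity $d^{2-n}=R^{2-n}+(n-2)\int_d^R\rho^{1-n}\,\d\rho$ with $d=\dist(y,\,x_0)$: multiplying by $(1-\abs{u_\eps(y)}^2)^2$, integrating over $\mathcal{B}_R(x_0)$, and applying Tonelli's theorem to the set $\{(y,\,\rho):0<\rho<R,\ y\in\mathcal{B}_\rho(x_0)\}$ (all integrands being nonnegative), one gets
\[
 \int_{\mathcal{B}_R(x_0)}\frac{\left(1-\abs{u_\eps}^2\right)^2}{\dist^{n-2}(y,\,x_0)}\vol_g(y)
 = \frac{1}{R^{n-2}}\int_{\mathcal{B}_R(x_0)}\left(1-\abs{u_\eps}^2\right)^2\vol_g
 + (n-2)\int_0^R\frac{1}{\rho^{n-1}}\left(\int_{\mathcal{B}_\rho(x_0)}\left(1-\abs{u_\eps}^2\right)^2\vol_g\right)\d\rho.
\]
Multiplying by $\eps^{-2}$: the first summand is $\leq 4R^{2-n}E_\eps(x_0,\,R)$ because $\eps^{-2}(1-\abs{u_\eps}^2)^2\leq 4\,e_\eps(u_\eps,\,A_\eps)$ pointwise, while the second is controlled by the first step; adding the two contributions proves~\eqref{eq:cor-monotonicity}.

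There is no real obstacle here: the substantial work is already packaged into Remark~\ref{rk:improved-monotonicity}/Theorem~\ref{thm:monotonicity}, and everything else is bookkeeping. The only technical care needed is the justification of the passage $r\to 0^+$ and of the use of Tonelli's theorem, both of which are immediate from $\abs{u_\eps}\leq 1$ and the nonnegativity of the integrands.
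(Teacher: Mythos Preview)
Your proof is correct and follows essentially the same route as the paper's: both integrate the improved monotonicity inequality~\eqref{improvedmonotonicity} (equivalently~\eqref{eq:mon-comp-3}) over $\rho\in(0,R)$ and use the $L^\infty$-bounds to justify the passage $r\to 0^+$. The only cosmetic difference is in the final conversion of $\int_0^R\rho^{1-n}\bigl(\int_{\mathcal{B}_\rho}(1-\abs{u_\eps}^2)^2\bigr)\,\d\rho$ into the weighted integral~\eqref{eq:cor-monotonicity}: the paper does this via the product-rule identity $\rho^{1-n}W_\eps=(n-2)^{-1}\bigl[-(\rho^{2-n}W_\eps)'+\rho^{2-n}W_\eps'\bigr]$ together with polar coordinates, whereas you use the equivalent layer-cake/Tonelli rewriting.
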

\begin{proof}
	By Proposition~\ref{prop:L-infty-bound-u}, 
	Lemma~\ref{lemma:L-infty-bound-D_Au} and 
	Lemma~\ref{lemma:Linfty-Feps}, the energy
	density is a bounded function (and its
	$L^\infty(M)$-norm is of order~$\mathrm{O}(\eps^{-2})$).
	Consequently, there holds
	\begin{equation} \label{eq:cor-mon-compu0}
		\lim_{r \to 0} r^{2-n} E_\eps(x_0,\, r) = 0.
	\end{equation}
	Let us set
	\[
		W_\eps(x_0,\,\rho) := \int_{\mathcal{B}_\rho(x_0)} \frac{\left(1-\abs{u_\eps(y)}^2\right)^2}{2 \eps^2} \vol_g(y).
	\]
	By~\eqref{eq:mon-comp-3} and~\eqref{eq:def-X-rho}, we clearly obtain
	\[
			\frac{\d }{\d \rho}\left\{ e^{C \rho^2/2} \rho^{2-n} E_\eps(x_0,\,\rho) \right\} \geq \rho^{1-n} \int_{\mathcal{B}_\rho(x_0)} \frac{\left(1-\abs{u_\eps(y)}^2\right)^2}{2 \eps^2} \vol_g(y)  - \frac{C e^{C \rho^2/2} \rho^{\alpha-1}}{R^{n-2+\alpha}} E_\eps(x_0,\,R).
	\]
	From now on, we work in polar coordinates in $\mathcal{B}_\rho$ 
	(centered at $x_0$). 
	Since the function 
	$(0,\,R) \ni \rho \mapsto W_\eps(x_0,\,\rho)$ is absolutely 
	continuous, for a.e. $\rho \in (0, R)$ we have 
	\[
		W^\prime_\eps(x_0,\,\rho) = \int_{\partial \mathcal{B}_\rho(x_0)} \frac{\left(1-\abs{u_\eps(y)}^2\right)^2}{2 \eps^2} \vol_{\hat{g}}(y)
	\]
	as well as 
	\[
		\rho^{1-n} W_\eps(x_0,\,\rho) = \frac{1}{n-2}\left\{-\frac{\d}{\d \rho}\left( \rho^{2-n} W_\eps(x_0,\,\rho) \right) + \rho^{2-n} W^\prime_\eps(x_0,\,\rho)\right\}.
	\]
	Hence,
	\begin{equation}\label{eq:cor-mon-compu2}
	\begin{split}
	\frac{\d }{\d \rho}\Big\{ e^{C \rho^2/2} \rho^{2-n} E_\eps(x_0,\,\rho)
	&+ \frac{1}{n-2} \rho^{2-n} W_\eps(x_0,\,\rho) \Big\}\geq\\
	 &\frac{\rho^{2-n}}{n-2} W^\prime_\eps(x_0,\,\rho) - \frac{C e^{C \rho^2/2} \rho^{\alpha-1}}{R^{n-2+\alpha}} E_\eps(x_0,\,R)
	 \end{split}
	\end{equation} 
	for a.e. $\rho \in (0,\,R)$. Integrating both sides 
	of~\eqref{eq:cor-mon-compu2} over $(0,\,R)$,
	and taking~\eqref{eq:cor-mon-compu0} into account, yields
	\[
	\begin{split}
		\frac{1}{n-2} &\int_{\mathcal{B}_R(x_0)} \frac{1}{2\eps^2} \frac{\left(1-\abs{u_\eps(y)}^2\right)^2}{\dist(x_0,y)^{n-2}} \,\vol_g(y) \\
		&\leq \left( 1 + \frac{C}{\alpha} \right) e^{C R^2/2} R^{2-n}E_\eps(x_0,\,R)	+ \frac{R^{2-n}}{n-2} \int_{\mathcal{B}_R(x_0)} \frac{\left( 1 - \abs{u_\eps(y)}^2 \right)^2}{2\eps^2} \,\vol_g(y)\\
		&\leq \left( \frac{n}{n-2} + \frac{C}{\alpha}\right) R^{2-n} E_\eps(x_0,\,R).
	\end{split}
	\]
	Recalling that $\alpha$ depends only on $(M,\,g)$, the conclusion follows.
\end{proof}
\section{Clearing-out and its consequences}\label{sec:clearing-out}

The aim of this section is to prove 
Proposition~\ref{prop:small-ball-small-energy} below,
which will be essential 
in characterising the support of the
energy-concentration measure~$\mu_*$, defined in~\eqref{eq:mu-eps}
(see Section~\ref{sec:varifold} and Lemma~\ref{lemma:out-of-support}
in particular).
As in the previous section, given a finite-energy
pair~$(u_\eps, \, A_\eps)$, a point~$x_0\in M$ and
a radius~$\rho > 0$, we write~$E_\eps(x_0, \, \rho) 
:= G_\eps(u_\eps, \, A_\eps; \, \mathcal{B}_\rho(x_0))$.
\begin{prop}\label{prop:small-ball-small-energy}
	There exists constants $\eta_0 > 0$, $R_* \in (0, \inj(M))$ and~$\eps_* > 0$, 
	depending on $(M,g)$ only, such that the following statement holds. 
	Let $x_0 \in M$, $R \in (0,R_*)$ and let $\{(u_\eps,\,A_\eps)\}
	\subset (W^{1,2}\cap L^\infty)(M, \, E)\times W^{1,2}(M, \, \T^*M)$ 
	be a sequence of critical points of $G_\eps$ 
	that satisfy~\eqref{hp:logenergy} and
	\begin{equation}\label{eq:small-resclaed-energy}
		E_\eps(x_0,R) \leq \eta_0 \, R^{n-2} \log\frac{R}{\eps}.
	\end{equation}
	Then, 
	for any $\eps < \eps_*$, there holds
	\begin{equation}\label{eq:clearingout-small-ball}
		\textrm{for any } x \in \mathcal{B}_{3R/4}(x_0),
		\quad \abs{u_\eps(x)} \geq \frac{1}{2}.
	\end{equation} 
	Moreover, 
	there holds
	\begin{equation}\label{eq:no-energy-small-balls}
		\sup_{\eps > 0} E_\eps(x_0, R/2) < +\infty.
	\end{equation}
\end{prop}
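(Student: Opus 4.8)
The strategy is a classical clearing-out argument adapted to the gauge-invariant setting, combining the improved monotonicity inequality~\eqref{improvedmonotonicity} with an energy-decay estimate and a covering argument, very much in the spirit of~\cite[Proposition~VIII.1]{BethuelBrezisOrlandi}. First I would observe that, by the smallness assumption~\eqref{eq:small-resclaed-energy}, the rescaled energy $E_\eps(x_0,R)/R^{n-2}$ is bounded by $\eta_0\log(R/\eps)$. The key mechanism is that, if at some point $x\in\mathcal{B}_{3R/4}(x_0)$ one had $\abs{u_\eps(x)}<1/2$, then (by the $L^\infty$-bound $\abs{\D_{A_\eps}u_\eps}\lesssim\eps^{-1}$ from Lemma~\ref{lemma:L-infty-bound-D_Au} and continuity) there would be a ball $\mathcal{B}_{c\eps}(x)$ on which $\abs{u_\eps}\leq 3/4$, forcing the potential term $\frac{1}{4\eps^2}(1-\abs{u_\eps}^2)^2$ to contribute at least $\sim\eps^n\cdot\eps^{-2}=\eps^{n-2}$ of energy on that ball; this is a lower bound on $E_\eps$ at scale $\eps$ around $x$. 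The plan is to feed this into the monotonicity formula to propagate a lower bound on $E_\eps(x,\rho)$ from scale $\eps$ up to scale $\sim R$, then contradict~\eqref{eq:small-resclaed-energy} for $\eta_0$ small enough.

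More precisely, I would integrate~\eqref{improvedmonotonicity} and use Corollary~\ref{cor:monotonicity} to control the potential term; the bad-set contribution gives that either $\abs{u_\eps}\geq 1/2$ throughout $\mathcal{B}_{3R/4}(x_0)$, or else the monotone quantity $e^{C\rho^2}\rho^{2-n}E_\eps(x,\rho)$ is bounded below by a positive constant $c_0$ for all $\rho\in(c\eps,R/8)$ (say), up to the controllable error term $\frac{C\rho^\alpha}{R^{n-2+\alpha}}E_\eps(x_0,R)$. Combining with a Vitali-type covering of $\{\abs{u_\eps}\leq 1/2\}\cap\mathcal{B}_{3R/4}(x_0)$ by balls of radius comparable to $\eps$, and using the bound $E_\eps(x_0,R)\leq\eta_0 R^{n-2}\log(R/\eps)$, one gets that the total energy in $\mathcal{B}_R(x_0)$ is at least $\sim c_0 R^{n-2}\log(R/\eps)$ times a purely geometric factor, which contradicts~\eqref{eq:small-resclaed-energy} once $\eta_0$ is chosen smaller than that geometric constant. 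This proves~\eqref{eq:clearingout-small-ball} for all $\eps<\eps_*$.

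For the second conclusion~\eqref{eq:no-energy-small-balls}, once we know $\abs{u_\eps}\geq 1/2$ on $\mathcal{B}_{3R/4}(x_0)$, the decomposition~\eqref{eq:dec-D_Au} gives $\abs{\D_{A_\eps}u_\eps}\lesssim\abs{\d\abs{u_\eps}}+\abs{j(u_\eps,A_\eps)}$ there, so the Dirichlet term is controlled once we bound $\abs{j(u_\eps,A_\eps)}$ and $\abs{\d\abs{u_\eps}}$ locally. Writing the equation~\eqref{eq:EL-rho} for $\rho_\eps=\abs{u_\eps}$ on $\mathcal{B}_{3R/4}(x_0)$, where now $\rho_\eps\geq 1/2$ is bounded away from zero, one can use a Caccioppoli-type estimate (testing with a cutoff times $1-\rho_\eps^2$) together with Corollary~\ref{cor:monotonicity} — which gives $\eps^{-2}\int(1-\abs{u_\eps}^2)^2/\dist^{n-2}\lesssim R^{2-n}E_\eps(x_0,R)$ — to bound $\int_{\mathcal{B}_{R/2}}\abs{\d\rho_\eps}^2$ uniformly in $\eps$. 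For the prejacobian, I would use the London equation~\eqref{London} and the uniform curvature bounds (Lemma~\ref{lemma:Linfty-Feps} is too weak, but the Morrey bound of Proposition~\ref{prop:almostLinftycurvature} combined with $\d^*F_\eps=j(u_\eps,A_\eps)$ and elliptic estimates on $\mathcal{B}_{R/2}$) to get $\int_{\mathcal{B}_{R/2}}\abs{j(u_\eps,A_\eps)}^2\lesssim 1$; alternatively one rescales the monotonicity formula restricted to the region where $\abs{u_\eps}$ is non-degenerate. Putting these together bounds $E_\eps(x_0,R/2)$ uniformly.

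\textbf{Main obstacle.} The delicate point is the lower energy bound propagation: making rigorous the passage from ``$\abs{u_\eps}<1/2$ somewhere'' to ``$E_\eps\gtrsim\eps^{n-2}$ at scale $\eps$'' requires the $L^\infty$-gradient bound and a quantitative continuity/covering argument, and then the monotonicity formula must be applied with careful bookkeeping of the error term $\frac{C\rho^\alpha}{R^{n-2+\alpha}}E_\eps(x_0,R)$ — which, under~\eqref{eq:small-resclaed-energy}, is itself of size $\sim\eta_0\rho^\alpha R^{-\alpha}\log(R/\eps)$ and must be absorbed. Choosing $\eta_0$, $R_*$, $\eps_*$ in the right order so that all these competing terms close up is the technical heart of the argument; the rest is standard elliptic estimates.
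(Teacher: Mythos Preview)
Your plan for part~\eqref{eq:clearingout-small-ball} has a genuine gap: the naive ``vortex $\Rightarrow$ lower bound'' argument cannot beat the logarithmic factor in the hypothesis. Concretely, if $\abs{u_\eps(x)}<1/2$ at some $x\in\mathcal{B}_{3R/4}(x_0)$, the $L^\infty$-gradient bound gives $E_\eps(x,c\eps)\gtrsim\eps^{n-2}$, and the monotonicity formula propagates this to $E_\eps(x,\rho)/\rho^{n-2}\gtrsim c_0$ for $\rho\in(c\eps,R/8)$. But this yields only $E_\eps(x_0,R)\gtrsim R^{n-2}$, a \emph{constant} lower bound, while the assumption~\eqref{eq:small-resclaed-energy} permits $E_\eps(x_0,R)$ as large as $\eta_0 R^{n-2}\log(R/\eps)\to+\infty$. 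No Vitali covering of a single bad point (or even many bad points at mutual distance $\gtrsim\eps$) produces an extra $\log(R/\eps)$ factor: the integral of $\rho^{1-n}\int_{\mathcal{B}_\rho}(1-\abs{u_\eps}^2)^2$ over $\rho\in(c\eps,R/8)$ is bounded by a constant for $n\ge3$, not by $\log(R/\eps)$. So the contradiction does not close.

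The paper obtains~\eqref{eq:clearingout-small-ball} as a consequence of the quantitative clearing-out Proposition~\ref{prop:clearingout}, whose proof rests on two non-trivial ingredients absent from your plan: the energy-\emph{decay} estimate of Proposition~\ref{prop:decay} (not merely monotonicity), and the good-radius selection Lemma~\ref{lemma:goodradius}. The decay estimate, proved via a Hodge decomposition of $\eta_r\,j(u_\eps,A_\eps)$ and separate treatment of four pieces, shows that $E_\eps(x_0,\delta R)/R^{n-2}$ is controlled by $(p_\eps^{1/3}+\delta^{n-2+\alpha})E_\eps(x_0,R)/R^{n-2}+p_\eps^{2/3}$, with $p_\eps$ the rescaled potential. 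The good-radius lemma then locates $R_*\in(\eps^{3/4}R^{1/4},\eps^{1/2}R^{1/2})$ where both $p_\eps\lesssim\eta\abs{\log\delta}$ and a reverse comparison $E_\eps(R_*)/R_*^{n-2}\lesssim E_\eps(\delta R_*)/(\delta R_*)^{n-2}+\eta\abs{\log\delta}$ hold; this is exactly what allows the iteration to absorb the $\log(R/\eps)$ and yield $E_\eps(R_*)/R_*^{n-2}\lesssim\eta^{(n+2)/(3n)}\abs{\log\eta}^{2/3}$. Monotonicity from $R_*$ down to scale $\eps$ then finishes.

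For part~\eqref{eq:no-energy-small-balls} your outline is in the right spirit but misses the mechanism by which the $L^2$-bounds close. The paper works in local Coulomb gauge on $\mathcal{B}_R$ and writes $u_\eps=\rho_\eps e^{i\theta_\eps}$ on $\mathcal{B}_{3R/4}$; the key step is that $\d^*(\rho_\eps^2\d\theta_\eps)=\d^*((\rho_\eps^2-1)A_\eps)$ and $\norm{(1-\rho_\eps^2)A_\eps}_{L^2(\mathcal{B}_R)}\to0$ (by interpolation between the $L^\infty$ and potential bounds), so a Caccioppoli inequality bootstraps $\theta_\eps$ from the $W^{1,p}$-bound of Lemma~\ref{lemma:Lp-bound-prejac} to an $\eps$-uniform $L^2$-bound on $\d\theta_\eps$. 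Only then does the equation $-\Delta A_\eps+\rho_\eps^2 A_\eps=\rho_\eps^2\d\theta_\eps$ give an $L^2$-bound on $A_\eps$ and $\d A_\eps$, and finally the $\rho_\eps$-equation closes the estimate. Your appeal to Proposition~\ref{prop:almostLinftycurvature} and $\d^*F_\eps=j(u_\eps,A_\eps)$ does not by itself control $\norm{j(u_\eps,A_\eps)}_{L^2}$ uniformly, since the Morrey bound on $F_\eps$ still carries the factor $E_\eps(x_0,R)\lesssim\abs{\log\eps}$.
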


Proposition~\ref{prop:small-ball-small-energy} is the counterpart 
of~\cite[Proposition~VII.1]{BethuelBrezisOrlandi}. For its proof, 
we shall follow the same strategy as in~\cite{BethuelBrezisOrlandi},
involving essentially three ingredients: energy decay estimates, 
a {\em clearing-out} property, and elliptic estimates. 

The pivotal point of the argument is the ``clearing-out'' property, 
Proposition~\ref{prop:clearingout}. Indeed, this property implies 
that Equation~\eqref{EL-A} is uniformly elliptic in any geodesic ball in 
which~\eqref{eq:small-resclaed-energy} holds. 
In turn, uniform ellipticity implies several strong elliptic estimates that 
allow to achieve the conclusion of Proposition~\ref{prop:small-ball-small-energy}. 

According to \cite{BethuelBrezisOrlandi}, the main concern towards the proof 
of the clearing-out property lies in obtaining a suitable, 
{\em quantitative} decay of the rescaled energy in small balls with respect 
to the radius. This is done in Section~\ref{sec:energydecay} below.

Throughout Section~\ref{sec:clearing-out}, we 
will consider a sequence of critical points 
$\{(u_\eps,\,A_\eps)\}\subset (W^{1,2}\cap L^\infty)(M, \, E)
\times W^{1,2}(M, \, \T^*M)$, a point~$x_0\in M$
and a radius~$R > 0$ (smaller than the injectivity radius of~$M$).
We will write indifferently~$\mathcal{B}_r$
or~$\mathcal{B}_r(x_0)$ for a geodesic ball of center~$x_0$
and radius~$r > 0$.

\subsection{An energy decay estimate}
\label{sec:energydecay}

Towards the proof of Proposition~\ref{prop:small-ball-small-energy},
the first step is an energy decay estimate, 
analogous to~\cite[Theorem~3]{BethuelBrezisOrlandi}.

\begin{prop} \label{prop:decay}
 There exist numbers~$R_* > 0$ and~$\alpha\in (0, \, 2)$,
 depending on~$(M, \, g)$ only, such that the following
 statement holds. Let~$x_0\in M$, $0 < \eps < R < R_*$
 and~$0 < \delta < 1/10$.
 Let~$(u_\eps, \, A_\eps)$ be a 
 critical point of $G_\eps$.
 Define
 \begin{equation} \label{}
  p_\eps := \frac{1}{\eps^2  \, R^{n-2}}
    \int_{\mathcal{B}_R(x_0)} \left(1 - \abs{u_\eps}^2\right)^2 \vol_g
 \end{equation}
 Then, there holds 
 \[
  \begin{split}
   \frac{E_\eps(x_0, \, \delta R)}{R^{n-2}} \lesssim 
    \left(p_\eps^{1/3} + \delta^{n-2+\alpha}\right)
     \frac{E_\eps(x_0, \, R)}{R^{n-2}} + p_\eps^{2/3}
  \end{split}
 \]
 where
 
\end{prop}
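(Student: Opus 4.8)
The plan is to adapt the scalar energy‑decay estimate of \cite{BethuelBrezisOrlandi} (their Theorem~3) to the present gauge‑invariant situation, in the spirit of \cite{Orlandi,PigatiStern}, using the monotonicity and curvature‑decay results already proved in Section~\ref{sec:monotonicity}.

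\emph{Step 1: localisation and reduction to the scalar energy.} I would work in geodesic normal coordinates centred at $x_0$, so that $g$ is Euclidean up to an error $\mathrm O(R^2)$, which is harmless since such errors are already absorbed by the exponential factors of Theorem~\ref{thm:monotonicity}. Trivialising $E$ over $\mathcal B_R$ and fixing a Coulomb gauge exactly as in the proofs of Lemma~\ref{lemma:est-Feps} and Lemma~\ref{lemma:decay-Feps}, one may write $\D_{A_\eps}=\d-iA_\eps$ with $\d^*A_\eps=0$ in $\mathcal B_R$, $\mathrm i_\nu A_\eps=0$ on $\partial\mathcal B_R$, $\d A_\eps=F_\eps$, and $A_\eps$ smooth. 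Combining the Poincar\'e--Gaffney inequality \eqref{eq:Feps-comp3} with the curvature decay of Lemma~\ref{lemma:decay-Feps} gives, for every $\rho<R$,
\[
 \int_{\mathcal B_\rho}\bigl(|A_\eps|^2+|F_\eps|^2\bigr)\vol_g
 \;\lesssim\; \frac{\rho^{\,n-2+\alpha}}{R^{\,n-2+\alpha}}\,E_\eps(x_0,R).
\]
Since $|u_\eps|\le1$, the identity $|\D_{A_\eps}u_\eps|^2=|\d u_\eps|^2+\mathrm O\bigl(|A_\eps||\d u_\eps|+|A_\eps|^2\bigr)$ then lets me replace, up to errors bounded by the displayed quantity, the energy $E_\eps(x_0,\rho)$ by the scalar energy $\int_{\mathcal B_\rho}\bigl(\tfrac12|\d u_\eps|^2+\tfrac1{4\eps^2}(1-|u_\eps|^2)^2\bigr)\vol_g$, where now $u_\eps$ is a $\C$‑valued map solving $-\Delta u_\eps=\eps^{-2}(1-|u_\eps|^2)u_\eps+\mathcal R_\eps$ with a magnetic remainder $\mathcal R_\eps$ bounded in $L^2(\mathcal B_\rho)$ by the same decaying quantity.

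\emph{Step 2: a good sphere and the phase decomposition.} Averaging $\rho\mapsto\int_{\partial\mathcal B_\rho}(\dots)\,\vol_{\hat g}$ over $\rho$ in a dyadic annulus inside $\mathcal B_R$ (using Theorem~\ref{thm:monotonicity} to control $\int X_\eps$), I would pick a good radius $r$ comparable to $R$ on which the boundary energy is $\lesssim R^{-1}E_\eps(x_0,R)$ and, crucially, $\tfrac1{\eps^2}\int_{\partial\mathcal B_r}(1-|u_\eps|^2)^2\,\vol_{\hat g}\lesssim p_\eps R^{n-3}$. The $L^\infty$ bound $\|\D_{A_\eps}u_\eps\|_{L^\infty}\lesssim\eps^{-1}$ of Lemma~\ref{lemma:L-infty-bound-D_Au} upgrades this to a pointwise bound $\inf_{\partial\mathcal B_r}|u_\eps|\ge 1-Cp_\eps^{1/3}$ through an interpolation inequality on the $(n-1)$‑sphere — this is where the exponent $\tfrac13$ enters. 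Once $p_\eps$ is below a universal threshold, $u_\eps/|u_\eps|$ is a well‑defined $\S^1$‑valued $W^{1,2}$ map on $\partial\mathcal B_r$, and since $n\ge3$ every map $\partial\mathcal B_r\simeq\S^{n-1}\to\S^1$ is null‑homotopic, so it lifts: $u_\eps/|u_\eps|=e^{i\psi_\eps}$ on $\partial\mathcal B_r$.

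\emph{Step 3: decay of the interior energy.} Split $E_\eps(x_0,\delta R)$ over $\{|u_\eps|\ge\tfrac12\}$ and $\{|u_\eps|<\tfrac12\}$. On the bad set $(1-|u_\eps|^2)^2\ge\tfrac9{16}$, so its measure inside $\mathcal B_R$ is $\lesssim\eps^2 p_\eps R^{n-2}$; combined with the crude bound $e_\eps\lesssim\eps^{-2}$ (Proposition~\ref{prop:L-infty-bound-u}, Lemmas~\ref{lemma:L-infty-bound-D_Au} and~\ref{lemma:Linfty-Feps}) and a H\"older interpolation against the scale‑invariant energy, this produces the standalone term $p_\eps^{2/3}$. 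On the good set, the potential part of the energy is $\lesssim\delta^{n-2}E_\eps(x_0,R)$ by the potential‑monotonicity Corollary~\ref{cor:monotonicity} (and, where sharper, simply $\lesssim p_\eps R^{n-2}$); the gradient part is handled by writing $u_\eps=\rho_\eps e^{i\psi_\eps}$ near $\partial\mathcal B_r$, noting that $\psi_\eps$ solves a divergence‑form equation $\div(\rho_\eps^2\nabla\psi_\eps)=(\text{magnetic remainder})$, and applying the decay estimates for elliptic equations with right‑hand side already used in Lemma~\ref{lemma:decay-Feps} (Proposition~\ref{prop:decay-RHS}): these give the $\delta^{n-2+\alpha}$ contribution plus further $p_\eps$‑terms, while the curvature part is $\lesssim\delta^{n-2+\alpha}E_\eps(x_0,R)/R^{n-2}$ by Lemma~\ref{lemma:decay-Feps} directly. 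Collecting everything, and absorbing $\delta^{n-2}E_\eps(x_0,R)$ into $\bigl(p_\eps^{1/3}+\delta^{n-2+\alpha}\bigr)E_\eps(x_0,R)+p_\eps^{2/3}R^{n-2}$ by distinguishing the regimes $p_\eps\gtrsim\delta^{3(n-2)}$ and $p_\eps\lesssim\delta^{3(n-2)}$ (in the latter $|u_\eps|$ stays close to $1$ and the potential part is in fact $\lesssim p_\eps R^{n-2}$, not merely $\lesssim\delta^{n-2}E_\eps(x_0,R)$), yields the asserted inequality.

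The main obstacle, as usual in this circle of ideas, is the absence of minimality: the decay of the covariant‑gradient part of the energy cannot be obtained by comparison with a competitor but must be extracted from the Euler--Lagrange system itself, through the interplay of the Pohozaev‑type inequality (Lemma~\ref{lemma:PS-ineq}), the almost‑monotonicity formula (Theorem~\ref{thm:monotonicity}), the curvature decay (Lemma~\ref{lemma:decay-Feps}) and the potential monotonicity (Corollary~\ref{cor:monotonicity}); moreover every estimate has to be carried out in a gauge, with constants uniform in $\eps$, converting repeatedly between $\D_{A_\eps}u_\eps$, $\d u_\eps$ and $A_\eps$. Pinning down the precise quantitative dependence on $p_\eps$ — the exponents $\tfrac13$ and $\tfrac23$ — is the book‑keeping heart of the proof.
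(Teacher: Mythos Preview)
Your outline captures the spirit of the \cite{BethuelBrezisOrlandi} argument but departs from the paper's proof in a way that introduces a genuine circularity. In Step~3 you propose to control the covariant gradient on the good set $\{|u_\eps|\ge\tfrac12\}$ by lifting $u_\eps=\rho_\eps e^{i\psi_\eps}$ and applying elliptic decay to the divergence-form equation for~$\psi_\eps$. But Proposition~\ref{prop:decay-RHS} requires the equation to hold on a full ball; the phase~$\psi_\eps$ is only well-defined (as a single-valued function) on the good set, and at this stage of the paper we have no information forcing $\{|u_\eps|<\tfrac12\}\cap\mathcal B_{\delta R}=\varnothing$ --- indeed that is exactly what Proposition~\ref{prop:clearingout} will establish \emph{using} Proposition~\ref{prop:decay}. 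Your interpolation in Step~2 also does not give $\inf_{\partial\mathcal B_r}|u_\eps|\ge 1-Cp_\eps^{1/3}$ for $n>3$: the Gagliardo--Nirenberg exponent on an $(n-1)$-sphere between $L^2$ and Lipschitz produces a power of~$R/\eps$ that is unbounded as~$\eps\to 0$.

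The paper avoids the lifting altogether. It works with the gauge-invariant prejacobian $j(u_\eps,A_\eps)$, which is defined pointwise regardless of where $u_\eps$ vanishes, and applies a \emph{global} Hodge decomposition on~$M$ to the truncated form $\eta_r\,j(u_\eps,A_\eps)$, writing it as $\d\varphi_\eps+\d^*\psi_\eps+\xi_\eps$. The coexact part is further split via the London-type identity $-\Delta\psi_\eps=\omega_1+\omega_2+\omega_3+\omega_4$, where the key term $\omega_1=2\eta_r\,J(v_\eps,A_\eps)$ involves a modified section $v_\eps:=f(|u_\eps|)\,u_\eps$ with a free cutoff parameter~$\beta\in(0,\tfrac14)$: by construction $|v_\eps|=1$ on $\{|u_\eps|>1-\beta\}$, so $J(v_\eps,A_\eps)$ vanishes there, while on the complement the $L^\infty$ bounds give $|\omega_1|\lesssim\beta^{-2}\eps^{-2}(1-|u_\eps|^2)^2$. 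Each $\tau_k$ (solving $-\Delta\tau_k=\omega_k-H(\omega_k)$ on~$M$) is then estimated by Green-function bounds, harmonic decay, or Proposition~\ref{prop:decay-RHS} combined with the curvature decay of Lemma~\ref{lemma:decay-Feps}. The exponents $\tfrac13$ and~$\tfrac23$ arise not from a sphere interpolation but from balancing the~$\beta^2$ and~$\beta^{-4}p_\eps$ contributions by choosing $\beta=p_\eps^{1/6}$.
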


The rest of the section is devoted to the proof of
Proposition~\ref{prop:decay}.
Let~$x_0\in M$, $0 < R < \inj(M)$, 
$\eps > 0$ and~$0 < \delta < 1/10$ be fixed,
and let~$(u_\eps, \, A_\eps)$
be a 
critical point of $G_\eps$.
First of all, we choose a radius~$r\in (R/10, \, R/5)$
such that
\begin{align}
 \int_{\partial \mathcal{B}_r} \abs{\D_{A_\eps} u_\eps}^2 \d\H^{n-1}
  &\leq \frac{40}{R} \int_{\mathcal{B}_R} \abs{\D_{A_\eps} u_\eps}^2 \vol_g 
  \label{average-r-1} \\
 \int_{\partial \mathcal{B}_r} \left(\abs{u_\eps}^2 - 1\right)^2 \d\H^{n-1}
  &\leq \frac{40}{R} \int_{\mathcal{B}_R} \left(\abs{u_\eps}^2 - 1\right)^2 \vol_g 
  \label{average-r-2}
\end{align}
Such a radius exist, because the set of radii~$r\in(R/10, \, R/5)$
that do not satisfy either of the inequalities~\eqref{average-r-1}
has length~$R/40$ at most. For such a choice of~$r$, we have~$r>\delta R$.
Let~$\eta_r$ be the indicator function of the ball~$\mathcal{B}_r$
(i.e, $\eta_r(x) := 1$ if~$x\in \mathcal{B}_r$ and~$\eta_r(x) := 0$
otherwise). By Hodge decomposition,
there exist~$\varphi_\eps \in W^{1,2}(M)$,
$\psi_\eps\in W^{1,2}(M, \, \Lambda^2\T^*M)$ and~$\xi_\eps\in \Harm^1(M)$
such that
\begin{equation} \label{Hodge-jac}
 \eta_r \, j(u_\eps, \, A_\eps) = \d\varphi_\eps + \d^*\psi_\eps + \xi_\eps 
\end{equation}
where the three terms in the right-hand side are orthogonal in~$L^2(M)$.
Moreover, there is no loss of generality in assuming that~$\psi_\eps$
is exact (if not, we replace~$\psi_\eps$ by its projection onto exact forms).
We prove decay estimates on~$\varphi_\eps$, $\psi_\eps$
and~$\xi_\eps$ separately.

\begin{lemma} \label{lemma:decay-phixi}
 We have
 \[
  \int_{\mathcal{B}_{\delta R}}\left(\abs{\d\varphi_\eps}^2 
   + \abs{\xi_\eps}^2\right)\vol_g \lesssim \delta^n E_\eps(x_0, \, R) 
 \]
\end{lemma}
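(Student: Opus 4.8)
The plan is to estimate the harmonic part $\xi_\eps$ and the exact part $\d\varphi_\eps$ in~\eqref{Hodge-jac} separately, exploiting respectively that $\Harm^1(M)$ is finite-dimensional and that $\varphi_\eps$ is actually harmonic inside the ball $\mathcal{B}_r$.

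\emph{The harmonic part.} Since the Hodge decomposition~\eqref{Hodge-jac} is $L^2(M)$-orthogonal, $\norm{\xi_\eps}_{L^2(M)}\le\norm{\eta_r\,j(u_\eps,\,A_\eps)}_{L^2(M)}=\norm{j(u_\eps,\,A_\eps)}_{L^2(\mathcal{B}_r)}\le\norm{\D_{A_\eps}u_\eps}_{L^2(\mathcal{B}_r)}\le\sqrt{2\,E_\eps(x_0,\,R)}$, where I used the pointwise bound $\abs{j(u_\eps,\,A_\eps)}\le\abs{\D_{A_\eps}u_\eps}$ (a consequence of~\eqref{maxprinc}) and $\mathcal{B}_r\subset\mathcal{B}_R$. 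As $\Harm^1(M)$ is finite-dimensional, all norms on it are equivalent with constants depending only on $(M,\,g)$, so $\norm{\xi_\eps}_{L^\infty(M)}\lesssim\norm{\xi_\eps}_{L^2(M)}$; hence $\int_{\mathcal{B}_{\delta R}}\abs{\xi_\eps}^2\,\vol_g\le\abs{\mathcal{B}_{\delta R}}\,\norm{\xi_\eps}_{L^\infty(M)}^2\lesssim(\delta R)^n\,E_\eps(x_0,\,R)\lesssim\delta^n\,E_\eps(x_0,\,R)$, using $R<R_*$.

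\emph{The exact part.} First I would check that $\varphi_\eps$ is harmonic in $\mathcal{B}_r$. Applying $\d^*$ to~\eqref{Hodge-jac} and using $\d^*\d^*=0$ and $\d^*\xi_\eps=0$ gives $\d^*\d\varphi_\eps=\d^*(\eta_r\,j(u_\eps,\,A_\eps))$; and for every $\phi\in C^\infty_{\mathrm c}(\mathcal{B}_r)$ one has, by~\eqref{EL-A} and $\d\d=0$, $\ip{\eta_r\,j(u_\eps,\,A_\eps)}{\d\phi}=\ip{j(u_\eps,\,A_\eps)}{\d\phi}_{L^2(\mathcal{B}_r)}=\ip{\d^*F_\eps}{\d\phi}_{L^2(M)}=\ip{F_\eps}{\d\d\phi}_{L^2(M)}=0$, so $\d^*\d\varphi_\eps=0$ in $\mathcal D'(\mathcal{B}_r)$. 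By elliptic regularity $\varphi_\eps$ is smooth and harmonic in $\mathcal{B}_r$, and standard interior decay estimates for harmonic functions on $(M,\,g)$ (valid on geodesic balls of radius $<R_*$; see e.g.\ Proposition~\ref{prop:decay-RHS} with vanishing right-hand side, or the Bochner inequality $\Delta\abs{\d\varphi_\eps}^2\ge-C\abs{\d\varphi_\eps}^2$ together with the mean-value inequality for subsolutions) yield, for $0<\rho\le r$,
\[
\int_{\mathcal{B}_\rho}\abs{\d\varphi_\eps}^2\,\vol_g\lesssim\Big(\frac{\rho}{r}\Big)^{\! n}\int_{\mathcal{B}_r}\abs{\d\varphi_\eps}^2\,\vol_g.
\]
Taking $\rho=\delta R$ (legitimate since $\delta R<R/10<r$) and using $r>R/10$ gives $(\delta R/r)^n\le(10\delta)^n$, while the orthogonality of~\eqref{Hodge-jac} again gives $\int_{\mathcal{B}_r}\abs{\d\varphi_\eps}^2\,\vol_g\le\norm{\eta_r\,j(u_\eps,\,A_\eps)}_{L^2(M)}^2\le2\,E_\eps(x_0,\,R)$. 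Hence $\int_{\mathcal{B}_{\delta R}}\abs{\d\varphi_\eps}^2\,\vol_g\lesssim\delta^n\,E_\eps(x_0,\,R)$, and adding the two bounds proves the lemma.

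The one mildly delicate point is the interior decay estimate for $\d\varphi_\eps$ on the curved manifold: unlike in $\R^n$, the components of $\d\varphi_\eps$ are not themselves harmonic, so I would derive it either from the Bochner formula and the De Giorgi--Nash--Moser mean-value inequality for subsolutions of $-\Delta-C$ on small geodesic balls, or by passing to normal coordinates and invoking the Campanato-type decay lemma already used in Section~\ref{sec:monotonicity}, absorbing the lower-order terms for $r<R_*$ small. Everything else is bookkeeping, and all implicit constants depend only on $(M,\,g)$.
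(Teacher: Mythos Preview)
Your proof is correct and follows essentially the same approach as the paper's. One remark: the ``mildly delicate point'' you flag at the end is handled more cleanly in the paper by observing that the Hodge Laplacian commutes with~$\d$, so from $-\Delta\varphi_\eps=0$ in~$\mathcal{B}_r$ one gets directly $-\Delta(\d\varphi_\eps)=0$ in~$\mathcal{B}_r$; then Theorem~\ref{thm:decay-harmonic} applies to the harmonic $1$-form $\d\varphi_\eps$ itself and gives the decay of $\int\abs{\d\varphi_\eps}^2$ without any Bochner-type argument. (Your alternative of invoking Proposition~\ref{prop:decay-RHS} with $f=0$ and $\omega=\varphi_\eps$ also works, since for a $0$-form $\d^*\varphi_\eps=0$.) For~$\xi_\eps$ the paper simply says ``an analogous argument applies,'' i.e.\ $\xi_\eps$ is harmonic on all of~$M$ and Theorem~\ref{thm:decay-harmonic} gives the same $\delta^n$ decay; your use of finite-dimensionality and the $L^\infty$-bound is an equally valid (and arguably more direct) route.
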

\begin{proof}
 We consider~$\varphi_\eps$ first. In the interior of~$\mathcal{B}_r$,
 we have
 \[
  -\Delta\varphi_\eps = \d^*\d\varphi_\eps = \d^*j(u_\eps, \, A_\eps)
  = \d^*\d^* F_\eps = 0
 \]
 By differentiating both sides of~$-\Delta\varphi_\eps =0$,
 we deduce that~$-\Delta(\d\varphi_\eps) = 0$ in the 
 interior of~$\mathcal{B}_{R}$. Then, Theorem~\ref{thm:decay-harmonic}
 implies
 \[
  \int_{\mathcal{B}_{\delta R}}\abs{\d\varphi_\eps}^2 \,\vol_g 
  \lesssim \delta^n \int_{\mathcal{B}_{R}}\abs{\d\varphi_\eps}^2 \,\vol_g 
  \lesssim \delta^n \int_{M}\abs{\d\varphi_\eps}^2 \,\vol_g 
 \]
 As the decomposition~\eqref{Hodge-jac} is orthogonal in~$L^2(M)$,
 we deduce
 \[
  \int_{\mathcal{B}_{\delta R}}\abs{\d\varphi_\eps}^2 \,\vol_g 
  \lesssim \delta^n \int_{\mathcal{B}_R} \abs{j(u_\eps, \, A_\eps)}^2 \, \vol_g
  \lesssim \delta^n E_\eps(x_0, \, R) 
 \]
 where we have used that 
 $\abs{j(u_\eps, \, A_\eps)}\leq \abs{\D_{A_\eps}u_\eps}$
 for the last inequality.
 An analogous argument applies to the harmonic form~$\xi_\eps$. 
\end{proof}

The proof of the decay estimate for~$\psi_\eps$ is more technical.
It is convenient to further decompose~$\psi_\eps$ as 
a sum of several contributions, as 
in~\cite[Proof of Theorem~3, Step~2]{BethuelBrezisOrlandi}.
Let~$\beta\in (0, \, 1/4)$ be a parameter, to be chosen later.
Let~$f\colon [0, \, +\infty)\to [0, \, +\infty)$ be a smooth
function, such that
\begin{align} 
  f(t) = 1 \quad \textrm{if } 0 \leq t \leq 1 - 2\beta, \qquad
  &f(t) = \frac{1}{t} \quad \textrm{if } t \geq 1 - \beta, \label{f-BBO1} \\
  1 \leq f(t) \leq \frac{1}{t} \quad \textrm{if } 1 - 2\beta < t < 1 - \beta,
  \qquad &\abs{f^\prime(t)} \leq 4 \qquad \textrm{for any } t \geq 0 
  \label{f-BBo2}
\end{align}
We define
\begin{equation} \label{rho_BBO}
 \rho_\eps := f(\abs{u_\eps}), 
 \qquad v_\eps := \rho_\eps u_\eps
\end{equation}
By construction, the function~$\rho_\eps$ satisfies
\begin{equation} \label{rho_BBO_beta}
 0 \leq \rho_\eps^2 - 1 \leq 4\beta \qquad \textrm{in } M.
\end{equation}
Moreover, writing the reference connection
as~$\D_0 = \d - i\gamma_0$ for some some (locally defined)
$1$-form~$\gamma_0$, we have
\begin{equation} \label{v_BBO}
 j(v_\eps, \, A_\eps) 
 = \ip{\d \rho_\eps\otimes u_\eps + \rho_\eps \, \d u_\eps 
  - i(\gamma_0 + A_\eps)u_\eps}{i \rho_\eps u_\eps}
 = \rho_\eps^2 \, j(u_\eps, \, A_\eps) 
\end{equation}
Now, we consider the Hodge decomposition~\eqref{Hodge-jac}.
As we have seen above, we can assume with no loss of generality
that~$\psi_\eps$ is exact.
Then, by differentiating both sides of~\eqref{Hodge-jac}, we obtain
\[
 \begin{split}
  -\Delta\psi_\eps = \d\d^*\psi_\eps
  &= \d\left(\eta_r \, j(u_\eps, \, A_\eps)\right)\\
  &\hspace{-.17cm} \stackrel{\eqref{v_BBO}}{=} 
   \d\left(\eta_r \, j(v_\eps, \, A_\eps)\right)
   + \d\left(\eta_r \, (1 - \rho_\eps^2) \, j(u_\eps, \, A_\eps)\right) \\
  &= \eta_r \, \d j(v_\eps, \, A_\eps)
   - \sigma_r \, \nu^\flat\wedge j(v_\eps, \, A_\eps)
   + \d\left(\eta_r \, (1 - \rho_\eps^2) \, j(u_\eps, \, A_\eps)\right) 
 \end{split}
\]
where~$\sigma_r$ denotes the~$(n-1)$-dimensional Hausdorff measure
on~$\partial \mathcal{B}_r$ and~$\nu$ is the outward unit vector to~$\partial \mathcal{B}_r$.
Recalling the definition of Jacobian, Equation~\eqref{Jac}, we deduce
\[
 \begin{split}
  -\Delta\psi_\eps
  &= 2\eta_r \, J(v_\eps, \, A_\eps) - \eta_r \, F_\eps
   - \sigma_r \, \nu^\flat\wedge j(v_\eps, \, A_\eps)
   + \d\left(\eta_r \, (1 - \rho_\eps^2) \, j(u_\eps, \, A_\eps)\right) 
 \end{split}
\]
Therefore, we can write
\begin{equation} \label{London-psi}
 -\Delta\psi_\eps 
 = \omega_1 + \omega_2 + \omega_3 + \omega_4,
\end{equation}
where
\begin{align}
 \omega_1 := 2\eta_r \, J(v_\eps, \, A_\eps), \qquad
  &\omega_2 := - \eta_r \, F_\eps \label{omega-BBO1} \\
 \omega_3 := - \sigma_r \, \nu^\flat\wedge j(v_\eps, \, A_\eps), \qquad
  &\omega_4 := \d\left(\eta_r \, (1 - \rho_\eps^2) \, j(u_\eps, \, A_\eps)\right) 
\end{align}
We have~$\omega_k\in W^{-1, 2}(M, \, \Lambda^2\T^*M)$
for each~$k$ --- in fact, $\omega_1$, $\omega_2$ 
belong to~$L^2(M, \, \Lambda^2\T^*M)$, $\omega_4$
is the differential of an~$L^2$-form, 
and~$\omega_3\in W^{-1,2}(M, \, \Lambda^2\T^*M)$ 
by continuity of the trace operator~$W^{1,2}(M)\to L^2(\partial \mathcal{B}_r)$.
Let~$H\colon W^{-1,2}(M, \, \Lambda^2\T^*M)\to\Harm^2(M)$
be the orthogonal projection onto the space
of harmonic~$2$-forms, defined by
\begin{equation} \label{harmonicprojection}
 H(\omega) := \sum_{j=1}^{\ell}
 \ip{\omega}{\zeta_j}_{\mathcal{D}^\prime(M), \, \mathcal{D}(M)} \zeta_j
\end{equation}
where~$(\zeta_1, \, \ldots, \, \zeta_\ell)$
is an orthonormal basis of the finite-dimensional space~$\Harm^2(M)$.
By Lax-Milgram lemma, for each~$k$ there exists
a unique~$\psi_k\in W^{1,2}(M, \, \Lambda^2\T^*M)$
such that
\begin{equation} \label{psik}
 \begin{cases}
  -\Delta\tau_k = \omega_k - H(\omega_k) \qquad 
   &\textrm{in the sense of distributions in } M \\[5pt]
   \displaystyle\int_M \ip{\tau_k}{\zeta} \, \vol_g = 0 
   &\textrm{for all harmonic $2$-forms } \zeta.
 \end{cases}
\end{equation}
in the sense of distributions on~$M$.
We have~$\psi_\eps = \psi_1 + \psi_2 + \psi_3 + \psi_4$
because, by construction,
$\psi_\eps$ is exact and hence, orthogonal
to all harmonic~$2$-forms.
We shall prove a decay estimate for each~$\d^*\psi_k$ separately.

\paragraph{Estimate for~$\tau_4$.}
We claim that
\begin{equation} \label{tau4}
 \int_{M} \abs{\d^*\tau_4}^2 \, \vol_g \lesssim \beta^2 \, E_\eps(x_0, \, R)
\end{equation}
Indeed, as~$\omega_4$ is exact, it is orthogonal to
all harmonic forms, i.e.~$H(\omega_4) = 0$.
Then, by testing the equation for~$\tau_4$ against~$\tau_4$,
we obtain
\[
 \int_{M} \left(\abs{\d\tau_4}^2 + \abs{\d^*\tau_4}^2 \right)\vol_g
 = \int_{\mathcal{B}_r} (1 - \rho_\eps^2) \, \ip{j(u_\eps, \, A_\eps)}{\d^*\tau_4} \vol_g
\]
By applying the Young inequality at the right hand side,
we deduce
\[
 \begin{split}
  \int_{M} \left(\abs{\d\tau_4}^2 + \abs{\d^*\tau_4}^2\right)\vol_g
  \lesssim \int_{\mathcal{B}_r} (1 - \rho_\eps^2)^2 \, 
   \abs{j(u_\eps, \, A_\eps)}^2 \vol_g 
  \stackrel{\eqref{rho_BBO_beta}}{\lesssim} 
   \beta^2 \int_{\mathcal{B}_r} \abs{j(u_\eps, \, A_\eps)}^2 \vol_g
 \end{split}
\]
and~\eqref{tau4} follows.

\paragraph{Estimate for~$\tau_3$.}
We shall prove that
\begin{equation} \label{tau3}
 \int_{\mathcal{B}_{\delta R}} \abs{\d^*\tau_3}^2 \, \vol_g 
  \lesssim \delta^n E_\eps(x_0, \, R)
\end{equation}
To this end, we will need the following lemma.

\begin{lemma} \label{lemma:trace-R2}
 For any~$\tau\in W^{1,2}(M, \, \Lambda^k\T^*M)$, any~$x_0\in M$
 and any~$r>0$ small enough, there holds
 \begin{equation*} 
  \frac{1}{r^2}\int_{\partial \mathcal{B}_r(x_0)} \abs{\tau}^2 \, \d\H^{n-1}
  \lesssim \int_M\left(\abs{\d\tau}^2 
   + \abs{\d^*\tau}^2 + \abs{\tau}^2\right) \vol_g
 \end{equation*}
 where the implicit constant at the right-hand side
 does not depend on~$r$, $x_0$.
\end{lemma}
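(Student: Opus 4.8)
The statement to prove is Lemma~\ref{lemma:trace-R2}: for $\tau \in W^{1,2}(M,\Lambda^k\T^*M)$, $x_0 \in M$ and $r$ small,
\[
\frac{1}{r^2}\int_{\partial\mathcal{B}_r(x_0)}\abs{\tau}^2\,\d\H^{n-1} \lesssim \int_M\left(\abs{\d\tau}^2 + \abs{\d^*\tau}^2 + \abs{\tau}^2\right)\vol_g.
\]

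The plan is to combine two ingredients: a scaled trace inequality on the geodesic sphere $\partial\mathcal{B}_r(x_0)$ (which gains a factor $1/r^2$ compared to the naive trace bound) and the Gaffney inequality, which controls the full $W^{1,2}$-norm of a form by its exterior differential, codifferential and $L^2$-norm. The key point is that the constant must not depend on $r$ or $x_0$; this is what forces us to track the $r$-scaling carefully.

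\emph{Step 1 (scaled trace inequality).} I would first prove that for any $w \in W^{1,2}(M)$ (scalar, applied componentwise) and any geodesic ball $\mathcal{B}_r(x_0)$ with $r < \inj(M)$,
\[
\frac{1}{r}\int_{\partial\mathcal{B}_r(x_0)}\abs{w}^2\,\d\H^{n-1} \lesssim \int_{\mathcal{B}_r(x_0)}\abs{\nabla w}^2\,\vol_g + \frac{1}{r^2}\int_{\mathcal{B}_r(x_0)}\abs{w}^2\,\vol_g,
\]
with an implicit constant independent of $r$ and $x_0$. This is the standard trace inequality on a ball, made scale-invariant. One proves it by pulling back to the Euclidean unit ball via $y \mapsto \exp_{x_0}(ry)$ and using that, for $r$ small, the pulled-back metric $r^{-2}\exp_{x_0}^*g(r\,\cdot)$ is uniformly close (in $C^1$) to the Euclidean metric on $B_1(0)$, uniformly in $x_0$ by compactness of $M$; then one applies the classical trace theorem $H^1(B_1) \hookrightarrow L^2(\partial B_1)$ on the unit ball and unwinds the scaling (each spatial derivative produces a factor $r^{-1}$, the volume element $r^n$, the surface element $r^{n-1}$), which reproduces exactly the stated powers of $r$. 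Dividing through by a further factor $r$ gives $r^{-2}\int_{\partial\mathcal{B}_r}\abs{\tau}^2 \lesssim r^{-1}\int_{\mathcal{B}_r}\abs{\nabla\tau}^2 + r^{-3}\int_{\mathcal{B}_r}\abs{\tau}^2$. Since $r$ is small and hence bounded, we may crudely bound $r^{-1} \leq Cr^{-3}$ is the wrong direction, so instead I keep the two terms and pass to the next step where Gaffney absorbs them — more precisely I use $r^{-2}\int_{\partial\mathcal{B}_r}\abs{\tau}^2 \lesssim \int_{\mathcal{B}_r}\abs{\nabla\tau}^2 + r^{-2}\int_{\mathcal{B}_r}\abs{\tau}^2$ after one more division, and then bound $r^{-2}\int_{\mathcal B_r}\abs{\tau}^2 \leq r^{-2}\int_M\abs{\tau}^2$ — but this blows up as $r\to 0$. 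The correct route is instead: keep $\frac{1}{r}\int_{\partial\mathcal B_r}\abs\tau^2 \lesssim \int_{\mathcal B_r}\abs{\nabla\tau}^2 + \frac1{r^2}\int_{\mathcal B_r}\abs\tau^2$ and then estimate $\frac1{r^2}\int_{\mathcal B_r}\abs\tau^2$ by the $L^2$-Poincaré–type bound against $\nabla\tau$ only after subtracting the average — which is not available for forms.

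\emph{Step 2 (Gaffney and conclusion).} The cleaner argument: by the scaled trace inequality applied on the \emph{whole} manifold region, namely $\frac1r\int_{\partial\mathcal B_r}\abs\tau^2\,\d\H^{n-1} \lesssim \int_M\bigl(\abs{\nabla\tau}^2 + \abs\tau^2\bigr)\vol_g$ — here I localize to $\mathcal{B}_{2r}(x_0)$, use the bound with the volume integrals over $\mathcal B_{2r}$, and note $r^{-2}\int_{\mathcal B_{2r}}\abs\tau^2 \lesssim \int_{\mathcal B_{2r}}\abs\tau^2$ is again wrong for small $r$. So the genuinely correct statement must have the $1/r^2$ on the sphere balanced by $\int_M\abs{\nabla\tau}^2$, which it is, provided we do \emph{not} localize the right-hand side. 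Concretely: prove $\frac1{r^2}\int_{\partial\mathcal B_r(x_0)}\abs\tau^2\,\d\H^{n-1} \lesssim \int_{\mathcal B_{r_0}(x_0)}\bigl(\abs{\nabla\tau}^2 + r_0^{-2}\abs\tau^2\bigr)$ for a fixed reference radius $r_0$ (e.g. $r_0 = \inj(M)/2$), by a dyadic/covering argument summing the scale-$r$ trace estimates, or simply by the one-shot Euclidean trace theorem on the annulus-free ball $\mathcal B_{r_0}$ rescaled so $r \mapsto r/r_0$; then bound the right-hand side by $C(M)\int_M\bigl(\abs{\nabla\tau}^2 + \abs\tau^2\bigr)$. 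Finally, invoke the Gaffney inequality on the closed manifold $M$: $\norm{\nabla\tau}_{L^2(M)}^2 \lesssim \norm{\d\tau}_{L^2(M)}^2 + \norm{\d^*\tau}_{L^2(M)}^2 + \norm{\tau}_{L^2(M)}^2$ (this is the Bochner–Weitzenböck/Gaffney estimate, cited earlier via \cite{IwaniecScottStroffolini}). Combining the two displays yields the claim.

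\emph{Main obstacle.} The delicate point is getting the $r$-scaling exactly right so that the constant is genuinely $r$-independent: the factor $1/r^2$ on the boundary sphere is sharp and is exactly what a single application of the Euclidean trace inequality on a ball of radius $r$ produces after rescaling to the unit ball, so the proof hinges on the uniform (in $x_0$) $C^1$-comparison of the rescaled geodesic-ball metric with the Euclidean one — which is where compactness of $M$ enters — together with the observation that $r^{-2}\int_{\mathcal B_r}\abs\tau^2$ is controlled (for small $r$, up to the fixed reference scale) by $\int_M\abs{\nabla\tau}^2 + \int_M\abs\tau^2$ via the scale-invariant trace/interpolation inequality on the fixed ball $\mathcal B_{r_0}(x_0)$. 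Once the scaling bookkeeping is clean, the rest is a direct chain: scaled trace $\Rightarrow$ control by $\nabla\tau$ and $\tau$ on a fixed ball $\Rightarrow$ Gaffney on $M$ $\Rightarrow$ the stated inequality.
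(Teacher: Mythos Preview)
Your overall strategy (scaled trace inequality on~$\partial\mathcal{B}_r$ plus Gaffney on~$M$) matches the paper's, and your Step~1 is exactly right. The gap is in Step~2: you correctly isolate the term~$r^{-2}\int_{\mathcal{B}_r}\abs{\tau}^2$ as the obstacle, but you never actually control it. Your several attempts (bounding by~$r^{-2}\int_M\abs{\tau}^2$, Poincar\'e, a ``fixed reference radius~$r_0$'', a vague ``dyadic/covering'' or ``interpolation on~$\mathcal{B}_{r_0}$'') either blow up as~$r\to 0$ or are left unjustified, and the proposal never closes this loop.

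The missing idea is a one-line H\"older--Sobolev trick that kills the~$r^{-2}$ exactly. With~$p = 2^* = 2n/(n-2)$ one has~$1/2 = 1/p + 1/n$, so H\"older on the small ball gives
\[
 \frac{1}{r^2}\norm{\tau}_{L^2(\mathcal{B}_r)}^2
 \;\lesssim\; \frac{1}{r^2}\,\norm{\tau}_{L^{2^*}(M)}^2\,
  \abs{\vol(\mathcal{B}_r)}^{2/n}
 \;\lesssim\; \norm{\tau}_{L^{2^*}(M)}^2
 \;\lesssim\; \norm{\tau}_{W^{1,2}(M)}^2,
\]
since~$\abs{\vol(\mathcal{B}_r)}^{2/n}\sim r^2$ cancels the prefactor, and the last step is the Sobolev embedding~$W^{1,2}(M)\hookrightarrow L^{2^*}(M)$. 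Combining this with your Step~1 inequality (applied componentwise in normal coordinates) gives~$r^{-2}\int_{\partial\mathcal{B}_r}\abs{\tau}^2 \lesssim \norm{\tau}_{W^{1,2}(M)}^2$, and then Gaffney on the closed manifold finishes the proof. This is precisely what the paper does; once you see this cancellation, the whole argument is three lines.
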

\begin{proof}
 For simplicity of notation, we write~$\mathcal{B}_r$
 instead of~$\mathcal{B}_r(x_0)$.
 We claim that, for any scalar function~$f\in W^{1,2}(\mathcal{B}_r)$,
 there holds
 \begin{equation} \label{trace-R2-1}
  \frac{1}{r^2}\int_{\partial \mathcal{B}_r} f^2 \, \d\H^{n-1}
  \lesssim \int_{\mathcal{B}_r} \abs{\d f}^2 \, \vol_g
  + \frac{1}{r^2}\int_{\mathcal{B}_r} f^2 \, \vol_g
 \end{equation}
 The inequality~\eqref{trace-R2-1} holds true 
 if~$\mathcal{B}_r$ is a ball in~$\R^n$, 
 equipped with the Euclidean metric. In case~$\mathcal{B}_r$
 is a geodesic ball in~$M$ (with~$r < \inj(M)/2$),
 the estimate~\eqref{trace-R2-1} follows
 by composition with 
 (normal geodesic) coordinate charts.
 The implicit constant in front of the right-hand side of~\eqref{trace-R2-1}
 depends only on the metric and is bounded uniformly
 with respect to~$x_0$, $r$, because~$M$ is compact and smooth.
 
 Now, take a form~$\tau\in W^{1,2}(M, \, \Lambda^k\T^*M)$.
 Let~$(x^1, \, \ldots, x^n)$ be normal geodesic
 charts in~$\mathcal{B}_r(x_0)$. Let us 
 write~$\tau = \sum_\alpha \tau_\alpha \d x^\alpha$,
 where the sum is taken over all multi-indices~$\alpha$ of order~$k$.
 By applying~\eqref{trace-R2-1} to each component~$\tau_\alpha$,
 we deduce
 \begin{equation*} 
  \frac{1}{r^2}\int_{\partial \mathcal{B}_r} \abs{\tau}^2 \, \d\H^{n-1}
  \lesssim \sum_\alpha \int_{\mathcal{B}_r} \abs{\d \tau_\alpha}^2 \, \vol_g
   + \frac{1}{r^2}\int_{\mathcal{B}_r} \abs{\tau}^2 \, \vol_g
  \lesssim \norm{\tau}^2_{W^{1,2}(M)} + \frac{1}{r^2}\norm{\tau}^2_{L^2(\mathcal{B}_r)}
 \end{equation*}
 The last term in the right-hand side can be estimated
 by applying the H\"older inequality and Sobolev embeddings.
 Indeed, if~$p := 2^* = 2n/(n - 2)$ 
 then $1/2 = 1/p + 1/n$ and hence,
 \begin{equation*} 
  \begin{split}
   \frac{1}{r^2}\norm{\tau}^2_{L^2(\mathcal{B}_r)}
   \lesssim \frac{1}{r^2} \norm{\tau}^2_{L^p(M)} \abs{\vol(\mathcal{B}_r)}^{2/n}
   \lesssim \norm{\tau}^2_{W^{1,2}(M)}
  \end{split}
 \end{equation*}
 (where the implicit constants are uniform with respect to~$r$, $x_0$).
 The lemma now follows by applying Gaffney's inequality 
 (see e.g.~\cite[Theorem~4.8]{IwaniecScottStroffolini}).
\end{proof}

\begin{remark} \label{rk:trace-R2}
 If~$\tau\in W^{1,2}(M, \, \Lambda^k\T^*M)$
 is orthogonal to all harmonic~$k$-forms,
 then the~$L^2(M)$-norm of~$\tau$ is bounded
 by the~$L^2(M)$-norms of~$\d\tau$ and~$\d^*\tau$
 (see e.g.~\cite[Theorem~4.11]{IwaniecScottStroffolini}),
 and we obtain
 \[
  \frac{1}{r^2}\int_{\partial \mathcal{B}_r(x_0)} \abs{\tau}^2 \, \d\H^{n-1}
  \lesssim \int_M\left(\abs{\d\tau}^2 + \abs{\d^*\tau}^2\right) \vol_g
 \]
\end{remark}

Now, we proceed to the proof of~\eqref{tau3}.
In the interior of~$\mathcal{B}_r$, the form~$\tau_3$
satisfies $-\Delta\tau_3 = 0$. By taking 
the codifferential of both sides of this equation,
we deduce $-\Delta(\d^*\tau_3) = 0$ in the interior of~$\mathcal{B}_r$.
Then, standard decay estimates for elliptic equations 
(see e.g. Theorem~\ref{thm:decay-harmonic} in the appendix) give
\begin{equation} \label{tau3,1}
 \int_{\mathcal{B}_{\delta R}}\abs{\d^*\tau_3}^2 \, \vol_g 
 \lesssim \delta^n \int_{\mathcal{B}_R} \abs{\d^*\tau_3}^2 \, \vol_g
\end{equation}
It remains to estimate the right-hand side of~\eqref{tau3,1}.
By testing the equation for~$\tau_3$ against~$\tau_3$,
we obtain
\begin{equation*} 
 \begin{split}
  \int_{M} \left(\abs{\d\tau_3}^2 + \abs{\d^*\tau_3}^2\right)\vol_g
  &= -\int_{\partial \mathcal{B}_r}  \ip{\nu^\flat\wedge
   j(v_\eps, \, A_\eps)}{\tau_3}   \d\H^{n-1} 
   + \int_M \ip{H(\omega_3)}{\tau_3} \vol_g \\
  &\leq \frac{r}{2\lambda} \int_{\partial \mathcal{B}_r} 
   \rho_\eps^4\abs{j(u_\eps, \, A_\eps)}^2 \, \d\H^{n-1}
   + \frac{\lambda}{2 r} \int_{\partial \mathcal{B}_r} \abs{\tau_3}^2 \d\H^{n-1} \\
  &\hspace{1.8cm}  + \frac{1}{2\lambda} \int_{M} \abs{H(\omega_3)}^2 \vol_g
   + \frac{\lambda}{2} \int_{M} \abs{\tau_3}^2 \vol_g
 \end{split}
\end{equation*}
for any~$\lambda > 0$. If~$\lambda$ is small enough,
the last term in the right-hand side can be absorbed into the
left-hand side, because~$\tau_3$ is orthogonal to all
harmonic~$2$-forms (see e.g.~\cite[Theorem~4.11]{IwaniecScottStroffolini}).
Moreover, due to Lemma~\ref{lemma:trace-R2}
and Remark~\ref{rk:trace-R2},
we can choose a constant~$\lambda$ small enough
(uniformly with respect to~$r$),
so as to obtain
\begin{equation} \label{tau3,3}
 \begin{split}
  \int_{M} \left(\abs{\d\tau_3}^2 + \abs{\d^*\tau_3}^2\right)\vol_g 
  &\lesssim R \int_{\partial \mathcal{B}_r} 
   \abs{j(u_\eps, \, A_\eps)}^2 \, \d\H^{n-1}
   + \int_M \abs{H(\omega_3)}^2 \vol_g 
 \end{split}
\end{equation}
Finally, we estimate the~$L^2(M)$-norm of~$H(\omega_3)$.
We observe that~$\omega_3$ is a bounded measure,
whose total variation is given by
\[
 \abs{\omega_3}\!(M) = \int_{\partial \mathcal{B}_r} 
   \abs{\nu^\flat\wedge j(u_\eps, \, A_\eps)} \, \d\H^{n-1}
   \leq \left(\H^{n-1}(\partial \mathcal{B}_r)\right)^{1/2} 
   \norm{j(u_\eps, \, A_\eps)}_{L^2(\partial \mathcal{B}_r)}
\]
From the definition of the projection operator~$H$,
Equation~\eqref{harmonicprojection}, we immediately obtain
\begin{equation} \label{tau3,4}
 \norm{H(\omega_3)}_{L^1(M)}
 \lesssim \abs{\omega_3}\!(M)
 \lesssim r^{n/2 - 1/2}  
   \norm{j(u_\eps, \, A_\eps)}_{L^2(\partial \mathcal{B}_r)}
\end{equation}
Since all norms on the finite-dimensional
space~$\Harm^2(M)$ are equivalent, Equations~\eqref{tau3,3}
and~\eqref{tau3,4} together imply
\begin{equation} \label{tau3,5}
 \begin{split}
  \int_{M} \left(\abs{\d\tau_3}^2 + \abs{\d^*\tau_3}^2\right)\vol_g 
  &\lesssim \left(R + R^{n-1}\right) \int_{\partial \mathcal{B}_r} 
   \abs{j(u_\eps, \, A_\eps)}^2 \, \d\H^{n-1}
  \stackrel{\eqref{average-r-1}}{\lesssim} E_\eps(x_0, \, R)
 \end{split}
\end{equation}
By combining~\eqref{tau3,1} with~\eqref{tau3,5},
the desired estimate~\eqref{tau3} follows.

\paragraph*{Estimate for~$\tau_2$.}

We claim that, for any~$\mu\in (0, \, 2)$, there exists a
constant~$C_\mu > 0$ (independent on~$\eps$, $R$, $\delta$
and the point~$x_0\in M$) such that
\begin{equation} \label{tau2}
 \int_{\mathcal{B}_{\delta R}} \abs{\d^*\tau_{2}}^2 \, \vol_g 
 \leq C_\mu \, \delta^{n - \mu} \, E_\eps(x_0, \, R)
\end{equation}

We test the equation for~$\tau_2$ against~$\tau_2$.
By applying Young's inequality at the right-hand side,
we obtain
\begin{equation*}
 \begin{split}
  \int_{M} \left(\abs{\d\tau_2}^2 + \abs{\d^*\tau_2}^2\right) \vol_g
  \lesssim \frac{1}{2\lambda} 
   \int_{\mathcal{B}_r} \left(\abs{F_\eps}^2 + \abs{H(\omega_2)}^2\right) \vol_g
   + \lambda \int_{M} \abs{\tau_2}^2 \vol_g
 \end{split}
\end{equation*}
Since~$H$ is the $L^2(M)$-orthogonal projection onto 
the space of harmonic forms, the~$L^2(M)$-norm of~$H(\omega_2)$
is not greater than the $L^2(M)$-norm of~$\omega_2 = - \eta_r F_\eps$.
Moreover, as~$\tau_2$ is orthogonal to all harmonic~$2$-forms,
the~$L^2(M)$-norm of~$\tau_2$ can be estimated
by the~$L^2(M)$-norms of~$\d\tau_2$ and~$\d^*\tau_2$
(see e.g.~\cite[Theorem~4.11]{IwaniecScottStroffolini}).
Therefore, choosing~$\lambda$ small enough,
we obtain
\begin{equation} \label{tau2,1}
 \int_{M} \left(\abs{\d\tau_2}^2 + \abs{\d^*\tau_2}^2\right) \vol_g
 \lesssim \int_{\mathcal{B}_r} \abs{F_\eps}^2 \vol_g 
 \lesssim E_\eps(x_0, \, R)
\end{equation}
Now, let~$s$, $t$ be numbers such that~$0 < s < t < r$.
The decay estimate in Proposition~\ref{prop:decay-RHS}
implies
\begin{equation*}
 \begin{split}
  \int_{\mathcal{B}_s} \left(\abs{\d\tau_2}^2 + \abs{\d^*\tau_2}^2\right) \vol_g
  \lesssim \frac{s^n}{t^n}\int_{\mathcal{B}_t} \left(\abs{\d\tau_2}^2 
   + \abs{\d^*\tau_2}^2\right) \vol_g
   + t^2 \int_{\mathcal{B}_t} \abs{F_\eps}^2 \vol_g
 \end{split}
\end{equation*}
(so long as we choose~$R < R_*$,
where~$R_*$ is a small number that depends on~$M$ only,
as given by Lemma~\ref{lemma:Poincare-R}).
The integral of~$\abs{F_\eps}^2$ at the right-hand side 
can be further bounded by applying
the decay estimate~\eqref{eq:Feps-decay-3}; we obtain
\begin{equation} \label{tau2,2}
 \begin{split}
  \int_{\mathcal{B}_s} \left(\abs{\d\tau_2}^2 + \abs{\d^*\tau_2}^2\right) \vol_g
  \leq \frac{C_1 \, s^n}{t^n}\int_{\mathcal{B}_t} \left(\abs{\d\tau_2}^2 
   + \abs{\d^*\tau_2}^2\right) \vol_g
   + \frac{C_2 \, t^n \, E_\eps(x_0, \, R)}{R^{n-2}}
 \end{split}
\end{equation}
for some positive constants~$C_1$, $C_2$
that depend only on~$M$ (in particular, 
not on~$\eps$, $R$, or~$x_0$). 
Let~$\mu \in (0, \, 2)$ be given.
We choose a number~$\theta\in (0, \, 1)$
small enough that~$C_1\theta^n \leq \theta^{n - \mu/2}$.
Then, the inequality~\eqref{tau2,2} implies
\begin{equation*} 
 \begin{split}
  \int_{\mathcal{B}_{\theta t}} \left(\abs{\d\tau_2}^2 + \abs{\d^*\tau_2}^2\right) \vol_g
  \leq \theta^{n - \mu/2} \int_{\mathcal{B}_t} \left(\abs{\d\tau_2}^2 
   + \abs{\d^*\tau_2}^2\right) \vol_g
   + \frac{C_3 \, t^{n - \mu} \, E_\eps(x_0, \, R)}{R^{n-2}}
 \end{split}
\end{equation*}
for any~$t\in (0, \, r)$ and some constant~$C_3$
that depends only on~$C_2$, $\mu$ and~$R_*$. 
By an iteration argument (see e.g. \cite[Lemma~III.2.1]{Giaquinta-MultipleIntegrals} or~\cite[Lemma~B.3]{Beck}),
we deduce
\begin{equation*}
 \begin{split}
  \int_{\mathcal{B}_t} \left(\abs{\d\tau_2}^2 + \abs{\d^*\tau_2}^2\right) \vol_g
  &\leq \frac{C_\mu \, t^{n - \mu}}{r^{n - \mu}} 
   \int_{\mathcal{B}_r} \left(\abs{\d\tau_2}^2 
   + \abs{\d^*\tau_2}^2\right) \vol_g
   + \frac{C_\mu \, t^{n - \mu} \, E_\eps(x_0, \, R)}{R^{n-2}} \\
  &\hspace{-.22cm} \stackrel{\eqref{tau2,1}}{\lesssim}
   \frac{C_\mu \, t^{n - \mu} \, E_\eps(x_0, \, R)}{R^{n-2}} 
 \end{split}
\end{equation*}
for all~$0 < t < r$, where~$C_\mu$ is a constant 
that depends only on~$\mu$, $n$, $\theta$ and~$C_3$.
The desired estimate~\eqref{tau2} now follows by
taking~$t=\delta R$.

\paragraph{Estimate for~$\tau_1$.}
We will prove that
\begin{equation} \label{tau1}
 \int_{\mathcal{B}_{\delta R}} \abs{\d^*\tau_1}^2
 \lesssim \beta^{-4} E_\eps(x_0, \, R)
 \left(\frac{1}{\eps^2  \, R^{n-2}}
  \int_{\mathcal{B}_R} \left(1 - \abs{u_\eps}^2\right)^2 \vol_g\right) 
\end{equation}
To this purpose, we need an estimate for the ``Green function''
of the Hodge Laplacian, which is provided by Proposition~\ref{prop:Green} 
below. Moreover, we will need the basic bound on curvatures in 
Lemma~\ref{lemma:Linfty-Feps}.

We proceed exactly as in~\cite[pp.~453--454]{BethuelBrezisOrlandi}.
First of all, we observe that
\begin{equation} \label{tau1,1}
 \abs{\omega_1} \lesssim 
  \frac{1}{\beta^2 \, \eps^2} \left(1 - \abs{u_\eps}^2\right)^2
  \qquad \textrm{pointwise in } \mathcal{B}_r.
\end{equation}
Indeed, in the open set~$\{x\in \mathcal{B}_r\colon \abs{u_\eps(x)} > 1 - \beta\}$,
we have~$\abs{v_\eps} = 1$ (by construction)
and hence, $\omega_1 = 2J(v_\eps, \, A_\eps) = 0$.
In the complement, $\{x\in \mathcal{B}_r\colon \abs{u_\eps(x)} \leq 1 - \beta\}$,
we have 
\[
 \abs{\D_{A_\eps} v_\eps} \lesssim \eps^{-1}
\]
because of~\eqref{f-BBO1}--\eqref{f-BBo2} and
the $L^\infty$ estimate~\eqref{eq:L-infty-bound-D_Au} 
for~$\D_{A_\eps} u_\eps$.
Taking Lemma~\ref{lemma:Linfty-Feps} into account,
we deduce
\begin{equation*}
 \abs{\omega_1} 
 \lesssim \abs{J(u_\eps, \, A_\eps)} 
 \lesssim \abs{\D_{A_\eps} u_\eps}^2 + \abs{F_\eps}
 \lesssim \frac{1}{\eps^2}
 = \frac{1}{\beta^2 \, \eps^2} \cdot \beta^2 
 \lesssim \frac{1}{\beta^2 \, \eps^2} \left(1 - \abs{u_\eps}^2\right)^2 
\end{equation*}
at each point of~$\{x\in \mathcal{B}_r\colon \abs{u_\eps(x)} \leq 1 - \beta\}$.
Therefore, \eqref{tau1,1} is proved.

Next, we show that
\begin{equation} \label{tau1,2}
 \norm{\tau_1}_{L^\infty(M)} \lesssim \frac{E_\eps(x_0, \, R)}{\beta^2 \, R^{n-2}}  
\end{equation}
Indeed, for any~$x\in M$, Proposition~\ref{prop:Green}
implies
\begin{equation} \label{tau1,3}
 \abs{\tau_1(x)} 
 \lesssim \int_{\mathcal{B}_r} \frac{\abs{\omega_1(y)}}{\dist(x, \, y)^{n-2}} \vol_g(y)
\end{equation}
(we have used the fact that~$\omega_1 = 0$ out of~$\mathcal{B}_r$).
If~$x\in M\setminus \mathcal{B}_{2r}$, then
\[
 \abs{\tau_1(x)} 
 \lesssim \frac{1}{r^{n-2}} \int_{\mathcal{B}_r} 
  \abs{\omega_1(y)} \vol_g(y)
 \stackrel{\eqref{tau1,1}}{\lesssim}
  \frac{1}{\beta^2 \, \eps^2 \, R^{n-2}} \int_{\mathcal{B}_r} 
  \left(1 - \abs{u_\eps}^2\right)^2 \vol_g
 \lesssim \frac{E_\eps(x_0, \, R)}{\beta^2 \, R^{n-2}}
\]
On the other hand, if~$x\in \mathcal{B}_{2r}$,
then we have $\mathcal{B}_{r} \subseteq \mathcal{B}_{3r}(x)$ and hence,
\[
 \begin{split}
  \abs{\tau_1(x)} 
 \lesssim \int_{\mathcal{B}_{3r}} 
  \frac{\abs{\omega_1(y)}}{\dist(x, \, y)^{n-2}} \vol_g(y)
 \stackrel{\eqref{tau1,1}}{\lesssim}
  \frac{1}{\beta^2 \, \eps^2} \int_{\mathcal{B}_{3r}(x)}
   \frac{(1 - \abs{u_\eps(y)}^2)^2}{\dist(x, \, y)^{n-2}} \, \vol_g(y)
 \end{split}
\]
By applying Corollary~\ref{cor:monotonicity}, we obtain
\[
 \begin{split}
  \abs{\tau_1(x)} 
   \lesssim \frac{E_\eps(u_\eps, \, A_\eps; \, \mathcal{B}_{3r}(x))}{\beta^2 \, r^{n-2}}
   \lesssim \frac{E_\eps(x_0, \, R)}{\beta^2 \, R^{n-2}}
 \end{split}
\]
(for the last inequality,
we have used that~$\mathcal{B}_{3r}(x)\subseteq \mathcal{B}_R$, 
because~$x\in \mathcal{B}_{2r}$ and~$r < R/5$). Therefore,
\eqref{tau1,2} is proved.

Finally, testing the equation for~$\tau_1$
against~$\tau_1$, we obtain
\[
 \int_M\left(\abs{\d\tau_1}^2 + \abs{\d^*\tau_1}^2 \right) \vol_g
 \leq \norm{\tau_1}_{L^\infty(M)} \norm{\omega_1 - H(\omega_1)}_{L^1(M)}
\]
From the definition of the harmonic projector~$H$,
Equation~\eqref{harmonicprojection},
we immediately deduce that $\norm{H(\omega_1)}_{L^1(M)} 
\lesssim \norm{\omega_1}_{L^1(M)}$.
Therefore, taking~\eqref{tau1,1} and~\eqref{tau1,2}
into account, we obtain
\[
 \int_M\left(\abs{\d\tau_1}^2 + \abs{\d^*\tau_1}^2 \right) \vol_g
 \lesssim \beta^{-4} E_\eps(x_0, \, R)
 \left(\frac{1}{\eps^2  \, R^{n-2}}
  \int_{\mathcal{B}_R} \left(1 - \abs{u_\eps}^2\right)^2 \vol_g\right) 
\]
and~\eqref{tau1} follows.

Combining Lemma~\ref{lemma:decay-phixi}
with~\eqref{tau1}, \eqref{tau2}, \eqref{tau3} and~\eqref{tau4},
we obtain a decay estimate for the prejacobian:
for any~$\delta\in (0, \, 1/10)$, $\mu\in (0, \, 2)$
and~$\beta\in (0, \, 1/4)$, there holds
\begin{equation} \label{decay-prejacobian}
  \int_{\delta R} \abs{j(u_\eps, \, A_\eps)}^2 \vol_g
   \leq C_\mu \left(\delta^{n-\mu} + \beta^2 + \beta^{-4}
   \left(\frac{1}{\eps^2  \, R^{n-2}}
  \int_{\mathcal{B}_R} \left(1 - \abs{u_\eps}^2\right)^2 \vol_g\right)
  \right) E_\eps(x_0, \, R)
\end{equation}
where~$C_\mu$ is a constant that depends only on~$\mu$
and the ambient manifold~$M$ (not on~$\eps$, $\delta$, $\beta$, $R$).

\paragraph*{Completing the proof of Proposition~\ref{prop:decay}.}

With~\eqref{decay-prejacobian} at our disposal, 
we can complete the proof of Proposition~\ref{prop:decay}.
The arguments are largely similar to those
in~\cite{BethuelBrezisOrlandi}, so we omit some details. Let
\[
 p_\eps := \frac{1}{\eps^2 R^{n-2}} 
 \int_{\mathcal{B}_R} \left(1 - \abs{u_\eps}^2\right)^2 \vol_g
\]
By reasoning exactly as
in~\cite[Proof of Theorem~3, Step~3, p.~455]{BethuelBrezisOrlandi}, 
we can show that
\begin{equation} \label{decay-absu}
 \begin{split}
  \int_{\mathcal{B}_r} \abs{\d(\abs{u_\eps}^2)}^2 \vol_g
  &\lesssim \beta^2 \int_{\mathcal{B}_R} \abs{\D_{A_\eps} u}^2 \, \vol_g
   + \frac{1}{\beta^{2}\,\eps^2} 
   \int_{\mathcal{B}_R} \left(1 - \abs{u_\eps}^2\right) \vol_p\\
  &\lesssim \beta^2 E_\eps(x_0, \, R) + \beta^{-2} p_\eps \, R^{n-2}
 \end{split}
\end{equation}
(the second inequality follows because, 
by assumption,~$R$ is uniformly bounded from above). 
Moreover, we have
\begin{equation} \label{decay-absDu}
 \int_{\mathcal{B}_r} \left(1 - \abs{u_\eps}^2\right) 
  \abs{\D_{A_\eps} u}^2 \vol_g
 \lesssim \beta^2 E_\eps(x_0, \, R) + \beta^{-2} p_\eps \, R^{2-n}
\end{equation}
(as in Eq.~(III.32) of~\cite{BethuelBrezisOrlandi}).
From Lemma~\ref{lemma:dec-D_Au}, we have
\[
 \begin{split}
  \abs{u_\eps}^2 \, \abs{\D_{A_\eps} u_\eps}^2 
  \lesssim \abs{u_\eps}^2 \, \abs{\d(\abs{u_\eps})}^2 
   + \abs{j(u_\eps, \, A_\eps)}^2
  \lesssim \abs{\d(\abs{u_\eps}^2)}^2 
   + \abs{j(u_\eps, \, A_\eps)}^2
 \end{split}
\]
and hence,
\begin{equation} \label{decay-Du1}
 \begin{split}
  \abs{\D_{A_\eps} u_\eps}^2 
  \lesssim \abs{\d(\abs{u_\eps}^2)}^2 
   + \abs{j(u_\eps, \, A_\eps)}^2
   + \left(1 - \abs{u_\eps}^2\right)^2 \abs{\D_{A_\eps} u_\eps}^2
 \end{split}
\end{equation}
Combining~\eqref{decay-Du1} with~\eqref{decay-prejacobian},
\eqref{decay-absu} and~\eqref{decay-absDu}, we deduce
\begin{equation} \label{decay-Du2}
 \int_{\mathcal{B}_{\delta R}} \abs{\D_{A_\eps} u_\eps}^2 \vol_g
 \leq C_\mu \left(\delta^{n-\mu} + \beta^2 
   + \beta^{-4} \, p_\eps \right) E_\eps(x_0, \, R)
   + \beta^{-2} \, p_\eps \, R^{n - 2}
\end{equation}
for all~$\mu\in (0, \, 2)$. On the other hand,
by~\eqref{eq:decay-Feps},  
there holds
\begin{equation} \label{decay-F}
 \int_{\mathcal{B}_{\delta R}} \abs{F_\eps}^2 \vol_g
 \lesssim \delta^{n-2 + \alpha} \,  E_\eps(x_0, \, R) 
\end{equation}
for some~$\alpha > 0$ that depends only on~$M$
(but is independent of~$R$, $\delta$, $\eps$).
Choosing~$\mu = 2 - \alpha$ in~\eqref{decay-Du2},
and taking~\eqref{decay-F} into account, 
we immediately deduce
\begin{equation*}
 E(\delta R) \lesssim \left(\delta^{n-2+\alpha} + \beta^2 
   + \beta^{-4} \, p_\eps \right) E_\eps(x_0, \, R) + \beta^{-2} \, p_\eps\, R^{n- 2}
\end{equation*}
Finally, choosing~$\beta = \min\{(p_\eps)^{1/6}, \, 1/8\}$,
the proposition follows.
\qed

\subsection{Proof of Proposition~\ref{prop:small-ball-small-energy}}

The energy decay estimate given by Proposition~\ref{prop:decay},
and the monotonicity formula (Theorem~\ref{thm:monotonicity}),
together, imply a ``clearing-out'' or ``$\eta$-ellipticity''
result (cf.~\cite[Theorem~2]{BethuelBrezisOrlandi}).

\begin{prop} \label{prop:clearingout}
 There exist positive numbers~$R_*$, $\eps_*$,
 $C$ and~$\gamma$, depending on~$M$ only, such that the following
 statement holds: for any~$x_0\in M$, $0 < R < R_*$ ,
 $0 < \eps < \eps_* R$, and any critical point
 $(u_\eps, \, A_\eps)\in (W^{1,2}\cap L^\infty)(M, \, E)\times W^{1,2}(M, \, T^*M)$ of~$G_\eps$, there holds
 \begin{equation} \label{clearingout}
  \begin{split}
   \abs{u_\eps(x_0)}
    \geq 1 - C \left(\frac{E_\eps(x_0, \, R)}
     {\log(R/\eps) \, R^{n-2}}\right)^{\gamma}
  \end{split}  
 \end{equation}
\end{prop}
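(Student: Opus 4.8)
The plan is to adapt the scheme of \cite[Theorem~2]{BethuelBrezisOrlandi} to the present magnetic, Riemannian setting: the estimate~\eqref{clearingout} will come from iterating the energy decay estimate of Proposition~\ref{prop:decay} and keeping the accumulated error terms under control by means of the monotonicity formula (Theorem~\ref{thm:monotonicity} and Corollary~\ref{cor:monotonicity}). Throughout, I write $\theta := E_\eps(x_0,R)/(R^{n-2}\log(R/\eps))$ and, for $0<r<R$,
\[
 \mathcal{E}(r) := r^{2-n}E_\eps(x_0,r), \qquad
 p_\eps(r) := \frac{1}{\eps^2\, r^{n-2}} \int_{\mathcal{B}_r(x_0)} \left(1-\abs{u_\eps}^2\right)^2 \vol_g \leq 4\,\mathcal{E}(r).
\]
One may assume $\theta$ smaller than a threshold $\theta_0>0$ depending only on $(M,g)$, since otherwise the right-hand side of~\eqref{clearingout} is $\leq0$ for $C$ large and there is nothing to prove; I pick $R_*$ below all the distinguished radii of Proposition~\ref{prop:decay}, Theorem~\ref{thm:monotonicity} and Corollary~\ref{cor:monotonicity}, I pick $\eps_*$ small, and I work in geodesic normal coordinates at $x_0$, all implicit constants depending on $(M,g)$ only.

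The first and main step is to propagate the smallness of the rescaled energy from scale $R$ down to scale $\sim\eps$. The available inputs are: by Theorem~\ref{thm:monotonicity}, $\mathcal{E}$ is almost increasing, so $\mathcal{E}(r)\lesssim\mathcal{E}(R)=\theta\log(R/\eps)$ for all $r\leq R$; and the $X_\eps$-term in~\eqref{eq:monotonicity}, equivalently Corollary~\ref{cor:monotonicity}, gives $\int_0^R \rho^{-1}p_\eps(\rho)\,\d\rho \lesssim \mathcal{E}(R)$. Along the geometric sequence $r_j:=\delta^j R$, with $\delta\in(0,1/10)$ a small constant chosen (depending on $(M,g)$) so that $C_0\delta^\alpha\leq\tfrac14$, where $C_0$ and $\alpha$ are the constants of Proposition~\ref{prop:decay}, the decay estimate reads
\[
 \mathcal{E}(r_{j+1}) \leq C_0\left[\left(\delta^{-(n-2)}p_\eps(r_j)^{1/3} + \delta^\alpha\right)\mathcal{E}(r_j) + \delta^{-(n-2)}p_\eps(r_j)^{2/3}\right].
\]
Calling $r_j$ \emph{good} when $p_\eps(r_j)$ lies below a suitable $\delta$-dependent threshold --- so that $\mathcal{E}(r_{j+1})\leq\tfrac12\mathcal{E}(r_j)+C_0\delta^{-(n-2)}p_\eps(r_j)^{2/3}$ --- and observing that, among the $\sim\log(R/\eps)$ scales $r_j$ in $[\eps,R]$, the \emph{bad} ones number $\lesssim\theta\log(R/\eps)$ because $\sum_j p_\eps(r_j)\lesssim\mathcal{E}(R)=\theta\log(R/\eps)$, one combines the geometric decay on runs of good scales, the a priori bound $\mathcal{E}(r_j)\lesssim\theta\log(R/\eps)$ across bad scales, and the smallness of the accumulated corrections to deduce, exactly as in \cite{BethuelBrezisOrlandi}, that once $\theta\leq\theta_0$
\[
 \mathcal{E}(C_1\eps) \lesssim \theta^{\gamma_1}
\]
for some $\gamma_1\in(0,1)$ and $C_1\geq1$ depending on $(M,g)$ only (almost-monotonicity of $\mathcal{E}$ being used once more to reach $C_1\eps$ from the smallest $r_j\geq C_1\eps$). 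In particular $p_\eps(C_1\eps)\lesssim\theta^{\gamma_1}$.

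It then remains to turn the smallness of $p_\eps$ at scale $\eps$ into the pointwise bound on $\abs{u_\eps(x_0)}$. Set $\delta_0:=1-\abs{u_\eps(x_0)}^2\in[0,1]$. By Lemma~\ref{lemma:L-infty-bound-D_Au} and $\abs{u_\eps}\leq1$ one has $\abs{\d(\abs{u_\eps}^2)} = 2\abs{\ip{\D_{A_\eps}u_\eps}{u_\eps}}\lesssim\eps^{-1}$, so $1-\abs{u_\eps}^2\geq\delta_0/2$ throughout $\mathcal{B}_{c\eps\delta_0}(x_0)$ for a small $c=c(M)>0$; consequently
\[
 C_1^{n-2}\eps^{n-2}\,p_\eps(C_1\eps) = \frac{1}{\eps^2}\int_{\mathcal{B}_{C_1\eps}(x_0)}\left(1-\abs{u_\eps}^2\right)^2\vol_g \geq \frac{\delta_0^2}{4\,\eps^2}\,\vol_g\left(\mathcal{B}_{c\eps\delta_0}(x_0)\right) \gtrsim \eps^{n-2}\,\delta_0^{n+2},
\]
whence $\delta_0\lesssim p_\eps(C_1\eps)^{1/(n+2)}\lesssim\theta^{\gamma_1/(n+2)}$. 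Since $1-\abs{u_\eps(x_0)}\leq1-\abs{u_\eps(x_0)}^2=\delta_0$, the choice $\gamma:=\gamma_1/(n+2)$ yields~\eqref{clearingout}, after enlarging $C$ and shrinking $\theta_0$ to also cover the trivial case $\theta>\theta_0$.

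The main obstacle is the iteration step. The rescaled energy $\mathcal{E}(r)$ may genuinely be of order $\log(R/\eps)$ at the top scale $r=R$ (that is the size carried by a single vortex tube), so the decay has to be \emph{earned} across $\sim\log(R/\eps)$ dyadic scales, and the crux is to show that the potential errors $p_\eps(r_j)$ cannot obstruct this on too many scales --- this is exactly the delicate bookkeeping at the heart of \cite[Theorem~2]{BethuelBrezisOrlandi}, here made possible by the monotonicity formula and its Corollary~\ref{cor:monotonicity}; the magnetic and curvature contributions are absorbed into the a priori estimates of Sections~\ref{sec:estimates} and~\ref{sec:monotonicity}, in particular the $L^\infty$-bound on $F_\eps$ of Lemma~\ref{lemma:Linfty-Feps}.
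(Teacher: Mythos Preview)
Your overall plan --- use the decay estimate and monotonicity to push smallness from scale~$R$ down to scale~$\sim\eps$, then convert the smallness of the potential at scale~$\eps$ into the pointwise bound via the Lipschitz estimate on~$\abs{u_\eps}^2$ --- matches the paper, and your Step~2 is essentially \cite[Lemma~III.3]{BethuelBrezisOrlandi}, which the paper invokes verbatim. The difference, and the gap, is in Step~1.

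The paper does \emph{not} run a multi-scale iteration of Proposition~\ref{prop:decay}. Instead it proves a pigeonhole lemma (Lemma~\ref{lemma:goodradius}, the analogue of~\cite[Lemma~III.1]{BethuelBrezisOrlandi}): along the geometric sequence $R_j=\eps^{1/2}R^{1/2}(\delta/4)^j$ lying between scales~$\eps^{3/4}R^{1/4}$ and~$\eps^{1/2}R^{1/2}$, the increments of the almost-monotone quantity $e^{C\rho^2}\rho^{2-n}E_\eps(x_0,\rho)$ telescope to at most $\lesssim\eta\log(R/\eps)$; since there are $\sim\log(R/\eps)/\abs{\log\delta}$ of them, one increment is $\lesssim\eta\abs{\log\delta}$. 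At that index, the mean-value theorem together with~\eqref{improvedmonotonicity} produces a \emph{single} radius~$R_*$ satisfying both $\mathcal{E}(R_*)\lesssim\mathcal{E}(\delta R_*)+\eta\abs{\log\delta}$ and $p_\eps(R_*)\lesssim\eta\abs{\log\delta}$. Proposition~\ref{prop:decay} is then applied \emph{once}, at~$R_*$, and --- crucially --- with~$\delta=\eta^{1/(3n)}$ chosen depending on~$\eta$, which balances the $\delta^{2-n}p^{1/3}$ and~$\delta^\alpha$ contributions and yields $\mathcal{E}(R_*)\lesssim\eta^{(n+2)/(3n)}\abs{\log\eta}^{2/3}$. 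Almost-monotonicity then carries this down to scale~$\eps<R_*$.

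Your good/bad-scale iteration, as written, has a genuine gap. Across a bad scale the only available control is almost-monotonicity, which contributes a \emph{multiplicative} factor~$C_1>1$. The additive corrections $C'p_\eps(r_j)^{2/3}$ accumulated at earlier good steps are therefore amplified by~$C_1^{b_j}$ (with $b_j$ the number of later bad scales), and since the bound $\abs{B}\lesssim\theta N/\kappa$ still allows $\abs{B}$ to be a fixed small multiple of $N\sim\log(R/\eps)$, this amplification is of order $C_1^{c\theta N}\sim (R/\eps)^{c'\theta}$ --- not uniformly bounded in~$\eps$. (The worst case is when the bad scales sit near the bottom of the cascade, which is exactly where they tend to accumulate if the potential energy is concentrated near~$x_0$.) Resetting to the a~priori value $\theta\log(R/\eps)$ at each bad scale does not help either: then the last run of good scales would have to be long enough to kill a factor~$\log(R/\eps)$, which nothing in your setup guarantees. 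This is precisely the difficulty that the pigeonhole selection of a single good radius is designed to avoid; ``exactly as in~\cite{BethuelBrezisOrlandi}'' here means the pigeonhole Lemma~\ref{lemma:goodradius}, not a direct iteration with fixed~$\delta$.
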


Proposition~\ref{prop:clearingout} is only 
significant when the energy of~$(u_\eps, \, A_\eps)$
on the ball~$\mathcal{B}_R(x_0)$ is small compared 
to the logarithm of~$\eps$. Indeed, the estimate~\eqref{clearingout}
can be rephrased as follows: if
\[
 E_\eps(x_0\, \, R) \leq \eta R^{n-2} \log\frac{R}{\eps}
\]
for some~$\eta > 0$, then
$\abs{u(x_0)} \geq 1 - C \eta^\gamma$. In particular,
if~$\eta$ is small enough, then~$\abs{u_\eps(x_0)} \geq 1/2$.

We proceed to the proof of Proposition~\ref{prop:clearingout}.
Let~$x_0$, $R$, $\eps$ be fixed, and 
let~$(u_\eps, \, A_\eps)(u_\eps, \, A_\eps)\in (W^{1,2}\cap L^\infty)(M, \, E)\times W^{1,2}(M, \, T^*M)$ be a critical point of~$G_\eps$. Let
\begin{equation} \label{eta-clearingout}
 \eta := \frac{E_\eps(x_0, \, R)}{\log(R/\eps) \, R^{n-2}},
\end{equation}
As above, we omit~$x_0$ from the notation when convenient.

\begin{lemma} \label{lemma:goodradius}
 There exists a small number~$\eps_*$,
 depending on~$M$ only, such that the following
 statement holds: if~$0 < \eps < \eps_* R$,
 then for any~$0 < \delta < 1/10$ there exists
 a radius~$R_* \in (\eps^{3/4} R^{1/4}, \, \eps^{1/2} R^{1/2})$ 
 such that
 \begin{gather}
  \frac{E_\eps(x_0, \, R_*)}{R_*^{n-2}} 
   \lesssim \frac{E_\eps(x_0, \, \delta R_*)}{(\delta R_*)^{n-2}}
   + \eta \abs{\log\delta} \label{goodradius-energy} \\
  \frac{1}{\eps^2 R_*^{n-2}} \int_{\mathcal{B}_{R_*}}
   \left(1 - \abs{u_\eps}^2\right)^2 \vol_g
   \lesssim \eta\abs{\log\delta} \label{goodradius-potential}
 \end{gather}
\end{lemma}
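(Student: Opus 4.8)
The plan is to produce, separately for \eqref{goodradius-potential} and \eqref{goodradius-energy}, a set of ``good'' radii inside the interval $I := (\eps^{3/4}R^{1/4},\,\eps^{1/2}R^{1/2})$ of large logarithmic measure, and then intersect the two sets. Throughout write $h(\rho) := \rho^{2-n}E_\eps(x_0,\rho)$, $L := \log(R/\eps)$, $D := \abs{\log\delta}\ (\geq\log 10)$, so that $\eta = h(R)/L$ by~\eqref{eta-clearingout} and $\abs{I}_{\log} = L/4$ (all logarithmic measures are taken with respect to $\d\rho/\rho$). We may assume $\eta>0$ (otherwise $e_\eps\equiv 0$ on $\mathcal{B}_R(x_0)$ and both estimates are trivial) and, shrinking the constant $R_0$ of Theorem~\ref{thm:monotonicity} if needed, that $e^{CR_0^2}\leq 2$; recall that $R<R_0$.

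For \eqref{goodradius-potential}: by Corollary~\ref{cor:monotonicity} and Fubini's theorem,
\[
 \int_0^R \frac{1}{\eps^2\rho^{n-1}}\Bigl(\int_{\mathcal{B}_\rho(x_0)}(1-\abs{u_\eps}^2)^2\,\vol_g\Bigr)\d\rho
 \;\lesssim\; \frac{1}{\eps^2}\int_{\mathcal{B}_R(x_0)}\frac{(1-\abs{u_\eps}^2)^2}{\dist^{n-2}(\cdot,x_0)}\,\vol_g
 \;\lesssim\; R^{2-n}E_\eps(x_0,R)=\eta L .
\]
Writing the integrand as $\rho^{-1}p_\eps(\rho)$ with $p_\eps(\rho):=\frac{1}{\eps^2\rho^{n-2}}\int_{\mathcal{B}_\rho(x_0)}(1-\abs{u_\eps}^2)^2\,\vol_g$ and restricting to $I$, Chebyshev's inequality gives a set $G_1\subseteq I$ with $\abs{G_1}_{\log}\geq\tfrac34\abs{I}_{\log}$ on which $p_\eps(\rho)\lesssim\eta\leq\eta D$, which is \eqref{goodradius-potential}.

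For \eqref{goodradius-energy}: discarding the nonnegative terms in Theorem~\ref{thm:monotonicity} yields the scale comparability $h(\rho_1)\leq\tilde C\,h(\rho_2)$ for $0<\rho_1<\rho_2<R_0$, with $\tilde C:=(1+C)e^{CR_0^2}\leq 2(1+C)$ depending only on $(M,g)$; in particular $h(\rho)\leq\tilde C\eta L$ for every $\rho\in I$. Fix a small constant $c_0>0$ (to be chosen). If $D\geq c_0 L$ then $h(\rho)\leq\tilde C\eta L\leq(\tilde C/c_0)\eta D$ for all $\rho\in I$, so \eqref{goodradius-energy} holds at every radius of $I$ and we take $R_*\in G_1$. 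If instead $D<c_0 L$ (``many scales''), set $N:=\lfloor L/(4D)\rfloor$, $V(\rho):=h(\rho)+\eta D$, and fix a large constant $C_2$; call $\rho\in I$ \emph{bad} if $h(\rho)>C_2\bigl(h(\delta\rho)+\eta D\bigr)$ (so a non-bad $\rho$ satisfies \eqref{goodradius-energy} with $\lesssim$-constant $C_2$). If $\rho$ is bad then $V(\delta\rho)<V(\rho)/C_2$, while if $\rho$ is good then comparability gives $V(\delta\rho)\leq\tilde C V(\rho)$. For $\rho_0\in J:=(\delta\eps^{1/2}R^{1/2},\,\eps^{1/2}R^{1/2})$ form the geometric chain $\rho_j:=\delta^j\rho_0$; then $\rho_0,\dots,\rho_{N-1}\in I$, and multiplying the one-step estimates,
\[
 \eta D\leq V(\rho_N)<V(\rho_0)\,C_2^{-b}\,\tilde C^{\,N-b},\qquad V(\rho_0)\leq(\tilde C+1)\eta L,
\]
where $b$ is the number of bad indices among $0,\dots,N-1$. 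Using $L/D<4(N+1)$ and taking logarithms, $b<\tfrac{\log\tilde C}{\log(C_2\tilde C)}N+\tfrac{\log(4(\tilde C+1)(N+1))}{\log(C_2\tilde C)}$; choosing first $C_2$ with $\log\tilde C<\tfrac18\log(C_2\tilde C)$ (e.g. $C_2=\tilde C^{7}+1$) and then $c_0$ small enough that $N\geq N_0$ for a suitable $N_0=N_0(M,g)$ makes the second term $<N/8$, whence $b<N/4$ for every chain.

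Since the families $\{\delta^j\rho_0:0\leq j\leq N-1\}$, as $\rho_0$ runs over $J$, sweep out $I$ up to a set of logarithmic measure $\leq D$, integrating $b<N/4$ over $\rho_0\in J$ (note $\abs{J}_{\log}=D$ and $ND\leq\abs{I}_{\log}$) bounds the bad set $B_2\subseteq I$ by $\abs{B_2}_{\log}<\tfrac14 ND+D\leq(\tfrac14+4c_0)\abs{I}_{\log}$. Shrinking $c_0$ once more so that $c_0<\tfrac18$, we get $\abs{B_2}_{\log}+\abs{I\setminus G_1}_{\log}<\abs{I}_{\log}$, hence $G_1\cap(I\setminus B_2)\neq\emptyset$; any $R_*$ there satisfies both \eqref{goodradius-energy} and \eqref{goodradius-potential}. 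The constants $R_0,\tilde C,C_2,c_0$, and therefore the threshold $\eps_*$ placing us in one of the two cases, all depend only on $(M,g)$. The delicate point is exactly the ``many scales'' case: a direct Chebyshev estimate on $B_2$ is unavailable because $h(\rho)$ may be of size $\eta L$ at every scale of $I$, and one must use the chain iteration to exploit that at a bad radius the rescaled energy drops by a definite factor $C_2$ whereas at a good radius it can grow only by the fixed factor $\tilde C$.
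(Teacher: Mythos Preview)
Your proof is correct, but it takes a genuinely different route from the paper's argument.

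For \eqref{goodradius-potential} the paper does \emph{not} use Corollary~\ref{cor:monotonicity}. Instead it finds, by a telescoping pigeonhole along the discrete chain $R_j := \eps^{1/2}R^{1/2}(\delta/4)^j$, an index $J$ with $e^{CR_J^2}R_J^{2-n}E_\eps(R_J) - e^{CR_{J+1}^2}R_{J+1}^{2-n}E_\eps(R_{J+1}) \lesssim \eta\abs{\log\delta}$, then applies the mean value theorem on $(R_J/2,R_J)$ to locate $R_*$ where the $\rho$-derivative of $e^{C\rho^2}\rho^{2-n}E_\eps(\rho)$ is $\lesssim \eta\abs{\log\delta}/R_*$; the potential bound then comes directly from the differential monotonicity \eqref{improvedmonotonicity}. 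Your route via Corollary~\ref{cor:monotonicity}~+~Fubini~+~Chebyshev is a clean measure-theoretic alternative that avoids the mean value theorem entirely, and in fact yields a \emph{set} of good radii rather than a single one.

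For \eqref{goodradius-energy} the paper again exploits the same index $J$: since $R_{J+1}\leq\delta R_*\leq R_*\leq R_J$, the almost-monotonicity \eqref{eq:monotonicity} sandwiches $h(R_*)-h(\delta R_*)$ between the known quantities $h_{\mathrm{mono}}(R_J)-h_{\mathrm{mono}}(R_{J+1})$ and small error terms, giving the estimate in two lines. Your chain-iteration argument (propagating $V(\delta\rho)\leq V(\rho)/C_2$ at bad radii and bounding the total number of bad steps via $V(\rho_N)\geq\eta D$) is correct but considerably more elaborate than necessary; the same pigeonhole that the paper uses would give you a large good set for \eqref{goodradius-energy} directly, without the case split on $D$ versus $L$. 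The trade-off is that the paper's approach is unified --- both estimates fall out of the \emph{same} good radius via \eqref{improvedmonotonicity} --- whereas yours is modular, proving the two estimates on separate large sets and then intersecting. Your potential argument is arguably the nicer of the two for that estimate; your energy argument works harder than needed.
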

\begin{proof}
 The proof follows along the lines of~\cite[Lemma~III.1]{BethuelBrezisOrlandi}.
 For any integer~$j\geq 0$, let 
 \[
  R_j := \eps^{1/2} \, R^{1/2} \left(\frac{\delta}{4}\right)^j
 \]
 Let~$k$ be the unique integer such that
 \begin{equation} \label{goodradius1}
  R_k > \eps^{3/4} \, R^{1/4},
  \qquad R_{k+1} \leq \eps^{3/4} \, R^{1/4}
 \end{equation}
 Let~$C > 0$ be a constant. By the monotonicity formula 
 (Theorem~\ref{thm:monotonicity}) and~\eqref{eta-clearingout}, 
 for a suitable choice of~$C$ (depending on~$M$ only)
 we have
 \[
  \begin{split}
   \sum_{j=0}^{k-1} \left(\frac{e^{CR_j^2} \, E_\eps(x_0, \, R_j)}{R_j^{n-2}} 
    - \frac{e^{CR_{j+1}^2} \, E_\eps(x_0, \, R_{j+1})}{R_{j+1}^{n-2}}\right)
    \leq \frac{e^{CR_0^2} \, E_\eps(x_0, \, R_0)}{R_0^{n-2}} 
    \lesssim \frac{E_\eps(x_0, \, R)}{R^{n-2}} = \eta\log\left(\frac{R}{\eps}\right)
  \end{split}
 \]
 On the other hand, the second inequality
 in~\eqref{goodradius1} implies, via some 
 algebraic manipulation,
 \[
  k+1 \geq \frac{\log(R/\eps)}{4 \log(4/\delta)}
 \] 
 As a consequence, there must be
 an integer~$J\in\{0, \, \ldots, \, k-1\}$
 such that
 \begin{equation} \label{goodradius2}
  \begin{split}
   \frac{e^{CR_J^2} \, E_\eps(x_0, \, R_J)}{R_J^{n-2}} 
    - \frac{e^{CR_{J+1}^2} \, E_\eps(x_0, \, R_{J+1})}{R_{J+1}^{n-2}}
    \lesssim \frac{\eta}{k}\log\left(\frac{R}{\eps}\right)
    \lesssim \eta \log\left(\frac{4}{\delta}\right) 
    \lesssim \eta \abs{\log\delta}
  \end{split}
 \end{equation}
 Due to the mean value theorem, there 
 exists~$R_*\in (R_{J}/2, \, R_J)$ such that
 \begin{equation} \label{goodradius3}
  \left.\frac{\d}{\d\rho}\right|_{\rho = R_*}
  \left(\frac{e^{C\rho^2} E_\eps(x_0, \, \rho)}{\rho^{n-2}}\right) 
  \lesssim \frac{\eta\abs{\log\delta}}{R_*}
 \end{equation}
 Then, the monotonicity formula
 (see Equation~\eqref{improvedmonotonicity})
 and~\eqref{eta-clearingout} imply
 \[
  \begin{split}
   \frac{1}{\eps^2 R_*^{n-2}}
    \int_{\mathcal{B}_{R_*}} \left(1 - \abs{u_\eps}^2\right)^2 \vol_g
   &\lesssim R_* \left.\frac{\d}{\d\rho}\right|_{\rho = R_*}
    \left(\frac{e^{C\rho^2} E_\eps(x_0, \, \rho)}{\rho^{n-2}}\right) 
    + \eta \left(\frac{R_*}{R}\right)^\alpha 
    \frac{E_\eps(x_0, \, R)}{R^{n-2}} \\
   &\lesssim \eta \abs{\log\delta} 
    + \eta \left(\frac{\eps}{R}\right)^{\alpha/2}
    \log\left(\frac{R}{\eps}\right)
  \end{split}
 \]
 If the ratio~$\eps/R$ is small enough,
 then~$(\eps/R)^{\alpha/2}\log(R/\eps) \leq 1
 \leq \abs{\log\delta}$
 for any~$\delta\in (0, \, 1/10)$
 and~\eqref{goodradius-potential} follows.
 As for the proof of~\eqref{goodradius-energy},
 we observe that~$R_J/2 \leq R_* \leq R_J$
 and hence, $\delta R_* \geq R_{J+1}$.
 Then, the same monotonicity formula~\eqref{improvedmonotonicity}
 implies
 \[
  \begin{split}
   &\frac{e^{CR_*^2} \, E_\eps(x_0, \, R_*)}{R_*^{n-2}} 
    - \frac{e^{C(\delta R_*)^2} \, E_\eps(x_0, \, \delta R_*)}{(\delta R_*)^{n-2}}
    + \frac{C^\prime \, E_\eps(x_0, \, R)}{R^{n-2+\alpha}} 
    \int_{\delta R_*}^{R_*} \rho^{\alpha - 1} \, \d\rho \\
   &\hspace{2cm}\leq \frac{e^{CR_J^2} \, E_\eps(x_0, \, R_J)}{R_J^{n-2}} 
    - \frac{e^{CR_{J+1}^2} \, E_\eps(x_0, \, R_{J+1})}{R_{J+1}^{n-2}}
    + \frac{C^\prime \, E_\eps(x_0, \, R)}{R^{n-2+\alpha}} 
    \int_{R_{j+1}}^{R_j} \rho^{\alpha - 1} \, \d\rho 
  \end{split}
 \]
 for some constant~$C^\prime$ that 
 depends only on~$M$. Therefore, taking~\eqref{eta-clearingout}
 and~\eqref{goodradius2} into account, we deduce
 \[
  \begin{split}
   \frac{e^{CR_*^2} \, E_\eps(x_0, \, R_*)}{R_*^{n-2}} 
    - \frac{e^{C(\delta R_*)^2} \, E_\eps(x_0, \, \delta R_*)}{(\delta R_*)^{n-2}}
   &\lesssim \eta \abs{\log\delta} + \eta \left(\frac{R_J}{R}\right)^\alpha 
    \frac{E_\eps(x_0, \, R)}{R^{n-2}} \\
   &\lesssim \eta \abs{\log\delta} 
    + \eta \left(\frac{\eps}{R}\right)^{\alpha/2}
    \log\left(\frac{R}{\eps}\right)
  \end{split}
 \]
 If~$\eps/R$ is small enough, the
 desired estimate~\eqref{goodradius-energy} follows. 
\end{proof}

\begin{proof}[Proof of Proposition~\ref{prop:clearingout}]
 We proceed as in~\cite[Theorem~2, Part~C, p.~456]{BethuelBrezisOrlandi}.
 Let~$\eta_0 > 0$ be a small parameter, to be
 chosen later on (depending on~$M$ only).
 Let~$\eta$ be defined as in~\eqref{eta-clearingout}.
 If~$\eta > \eta_0$, then Proposition~\ref{prop:clearingout}
 holds true for any value of~$\gamma > 0$,
 so long as we choose a constant~$C$ large enough
 (depending on~$\gamma$, $\eta_0$ only).
 Therefore, it suffices to consider the case~$\eta \leq \eta_0$.
 Let~$\delta\in (0, \, 1/10)$ be fixed, and 
 let~$R_*\in (\eps^{3/4} R^{1/4}, \, \eps^{1/2} R^{1/2})$
 be a radius that satisfies~\eqref{goodradius-energy} 
 and~\eqref{goodradius-potential}, as given 
 by Lemma~\ref{lemma:goodradius}. For such a
 choice of~$R_*$, Proposition~\ref{prop:decay}
 implies
 \[
  \begin{split}
   \frac{E_\eps(x_0, \, R_*)}{R_*^{n-2}} 
   &\lesssim \frac{E_\eps(x_0, \, \delta R_*)}{(\delta R_*)^{n-2}} + \eta\abs{\log\delta}\\
   &\lesssim \left(\frac{(\eta\abs{\log\delta})^{1/3}}{\delta^{n-2}}
    + \delta^{\alpha}\right)
    \frac{E_\eps(x_0, \, R_*)}{R_*^{n-2}} + \frac{(\eta\abs{\log\delta})^{2/3}}{\delta^{n-2}}
    + \eta\abs{\log\delta}
  \end{split}
 \]
 Assume~$\delta$ is such that~$\eta\abs{\log\delta}\leq 1$.
 Then, we obtain
 \[
  \begin{split}
   \left(1 - \frac{C(\eta\abs{\log\delta})^{1/3}}{\delta^{n-2}}
    - C \delta^{\alpha}\right)\frac{E_\eps(x_0, \, R_*)}{R_*^{n-2}} 
   \lesssim \frac{(\eta\abs{\log\delta})^{2/3}}{\delta^{n-2}}
  \end{split}
 \]
 for some uniform constant~$C$. We choose~$\delta = \eta^{1/(3n)}$ 
 and we take~$\eta_0$ so small that $\eta_0\abs{\log\eta_0} \leq 1$ and
 $C \eta_0^{2/(3n)}\abs{\log\eta_0} + C \eta_0^{\alpha/(3n)}\leq 1/2$.
 Then, we deduce
 \[
  \begin{split}
   \frac{E_\eps(x_0, \, R_*)}{R_*^{n-2}} 
   \lesssim \eta^{(n+2)/(3n)}\abs{\log\eta}^{2/3}
  \end{split}
 \]
 for any~$\eta\leq\eta_0$.
 Moreover, assuming~$\eps < R$,
 we have~$R_* > \eps^{3/4} R^{1/4} > \eps$
 and hence, the monotonicity formula 
 (Theorem~\ref{thm:monotonicity}) implies
 \[
  \frac{1}{\eps^n}\int_{\mathcal{B}_\eps}\left(1 - \abs{u_\eps}^2\right)^2\vol_g
  \lesssim \frac{E_\eps(x_0, \, \eps)}{\eps^{n-2}} 
  \lesssim \frac{E_\eps(x_0, \, R_*)}{R_*^{n-2}} 
  \lesssim \eta^{(n+2)/(3n)}\abs{\log\eta}^{2/3}
 \]
 Now the proposition follows by repeating
 the arguments in~\cite[Lemma~III.3]{BethuelBrezisOrlandi}.
\end{proof}

Finally, we are in a position to prove the main result of this section,
Proposition~\ref{prop:small-ball-small-energy}.

\begin{proof}[{Proof of Proposition~\ref{prop:small-ball-small-energy}}]
	In view of Proposition~\ref{prop:clearingout}, 
	the property~\eqref{eq:clearingout-small-ball} 
	follows by the very same argument as
	in~\cite[Proposition~VII.1]{BethuelBrezisOrlandi}.
	The proof of~\eqref{eq:no-energy-small-balls} is also
	ispired by~\cite[Proposition~VII.1]{BethuelBrezisOrlandi}.
	Suppose, towards a contradiction, that~\eqref{eq:no-energy-small-balls}
	fails. Then, there exists a (non-relabelled) subsequence such that
	\begin{equation} \label{eq:no-energy-small-balls-0}
	 E_\eps(x_0, \, R/2) \to +\infty \qquad \textrm{as } \eps\to 0.
	\end{equation}
	We split the rest of the proof in steps.
	
   \setcounter{step}{0}
   \begin{step}
	Locally, on the ball~$\mathcal{B}_{R} := \mathcal{B}_R(x_0)$,
	we may write the reference connection as~$\D_0 = \d - i \gamma_0$
	for some (smooth, $\eps$-independent) $1$-form~$\gamma_0$.
	Upon replacing~$A_\eps$ with~$A_\eps + \gamma_0$, from now on 
	we assume that $\D_{A_\eps} = \d - i A_\eps$ on~$\mathcal{B}_R$.
	Moreover, we assume that~$A_\eps$ is in Coulomb gauge on~$\mathcal{B}_R$
	--- that is, $A_\eps$ satisfies~\eqref{eq:Feps-comp1}.
	Then, the Gaffney inequality (see 
	e.g.~Proposition~\ref{prop:gaffney-rho} in
	the appendix for details) implies
	\begin{equation*}
	 \norm{A_\eps}_{L^2(\mathcal{B}_R)}
	 \lesssim R\norm{F_\eps}_{L^2(\mathcal{B}_R)}
	 \stackrel{\eqref{eq:small-resclaed-energy}}{\lesssim}
	 R^{n/2} \left(\log\frac{R}{\eps}\right)^{1/2}
	\end{equation*}
	Moreover, Sobolev embeddings and the Gaffney inequality again
	imply
	\begin{equation} \label{eq:no-energy-small-balls-1}
	 \norm{A_\eps}_{L^{2^*}(\mathcal{B}_R)}
	 \leq C_R \norm{A_\eps}_{W^{1,2}(\mathcal{B}_R)}
	 \leq C_R\left(\norm{A_\eps}_{L^2(\mathcal{B}_R)}
	  + \norm{F_\eps}_{L^2(\mathcal{B}_R)}\right)
	 \leq C_R \abs{\log\eps}^{1/2}
	\end{equation}
	for~$2^*:= 2n/(n-2)$ and some constant~$C_{R}$
	depending on~$R$ (and, possibly, on~$x_0$).
	On the other hand, the energy bound~\eqref{eq:small-resclaed-energy}
	and the $L^\infty$-estimate given by 
	Proposition~\ref{prop:L-infty-bound-u} imply, by interpolation,
	\begin{equation} \label{eq:no-energy-small-balls-2}
	 \norm{1 - \abs{u_\eps}^2}_{L^n(\mathcal{B}_R)}
	 \leq \norm{1 - \abs{u_\eps}^2}_{L^2(\mathcal{B}_R)}^{2/n}
	  \norm{1 - \abs{u_\eps}^2}_{L^\infty(\mathcal{B}_R)}^{1 - 2/n}
	  \leq C_R \, \eps^{2/n} \abs{\log\eps}^{1/n}
	\end{equation}
	From~\eqref{eq:no-energy-small-balls-1} and~\eqref{eq:no-energy-small-balls-2}, we conclude that
	\begin{equation} \label{eq:no-energy-small-balls-3}
	 \norm{\left(1 - \abs{u_\eps}^2\right) A_\eps}_{L^2(\mathcal{B}_R)}
	 \to 0 \qquad \textrm{as } \eps\to 0
	\end{equation}
   \end{step}
   
   \begin{step}
	On the smaller ball~$\mathcal{B}_{3R/4} := \mathcal{B}_{3R/4}(x_0)$,
	the function~$\rho_\eps := \abs{u_\eps}$ is bounded away from zero,
	thanks to~\eqref{eq:clearingout-small-ball}. Therefore,
	by identifying~$u_\eps$ with a map~$\mathcal{B}_{3R/4}\to\C$,
	we find a (Lipschitz-continuous)
	function~$\theta_\eps\colon\mathcal{B}_{3R/4}\to\R$ such that
	$u_\eps = \rho_\eps\exp(i\theta_\eps)$.
	By direct computation, we see that
	\begin{equation} \label{eq:no-energy-small-balls-4}
	 j(u_\eps, \, A_\eps) = \rho_\eps^2 \left(\d\theta_\eps - A_\eps\right)
	\end{equation}
	Assume that~$(u_\eps, \, A_\eps)$ satisfies~\eqref{hp:logenergy}.
	Then, up to extraction of a subsequence, we know that
	$j(u_\eps, \, A_\eps)$ converges in~$L^p(M)$
	and~$F_\eps$ converges in~$W^{1,p}(M)$
	for any~$p < n/(n-1)$
	(by Lemma~\ref{lemma:Lp-bound-prejac}
	and~\eqref{compactnessF,J}, respectively).
	Since~$A_\eps$ is in Coulomb gauge
	(Equation~\eqref{eq:Feps-comp1}), from the Gaffney inequality
	(Proposition~\ref{prop:gaffney-rho}) we deduce
	that~$A_\eps$ is bounded in~$L^p(\mathcal{B}_R)$
	for~$p < n/(n-1)$. As a consequence, from~\eqref{eq:clearingout-small-ball}
	and~\eqref{eq:no-energy-small-balls-4} we obtain that~$\d\theta_\eps$
	is bounded in~$L^p(\mathcal{B}_{3R/4})$. Up to
	subtracting a constant multiple of~$2\pi$,
	we can moreover assume that the average of~$\theta_\eps$
	on~$B_{3R/4}$ belongs to the interval~$[0, \, 2\pi)$;
	then, it follows
	\begin{equation} \label{eq:no-energy-small-balls-5}
	 \norm{\theta_\eps}_{W^{1,p}(\mathcal{B}_{3R/4})} \leq C_{R,p}
	\end{equation}
	for any~$p < n/(n-1)$ and some constant~$C_{R,p}$
	depending on~$R$ and~$p$ (and possibly, on~$x_0$), but not on~$\eps$.
   \end{step}
   
   \begin{step}
    The Euler-Lagrange equation~\eqref{EL-A} implies that
    the form~$j(u_\eps, \, A_\eps)$ is co-exact --- in particular,
    co-closed. The form $A_\eps$ is co-closed in~$\mathcal{B}_{3R/4}$,
    too, because of our choice of gauge~\eqref{eq:Feps-comp1}.
    Therefore, we have
    \[
     0 = \d^* j(u_\eps, \, A_\eps) + \d^* A_\eps
     \stackrel{\eqref{eq:no-energy-small-balls-4}}{=}
     \d^*\left(\rho_\eps^2 \d\theta_\eps - \rho_\eps^2 A_\eps + A_\eps\right)
    \]
    and hence,
    \begin{equation} \label{eq:no-energy-small-balls-6}
     \d^*\left(\rho_\eps^2 \d\theta_\eps\right) =  
     \d^*\left((\rho_\eps^2 - 1)A_\eps\right)
    \end{equation}
    By a suitable Caccioppoli inequality
    (see e.g. Lemma~\ref{lemma:CaccioppoliL1} in the appendix),
    we deduce
    \begin{equation*}
     \norm{\d\theta_\eps}_{L^2(\mathcal{B}_{2R/3})} 
	 \lesssim R^{-n/2 - 1} \norm{\theta_\eps}_{L^1(\mathcal{B}_{3R/4})}
	 + \norm{(1 - \rho_\eps^2)A_\eps}_{L^2(\mathcal{B}_{3R/4})}
    \end{equation*}
    and hence, thanks to~\eqref{eq:no-energy-small-balls-3}
    and~\eqref{eq:no-energy-small-balls-5}, 
    \begin{equation} \label{eq:no-energy-small-balls-7}
     \norm{\d\theta_\eps}_{L^2(\mathcal{B}_{2R/3})} 
	 \leq C_R
    \end{equation}
    for some constant~$C_R$ that depends on~$R$
    (and possibly, on~$x_0$), but not on~$\eps$.
   \end{step}
   
   \begin{step}
    Due to the choice of gauge~\eqref{eq:Feps-comp0},
    the Euler-Lagrange equation~\eqref{EL-A} rewrites as
    \begin{equation} \label{eq:no-energy-small-balls-8}
     -\Delta A_\eps = j(u_\eps, \, A_\eps)
     \qquad \textrm{on } \mathcal{B}_R.
    \end{equation}
    Combining~\eqref{eq:no-energy-small-balls-8} 
    with~\eqref{eq:no-energy-small-balls-4}, we deduce
    \begin{equation} \label{eq:no-energy-small-balls-9}
     -\Delta A_\eps + \rho^2_\eps A_\eps = \rho_\eps^2 \,\d\theta_\eps
     \qquad \textrm{on } \mathcal{B}_R.
    \end{equation}
    The right-hand side of~\eqref{eq:no-energy-small-balls-9}
    is uniformly bounded in~$L^2(\mathcal{B}_{3R/4})$,
    because of~\eqref{eq:no-energy-small-balls-7}.
    A Cac\-ciop\-po\-li-type inequality (see Lemma~\ref{lemma:CaccioppoliL1bis}
    in the appendix for details) now implies
    \begin{equation} \label{eq:no-energy-small-balls-10}
     \norm{A_\eps}_{L^2(\mathcal{B}_{7R/12})} 
      + \norm{\d A_\eps}_{L^2(\mathcal{B}_{7R/12})}\leq C_R
    \end{equation}
    for some constant~$C_R$ that depends on~$R$, but ot on~$\eps$.
   \end{step}
   
   \begin{step}
    Finally, it remains to estimate the 
    $W^{1,2}$-norm of~$\rho_\eps := \abs{u_\eps}$.
    By explicit computation, recalling that~$u_\eps = \rho_\eps e^{i\theta_\eps}$ and~$\D_{A_\eps} = \d - i A_\eps$, we have
    \[
     -\frac{1}{2}\Delta\left(\rho_\eps^2\right) 
      + \abs{\D_{A_\eps} u_\eps}^2
     = - \rho_\eps \Delta \rho_\eps 
      + \rho_\eps^2 \abs{\d\theta_\eps - A_\eps}^2
    \]
    By injecting this identity into Equation~\eqref{EL-abs},
    and recalling that~$\rho_\eps \geq 1/2$ in~$\mathcal{B}_{3R/4}$,
    we obtain 
    \begin{equation} \label{eq:no-energy-small-balls-11}
     -\Delta\rho_\eps + \frac{1}{\eps^2} 
      \rho_\eps \left(\rho_\eps^2 - 1\right) + \rho_\eps \abs{\d\theta_\eps - A_\eps}^2 = 0 \qquad \textrm{in } \mathcal{B}_{3R/4}.
    \end{equation}
    We test Equation~\eqref{eq:no-energy-small-balls-11}
    against~$(1 - \rho_\eps)\zeta$, where~$\zeta\in C^\infty_{\mathrm{c}}(\mathcal{B}_{7R/12})$ is a cut-off function, such that
    $\zeta = 1$ in~$\mathcal{B}_{R/2}$ and~$\abs{\nabla\zeta}\lesssim R^{-1}$.
    We obtain
    \[
     \begin{split}
      \int_{\mathcal{B}_{7R/12}} \left(\abs{\nabla\rho_\eps}^2 
       + \frac{1}{\eps^2}\rho_\eps^2(1 + \rho_\eps)(1 - \rho_\eps)^2 \right)\zeta \, \vol_g
       &= \int_{\mathcal{B}_{7R/12}} (\rho_\eps - 1) \ip{\nabla \rho_\eps}{\nabla \zeta} \vol_g  \\
       &\qquad + \int_{\mathcal{B}_{7R/12}} 
        \rho_\eps (1 - \rho_\eps)\,\zeta 
        \abs{\d\theta_\eps - A_\eps}^2 \vol_g
     \end{split}
    \]
    The second term in the right-hand side is bounded
    uniformly with respect to~$\eps$,
    due to~\eqref{eq:no-energy-small-balls-7},
    \eqref{eq:no-energy-small-balls-10} and
    the bound~$\abs{\rho_\eps} \leq 1$. Therefore,
    upon applying the Young inequality at the right-hand side,
    we deduce that
    \begin{equation} \label{eq:no-energy-small-balls-12}
     \int_{\mathcal{B}_{R/2}} \left(\abs{\nabla\rho_\eps}^2 
       + \frac{1}{\eps^2}(1 - \rho_\eps^2)^2 \right)\vol_g \leq C_R
    \end{equation}
    for some~$\eps$-independent constant~$C_R$.
    From~\eqref{eq:no-energy-small-balls-7}, \eqref{eq:no-energy-small-balls-10} and~\eqref{eq:no-energy-small-balls-12},
    we obtain the energy bound~$E_\eps(x_0, \, R)\leq C_R$,
    which contradicts~\eqref{eq:no-energy-small-balls-0}.
    This completes the proof.
    \qedhere
   \end{step}
\end{proof}

\section{Convergence to the limiting varifold}
\label{sec:varifold}
In this section we consider the \emph{rescaled} energy measures, defined as
\begin{equation}\label{eq:mu-eps}
	\mu_\eps := \frac{e_\eps(u_\eps,\,A_\eps)}{\pi \abs{\log\eps}} \,\vol_g
\end{equation}
where we identify~$\vol_g$ with a Radon measure on~$M$, in a canonical way.
We prove that the measures~$\mu_\eps$ converge 
weakly$^*$ in the sense of Radon measures in $M$ to a limiting Radon measure 
$\mu_*$ and that $\mu_*$ is the weight measure of a stationary, rectifiable 
$(n-2)$-varifold in $M$. For a precise formulation of this result, 
cf.~Theorem~\ref{thm:rectifiability} below.

A \emph{$k$-varifold}~$V$ in a Riemannian $n$-manifold~$M$
is defined as a nonnegative
Radon measure on $G_k(\T M)$, where $k \leq n$ is an integer and $G_k(\T M)$ is 
the (total space of the) Grassmanian bundle on~$M$, a bundle whose typical 
fibre is the Grassmanian manifold of $k$-planes in the tangent space to~$M$. 
(Details on the construction of $G_k(\T M)$
can be found, for instance, in \cite[Section~6.5.7]{Monclair2021}.)
We will denote the set of $k$-varifolds in~$M$ by $\mathcal{V}_k(M)$.

\begin{remark} \label{rk:varifolds}
Varifolds lack a boundary operator and
there is no natural notion of orientation on them. 
Thus, varifolds are less regular objects than currents (indeed, any current 
induces a varifold but the converse does not hold), although they are both 
generalisations of the concept of smooth submanifold.
\end{remark}

As in~\cite{PigatiStern}, one wishes to prove the rectifiability of the 
limiting varifold by applying (a suitable variant of) the 
Ambrosio-Soner rectifiability criterion for generalised varifolds,
i.e.~Theorem~3.8 
of~\cite{AmbrosioSoner}. Since in \cite{AmbrosioSoner} the 
authors work in the Euclidean setting, an extension of the Ambrosio-Soner 
theorem to the present setting is needed but, as explained in 
\cite[p.~1058]{PigatiStern}, easy to obtain. For the reader's convenience, we 
recall the main points of the argument below.

Following \cite[Section~6]{PigatiStern}, we use the metric $g$ to canonically 
identify tensors of rank $(2,\,0)$, $(1,\,1)$ and $(0,\,2)$. We denote by 
$\EndSym(\T M)$ the space of symmetric endomorphisms of $\T M$ and we define a 
subbundle $\mathcal{A}_{k,n}(M)$ of $\EndSym(\T M)$ as follows: 
\[
	\mathcal{A}_{k,n}(M) := \left\{ S \in \EndSym(\T M) : -n g \leq S \leq g \mbox{ and } {\rm Tr}(S) \geq k  \right\}.
\]
The typical fibre of $\mathcal{A}_{k,n}(M)$ at $x \in M$ is the set 
$\mathcal{A}_{k,n}(\T_x M)$ of symmetric endomorphisms 
$S_x\colon \T_x M \to \T_x M$ with trace $\geq k$ and such that 
$-n g_x \leq S_x \leq g_x$ as bilinear forms on $\T_x M$.
 
We define a \emph{generalised $k$-varifold in $M$} as a nonnegative Radon 
measure on $\mathcal{A}_{k,n}(M)$, thus extending in the most natural way the 
definition in \cite[Section~3]{AmbrosioSoner} 
(cf.~\cite[Section~6]{PigatiStern}). We denote $\mathcal{V}^*_k(M)$ the set of 
generalised $k$-varifolds on $\mathcal{A}_{k,n}(M)$

Let $V \in \mathcal{V}^*_{k}(M)$
and let~$\pi$ be the canonical projection
$\mathcal{A}_{k,n}(M) \to M$. We call $\|V\| := \pi_* V$ the 
\emph{weight measure} of $V$, where 
$\pi$ denotes the canonical projection $\mathcal{A}_{k,n}(M) \to M$ and 
$\pi_* V$ denotes the pushforward measure of~$V$ through~$\pi$. 
We say that $V$ \emph{has first variation} if and only if there exists a 
vector Radon measure $\delta V \in C(M, \, \T M)^\prime$ 
such that, for any $X \in C^1(M,\,\T M)$,
\begin{equation} \label{firstvariation}
	\int_{\mathcal{A}_{k,n}(M)} \ip{ S_x}{\nabla X(x)} \, \d V(x, S_x) 
	= -\int_M \ip{X}{\nu}\,\d{\norm{\delta V}},
\end{equation}
where $\nabla$ denotes the Levi Civita connection of 
$M$, $S_x \in \mathcal{A}_{k,n}(\T_x M)$, $\ip{S_x}{\nabla X(x)}$ 
denotes the scalar product of the matrices representing $S_x$ and 
$\nabla X(x)$, regarded as endomorphisms of $\T_x M$, and $\nu$ is a 
$\norm{\delta V}$-measurable vector field on $M$ with $\abs{\nu}=1$ 
$\norm{\delta V}$-a.e. in $M$.
When~\eqref{firstvariation} holds, we say that $\delta V$ is the \emph{first variation} of~$V$. 
We call~$V$ \emph{stationary} if $\delta V(X) = 0$ for all 
$X \in C^1(M,\,\T M)$.
We say that a sequence $\{V_h\} \subset \mathcal{V}^*_k(M)$ \emph{converges weakly$^*$} to $V \in \mathcal{V}^*_k(M)$ if and only if $\{V_h\}$ 
converges weakly$^*$ in the sense of measures on~$\mathcal{A}_{k,n}(M)$
to~$V$. Next, we define the \emph{upper $k$-dimensional density} of $V$ 
at $x \in M$ as 
\[
	\Theta^*_k(\|V\|,\,x) := 
	\limsup_{r \downarrow 0} \frac{\|V\|(\mathcal{B}_r(x))}{\mu(\mathcal{B}_r(x))}.
\]
Keeping in mind the construction of $G_k(\T M)$, a standard localisation 
argument shows that the 
following version of \cite[Theorem~3.8(c)]{AmbrosioSoner} holds 
(cf.~\cite[Proposition~5.1]{Stern2020})
\begin{theorem}[Rectifiability criterion]\label{thm:rect-gen-varifolds}
	Let $V \in \mathcal{V}^*_k(M)$ be a generalised $k$-varifold with first 
	variation and assume that 
	\[
		\Theta^*_k(\|V\|,\,x) > 0 \qquad \mbox{for } \|V\|{\mbox-a.e.}\, x \in M.
	\]
	Then, there exists a (classical) $k$-rectifiable varifold $\widetilde{V}$ 
	such that 
	\[
		\norm{V} = \|\widetilde{V}\| \quad \mbox{and}\quad 
		\delta V = \delta \widetilde{V}.
	\]
\end{theorem}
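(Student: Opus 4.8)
The plan is to deduce Theorem~\ref{thm:rect-gen-varifolds} from its Euclidean prototype \cite[Theorem~3.8(c)]{AmbrosioSoner} by a localisation argument in the spirit of \cite[Proposition~5.1]{Stern2020}. The guiding observation is that $k$-rectifiability, the finiteness of the first variation and the positivity of the upper $k$-density are all local properties, stable under bi-Lipschitz changes of coordinates; hence it is enough to analyse $V$ chart by chart and then to patch the local conclusions together.

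First I would use the compactness and smoothness of $M$ to fix a finite atlas $\{(U_\alpha, \varphi_\alpha)\}_\alpha$, with $\varphi_\alpha\colon U_\alpha \to B_\alpha\subseteq\R^n$ a diffeomorphism onto an open set, such that in the coordinates $\varphi_\alpha$ one has $\Lambda_\alpha^{-1}\delta_{ij}\leq g_{ij}\leq\Lambda_\alpha\delta_{ij}$ on $B_\alpha$ for some $\Lambda_\alpha\geq 1$. Using $g$ to identify $\T M$ with $\T^*M$, and transporting symmetric endomorphisms of $\T_x M$ through $\d\varphi_\alpha$, I would push $V\mres\pi^{-1}(U_\alpha)$ forward to a nonnegative Radon measure $V_\alpha$ on $B_\alpha\times\EndSym(\R^n)$. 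The distortion bound turns the constraints defining $\mathcal{A}_{k,n}(M)$ into constraints of the form $-N_\alpha\,\mathrm{Id}\leq S\leq N_\alpha\,\mathrm{Id}$, $\mathrm{Tr}(S)\geq c_\alpha k>0$, with $N_\alpha, c_\alpha$ depending only on $\Lambda_\alpha$; since the proof of the rectifiability criterion in \cite{AmbrosioSoner} only uses such two-sided bounds together with the positivity of the trace, $V_\alpha$ is an admissible generalised $k$-varifold on the open set $B_\alpha$. Writing the Levi-Civita connection as the flat connection plus the Christoffel symbols of $g$ (which are bounded on compact subsets of $B_\alpha$), the Euclidean left-hand side of~\eqref{firstvariation} for $V_\alpha$ differs from the $g$-covariant one for $V$ only by the integral of the test field against a bounded density with respect to $\|V\|$; hence $V_\alpha$ has first variation, a vector Radon measure on $B_\alpha$. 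Finally, the comparability of geodesic and Euclidean balls and of their volumes yields $\Theta^*_k(\|V_\alpha\|, y)>0$ for $\|V_\alpha\|$-a.e.\ $y$.

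With this in place, I would apply \cite[Theorem~3.8(c)]{AmbrosioSoner} to each $V_\alpha$, obtaining that $\|V_\alpha\|$ is $k$-rectifiable and that $V_\alpha$ is induced by a classical rectifiable varifold $\widetilde V_\alpha$ with $\|\widetilde V_\alpha\| = \|V_\alpha\|$ and $\delta\widetilde V_\alpha = \delta V_\alpha$. As $k$-rectifiability is local and diffeomorphism-invariant, $\|V\|$ is then $k$-rectifiable on $M$; writing $\|V\| = \theta\,\H^k\mres R$ with $R$ a $k$-rectifiable set and $\theta>0$ Borel, and letting $\T_x R$ denote its approximate tangent $k$-plane, I would set
\[
 \widetilde V := \int_R \theta(x)\,\delta_{(x,\,\T_x R)}\,\d\H^k(x),
\]
so that $\|\widetilde V\| = \|V\|$ by construction. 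It then remains to check $\delta V = \delta\widetilde V$, and this is once more local: the pushforward $(\varphi_\alpha)_*(\widetilde V\mres U_\alpha)$ agrees, after the metric identification, with the canonical rectifiable varifold associated to $\|V_\alpha\|$, hence with $\widetilde V_\alpha$; therefore $\delta(\widetilde V\mres U_\alpha)$ and $\delta(V\mres U_\alpha)$ coincide on $C^1_{\mathrm c}(U_\alpha, \T M)$, the common Christoffel correction cancelling because $\|\widetilde V\| = \|V\|$. Using a partition of unity $\{\chi_\alpha\}$ subordinate to $\{U_\alpha\}$, I would conclude by linearity that $\delta V(X) = \sum_\alpha \delta V(\chi_\alpha X) = \sum_\alpha \delta\widetilde V(\chi_\alpha X) = \delta\widetilde V(X)$ for every $X\in C^1(M,\T M)$.

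The main obstacle is the bookkeeping of the second paragraph: one must verify carefully that the notion of generalised $k$-varifold and, above all, its first variation transform correctly under the coordinate maps --- in particular that the curvature of $g$ enters the transformed first variation only through a term of the form ``$X$ integrated against a bounded density with respect to $\|V\|$'', so that the Radon-measure property of $\delta V$ survives the change of coordinates. Once this is granted, the remaining steps are routine, exactly as in \cite[Proposition~5.1]{Stern2020}.
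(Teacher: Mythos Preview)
Your proposal is correct and matches the paper's own treatment: the paper does not give a detailed proof but simply states that a ``standard localisation argument'' reduces the statement to \cite[Theorem~3.8(c)]{AmbrosioSoner}, referring to \cite[Proposition~5.1]{Stern2020} for the manifold extension. Your sketch is precisely that localisation argument spelled out.
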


We follow the strategy of \cite[Section~6]{PigatiStern}, applying the 
modifications made necessary by the logarithmic energy regime. 

For any given 
$u \in (W^{1,2}\cap L^\infty)(M,E)$ and $A \in W^{1,2}(M,\,\T^*M)$, 
we define the 
$(0,2)$ tensors $\D_A u^* \D_A u$ and $F_A^* F_A$ by
\[
	\D_A u^* \D_A u(E_i,E_j) := \sum_{i,j = 1}^n \ip{\D_{A,E_i} u}{\D_{A,E_j} u},
\]
and, respectively,
\[
	F_A^* F_A(E_i,E_j) := \sum_{i,j=1}^n \ip{F_A(E_i,\,E_j)}{F_A(E_i,\,E_j)},
\]
where $\{E_1, \dots, E_n\}$ is any local orthonormal frame for $E \to M$. 
It is easily checked that $\D_A u^* \D_A u$ and $F_A^* F_A$ are 
gauge-invariant and smooth.

Next, for any $\eps > 0$, we consider the
\emph{rescaled} stress-energy tensor field
\begin{equation} \label{stresstensor}
	T_\eps \equiv T_\eps(u_\eps,\,A_\eps) 
	:= \frac{1}{\abs{\log\eps}} \left( e_\eps(u_\eps,\,A_\eps) g - \D_A u^* \D_A u - F_A^* F_A \right), 
\end{equation}
where $(u_\eps,\,A_\eps)$ is any critical point of $G_\eps$. 
By the same 
computations as in \cite[Section~4]{PigatiStern}, we see that $T_\eps$ is 
divergence-free, for any $\eps > 0$. 
By means of the metric, we can identify $T_\eps$ with the induced 
$\EndSym(\T M)$-valued Radon measure in $M$. If we can show that 
the sequence $\{T_\eps\}$ 
converges to a limiting $\EndSym(\T M)$-valued measure $T_*$ and that 
$\abs{T_*}$ is absolutely continuous with respect to $\mu_*$, then we can 
represent $T_*$ by a generalised $(n-2)$-varifold $P$. The necessary estimates 
to this purpose are provided by Lemma~\ref{lemma:T_*}. These estimates ensure, 
in addition, that we can apply Theorem~\ref{thm:rect-gen-varifolds} to obtain 
the rectifiability of $P$.

The main result of this section is 
Theorem~\ref{thm:rectifiability} below (cf. \cite[Proposition~6.4]{PigatiStern}). 

\begin{theorem}\label{thm:rectifiability}
	Let $\{\mu_\eps\}$, where $\mu_\eps$ is defined
	by~\eqref{eq:mu-eps}, be the rescaled energy measures of a 
	sequence $\{(u_\eps,\,A_\eps)\}\subset \mathcal{E}$
	of critical points of $G_\eps$ satisfying the 
	logarithmic energy bound~\eqref{hp:logenergy}.
	Then, there exist a bounded Radon measure $\mu_*$ on $M$ and a (not 
	relabelled) subsequence 
	such that $\mu_\eps \rightharpoonup^* \mu_*$ weakly$^*$
	in the sense of measures. Moreover, the $(n-2)$-density 
	$\Theta(\mu_*,x_0) := \lim_{r \downarrow 0} r^{2-n} \mu_*(B_r(x_0))$ of $\mu_*$ 
	at $x_0$ is defined at every $x_0 \in M$ and
	$\mu_*$ is the weight measure of an associated stationary, rectifiable 
	$(n-2)$-varifold $V = v( \Sigma, \theta)$, which in turn is such that
	\[
		\lim_{\eps \to 0} 
		\int_M \ip{T_\eps(u_\eps,\,A_\eps)}{S}\,\vol_g 
		= \int_\Sigma \theta(x) \ip{\T_x \Sigma}{S(x)}\,\d \mathcal{H}^{n-2}(x)
	\]
	for all $S \in C^0(M,\, \EndSym(\T M))$, 
	where $\Sigma := \spt \mu_*$, $\theta(x) := \Theta_{n-2}(\mu_*, x)$, and 
	$\mu_* = \theta \mathcal{H}^{n-2} \mres \Sigma$.
\end{theorem}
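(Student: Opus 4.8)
The plan is to combine all the ingredients developed so far, following the strategy of \cite[Section~6]{PigatiStern} adapted to the logarithmic scaling. First I would establish compactness of the measures $\mu_\eps$ defined in~\eqref{eq:mu-eps}: by the energy bound~\eqref{hp:logenergy}, $\mu_\eps(M) = \frac{1}{\pi}G_\eps(u_\eps,\,A_\eps)/\abs{\log\eps} \leq \Lambda/\pi$ is uniformly bounded, so by weak$^*$ compactness of Radon measures we extract a subsequence with $\mu_\eps \rightharpoonup^* \mu_*$ for some bounded Radon measure $\mu_*$ on $M$. (Note $\mu_* = \frac{1}{\pi}\mu_*^{\text{old}}$ in the normalisation of Theorem~\ref{th:energy_density}; I will keep the factor $\pi$ explicit.) Simultaneously, by~\eqref{stresstensor} the rescaled stress-energy tensors $T_\eps$ are $\EndSym(\T M)$-valued measures with $\abs{T_\eps}(M) \lesssim \mu_\eps(M) \lesssim 1$; up to a further subsequence $T_\eps \rightharpoonup^* T_*$ weakly$^*$ as $\EndSym(\T M)$-valued measures. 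Since each $T_\eps$ is divergence-free (the inner-variation computation of \cite[Section~4]{PigatiStern} goes through verbatim, being purely local and scaling-independent), and the divergence-free condition is linear and weakly$^*$-closed, $T_*$ is divergence-free as well.

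Next I would establish the two-sided density bound for $\mu_*$. The upper bound $\Theta^*_{n-2}(\mu_*,\,x_0) \leq C$ is an immediate consequence of the monotonicity formula (Theorem~\ref{thm:monotonicity}) together with~\eqref{hp:logenergy}: for $0<r<R<R_0$, the rescaled energy $r^{2-n}E_\eps(x_0,\,r)$ is controlled (up to the $e^{Cr^2}$ factors and the small error term) by $R^{2-n}E_\eps(x_0,\,R) \leq R^{2-n}\Lambda\abs{\log\eps}$; dividing by $\pi\abs{\log\eps}$, passing to the limit, and letting $R$ be a fixed small radius gives $\mu_*(\mathcal{B}_r(x_0)) \leq C\,r^{n-2}$, hence $\H^{n-2}(\spt\mu_*)<+\infty$. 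The lower bound is the crucial one and it is exactly where the clearing-out enters: by Lemma~\ref{lemma:out-of-support} (whose proof rests on Proposition~\ref{prop:small-ball-small-energy}), if $\mu_*(\mathcal{B}_R(x_0)) < \eta_0 R^{n-2}$ for some uniform $\eta_0$ then $\mu_*(\mathcal{B}_{R/2}(x_0))=0$; consequently at every point $x_0\in\spt\mu_*$ one has $\mu_*(\mathcal{B}_R(x_0))\geq \eta_0 R^{n-2}$ for all small $R$, so $\Theta_*^{n-2}(\mu_*,\,x_0) \geq \eta_0 > 0$ on $\spt\mu_*$. Combined with the upper bound and standard density-ratio comparison (the $r\mapsto e^{Cr^2}r^{2-n}\mu_*(\mathcal{B}_r(x_0))$ is essentially monotone from the monotonicity inequality passed to the limit), the density $\Theta(\mu_*,\,x_0) = \lim_{r\downarrow 0} r^{2-n}\mu_*(\mathcal{B}_r(x_0))$ exists at every point and is bounded between $\eta_0$ and $C$ on $\spt\mu_*$, and vanishes off it. This is what Lemma~\ref{lemma:mu_*} provides.

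With these in hand I would turn to identifying $T_*$ with a generalised varifold and applying the rectifiability criterion. Writing $T_\eps = \frac{1}{\abs{\log\eps}}(e_\eps g - \D_A u^*\D_A u - F_A^* F_A)$, one checks pointwise that $-n g \leq \abs{\log\eps}\,T_\eps/e_\eps \leq g$ and $\mathrm{Tr}(\abs{\log\eps}\,T_\eps/e_\eps) \geq n - 2$ wherever $e_\eps > 0$ (using $\abs{\D_A u^*\D_A u}, \abs{F_A^*F_A} \leq 2 e_\eps$ in the appropriate tensorial sense and that the Higgs potential contributes nonnegatively to the trace); here Lemma~\ref{lemma:curvature} is essential, since it guarantees that $F_A^* F_A/\abs{\log\eps} \to 0$ so that the curvature term does not spoil the trace inequality in the limit. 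Hence the normalised tensor takes values in $\mathcal{A}_{n-2,n}(M)$ and, by disintegration of $T_*$ with respect to $\mu_*$ (using $\abs{T_*}\ll\mu_*$, which follows from $\abs{T_\eps}\leq(n+1)\mu_\eps/(\text{const})$ and weak$^*$ lower semicontinuity), we obtain a generalised $(n-2)$-varifold $P$ on $\mathcal{A}_{n-2,n}(M)$ with weight $\norm{P} = \mu_*$ (after absorbing the curvature contribution, which is null in the limit). Divergence-freeness of $T_*$ translates into $P$ being stationary, i.e. $\delta P = 0$. The positivity of $\Theta_{n-2}(\mu_*,\,\cdot)$ on $\spt\mu_*$ then lets me invoke Theorem~\ref{thm:rect-gen-varifolds} (the manifold version of the Ambrosio-Soner criterion), producing a classical rectifiable $(n-2)$-varifold $V = v(\Sigma,\,\theta)$ with $\norm{V} = \mu_*$ and $\delta V = \delta P = 0$; rectifiability of $\mu_*$ together with the density bounds forces $\Sigma = \spt\mu_*$, $\theta = \Theta_{n-2}(\mu_*,\,\cdot)$, and $\mu_* = \theta\,\H^{n-2}\mres\Sigma$. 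Finally, the convergence statement $\int_M \ip{T_\eps}{S}\,\vol_g \to \int_\Sigma \theta\ip{\T_x\Sigma}{S}\,\d\H^{n-2}$ for $S\in C^0(M,\EndSym(\T M))$ is just the weak$^*$ convergence $T_\eps\rightharpoonup^* T_*$ unwound through the varifold representation, again using Lemma~\ref{lemma:curvature} to discard the curvature piece.

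The main obstacle is the lower density bound, i.e. ensuring that the clearing-out machinery of Section~\ref{sec:clearing-out} genuinely yields $\mu_*(\mathcal{B}_{R/2}(x_0))=0$ from smallness of $\mu_*(\mathcal{B}_R(x_0))$ in the logarithmic regime; this is delicate because the natural energy scale is $\abs{\log\eps}$ rather than bounded, so the smallness hypothesis~\eqref{eq:small-resclaed-energy} must be stated as $E_\eps(x_0,\,R)\leq\eta_0 R^{n-2}\log(R/\eps)$ and one has to carefully pass between $\eps$-level statements and limiting statements about $\mu_*$, via Proposition~\ref{prop:small-ball-small-energy} and the monotonicity formula. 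A secondary technical point is verifying $\abs{T_*}\ll\mu_*$ with the correct constant and checking that the limiting tensor still takes values in $\mathcal{A}_{n-2,n}(M)$ after the curvature term is sent to zero — this is where Lemma~\ref{lemma:curvature} does the decisive work, and its absence would leave a diffuse curvature contribution that could violate the trace constraint. Everything else is a matter of assembling the already-proven ingredients in the right order.
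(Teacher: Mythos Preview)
Your overall strategy is correct and essentially coincides with the paper's: extract subsequential limits $\mu_*$ and $T_*$, use the monotonicity formula for the upper density bound and the clearing-out (Lemma~\ref{lemma:out-of-support} via Proposition~\ref{prop:small-ball-small-energy}) for the lower bound, represent $T_*$ as a generalised $(n-2)$-varifold with weight $\mu_*$, and apply the Ambrosio--Soner criterion (Theorem~\ref{thm:rect-gen-varifolds}).

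There is, however, one imprecision worth correcting. You write that ``one checks pointwise that \ldots $\mathrm{Tr}(\abs{\log\eps}\,T_\eps/e_\eps) \geq n-2$''. This is false at the $\eps$-level: a direct computation (as in the proof of Lemma~\ref{lemma:T_*}(ii)) gives
\[
\mathrm{Tr}\!\left(\frac{\abs{\log\eps}\,T_\eps}{e_\eps}\right)
= (n-2) + \frac{1}{e_\eps}\left(\frac{1}{2\eps^2}\bigl(1-\abs{u_\eps}^2\bigr)^2 - \abs{F_\eps}^2\right),
\]
which can drop below $n-2$ wherever the curvature dominates the potential. You cannot therefore assert that the normalised tensor takes values in $\mathcal{A}_{n-2,n}(M)$ pointwise and then pass to the limit. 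The paper avoids this by working directly with the limit object: Lemma~\ref{lemma:T_*} establishes, for $T_*$ (not $T_\eps$), the inequalities $-g \leq P \leq g$ and $\mathrm{Tr}(P) \geq n-2$ for the Radon--Nikodym density $P = \d T_*/\d\mu_*$, and it is precisely in proving (ii) of that lemma that Lemma~\ref{lemma:curvature} is invoked to kill the negative curvature contribution \emph{after} integration and passage to the limit. Your parenthetical remark about Lemma~\ref{lemma:curvature} shows you have the right idea, but the sentence preceding it should be rephrased: the two-sided bound $-g \leq P \leq g$ does follow from pointwise estimates on $T_\eps$, while the trace bound holds only for the limiting $P$ and requires Lemma~\ref{lemma:curvature}.
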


\begin{remark} \label{rk:nodiffuse}
The limiting measure~$\mu_*$ given by Theorem~\ref{thm:rectifiability}
is purely concentrated on a co\-di\-men\-sion-two varifold
and it does not 
contain any ``diffuse part'' (i.e., a contibution absolutely continuous with respect to~$\vol_g$). 
The latter is instead typical of the ``non-magnetic'' Ginzburg-Landau 
functional 
\begin{equation} \label{nonmagnetic_GL}
	I_\eps(v) = \int_M \frac{1}{2} \abs{\d v}^2 + 
	\frac{1}{4\eps^2}\left( 1 - \abs{v}^2 \right)^2\,\vol_g
	\qquad \textrm{for } v\in W^{1,2}(M, \, \C)
\end{equation}
without further constraints. For instance, it is proven in \cite{Stern2021} 
that the rescaled energy measures of critical points 
$v_\eps$ of $I_\eps$ (satisfying 
$I_\eps(v_\eps) \sim \On(\abs{\log\eps})$ as $\eps \to 0$) 
concentrate towards a measure of the form 
$\widetilde{\mu} := \norm{V} + \abs{\psi}^2\,\vol_g$, 
where $\psi$ is a possibly non-trivial harmonic one-form on~$M$. 
A similar phenomenon occurs also in the evolutionary case, even in 
the Euclidean setting (i.e., in  $\R^n \times \R^+$). 
Indeed, it is proven in 
\cite[Theorem~A]{BethuelOrlandiSmets-Annals}
that for solutions to the Ginzburg-Landau heat flow,
in the logarithmic energy regime,
the rescaled parabolic energy measures converge to measures of the form
$\widehat{\mu}(x,t) = \|\widehat{V}\| + |\nabla \widehat{\Phi}|^2\,\d x$, 
where $\widehat{\Phi}$ is a solution of the heat equation in 
$\R^n \times \R^+$.
In rough terms, the diffuse part in the limiting measure arises because of wild 
oscillations in the phase made possible by the high energy at disposal. 
In the situation considered in \cite{BethuelBrezisOrlandi}, this phenomenon is 
ruled out by topological arguments. 
The same would occur 
if the manifold~$M$ is simply connected.
In Theorem~\ref{thm:rectifiability}, there 
is no diffuse part in~$\mu_*$ because of 
the structure of the gauge-invariant functional,
which allows us to remove the non-topological contributions 
to the energy, to leading order, by a suitable choice of 
the connection~$\D_{A}$.
This is independent of the topology of $M$; it is 
a feature of the energy functional~$G_\eps$.
\end{remark}

The first step in the proof of Theorem~\ref{thm:rectifiability} is the 
characterisation of the limiting measure $\mu_*$.
\begin{lemma}\label{lemma:existence+density-mu_*}
	Let $\{\mu_\eps\}$, where $\mu_\eps$ is defined 
	by~\eqref{eq:mu-eps},
	be the rescaled energy measures of a sequence 
	$\{(u_\eps,\,A_\eps)\}\subset \mathcal{E}$ of critical points of 
	$G_\eps$ satisfying the logarithmic energy bound~\eqref{hp:logenergy}. 
	Then, there exist
	a bounded Radon measure $\mu_*$ on $M$ and a 
	(not relabeled) subsequence such that 
	$\mu_\eps \rightharpoonup \mu_*$ weakly$^*$ in the sense of measures. 
	Moreover, there exist constants 
	$C > 0$ and $R_0 \in (0,\,\inj(M))$, depending on $(M,g)$ only, 
	such that the measure $\mu_*$ satisfies
	\begin{equation}\label{eq:monotonicity-measures}
		r^{2-n}\mu_*(\mathcal{B}_r(x_0)) + \frac{C e^{CR^2/2}}{R^{n-2+\alpha}}r^\alpha \mu_*(\mathcal{B}_R(x_0)) 
		\leq R^{2-n}\mu_*(\mathcal{B}_R(x_0))(1 + C e^{C R^2 /2}), 
	\end{equation}
	for every~$x_0\in M$, every~$0 < r < R \leq R_0$
	and some~$\alpha > 0$ depending on~$(M, \, g)$ only. As a consequence, 
	the \emph{$(n-2)$-density} 
	\begin{equation}\label{eq:density}
		\Theta_{n-2}(\mu_*,x_0) := \lim_{r \downarrow 0} r^{2-n} \mu_*(\mathcal{B}_r(x_0))
	\end{equation}
	is defined at every $x_0 \in M$.
\end{lemma}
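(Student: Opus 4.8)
The plan is to derive Lemma~\ref{lemma:existence+density-mu_*} as a direct consequence of the monotonicity formula (Theorem~\ref{thm:monotonicity}), passing to the limit $\eps\to0$ in the rescaled version of the inequality~\eqref{eq:monotonicity}. First I would establish the existence of the limiting measure: the logarithmic energy bound~\eqref{hp:logenergy} gives $\mu_\eps(M) = \frac{1}{\pi\abs{\log\eps}}G_\eps(u_\eps,A_\eps)\leq \frac{\Lambda}{\pi}$, so $\{\mu_\eps\}$ is a uniformly bounded sequence of Radon measures on the compact manifold $M$. By weak$^*$ compactness (Banach--Alaoglu / Riesz representation), there is a non-relabelled subsequence and a bounded Radon measure $\mu_*$ with $\mu_\eps\rightharpoonup^*\mu_*$.

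Next I would rescale the monotonicity formula. Dividing~\eqref{eq:monotonicity} by $\pi\abs{\log\eps}$ and recalling that $E_\eps(x_0,\rho)/(\pi\abs{\log\eps}) = \mu_\eps(\mathcal{B}_\rho(x_0))$ while $X_\eps\geq 0$, one gets, for every $x_0\in M$ and $0<r<R<R_0$,
\[
 \frac{e^{Cr^2}}{r^{n-2}}\mu_\eps(\mathcal{B}_r(x_0))
 \leq \frac{e^{CR^2}}{R^{n-2}}\mu_\eps(\mathcal{B}_R(x_0))
 + \frac{C e^{CR^2}(R^\alpha-r^\alpha)}{R^{n-2+\alpha}}\mu_\eps(\mathcal{B}_R(x_0)).
\]
Now I pass to the limit. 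The standard subtlety is that weak$^*$ convergence of measures controls $\liminf$ on open sets and $\limsup$ on compact sets but not the measure of a fixed ball, whose boundary may carry mass. The clean way around this: fix $x_0$ and note that the function $R\mapsto\mu_*(\overline{\mathcal{B}_R(x_0)})$ is monotone, hence continuous off a countable set; for $R$ outside that exceptional set $\mu_*(\partial\mathcal{B}_R(x_0))=0$, so $\mu_\eps(\mathcal{B}_R(x_0))\to\mu_*(\mathcal{B}_R(x_0))$, while for the left-hand side I use $\liminf_\eps\mu_\eps(\mathcal{B}_r(x_0))\geq\mu_*(\mathcal{B}_r(x_0))$ on the open ball. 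This yields~\eqref{eq:monotonicity-measures} first for all $r$ and all $R$ in a co-countable set; then a limiting argument in $R$ (using monotonicity of $R\mapsto\mu_*(\mathcal{B}_R(x_0))$ and right-continuity, together with the continuity in $R$ of the coefficients $e^{CR^2}$, $R^{2-n}$, $R^{\alpha-n+2}$) removes the restriction, giving the stated inequality for every $0<r<R\leq R_0$ after a harmless relabelling of the constant (absorbing $e^{Cr^2}\geq1$ on the left, $e^{CR^2/2}$ vs.\ $e^{CR^2}$ — here one just takes $C$ large enough so the stated form holds).

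Finally, for the density: from~\eqref{eq:monotonicity-measures}, dropping the nonnegative second term on the left, the function
\[
 \phi(r) := r^{2-n}\mu_*(\mathcal{B}_r(x_0))\bigl(1 + C e^{Cr^2/2}\bigr) + (\textrm{correction})
\]
is essentially monotone — more precisely, $r\mapsto e^{Cr^2/2}r^{2-n}\mu_*(\mathcal{B}_r(x_0))$ plus an absolutely continuous error of the form $\int_0^r c\,\rho^{\alpha-1}\,\d\rho\cdot(\textrm{bounded})$ is nondecreasing, exactly as in the proof of Theorem~\ref{thm:monotonicity} itself (Remark~\ref{rk:improved-monotonicity} style). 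Since $\mu_*$ is finite, $R^{2-n}\mu_*(\mathcal{B}_R(x_0))$ is bounded as $R\to R_0$, so this monotone-plus-vanishing-error quantity has a limit as $r\downarrow0$; subtracting off the error term $O(r^\alpha)\to0$ and the factor $1+Ce^{Cr^2/2}\to 1+C$ shows $\lim_{r\downarrow0}r^{2-n}\mu_*(\mathcal{B}_r(x_0))$ exists (and is finite) at every $x_0\in M$, which is~\eqref{eq:density}. The main obstacle is the technical care needed in the limit passage — handling the possibly positive $\mu_*$-measure of sphere boundaries and making sure the error term's dependence on $R$ behaves under the limit — but this is a routine, if slightly delicate, weak$^*$-convergence argument, and no new analytic input beyond Theorem~\ref{thm:monotonicity} and the energy bound~\eqref{hp:logenergy} is required.
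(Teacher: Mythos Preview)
Your proposal is correct and takes essentially the same approach as the paper: pass the rescaled monotonicity formula (Theorem~\ref{thm:monotonicity}) through the weak$^*$ limit, handling the possible mass on spheres by a standard device (the paper bounds the right-hand side via closed balls and then approximates $R$ from below, while you restrict first to the co-countable set of continuity radii and then extend --- both work), and read off the density from the resulting almost-monotonicity. The paper dismisses the density step as a ``straightforward consequence'' of~\eqref{eq:monotonicity-measures}, whereas you are (rightly) a bit more explicit in invoking the monotone-plus-$O(r^\alpha)$ structure coming from~\eqref{eq:mon-comp-3}.
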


\begin{proof}
	Up to small modifications, the proof is as in \cite[Section~6]{PigatiStern}. 
	The existence of $\mu_*$ is standard, because the uniform bound 
	$\frac{G_\eps(u_\eps,\,A_\eps)}{\abs{\log\eps}} \leq \Lambda$ immediately 
	yields a uniform bound on $\norm{\mu_\eps}_{(C^0(M))^*}$. The existence of the limit in the right 
	hand side of \eqref{eq:density} is a straightforward consequence 
	of~\eqref{eq:monotonicity-measures}. Hence, it suffices to prove~\eqref{eq:monotonicity-measures}.
	To the last purpose, fix arbitrarily $x_0 \in M$. Let $C > 0$ and 
	let $R_0 \in (0, \inj(M))$ be the same constants (depending on $(M,g)$ only) 
	as in Theorem~\ref{thm:monotonicity}. 
	Then, by the definition on $\mu_\eps$ and Theorem~\ref{thm:monotonicity},
	\[
	\begin{split}
		R^{2-n}\mu_*(\overline{\mathcal{B}_R(x_0)})(1 + C e^{C R^2 /2}) &\geq 
		\limsup_{\eps \downarrow 0} R^{2-n}\mu_\eps(\overline{\mathcal{B}_R(x_0)})(1 + C e^{C R^2 /2}) \\
		&= \limsup_{\eps \downarrow 0} R^{2-n}\mu_\eps(\mathcal{B}_R(x_0))(1 + C e^{C R^2 /2}) \\
		&\geq \liminf_{\eps \downarrow 0} R^{2-n}\mu_\eps(\mathcal{B}_R(x_0))(1 + C e^{C R^2 /2}) \\
		&\geq \liminf_{\eps \downarrow 0} \left\{ r^{2-n} \mu_\eps(\mathcal{B}_r(x_0)) + \frac{C e^{CR^2/2}}{R^{n-2+\alpha}} r^\alpha \mu_\eps(\mathcal{B}_R(x_0))\right\} \\
		&\geq r^{2-n} \mu_*(\mathcal{B}_r(x_0)) + \frac{C e^{CR^2/2}}{R^{n-2+\alpha}} r^\alpha \mu_*(\mathcal{B}_R(x_0)),
	\end{split}
	\]
	for any $0 < r < R < R_0$. Then, \eqref{eq:monotonicity-measures} follows by 
	approximating $R$ from below. Approximating $R_0$ with an increasing sequence 
	$\{R_k\}$, we see that \eqref{eq:monotonicity-measures} holds also 
	for $R = R_0$. This concludes the proof of the lemma.
\end{proof}

Lemma~\ref{lemma:out-of-support} below is the counterpart 
of~\cite[Proposition~VIII.1]{BethuelBrezisOrlandi} and it follows from 
Proposition~\ref{prop:small-ball-small-energy} exactly 
as~\cite[Proposition~VIII.1]{BethuelBrezisOrlandi} 
follows from \cite[Proposition~VII.1]{BethuelBrezisOrlandi}.
\begin{lemma}\label{lemma:out-of-support}
	There exists a constant $\eta_0 > 0$, depending on $\Lambda$ and
	$(M,g)$ only, such that, if 
	\[
		\mu_*(\mathcal{B}_R(x_0)) < \eta_0 R^{n-2},
	\] 
	then
	\[
		\mu_*(\mathcal{B}_{R/2}(x_0)) = 0,
	\]
	i.e., $\mathcal{B}_{R/2}(x_0) \subset M \setminus \spt \mu_*$.
\end{lemma}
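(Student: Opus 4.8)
The plan is to run a standard covering/iteration argument that transfers the clearing-out property of Proposition~\ref{prop:small-ball-small-energy} from the $\eps$-level to the limit measure $\mu_*$. First I would fix $x_0\in M$ and $R\in(0,R_*)$ with $\mu_*(\mathcal B_R(x_0))<\eta_0 R^{n-2}$, where $\eta_0$ is the constant of Proposition~\ref{prop:small-ball-small-energy} (shrunk if necessary, depending also on $\Lambda$). The key elementary observation is that, by weak$^*$ convergence $\mu_\eps\rightharpoonup^*\mu_*$ and the fact that $\mu_*(\partial\mathcal B_\rho(x))=0$ for all but countably many $\rho$, for every $x$ in a neighbourhood of $\mathcal B_{R/2}(x_0)$ and every sufficiently small $\rho$ one has $\mu_\eps(\mathcal B_\rho(x))\le 2\mu_*(\mathcal B_\rho(x))\le 2\eta_0\rho^{n-2}$ for $\eps$ small; unwinding the definition~\eqref{eq:mu-eps} of $\mu_\eps$, this reads $E_\eps(x,\rho)\le 2\pi\eta_0\,\rho^{n-2}|\log\eps|$, which (for $\eps$ small compared to $\rho$, so that $\log(\rho/\eps)\ge\frac12|\log\eps|$) gives exactly the smallness hypothesis~\eqref{eq:small-resclaed-energy} of Proposition~\ref{prop:small-ball-small-energy} with a renamed small constant. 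Hence $|u_\eps|\ge\frac12$ on $\mathcal B_{3\rho/4}(x)$ and, crucially, $\sup_{\eps>0}E_\eps(x,\rho/2)<+\infty$.

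Next I would upgrade this to a uniform energy bound on a fixed ball. Cover $\overline{\mathcal B_{R/2}(x_0)}$ by finitely many balls $\mathcal B_{\rho/2}(x_i)$ of the above type, with the $\mathcal B_\rho(x_i)$ all contained in $\mathcal B_{3R/4}(x_0)$ (possible since the radii can be taken comparable to $R$); summing the bounds $\sup_\eps E_\eps(x_i,\rho/2)<+\infty$ yields $\sup_\eps E_\eps(x_0,R/2)<+\infty$, i.e. $\sup_\eps\int_{\mathcal B_{R/2}(x_0)}e_\eps(u_\eps,A_\eps)\,\vol_g<+\infty$. Dividing by $\pi|\log\eps|$ and passing to the limit, we get $\mu_*(\mathcal B_{R/2}(x_0))\le\liminf_\eps \mu_\eps(\mathcal B_{R/2}(x_0))=\liminf_\eps\frac{1}{\pi|\log\eps|}E_\eps(x_0,R/2)=0$, which is the assertion. (Equivalently, and this is the phrasing in~\cite[Proposition~VIII.1]{BethuelBrezisOrlandi}: one shows $\spt\mu_*\cap\mathcal B_{R/2}(x_0)=\varnothing$ because near every such point the density of the concentrating part vanishes.)

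The main technical point to be careful about — the real obstacle — is the bookkeeping that lets one apply Proposition~\ref{prop:small-ball-small-energy} \emph{at many centres $x$ near $x_0$ simultaneously} rather than only at $x_0$: one needs the rescaled-energy smallness $\mu_*(\mathcal B_\rho(x))<\eta_0\rho^{n-2}$ to propagate from $x_0$ to nearby centres. This is where the monotonicity for $\mu_*$ proved in Lemma~\ref{lemma:existence+density-mu_*}, Equation~\eqref{eq:monotonicity-measures}, does the work: from $\mu_*(\mathcal B_R(x_0))<\eta_0R^{n-2}$ and the two-sided control of $r^{2-n}\mu_*(\mathcal B_r(x))$ in terms of $r^{2-n}\mu_*(\mathcal B_{R'}(x))$ for $x\in\mathcal B_{R/4}(x_0)$ and suitable $R'$, one deduces $\mu_*(\mathcal B_\rho(x))\lesssim\eta_0\rho^{n-2}$ uniformly in such $x$ and $\rho\le cR$, with the implicit constant absolute; choosing $\eta_0$ small enough absorbs this constant. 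Once this uniform smallness is in hand, the covering argument and the limit passage above are routine, and the lemma follows exactly as~\cite[Proposition~VIII.1]{BethuelBrezisOrlandi} follows from~\cite[Proposition~VII.1]{BethuelBrezisOrlandi}.
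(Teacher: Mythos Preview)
Your proposal is correct and follows exactly the route the paper indicates (namely, the argument of~\cite[Proposition~VIII.1]{BethuelBrezisOrlandi}): transfer the smallness of~$\mu_*$ to~$E_\eps$ via weak$^*$ convergence, apply Proposition~\ref{prop:small-ball-small-energy}, and divide by~$\abs{\log\eps}$. One small simplification: the covering at many centres is not really needed, since a single application of Proposition~\ref{prop:small-ball-small-energy} at~$x_0$ with a radius~$R'\in(0,R)$ close to~$R$ already yields~$\sup_\eps E_\eps(x_0,R'/2)<+\infty$, hence~$\mu_*(\mathcal{B}_{R'/2}(x_0))=0$, and letting~$R'\nearrow R$ gives the claim.
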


\begin{lemma}\label{lemma:mu_*}
	There exist a constant $C_* := C_*(M,\,\Lambda) > 0$ and a number 
	$R_* \in (0, \inj(M))$, depending on $(M,g)$ only, such that 
	\begin{equation}\label{eq:control-rescaled-mu}
		\textrm{for any } x_0 \in \spt \mu_* 
		\textrm{ and } r \in (0,\,R_*], \quad C_*^{-1} \leq 
		r^{2-n} \mu_*(\mathcal{B}_r(x_0)) \leq C_*,
	\end{equation}
	whence
	\begin{equation}\label{eq:control-density}
		\textrm{for any } x_0 \in \spt \mu_*,
		\quad C_*^{-1} \leq \Theta_{n-2}(\mu,x_0) \leq C_*.
	\end{equation}
\end{lemma}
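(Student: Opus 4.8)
The plan is to read off both inequalities from facts already established in this section: the monotonicity property~\eqref{eq:monotonicity-measures} of $\mu_*$ will give the upper bound at every point of $M$, while the ``out of support'' Lemma~\ref{lemma:out-of-support}, combined with the definition of $\spt\mu_*$, will give the lower bound at points of the support.

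For the upper bound I would fix $R_0\in(0,\inj(M))$ as in Lemma~\ref{lemma:existence+density-mu_*} and first note that the logarithmic energy bound~\eqref{hp:logenergy} forces $\mu_\eps(M)=\frac{G_\eps(u_\eps,A_\eps)}{\pi\abs{\log\eps}}\le\frac{\Lambda}{\pi}$ for every $\eps$, hence $\mu_*(M)\le\frac{\Lambda}{\pi}$ by weak$^*$ convergence. Then, taking $R=R_0$ in~\eqref{eq:monotonicity-measures} and dropping the nonnegative second term on the left, one obtains
\[
 r^{2-n}\mu_*(\mathcal{B}_r(x_0))\le R_0^{2-n}\mu_*(\mathcal{B}_{R_0}(x_0))\bigl(1+Ce^{CR_0^2/2}\bigr)\le \frac{\Lambda}{\pi}\,R_0^{2-n}\bigl(1+Ce^{CR_0^2/2}\bigr)=:C_{\mathrm{up}}
\]
for every $x_0\in M$ and every $0<r\le R_0$, which is the right-hand inequality in~\eqref{eq:control-rescaled-mu} (in fact valid at every point of $M$, not just on $\spt\mu_*$).

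For the lower bound, let $\eta_0>0$ and $R_*\in(0,\inj(M))$ be the constants produced by Lemma~\ref{lemma:out-of-support} (through Proposition~\ref{prop:small-ball-small-energy}); after replacing $R_*$ by $\min\{R_*,R_0\}$ I may assume $R_*\le R_0$. Fix $x_0\in\spt\mu_*$ and $r\in(0,R_*]$. By definition of the support one has $\mu_*(\mathcal{B}_{r/2}(x_0))>0$, so $\mathcal{B}_{r/2}(x_0)$ is \emph{not} contained in $M\setminus\spt\mu_*$; the contrapositive of Lemma~\ref{lemma:out-of-support} then yields $\mu_*(\mathcal{B}_r(x_0))\ge\eta_0\,r^{n-2}$, i.e. $r^{2-n}\mu_*(\mathcal{B}_r(x_0))\ge\eta_0$. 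Setting $C_*:=\max\{C_{\mathrm{up}},\eta_0^{-1},1\}$, which depends only on $M$, $g$ and $\Lambda$, establishes~\eqref{eq:control-rescaled-mu}; and~\eqref{eq:control-density} follows immediately, since by Lemma~\ref{lemma:existence+density-mu_*} the density $\Theta_{n-2}(\mu_*,x_0)=\lim_{r\downarrow0}r^{2-n}\mu_*(\mathcal{B}_r(x_0))$ exists and every term of this limit already lies in $[C_*^{-1},C_*]$.

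I do not expect a genuine obstacle here: all the hard work is hidden in the clearing-out analysis of Section~\ref{sec:clearing-out} (which underlies Lemma~\ref{lemma:out-of-support}) and in the monotonicity formula, and the present lemma merely repackages them in the form needed to check the density hypothesis of the rectifiability criterion, Theorem~\ref{thm:rect-gen-varifolds}. The only point that needs a word of care is the standard one of open versus closed balls in~\eqref{eq:monotonicity-measures} and in the definition of the density, which is harmless because $\mu_*(\mathcal{B}_r(x_0))=\mu_*(\overline{\mathcal{B}_r(x_0)})$ for all but countably many $r$.
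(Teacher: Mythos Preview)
Your proof is correct and follows essentially the same approach as the paper's: the upper bound comes from the monotonicity inequality~\eqref{eq:monotonicity-measures} combined with the total mass bound $\mu_*(M)\le\Lambda/\pi$, and the lower bound from Lemma~\ref{lemma:out-of-support}. The only cosmetic difference is that the paper phrases the lower bound as a proof by contradiction while you state it directly via the contrapositive; your version is in fact slightly more explicit about the constants.
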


\begin{proof}
	The upper bound $r^{2-n} \mu_*(B_r(x_0)) \leq \widetilde{C}$, for 
	some $\widetilde{C} > 0$ depending on $(M,g)$ only, follows 
	from~\eqref{eq:monotonicity-measures} and the obvious inequality 
	$\mu_*(\mathcal{B}_r(x_0)) \leq \mu_*(M)$. 
	
	The lower bound is a straightforward 
	consequence of Lemma~\ref{lemma:out-of-support}. Indeed, were false, 
	for every choice of $c_* > 0$, we could find $x_0 \in \spt \mu_*$ and 
	$r_0 \in (0,R_*]$ such that 
	$\mu_*(\mathcal{B}_{r_0}(x_0)) < c_* r_0^{n-2}$. 
	By choosing $c_* < \eta_0$, where $\eta_0$ is the constant (depending 
	on 
	$(M,g)$ only) of Lemma~\ref{lemma:out-of-support}, 
	we would deduce 
	$x_0 \in M \setminus \spt \mu_*$, a contradiction. Then, 
	\eqref{eq:control-rescaled-mu} follows 
	by letting $C_* := \max\left\{ \widetilde{C}, \eta_0^{-1} \right\}$. 
	In turn, \eqref{eq:control-density} follows immediately 
	from~\eqref{eq:control-rescaled-mu} and~\eqref{eq:density}.
\end{proof}

We are now in a position to prove the following lemma, which is the counterpart 
of \cite[Lemma~6.3]{PigatiStern} and, as in \cite{PigatiStern}, is the key 
missing tool in the proof of Theorem~\ref{thm:rectifiability}.
\begin{lemma}\label{lemma:T_*}
	Let $\{T_\eps\}$ be the rescaled stress-energy tensor fields,
	defined by~\eqref{stresstensor}, of a sequence 
	$\{(u_\eps,\,A_\eps)\}\subset \mathcal{E}$ of critical points of $G_\eps$
	satisfying the logarithmic energy bound~\eqref{hp:logenergy}. Then, 
	there exists a bounded $\EndSym(M)$-valued Radon measure $T_*$ such that,  
	up to extraction of a subsequence, $T_\eps \rightharpoonup^* T_*$, 
	weakly$^*$ as $\EndSym(M)$-valued measures. Moreover, the following 
	statements hold.
	\begin{enumerate}[($i$)]
		\item $T_*$ is divergence-free, i.e., $\ip{T_*}{\nabla X} = 0$ for 
		all vector fields $X \in C^1(M,\,\T M)$ (here, $\nabla$ denotes the 
		Levi Civita connection of $M$).
		\item For every nonnegative $\varphi \in C^0(M)$, there holds 
		$\ip{T_*}{\varphi g} \geq (n-2) \ip{\mu_*}{\varphi}$
		\item For all $X \in C^0(M,\,\T M)$, there holds
		\[
			-\int_M \abs{X}^2 \,\d\mu_* \leq \ip{T_*}{X \otimes X} 
			\leq \int_M \abs{X}^2 \,\d\mu_*.
		\]
	\end{enumerate}	 
\end{lemma}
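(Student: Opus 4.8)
The plan is to establish the weak$^*$ convergence of $\{T_\eps\}$ first, then prove the three structural properties by passing to the limit in pointwise/integral inequalities that hold $\eps$-by-$\eps$.

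\emph{Existence of $T_*$.} Each $T_\eps$ is an $\EndSym(\T M)$-valued Radon measure, and by the very definition~\eqref{stresstensor} its total variation is controlled by $\abs{\log\eps}^{-1}(e_\eps g + \D_A u^*\D_A u + F_A^* F_A)\vol_g$, which is bounded by a constant multiple of $\mu_\eps$ (the tensors $\D_A u^*\D_A u$ and $F_A^* F_A$ are nonnegative symmetric and their traces are $\abs{\D_{A_\eps} u_\eps}^2$ and $2\abs{F_\eps}^2$, both dominated by $2e_\eps$). Since $\mu_\eps \rightharpoonup^* \mu_*$ with $\mu_*(M) < +\infty$ by Lemma~\ref{lemma:existence+density-mu_*}, the sequence $\{\abs{T_\eps}\}$ is uniformly bounded in mass, so by weak$^*$ compactness of bounded Radon measures (applied componentwise in a fixed orthonormal frame, or more invariantly to the $\EndSym(\T M)$-valued measures) there is a subsequence with $T_\eps \rightharpoonup^* T_*$ for some bounded $\EndSym(\T M)$-valued measure $T_*$. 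Moreover $\abs{T_*} \leq C\mu_*$ as a consequence of lower semicontinuity of total variation under weak$^*$ convergence combined with $\abs{T_\eps} \leq C\mu_\eps$.

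\emph{Property (i).} For each $\eps$ the tensor $T_\eps$ is divergence-free in the distributional sense: this is the inner-variation (stationarity) identity for critical points, established by the same computation as in~\cite[Section~4]{PigatiStern} and already invoked in the paragraph preceding Theorem~\ref{thm:rectifiability}. Concretely, $\int_M \ip{T_\eps}{\nabla X}\,\vol_g = 0$ for all $X \in C^1(M,\,\T M)$. Since $X \mapsto \int_M \ip{T_\eps}{\nabla X}\,\vol_g$ is, for fixed $X$, a continuous linear functional that is tested against the measures $T_\eps$ via the fixed continuous section $\nabla X$, weak$^*$ convergence gives $\int_M \ip{T_*}{\nabla X} = \lim_\eps \int_M \ip{T_\eps}{\nabla X}\,\vol_g = 0$. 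This is the assertion of (i).

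\emph{Properties (ii) and (iii).} Both follow by testing against nonnegative continuous functions and passing to the limit. For (ii), fix $\varphi \in C^0(M)$ with $\varphi \geq 0$. Pointwise, $\ip{T_\eps}{\varphi g} = \abs{\log\eps}^{-1}\varphi\,(n\, e_\eps - \abs{\D_{A_\eps} u_\eps}^2 - 2\abs{F_\eps}^2)$; using $e_\eps = \tfrac12\abs{\D_{A_\eps} u_\eps}^2 + \tfrac12\abs{F_\eps}^2 + \tfrac{1}{4\eps^2}(1-\abs{u_\eps}^2)^2$ one checks that $n\,e_\eps - \abs{\D_{A_\eps} u_\eps}^2 - 2\abs{F_\eps}^2 \geq (n-2)e_\eps$ precisely because the potential term and the remaining $\tfrac{n-2}{2}\abs{\D_{A_\eps} u_\eps}^2$, $\tfrac{n-4}{2}\abs{F_\eps}^2$ contributions (the last is nonnegative only for $n\geq 4$ — for $n=3$ one absorbs the $-\tfrac12\abs{F_\eps}^2$ into Lemma~\ref{lemma:curvature}'s vanishing curvature energy, i.e.\ it contributes zero in the limit). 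Thus $\ip{T_\eps}{\varphi g} \geq (n-2)\pi^{-1}\abs{\log\eps}^{-1}\varphi\, e_\eps + (\text{term}\to 0)$, and integrating and passing to the weak$^*$ limit gives $\ip{T_*}{\varphi g} \geq (n-2)\ip{\mu_*}{\varphi}$. For (iii), fix $X \in C^0(M,\,\T M)$; pointwise $\ip{T_\eps}{X\otimes X} = \abs{\log\eps}^{-1}\big(e_\eps\abs{X}^2 - \abs{\D_{A_\eps,X} u_\eps}^2 - (F_\eps^* F_\eps)(X,X)\big)$, and since $0 \leq \abs{\D_{A_\eps,X} u_\eps}^2 + (F_\eps^* F_\eps)(X,X) \leq 2 e_\eps \abs{X}^2$ (Cauchy--Schwarz plus the elementary bound $\abs{{\rm i}_X F_\eps}^2 \leq \abs{F_\eps}^2 \abs{X}^2$), we get $-e_\eps\abs{X}^2 \leq \abs{\log\eps}\ip{T_\eps}{X\otimes X} \leq e_\eps\abs{X}^2$ pointwise. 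Multiplying by $\pi^{-1}\abs{\log\eps}^{-1}$, integrating, and using $\mu_\eps \rightharpoonup^* \mu_*$ together with the weak$^*$ convergence $T_\eps \rightharpoonup^* T_*$ yields $-\int_M \abs{X}^2 d\mu_* \leq \ip{T_*}{X\otimes X} \leq \int_M \abs{X}^2 d\mu_*$.

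\emph{Main obstacle.} The delicate point is not the passage to the limit per se but ensuring the pointwise algebraic inequalities used in (ii) have no sign defect in low dimension: for $n=3$ the term $\tfrac{n-4}{2}\abs{F_\eps}^2 = -\tfrac12\abs{F_\eps}^2$ is negative, so one cannot conclude $\ip{T_\eps}{\varphi g}\geq (n-2)\varphi e_\eps$ pointwise. The remedy is to absorb $\abs{\log\eps}^{-1}\int_M \varphi\abs{F_\eps}^2\,\vol_g$, which vanishes as $\eps\to 0$ by Lemma~\ref{lemma:curvature}, into an error term that disappears in the limit — so (ii) holds with the stated constant for all $n\geq 3$. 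One must also be slightly careful that weak$^*$ convergence of $T_\eps$ as an $\EndSym(\T M)$-valued measure is compatible with testing against the continuous sections $\varphi g$ and $X\otimes X$; this is immediate once $T_*$ is defined as the weak$^*$ limit in the dual of $C^0(M,\,\EndSym(\T M))$.
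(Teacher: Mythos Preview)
Your proof is correct and follows essentially the same route as the paper: weak$^*$ compactness from the uniform mass bound, stationarity passed to the limit, the trace computation for~(ii) with the negative curvature contribution killed by Lemma~\ref{lemma:curvature}, and the pointwise Cauchy--Schwarz bounds for~(iii) (which the paper simply delegates to~\cite[Lemma~6.3]{PigatiStern}). One small cleanup: your dimensional case split in~(ii) is unnecessary and slightly misleading --- the pointwise inequality $n e_\eps - \abs{\D_{A_\eps} u_\eps}^2 - 2\abs{F_\eps}^2 \geq (n-2)e_\eps$ fails in \emph{every} dimension (it is equivalent to $\tfrac{1}{2\eps^2}(1-\abs{u_\eps}^2)^2 \geq \abs{F_\eps}^2$), so the paper simply writes the exact identity and invokes Lemma~\ref{lemma:curvature} once, for all~$n\geq 3$.
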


\begin{proof}
	The existence of $T_*$ is standard, as the uniform bound 
	$\frac{G_\eps(u_\eps,A_\eps)}{\abs{\log\eps}} \leq \Lambda$ immediately 
	yields a uniform bound on $\norm{T_\eps}_{(C^0(M))^*}$.
	Next, since each $T_\eps$ is divergence-free (because $(u_\eps,\,A_\eps)$ is a critical 
	point of $G_\eps$) and this property clearly passes to weak$^*$-limits in the sense of 
	measures, $T_*$ is divergence-free as well. This proves ($i$). 
	The proof of ($iii$) proceeds 
	exactly as in 
	\cite[Lemma~6.3]{PigatiStern}. 
	Concerning ($ii$), take any $\varphi \in C^0(M)$ such that $\varphi \geq 0$. Then,
	\[
	\begin{split}
		\int_M \ip{T_\eps}{\varphi g} \,\vol_g &= 
		\frac{1}{\abs{\log\eps}}\int_M \varphi(n e_\eps(u_\eps,A_\eps) - \abs{\D_{A_\eps}}^2 - 2 \abs{F_{A_\eps}}^2) \, \vol_g \\
		&= \frac{1}{\abs{\log\eps}}\int_M (n-2) \varphi \, e_\eps(u_\eps,A_\eps) \,\vol_g \\
		&\quad+ \frac{1}{\abs{\log\eps}} \int_M \varphi \left\{ \frac{1}{2\eps^2}\left(1-\abs{u_\eps}^2 \right)^2 - \abs{F_{A_\eps}}^2 \right\}\,\vol_g \\
		&\geq (n-2)\ip{\mu_\eps}{\varphi} - \frac{1}{\abs{\log\eps}}\int_M \abs{F_{A_\eps}}^2 \, \vol_g.
	\end{split}	
	\]
	By Lemma~\ref{lemma:curvature}, 
	$\frac{1}{\abs{\log\eps}} \int_M \abs{F_{A_\eps}}^2 \, \vol_g \to 0$ 
	as $\eps \to 0$, and (2) follows.
\end{proof}

\begin{proof}[Proof of Theorem~\ref{thm:rectifiability}]
	The existence of $\mu_*$ and of its $(n-2)$-density 
	$\Theta(\mu_*,x_0)$ 
	at every $x_0 \in M$ follow by Lemma~\ref{lemma:existence+density-mu_*}.
	Statement~($iii$) in Lemma~\ref{lemma:T_*}
	implies that the measure~$\abs{T_*}$ is absolutely continuous
	with respect to~$\mu_*$. 
	Hence, by the Radon-Nikodym theorem, we can represent $T_*$ by means of a 
	$L^1(\mu_*)$-section $P\colon M \to \EndSym(\T M)$; i.e., 
	for all $S \in C^0(M,\,\EndSym(\T M))$ we can write
	\[ 
		\ip{T_*}{S} = \int_M \ip{P(x)}{S(x)} \,\d \mu_*(x).
	\]
	Still by Lemma~\ref{lemma:T_*}, we infer that $-g \leq P(x) \leq g$ 
	(as bilinear forms) and ${\rm Tr}(P(x)) \geq n-2$ for $\mu_*$-a.e. $x \in M$. 
	This means that $T_*$ defines a generalised $(n-2)$-varifold 
	in $M$,
	with weight measure $\mu_*$. 
	Stationarity of such a varifold follows from ($i$) of Lemma~\ref{lemma:T_*} 
	and rectifiability follows from Theorem~\ref{thm:rect-gen-varifolds},
	which we can apply thanks to~\eqref{eq:control-density}. 
	In particular, $\spt \mu_*$ is $(n-2)$-rectifiable, $\mu_*$ coincides with $\theta \mathcal{H}^{n-2} \mres \spt\mu_*$, where 
	$\theta(x) := \Theta_{n-2}(\mu_*,x)$,
	and $P(x)$ is given, at $\mu_*$-a.e. $x \in M$, by the 
	orthogonal projection onto the $(n-2)$-subspace 
	$\T_x \spt \mu_* \subset \T_x M$. This proves the theorem.
\end{proof}

Finally, we record here a further 
property of the limit measure~$\mu_*$.
Lemma~\ref{lemma:curvature} implies that~$\mu_*$
can be characterised in terms of~$u_\eps$ only, 
\emph{so long as} we choose a convenient gauge.

\begin{prop} \label{prop:samelimit}
 Let~$\{(u_\eps, \, A_\eps)\}_{\eps>0}$ be a
 sequence of solutions of~\eqref{EL-u}--\eqref{EL-A}
 that satisfies~\eqref{hp:logenergy}. 
 Assume that~$A_\eps$ is in the form~\eqref{globalCoulomb}.
 Then,
 \[
  \mu_* = \lim_{\eps\to 0} \frac{1}{\abs{\log\eps}}
  \left( \frac{1}{2}\abs{\D_0 u_\eps}^2
   + \frac{1}{4\eps^2}\left(1 - \abs{u_\eps}^2\right)^2\right)
 \]
 (the limit being taken weakly$^*$ in the sense of measures).
\end{prop}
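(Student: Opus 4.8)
The plan is to show that replacing $e_\eps(u_\eps,\,A_\eps)$ by the ``non-magnetic'' density $\tfrac12\abs{\D_0 u_\eps}^2 + \tfrac{1}{4\eps^2}(1-\abs{u_\eps}^2)^2$ produces the same limiting measure, because the difference between the two densities, once divided by $\abs{\log\eps}$, tends to zero in $L^1(M)$. First I would expand the gauge-covariant kinetic term: since $\D_{A_\eps} u_\eps = \D_0 u_\eps - iA_\eps u_\eps$, we have pointwise
\[
 \abs{\D_{A_\eps} u_\eps}^2 = \abs{\D_0 u_\eps}^2 - 2\ip{\D_0 u_\eps}{iA_\eps u_\eps} + \abs{A_\eps}^2\abs{u_\eps}^2,
\]
so that
\[
 e_\eps(u_\eps,\,A_\eps) - \left(\tfrac12\abs{\D_0 u_\eps}^2 + \tfrac{1}{4\eps^2}(1-\abs{u_\eps}^2)^2\right)
 = \tfrac12\abs{F_\eps}^2 - \ip{\D_0 u_\eps}{iA_\eps u_\eps} + \tfrac12\abs{A_\eps}^2\abs{u_\eps}^2.
\]
The term $\tfrac12\abs{F_\eps}^2$ contributes nothing to the rescaled limit by Lemma~\ref{lemma:curvature}. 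It remains to control the two terms involving $A_\eps$ in $L^1(M)$, divided by $\abs{\log\eps}$.

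Next I would use the Coulomb gauge structure~\eqref{globalCoulomb}. Writing $A_\eps = \d^*\psi_\eps + \zeta_\eps$ with $\psi_\eps$ exact and $\zeta_\eps$ harmonic with $\norm{\zeta_\eps}_{L^\infty(M)} \leq C_M$, the estimates in the proof of Lemma~\ref{lemma:curvature} (see in particular~\eqref{curvature1}, \eqref{curvature3}, \eqref{curvature5}, \eqref{curvature6}) give $\norm{F_\eps}_{L^2(M)} \lesssim \abs{\log\eps}^{\alpha/2}$ with $\alpha = n/(n+2) < 1$, and hence, via the Gaffney inequality applied to the exact form $\d A_\eps = F_\eps - F_0$ together with $\norm{\zeta_\eps}_{L^\infty}\lesssim 1$, that
\[
 \norm{A_\eps}_{W^{1,2}(M)} \lesssim \abs{\log\eps}^{\alpha/2} + 1.
\]
By Sobolev embedding this yields $\norm{A_\eps}_{L^{2^*}(M)} \lesssim \abs{\log\eps}^{\alpha/2}$ for $2^* = 2n/(n-2)$, and interpolating with the crude $L^\infty$ bound $\norm{A_\eps}_{L^\infty(M)}\lesssim\eps^{-1}$ (which actually we do not even need) one gets strong control. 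Then, using $\norm{u_\eps}_{L^\infty(M)}\leq1$ from~\eqref{maxprinc} and $\norm{\D_0 u_\eps}_{L^2(M)} = \norm{\D_{A_\eps}u_\eps + iA_\eps u_\eps}_{L^2(M)} \lesssim \abs{\log\eps}^{1/2}$, Cauchy–Schwarz gives
\[
 \int_M \abs{\ip{\D_0 u_\eps}{iA_\eps u_\eps}} \,\vol_g
 \leq \norm{\D_0 u_\eps}_{L^2(M)}\,\norm{A_\eps}_{L^2(M)}
 \lesssim \abs{\log\eps}^{1/2}\,\abs{\log\eps}^{\alpha/2} = \abs{\log\eps}^{(1+\alpha)/2},
\]
and since $\alpha < 1$ the exponent $(1+\alpha)/2$ is strictly less than $1$, so after dividing by $\abs{\log\eps}$ this term vanishes as $\eps\to0$. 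Similarly $\int_M \abs{A_\eps}^2\abs{u_\eps}^2 \,\vol_g \leq \norm{A_\eps}_{L^2(M)}^2 \lesssim \abs{\log\eps}^\alpha = o(\abs{\log\eps})$.

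Putting these estimates together, the $L^1(M)$-norm of the difference of the two densities is $o(\abs{\log\eps})$, so for every $\varphi\in C^0(M)$ the quantity $\tfrac{1}{\abs{\log\eps}}\int_M \varphi\bigl(e_\eps(u_\eps,\,A_\eps) - \tfrac12\abs{\D_0 u_\eps}^2 - \tfrac{1}{4\eps^2}(1-\abs{u_\eps}^2)^2\bigr)\,\vol_g$ tends to $0$; combined with Theorem~\ref{th:energy_density} (the weak$^*$ convergence of $\tfrac{e_\eps}{\abs{\log\eps}}\,\vol_g$ to $\pi\mu_*$, noting the normalisation~\eqref{eq:mu-eps}) this gives the claimed identity. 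The only mildly delicate point is bookkeeping the exponents: one must make sure the harmonic part $\zeta_\eps$, whose $L^2$-norm is merely $\mathrm{O}(1)$, and the co-exact part $\d^*\psi_\eps$, whose $L^2$-norm is $\mathrm{O}(\abs{\log\eps}^{\alpha/2})$, both contribute sublogarithmically to the cross term and to $\abs{A_\eps}^2\abs{u_\eps}^2$ — which they do, since $\alpha<1$ and the cross term picks up only one extra power $\abs{\log\eps}^{1/2}$ from $\D_0 u_\eps$. I expect this exponent-chasing to be the main (though routine) obstacle; everything else is a direct consequence of results already established, chiefly Lemma~\ref{lemma:curvature} and the Gaffney/Sobolev estimates recalled in its proof.
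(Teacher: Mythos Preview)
Your proof is correct and follows essentially the same approach as the paper. Both arguments reduce to showing $\norm{A_\eps}_{L^2(M)} = o(\abs{\log\eps}^{1/2})$ via the Coulomb gauge and Lemma~\ref{lemma:curvature}; the paper simply states ``the proposition follows'' after establishing this bound, whereas you spell out the expansion of $\abs{\D_{A_\eps}u_\eps}^2 - \abs{\D_0 u_\eps}^2$ and the Cauchy--Schwarz estimates on the cross terms, making explicit the exponent $(1+\alpha)/2 < 1$ that the paper leaves implicit.
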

\begin{proof}
 By taking the differential in both sides of~\eqref{globalCoulomb},
 we obtain
 \[
  -\Delta \psi_\eps = \d\d^*\psi_\eps = \d A_\eps = F_\eps - F_0 
 \]
 where~$F_0$ is the curvature of the reference connection.
 Moreover, the form~$\psi_\eps$ is exact, hence orthogonal to
 all harmonic~$2$-forms. By elliptic regularity theory,
 it follows that the~$L^2(M)$-norm of~$-\Delta\psi_\eps$
 bounds the~$W^{2,2}(M)$-norm of~$\psi_\eps$ from above 
 (up to a constant factor). We deduce that 
 \[
  \norm{A_\eps}_{W^{1,2}(M)} 
  \leq \norm{\psi_\eps}_{W^{2,2}(M)} + \norm{\xi_\eps}_{W^{1,2}(M)} 
  \lesssim \norm{F_\eps}_{L^2(M)} + \norm{F_0}_{L^2(M)} + C_M
 \]
 (as~$\zeta_\eps$ is a harmonic form, the $W^{1,2}(M)$-norm of~$\zeta_\eps$
 is bounded by the~$L^\infty(M)$-norm, up to a constant factor
 that depends on~$M$ only).
 Then, Lemma~\ref{lemma:curvature} implies that the~$W^{1,2}(M)$-norm
 of~$A_\eps$ is of order lower than~$\abs{\log\eps}^{1/2}$,
 as~$\eps\to 0$. The proposition follows.
\end{proof}


\appendix

\section{Poincar\'e- and trace-type inequalities for differential forms}
\label{app:Poincare}

In the appendix, we collect a few technical results
on differential forms that are certainly part of the folklore.
However, since these results are crucial to our analysis,
we provide detailed proofs, for the convenience of the reader.
The following proposition provides a Poincar\'e-type inequality and a 
trace-type inequality for co-closed $k$-forms normal to the boundary of 
geodesic balls.  
A crucial point is that in both inequalities we make explicit the dependence
on the radius of the ball.
\begin{prop}\label{prop:gaffney-rho}
	Let $M$ be closed Riemannian manifold, $x_0 \in M$, 
	$\mathcal{B}_\rho(x_0)$ the geodesic ball of radius $\rho$ centered at 
	$x_0$, and let $\omega \in W^{1,p}(M,\,\T^*M)$, 
	with $p \in (1,\,+\infty)$ be a $1$-form such that
	\begin{equation}\label{eq:coulomb-omega}
	\begin{cases}
		\d^* \omega = 0 & \mbox{in } \mathcal{B}_\rho(x_0), \\
		{\rm i}_\nu \omega = 0 & \mbox{on } \partial \mathcal{B}_\rho(x_0).
	\end{cases}
	\end{equation}
	Then, 
	\begin{equation}\label{eq:gaffney-rho}
		\norm{\omega}_{L^p(\mathcal{B}_\rho(x_0))} \lesssim 
		\rho \norm{\d \omega}_{L^p(\mathcal{B}_\rho(x_0))},
	\end{equation}
	up to a constant depending only on $M$, $p$, and $g$. Moreover, 
	\begin{equation}\label{eq:trace}
		\norm{\omega}_{L^p(\partial \mathcal{B}_\rho(x_0))} \lesssim 
		\rho^{1-1/p}\norm{\d \omega}_{L^p(\mathcal{B}_\rho(x_0))},
	\end{equation}
	up to a constant depending only on $M$, $p$, and $g$.
\end{prop}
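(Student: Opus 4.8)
The plan is to reduce both inequalities to the model situation of a \emph{fixed} ball, by a scaling argument, and then to establish there a Gaffney-type estimate together with a trace estimate, with constants uniform over a precompact family of metrics. Since for $\rho$ bounded below the estimates are immediate by compactness of $M$, it suffices to treat $\rho \in (0, R_*)$ for a suitable small $R_* = R_*(M,g) \in (0, \inj(M))$. For such $\rho$ and for $x_0 \in M$, consider the diffeomorphism $\Phi_{x_0,\rho}\colon B_1 \to \mathcal{B}_\rho(x_0)$, $\Phi_{x_0,\rho}(y) := \exp_{x_0}(\rho y)$, where $B_1 \subset \R^n$ is the Euclidean unit ball, and set $\tilde g := \rho^{-2}\,\Phi_{x_0,\rho}^* g$ and $\eta := \Phi_{x_0,\rho}^*\omega$. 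Expanding $g$ in geodesic normal coordinates at $x_0$ one checks that $\tilde g(y) = \delta + \mathrm{O}(\rho^2\abs{y}^2)$ with all $y$-derivatives bounded uniformly in $(x_0, \rho)$; hence, by compactness of $M$ and smoothness of $g$, the family $\mathcal{M}$ of all metrics $\tilde g$ arising this way is precompact in $C^k(\overline{B_1})$ for every $k$ and, after shrinking $R_*$, uniformly elliptic. Because $\Phi_{x_0,\rho}$ is an isometry from $(B_1, \Phi_{x_0,\rho}^* g)$ onto $(\mathcal{B}_\rho(x_0), g)$ and $\d^*_{\rho^2\tilde g} = \rho^{-2}\d^*_{\tilde g}$, the conditions~\eqref{eq:coulomb-omega} pull back to $\d^*_{\tilde g}\eta = 0$ in $B_1$ and ${\rm i}_\nu\eta = 0$ on $\partial B_1$; and the elementary behaviour of pointwise norms and volumes under the constant conformal rescaling $\Phi_{x_0,\rho}^* g = \rho^2\tilde g$ gives the identities
\[
 \norm{\omega}_{L^p(\mathcal{B}_\rho(x_0))}^p = \rho^{\,n-p}\,\norm{\eta}_{L^p(B_1,\tilde g)}^p, \qquad
 \norm{\d\omega}_{L^p(\mathcal{B}_\rho(x_0))}^p = \rho^{\,n-2p}\,\norm{\d\eta}_{L^p(B_1,\tilde g)}^p,
\]
\[
 \norm{\omega}_{L^p(\partial\mathcal{B}_\rho(x_0))}^p = \rho^{\,n-1-p}\,\norm{\eta}_{L^p(\partial B_1,\tilde g)}^p .
\]

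On the fixed ball I would then prove, uniformly over $\mathcal{M}$, that there is $C = C(n,p) > 0$ such that for every $h \in \mathcal{M}$ and every $\eta \in W^{1,p}(B_1, \T^*B_1)$ with $\d^*_h\eta = 0$ in $B_1$ and ${\rm i}_\nu\eta = 0$ on $\partial B_1$ one has $\norm{\eta}_{W^{1,p}(B_1,h)} \le C\,\norm{\d\eta}_{L^p(B_1,h)}$ and $\norm{\eta}_{L^p(\partial B_1,h)} \le C\,\norm{\eta}_{W^{1,p}(B_1,h)}$. The first bound follows from the $L^p$ Gaffney inequality on a manifold with boundary (see e.g.~\cite[Theorem~4.8]{IwaniecScottStroffolini}), whose leading constant depends on $h$ only through its ellipticity constants and its $C^1$-norm and is therefore uniform over $\mathcal{M}$, together with the fact that $H^1(B_1) = 0$, so that there is no nonzero harmonic field $\eta$ with $\d\eta = 0$, $\d^*_h\eta = 0$, ${\rm i}_\nu\eta = 0$: this lets one absorb the lower-order term $\norm{\eta}_{L^p(B_1)}$ by the usual compactness--contradiction argument, carried out along a sequence of metrics in $\mathcal{M}$ (which may be taken $C^1$-convergent) and using Rellich's theorem. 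The second bound is the trace embedding $W^{1,p}(B_1) \hookrightarrow L^p(\partial B_1)$, again with constant uniform over $\mathcal{M}$. Composing the two and recalling $\d^*_h\eta = 0$ gives $\norm{\eta}_{L^p(\partial B_1,h)} \le C\,\norm{\d\eta}_{L^p(B_1,h)}$ and, a fortiori, $\norm{\eta}_{L^p(B_1,h)} \le C\,\norm{\d\eta}_{L^p(B_1,h)}$.

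Finally, inserting these inequalities, applied to $(B_1, \tilde g)$ with $\tilde g = \rho^{-2}\Phi_{x_0,\rho}^*g$, into the scaling identities above yields at once $\norm{\omega}_{L^p(\mathcal{B}_\rho(x_0))} \lesssim \rho\,\norm{\d\omega}_{L^p(\mathcal{B}_\rho(x_0))}$ and $\norm{\omega}_{L^p(\partial\mathcal{B}_\rho(x_0))} \lesssim \rho^{\,1-1/p}\,\norm{\d\omega}_{L^p(\mathcal{B}_\rho(x_0))}$, which are~\eqref{eq:gaffney-rho} and~\eqref{eq:trace}; the powers of $\rho$ come out right since $(n-p)-(n-2p) = p$ and $(n-1-p)-(n-2p) = p-1$, and the constants depend only on $n$, $p$, $M$ and $g$. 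The step I expect to be the main obstacle is the uniformity on the fixed ball: one must verify that the constant in the $L^p$ Gaffney inequality on $(B_1,h)$ can be taken independent of $h \in \mathcal{M}$, and in particular that the lower-order term is absorbed uniformly --- equivalently, that the Hodge/Fredholm argument behind Gaffney's inequality goes through uniformly along a convergent sequence of metrics. The rest is routine bookkeeping of scaling factors.
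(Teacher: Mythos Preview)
Your argument is correct and the scaling bookkeeping is right; the route, however, differs from the paper's. The paper also passes to normal coordinates and scales, but instead of keeping the pulled-back metric $\tilde g$ and invoking a \emph{uniform} Gaffney inequality over a precompact family of metrics (plus a compactness--contradiction to absorb the lower-order term), it compares the Riemannian codifferential with the Euclidean one explicitly: on $B_\rho$ in normal coordinates one has
\[
 \d^*\omega = \d^*_0\omega + (g^{ij}-\delta^{ij})\,\partial_i\omega_j + \frac{\partial_i(\sqrt{\det g}\,g^{ij})}{\sqrt{\det g}}\,\omega_j,
\]
and since $g^{ij}-\delta^{ij}=\mathrm{O}(\rho^2)$ and the second coefficient is $\mathrm{O}(\rho)$, the hypothesis $\d^*\omega=0$ yields $\norm{\d^*_0\omega}_{L^p(B_\rho)}\lesssim \rho(\norm{\omega}_{L^p}+\norm{\d\omega}_{L^p})$. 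Plugging this into the Euclidean Gaffney inequality (already scaled) and absorbing for $\rho$ small gives~\eqref{eq:gaffney-rho} directly; the trace inequality~\eqref{eq:trace} follows in the same way from the Euclidean trace inequality combined with Gaffney.

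The trade-off: the paper's perturbative computation is more elementary and makes the smallness condition on $\rho$ completely explicit, but is specific to this situation. Your argument is cleaner conceptually and generalizes more readily (any estimate stable under $C^1$-convergence of metrics transfers), at the cost of having to justify that the Gaffney constant on $(B_1,h)$ is uniform over $\mathcal{M}$ --- which, as you note yourself, is the one step needing care. Your outline of the contradiction argument (using $H^1(B_1)=0$, Rellich, and convergence of $\d^*_{h_j}$ along $h_j\to h_\infty$) is the standard way to do it and is fine.
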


\begin{proof}
	To prove~\eqref{eq:gaffney-rho}, we notice that, if we were to work in a 
	Euclidean ball~$B_\rho(x_0)\subseteq\R^n$ with the Euclidean metric, 
	from the Gaffney-type inequality 
	\cite[Theorem~4.11]{IwaniecScottStroffolini} for any $1$-form 
	$\omega$ satisfying ${\rm i}_\nu \omega = 0$ we would get
	\begin{equation}\label{eq:gaffney-euc}
		\norm{\omega}_{L^p(B_\rho(x_0))} \leq 
		C_p(B_\rho(x_0)) 
		\left(\norm{\d \omega}_{L^p(B_\rho(x_0))}
		+ \norm{\d^*_0 \omega}_{L^p(B_\rho(x_0))} \right),
	\end{equation}
	where $\d^*_0$ denotes the codifferential operator with respect to the 
	Euclidean metric and $C_p(B_\rho(x_0))$ is a constant depending only on 
	$p$ and  
	$B_\rho(x_0)$ (in a way we are now going to make precise).
	By~\eqref{eq:gaffney-euc} with $\rho = 1$ and $x_0 = 0$ 
	and a classical scaling argument, we easily obtain
	\begin{equation}\label{eq:gaffney-euc-bis}
		\norm{\omega}_{L^p(B_\rho(x_0))} \lesssim 
		\rho\left(\norm{\d \omega}_{L^p(B_\rho(x_0))}
		+ \norm{\d^*_0 \omega}_{L^p(B_\rho(x_0))} \right),
	\end{equation}
	up to constant depending only on $n$, which in particular 
	establishes~\eqref{eq:gaffney-euc} in the Euclidean case. We remark 
	explicitly that the absolute values and the integrals involved in the 
	$L^p$-norms in~\eqref{eq:gaffney-euc-bis} are computed with respect to the 
	Euclidean metric and the Euclidean volume form.

	For the general case, we can work in normal coordinates centered at $x_0$. 
	Recall that the image of the geodesic ball 
	$\mathcal{B}_\rho(x_0)$ through the normal coordinates chart is the 
	Euclidean ball $B_\rho$ (centered at the origin). Thus, it is enough 
	to consider the case in which $\mathcal{B}_\rho(x_0) = B_\rho$ is a 
	Euclidean ball centered at the origin endowed with the metric $g$ of
	$M$.
	
	Assume $\omega \in W^{1,p}(M,\,\T^*M)$ is a $1$-form on $M$ 
	satisfying~\eqref{eq:coulomb-omega}.
	Since Gauss' Lemma implies that the unit normal field is the same both 
	when evaluated with respect to $g$ and with respect to the Euclidean 
	metric, \eqref{eq:gaffney-euc} holds for $\omega$. By hypothesis, we have 
	$\d^* \omega = 0$. Writing $\d^* \omega$ in coordinates (employing Einstein 
	notation), we see that
	\[
	\begin{split}
		\d^* \omega &= \frac{1}{\sqrt{\det(g^{ij})}} 
		\partial_i\left( \sqrt{\det g} g^{ij} \omega_j \right) \\
		&= g^{ij}\partial_i \omega_j + 
		\frac{\partial_i (\sqrt{\det g} g^{ij})}{\sqrt{\det g}} \omega_j \\
		&= \d^*_0 \omega + (g^{ij} - \delta^{ij}) \partial_i \omega_j 
		+ \frac{\partial_i(\sqrt{\det g} g^{ij})}{\sqrt{\det g}} \omega_j.
	\end{split}
	\]
	By the standard properties of normal coordinates, we have 
	\begin{equation}\label{eq:gaffney-comp1}
		(g^{ij} - \delta^{ij}) = \mathrm{O}(\rho^2) \qquad \mbox{and} \qquad 
		 \frac{\partial_i(\sqrt{\det g} g^{ij})}{\sqrt{\det g}} 
		 = \mathrm{O}(\rho)
	\end{equation}
	as $\rho \to 0$. Thus, the assumption $\d^* \omega = 0$ along 
	with~\eqref{eq:gaffney-comp1} leads to 
	\begin{equation}\label{eq:gaffney-comp2}
		\norm{\d^*_0 \omega}_{L^p(B_\rho)} \lesssim 
		\rho \left( \norm{\omega}_{L^p(B_\rho)} 
		+ \norm{\d \omega}_{L^p(B_\rho)} \right),
	\end{equation}
	for any $\rho$ sufficiently small and up to constant depending only on $n$, $p$ and $g$.
	In~\eqref{eq:gaffney-comp2}, the norms are again computed with respect to 
	the Euclidean metric and volume form. Choosing $\rho$ sufficiently small, 
	by~\eqref{eq:gaffney-euc} and~\eqref{eq:gaffney-comp2}, we obtain
	\begin{equation}\label{eq:gaffney-comp3}
		\norm{\omega}_{L^p(B_\rho)} \lesssim 
		\rho \norm{\d \omega}_{L^p(B_\rho)},
	\end{equation}
	for any $\rho$ sufficiently small and up to constant depending only on $n$, $p$ and $g$.
	Once more, the norms in~\eqref{eq:gaffney-comp3} are computed with 
	respect to 
	the Euclidean metric and volume form. However, since $M$ is compact, 
	$g$ is controlled from below and from above by the Euclidean metric. 
	Similarly, $\vol_g$ is controlled from 	below and from 	above by the 
	Euclidean volume form, and hence~\eqref{eq:gaffney-rho} follows from 
	\eqref{eq:gaffney-comp3} (up to replacing the implicit constant 
	in~\eqref{eq:gaffney-rho} with a larger one, which however depends still 
	only on $p$, $M$ and $g$. 
	
	The proof of~\eqref{eq:trace} follows a similar pattern. We begin again 
	with a Euclidean ball endowed with the Euclidean metric. Then the 
	inequality  
	\begin{equation}\label{eq:trace-comp1}
		\norm{\omega}_{L^p(\partial B_\rho)} \leq C_p(B_\rho) \left(  
		\norm{\omega}_{L^p(B_\rho)} + \norm{\d \omega}_{L^p(B_\rho)} + 
		\norm{\d^*_0 \omega}_{L^p(B_\rho)} \right)
	\end{equation}
	is proven by following the usual argument for the trace inequality for 
	Sobolev functions and using the Gaffney-type inequality 
	\cite[Theorem~4.8]{IwaniecScottStroffolini}. Then, by scaling, we derive 
	\begin{equation}\label{eq:trace-comp2}
		\norm{\omega}_{L^p(\partial B_\rho)} \lesssim \frac{1}{\rho^{1/p}}  
		\norm{\omega}_{L^p(B_\rho)} + 
		\rho^{1-1/p} \left( \norm{\d \omega}_{L^p(B_\rho)} + 
		\norm{\d^*_0 \omega}_{L^p(B_\rho)} \right),
	\end{equation}
	up to a constant depending only on $n$ and $p$. 
	In view of~\eqref{eq:coulomb-omega} and~\eqref{eq:gaffney-euc-bis}, this establishes~\eqref{eq:trace} 
	in the Euclidean case. For the general case, we obtain~\eqref{eq:trace} 
	from~\eqref{eq:trace-comp2} arguing as in the proof 
	of~\eqref{eq:gaffney-rho}. 
	Since the argument is very similar, we leave the details to the reader.
\end{proof}

\begin{lemma} \label{lemma:Poincare-R}
 There exists a number~$R_* > 0$
 such that, for any~$R\in (0, \, R_*)$,
 any~$x_0\in M$ and any~$k$-form~$\omega\in W^{1,2}(\mathcal{B}_R(x_0), \, \Lambda^k\T^*M)$ with~$\omega = 0$ on~$\partial \mathcal{B}_R(x_0)$,
 there holds
 \begin{equation}\label{eq:Poincare-R}
  \int_{\mathcal{B}_R(x_0)} \abs{\omega}^2 \vol_g
  \lesssim R^2 \int_{\mathcal{B}_R(x_0)} \left(\abs{\d\omega}^2
   + \abs{\d^*\omega}^2\right)\vol_g
 \end{equation}
 where the implicit constant in front of the right-hand side
 is independent of~$R$, $x_0$.
\end{lemma}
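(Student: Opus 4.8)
The plan is to reduce to the Euclidean ball by working in normal geodesic coordinates centred at $x_0$, in the same spirit as the proof of Proposition~\ref{prop:gaffney-rho}. Under the normal coordinate chart, $\mathcal{B}_R(x_0)$ is identified with the Euclidean ball $B_R := B_R(0)\subset\R^n$, and the hypothesis $\omega = 0$ on $\partial\mathcal{B}_R(x_0)$ becomes the vanishing on $\partial B_R$ of every component $\omega_I$ of $\omega = \sum_{|I| = k}\omega_I\,\d x^I$; equivalently, $\omega$ lies in the closure of smooth compactly supported $k$-forms on $B_R$ and each $\omega_I\in W^{1,2}_0(B_R)$. By density it is enough to prove the estimate for such forms and then pass to the limit.

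First I would establish the inequality in the model case of the Euclidean ball $B_R$ with the Euclidean metric, where two elementary facts combine. Since the Hodge Laplacian of $(\R^n,\,\delta)$ acts on the components of a form as the componentwise (negative) scalar Laplacian, integrating by parts twice --- legitimate because $\omega$ is compactly supported, so all boundary terms drop --- gives the identity
\[
 \int_{B_R}\abs{\nabla_0\omega}^2\,\d x
 = \int_{B_R}\left(\abs{\d\omega}^2 + \abs{\d^*_0\omega}^2\right)\d x,
\]
where $\d^*_0$, $\nabla_0$ and the norms are Euclidean. On the other hand, the scalar Poincar\'e inequality applied componentwise to $\omega_I\in W^{1,2}_0(B_R)$ yields $\int_{B_R}\abs{\omega}^2\,\d x \lesssim R^2\int_{B_R}\abs{\nabla_0\omega}^2\,\d x$ with a purely dimensional constant. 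Combining the two gives
\[
 \int_{B_R}\abs{\omega}^2\,\d x \lesssim R^2\int_{B_R}\left(\abs{\d\omega}^2 + \abs{\d^*_0\omega}^2\right)\d x.
\]

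Next I would transfer this to the Riemannian metric $g$. In normal coordinates the pointwise norms $\abs{\,\cdot\,}_g$ and $\abs{\,\cdot\,}_0$ on $k$- and $(k+1)$-forms, together with the volume elements $\vol_g$ and $\d x$, differ by factors $1 + \mathrm{O}(R^2)$, so the corresponding $L^2$-norms of $\omega$ and of $\d\omega$ are comparable up to such factors. The exterior derivative is metric-independent, but the codifferential is not: writing $\d^*_g$ in coordinates exactly as in the proof of Proposition~\ref{prop:gaffney-rho}, one has $\d^*_g\omega = \d^*_0\omega + (g^{ij} - \delta^{ij})\,\partial_i\omega_{j\cdots} + (\text{Christoffel-type terms})\cdot\omega$, and the standard normal-coordinate bounds $g^{ij} - \delta^{ij} = \mathrm{O}(R^2)$, $\Gamma^\ell_{ij} = \mathrm{O}(R)$ give the pointwise estimate $\abs{\d^*_0\omega - \d^*_g\omega}_0 \lesssim R^2\abs{\nabla_0\omega}_0 + R\abs{\omega}_0$. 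Inserting this into the model estimate, and using the Euclidean identity once more to bound $\int_{B_R}\abs{\nabla_0\omega}^2$ by a multiple of $\int_{B_R}(\abs{\d\omega}^2 + \abs{\d^*_g\omega}^2) + R^2\int_{B_R}\abs{\omega}^2$ (after absorbing on the left an $\mathrm{O}(R^4)\int_{B_R}\abs{\nabla_0\omega}^2$ term, which is legitimate for $R$ small), one arrives at
\[
 \int_{B_R}\abs{\omega}^2\,\d x
 \lesssim R^2\int_{B_R}\left(\abs{\d\omega}^2 + \abs{\d^*_g\omega}^2\right)\d x
 + \mathrm{O}(R^4)\int_{B_R}\abs{\omega}^2\,\d x.
\]
For $R$ below a threshold $R_*$ depending only on $M$ and $g$, the last term is absorbed into the left-hand side, and converting the Euclidean norms back into $g$-norms (again at the price of $1 + \mathrm{O}(R^2)$ factors) gives \eqref{eq:Poincare-R}. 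Every constant generated along the way depends only on $n$ and on uniform bounds for the metric and its first two derivatives in normal coordinates, which by compactness of $M$ are independent of $x_0$ and of $R$. The only genuinely delicate point is the bookkeeping of the powers of $R$ in the error terms produced by the change of metric, so that each carries an extra factor of $R$ relative to the leading terms and can be absorbed after shrinking $R_*$; everything else is routine. (Alternatively one may avoid coordinates and argue directly on $\mathcal{B}_R(x_0)$ via the Bochner--Weitzenb\"ock formula with boundary, whose boundary term vanishes under the full Dirichlet condition $\omega|_{\partial\mathcal{B}_R(x_0)} = 0$, combined with the same $R^2$-Poincar\'e inequality; the curvature term is then absorbed for $R$ small in the same manner.)
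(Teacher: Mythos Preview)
Your argument is correct, but it follows a different route from the paper's. The paper extends $\omega$ by zero to a form $\bar\omega\in W^{1,2}(M,\Lambda^k\T^*M)$ (this is legitimate precisely because of the full Dirichlet condition), applies the \emph{global} Gaffney inequality on~$M$ together with the Sobolev embedding $W^{1,2}(M)\hookrightarrow L^{2^*}(M)$, and then recovers the factor~$R$ from H\"older's inequality on the ball; the resulting $R\norm{\bar\omega}_{L^2(M)}$ term is absorbed for $R$ small. By contrast, you work entirely locally, reducing to the Euclidean model via normal coordinates (mirroring the proof of Proposition~\ref{prop:gaffney-rho}), combining the flat Bochner identity $\int\abs{\nabla_0\omega}^2=\int(\abs{\d\omega}^2+\abs{\d^*_0\omega}^2)$ with the scalar $R^2$-Poincar\'e inequality componentwise, and then controlling the $\d^*_g-\d^*_0$ discrepancy via the $O(R^2)$ and $O(R)$ bounds on the metric coefficients and Christoffel symbols. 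Both approaches hinge on the same absorption step for small~$R$. The paper's proof is shorter and coordinate-free but uses the Sobolev exponent $2^*$ (hence implicitly $n\geq 3$, which the paper assumes anyway); your approach is more elementary, avoids Sobolev embeddings, and works in all dimensions, at the cost of the metric-perturbation bookkeeping you flagged.
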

\begin{proof}
 Let~$\bar{\omega}$ be the $k$-form on~$M$
 defined by~$\bar{\omega}(x) := \omega(x)$
 if~$x\in \mathcal{B}_R(x_0)$ and~$\bar{\omega}(x) := 0$
 otherwise. As~$\omega = 0$ on~$\partial \mathcal{B}_R(x_0)$,
 we have~$\bar{\omega}\in W^{1,2}(M, \, \Lambda^k\T^*M)$.
 Let~$p := 2^* = 2n/(n-2)$. By applying Sobolev embeddings
 and the Gaffney inequality (in~$M$, \cite[Proposition~4.10]{Scott}), we have
 \[
  \norm{\bar{\omega}}_{L^p(M)}
  \lesssim \norm{\bar{\omega}}_{W^{1,2}(M)}
  \lesssim \norm{\d\bar{\omega}}_{L^2(M)} 
   + \norm{\d^*\bar{\omega}}_{L^2(M)}
   + \norm{\bar{\omega}}_{L^2(M)}
 \]
 where the implicit constants depend only on~$M$
 (not on~$R$ or~$x_0$). On the other hand,
 the H\"older inequality gives
 \[
  \begin{split}
   \norm{\omega}_{L^2(\mathcal{B}_R(x_0))}
   &\lesssim \left(\vol(\mathcal{B}_R(x_0))\right)^{1/2 - 1/p}
    \norm{\bar{\omega}}_{L^p(M)} \\
   &\lesssim R \norm{\d\bar{\omega}}_{L^2(M)} 
   + R \norm{\d^*\bar{\omega}}_{L^2(M)}
   + R \norm{\bar{\omega}}_{L^2(M)}
  \end{split}
 \]
 If~$R < R_*$ and~$R_*$ is chosen small enough,
 then the last term in the right-hand side
 can be absorbed in the left-hand side,
 and the lemma follows.
\end{proof}

\section{Elliptic estimates for the Hodge Laplacian}
\label{app:elliptic}

In this section, we gather a few regularity estimates
for elliptic problems involving differential forms.
These results, as the ones contained in Appendix~\ref{app:Poincare},
are classical, but we include detailed proofs for the reader's convenience.
We denote by~$\Delta$ the Hodge-Laplace operator
on differential forms, defined by~$-\Delta := \d\d^* + \d^*\d$,
where~$\d$ is the exterior differential and~$\d^*$ the codifferential.

\begin{prop} \label{prop:Green}
 Let~$p > n/2$, let $f\in L^p(M, \, \Lambda^k\T^*M)$
 and let~$\tau\in W^{1,2}(M, \, \Lambda^k\T^*M)$ be such that
 \begin{equation} \label{Green-equation}
  \begin{cases}
   -\Delta\tau = f - H(f)
    & \textrm{in the sense of distributions in } M\\[5pt]
   \displaystyle\int_{M}\ip{\tau}{\xi} \vol_g = 0
    & \textrm{for any harmonic~$k$-form } \xi.
  \end{cases}
 \end{equation}
 Then, $\tau$ is continuous and, for any~$x\in M$,
 there holds
 \begin{equation} \label{Green}
  \abs{\tau(x)} 
  \lesssim \int_M \frac{\abs{f(y)}}{\dist(x, \, y)^{n-2}} \vol_g(y)
 \end{equation}
 where~$\dist$ denotes the geodesic distance on~$M$.
\end{prop}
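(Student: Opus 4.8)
The plan is to reduce the estimate~\eqref{Green} to a pointwise bound on the kernel of the (deficiency-corrected) Hodge Laplacian, via the standard spectral/heat-kernel construction on a closed manifold. First I would recall that, since~$M$ is closed, the Hodge Laplacian~$-\Delta$ acting on~$\Lambda^k\T^*M$ has a discrete spectrum~$0 = \lambda_0 \le \lambda_1 \le \cdots$ with smooth eigenforms, and the kernel is exactly the finite-dimensional space~$\Harm^k(M)$ of harmonic~$k$-forms. The orthogonal complement~$(\Harm^k(M))^\perp$ in~$L^2$ is preserved by~$-\Delta$, which is invertible there, so there is a well-defined Green operator~$G$ whose Schwartz kernel~$\mathcal{G}(x,y)$ is a smooth~$\Lambda^k\T^*_x M \otimes \Lambda^k\T^*_y M$-valued section off the diagonal. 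The unique solution of~\eqref{Green-equation} is then~$\tau(x) = \int_M \mathcal{G}(x,y)\big(f(y) - H(f)(y)\big)\,\vol_g(y)$; and since~$\mathcal{G}(x,\cdot)$ annihilates harmonic forms fibrewise (being in the range of the Green operator in the second variable), the~$H(f)$ term drops out, giving~$\tau(x) = \int_M \mathcal{G}(x,y) f(y)\,\vol_g(y)$.

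Next I would establish the pointwise kernel bound
\begin{equation*}
 \abs{\mathcal{G}(x,y)} \lesssim \dist(x,y)^{2-n} \qquad \text{for } x \ne y,
\end{equation*}
which together with the previous display immediately yields~\eqref{Green}. This is the classical parametrix estimate for a second-order elliptic operator on a closed manifold: one builds a parametrix~$\mathcal{G}_0(x,y)$ modelled on the Euclidean Newtonian kernel~$c_n\dist(x,y)^{2-n}$ in normal coordinates (using that~$-\Delta$ has principal symbol equal to that of the scalar Laplacian on each component), and the difference~$\mathcal{G} - \mathcal{G}_0$ is smoother by elliptic regularity, hence bounded. Alternatively, and perhaps more cleanly, one integrates the heat kernel: writing~$\mathcal{G}(x,y) = \int_0^\infty \big(e^{t\Delta}(x,y) - \Pi_{\Harm}(x,y)\big)\,\d t$, the Gaussian upper bound~$\abs{e^{t\Delta}(x,y)} \lesssim t^{-n/2}\exp(-\dist(x,y)^2/(Ct))$ for~$t$ small, combined with the exponential decay of~$e^{t\Delta} - \Pi_{\Harm}$ as~$t\to\infty$ (gap between~$0$ and~$\lambda_1$), gives the~$\dist(x,y)^{2-n}$ bound after the elementary integral~$\int_0^\infty t^{-n/2}e^{-r^2/(Ct)}\,\d t \lesssim r^{2-n}$.

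Finally I would verify the continuity of~$\tau$: with~$p > n/2$ the function~$y \mapsto \dist(x,y)^{2-n}$ is in~$L^{p'}_{\loc}$ with~$p' = p/(p-1) < n/(n-2)$, so~\eqref{Green} shows~$\tau \in L^\infty(M)$, and a routine dominated-convergence argument (splitting the integral into a small ball around the singularity, where the~$L^{p'}$-norm of the kernel over~$\mathcal{B}_\delta(x)$ tends to~$0$, and the complement, where~$\mathcal{G}(x,\cdot)$ depends continuously on~$x$) gives continuity. One can also simply invoke the fact that~$f \in L^p$ with~$p > n/2$ puts~$\tau$ in~$W^{2,p} \hookrightarrow C^0$ by standard elliptic regularity for~$-\Delta$, bypassing the explicit argument. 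The main obstacle is really just assembling the kernel estimate cleanly in the vector-bundle setting — the bundle structure contributes only lower-order terms, so the heat-kernel route is the most economical, and I would expect to cite a standard reference (e.g.~for Gaussian heat-kernel bounds on closed manifolds) rather than reprove it.
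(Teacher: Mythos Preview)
Your proposal is correct and follows essentially the same approach as the paper: represent~$\tau$ via the Green kernel of the Hodge Laplacian, use the pointwise bound~$\abs{\mathcal{G}(x,y)}\lesssim\dist(x,y)^{2-n}$, and conclude by H\"older (for~$p>n/2$) plus a density argument. The only difference is that the paper simply cites de~Rham's classical construction of the Green form and its singularity estimate, whereas you sketch a self-contained route to the same bound via the heat kernel or a parametrix; both are standard and lead to the same place.
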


The proof of Proposition~\eqref{prop:Green}
is based on the existence of a Green form
for the Hodge-Laplace operator~$-\Delta$ on~$k$-forms.
The Green form is a differential form~$\mathfrak{g}$ on~$M\times M$
of degree~$(k, \, k)$ --- that is, at each point of~$M\times M$,
$\mathfrak{g}$ can locally be written as
\[
 \mathfrak{g}(x, \, y) = \sum_{I, J} \mathfrak{g}_{IJ}(x, \, y) \d x^I \wedge \d y^J,
\]
where~$(x^1, \, \ldots, \, x^n)$, $(y^1, \, \ldots, \, y^n)$
are local coordinate systems on~$M$, the~$\mathfrak{g}_{IJ}(x, \, y)$'s
are scalar coefficients, and the sum is taken over all
multi-indices~$I$, $J$ of order~$k$.
The Green form~$\mathfrak{g}$ is uniquely characterised by the 
following property: for any smooth $k$-form~$f$ on~$M$,
the unique solution~$\tau$ of~\eqref{Green-equation}
can be written as
\begin{equation} \label{Green-def}
 \tau(x) = \int_{M} \mathfrak{g}(x, \, y)\wedge\star f(y)
 \qquad \textrm{for any } x\in M.
\end{equation}
The existence of a Green form~$g$ was proved by de~Rham
(see~\cite[Chapter~III, Section~21]{deRhamKodaira}),
based on previous work by Bidal and de Rham
who constructed a parametrix (i.e., an ``approximate inverse'')
for the Hodge-Laplace operator. The Green form is smooth in
$\{(x, \, y)\in M\times M \colon x\neq y\}$
and satisfies 
\begin{equation} \label{Green-bound}
 \abs{\mathfrak{g}(x, \, y)} \lesssim \dist(x, \, y)^{n-2}
\end{equation}
for any~$x\in M$, $y\in M$ with~$x\neq y$
(see~\cite{deRhamKodaira}).

\begin{proof}[Proof of Proposition~\ref{prop:Green}]
 If~$f$ is a smooth~$k$-form, the estimate~\eqref{Green}
 is an immediate consequence of~\eqref{Green-def}
 and~\eqref{Green-bound}. The estimate~\eqref{Green}
 implies, via the H\"older inequality,
 an estimate for the~$L^\infty(M)$-norm of~$\tau$
 in terms of the~$L^p(M)$-norm of~$f$, for any~$p > n/2$.
 Indeed, if~$p > n/2$ and~$q := p^\prime = p/(p-1)$,
 then~$q < n/(n-2)$ and the function~$\dist(x, \, \cdot)^{n-2}$
 belongs to~$L^q(M)$ (in fact, the $L^q(M)$-norm
 of~$\dist(x, \, \cdot)^{n-2}$ is uniformly bounded
 with respect to~$x$). Then, the proposition
 follows by a density argument.
\end{proof}


\begin{lemma} \label{lemma:Caccioppoli}
 Let~$x_0\in M$, $0 < R < \inj(M)$
 and let~$\omega\in W^{1,2}(\mathcal{B}_R(x_0), \, \Lambda^k\T^*M)$ 
 be such that~$-\Delta\omega = 0$ in~$\mathcal{B}_R(x_0)$. Then,
 \begin{equation} \label{sketch-decay1}
  \norm{\omega}_{W^{1,2}(\mathcal{B}_{R/2})}
  \lesssim R^{-1} \norm{\omega}_{L^2(\mathcal{B}_R)}
 \end{equation}
 (where the implicit constant in front of the
 right-hand side is independent of~$x_0$, $R$).
\end{lemma}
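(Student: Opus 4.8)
The statement is the standard Caccioppoli (interior energy) estimate for the Hodge Laplacian, so the plan is to test the equation $-\Delta\omega = 0$ against $\eta^2\omega$, where $\eta$ is a smooth cut-off function adapted to the two concentric balls. First I would fix $\eta\in C^\infty_{\mathrm c}(\mathcal{B}_R(x_0))$ with $0\le\eta\le 1$, $\eta\equiv 1$ on $\mathcal{B}_{R/2}(x_0)$, and $\abs{\d\eta}\lesssim R^{-1}$ (such a function exists for $R<\inj(M)$, e.g.\ by composing a Euclidean bump with normal coordinates; the bound on $\abs{\d\eta}$ is uniform in $x_0$ because $M$ is compact). Since $-\Delta\omega=\d\d^*\omega+\d^*\d\omega=0$ weakly in $\mathcal{B}_R(x_0)$ and $\eta^2\omega\in W^{1,2}_0(\mathcal{B}_R(x_0),\Lambda^k\T^*M)$ is an admissible test form, I would write
\[
 \int_{\mathcal{B}_R}\left(\ip{\d\omega}{\d(\eta^2\omega)} + \ip{\d^*\omega}{\d^*(\eta^2\omega)}\right)\vol_g = 0.
\]

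Next I would expand $\d(\eta^2\omega)=\eta^2\,\d\omega + 2\eta\,\d\eta\wedge\omega$ and $\d^*(\eta^2\omega)=\eta^2\,\d^*\omega - 2\eta\,\mathrm{i}_{(\d\eta)^\sharp}\omega$ (the Leibniz rules for $\d$ and $\d^*$ acting on the product of a scalar and a form), which gives
\[
 \int_{\mathcal{B}_R}\eta^2\left(\abs{\d\omega}^2+\abs{\d^*\omega}^2\right)\vol_g
 = -2\int_{\mathcal{B}_R}\eta\left(\ip{\d\omega}{\d\eta\wedge\omega} - \ip{\d^*\omega}{\mathrm{i}_{(\d\eta)^\sharp}\omega}\right)\vol_g.
\]
Using $\abs{\d\eta\wedge\omega}\le\abs{\d\eta}\abs{\omega}$ and $\abs{\mathrm{i}_{(\d\eta)^\sharp}\omega}\le\abs{\d\eta}\abs{\omega}$, then Cauchy--Schwarz and Young's inequality ($2ab\le \tfrac12 a^2 + 2b^2$), the right-hand side is bounded by
\[
 \tfrac12\int_{\mathcal{B}_R}\eta^2\left(\abs{\d\omega}^2+\abs{\d^*\omega}^2\right)\vol_g + C\int_{\mathcal{B}_R}\abs{\d\eta}^2\abs{\omega}^2\vol_g,
\]
so the first term absorbs into the left-hand side and, recalling $\abs{\d\eta}\lesssim R^{-1}$ and $\eta\equiv 1$ on $\mathcal{B}_{R/2}$, I obtain $\int_{\mathcal{B}_{R/2}}(\abs{\d\omega}^2+\abs{\d^*\omega}^2)\vol_g\lesssim R^{-2}\int_{\mathcal{B}_R}\abs{\omega}^2\vol_g$. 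Finally, to pass from $\d\omega,\d^*\omega$ to the full $W^{1,2}$-norm I would invoke the Gaffney inequality on the ball $\mathcal{B}_{R/2}$ (or on $M$ after an extension), which controls $\norm{\nabla\omega}_{L^2}$ by $\norm{\d\omega}_{L^2}+\norm{\d^*\omega}_{L^2}+\norm{\omega}_{L^2}$; combined with the preceding bound and $\norm{\omega}_{L^2(\mathcal{B}_{R/2})}\le\norm{\omega}_{L^2(\mathcal{B}_R)}\lesssim R^{-1}\norm{\omega}_{L^2(\mathcal{B}_R)}$ (valid since $R$ is bounded above), this yields \eqref{sketch-decay1}.

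The only mild subtlety — and the step I would be most careful about — is the scaling of the Gaffney constant: a naive application of Gaffney on $\mathcal{B}_{R/2}$ could introduce an $R$-dependent constant in the wrong direction. This is handled exactly as in Proposition~\ref{prop:gaffney-rho} above, by rescaling the ball to unit size and using that on $M$ the metric $g$ is two-sided comparable to the Euclidean metric in normal coordinates, uniformly in $x_0$; alternatively one can extend $\eta\omega$ by zero to all of $M$ and apply the global Gaffney inequality \cite[Proposition~4.10]{Scott}, at the cost of the harmless extra term $\norm{\eta\omega}_{L^2(M)}\le\norm{\omega}_{L^2(\mathcal{B}_R)}$ on the right. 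Since only a factor $R^{-1}$ on the right-hand side is claimed (not the sharp $R^{-1}$ versus $1$ dichotomy), the bookkeeping is straightforward.
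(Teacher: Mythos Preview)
Your proof is correct and follows essentially the same route as the paper: test $-\Delta\omega=0$ against $\eta^2\omega$ for a cut-off $\eta$, control the cross terms, and finish with the global Gaffney inequality on $M$ applied to the compactly supported form $\eta\omega$. The only cosmetic difference is that the paper rewrites the cross terms via the polarisation identities $\ip{\d\omega}{\d(\eta^2\omega)}=\abs{\d(\eta\omega)}^2-\abs{\d\eta\wedge\omega}^2$ and $\ip{\d^*\omega}{\d^*(\eta^2\omega)}=\abs{\d^*(\eta\omega)}^2-\abs{\d\eta\wedge\star\omega}^2$, obtaining $\norm{\d(\eta\omega)}_{L^2}^2+\norm{\d^*(\eta\omega)}_{L^2}^2\lesssim R^{-2}\norm{\omega}_{L^2(\mathcal{B}_R)}^2$ directly without the Young-inequality absorption step; your direct Leibniz expansion leads to the same conclusion after one extra line.
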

\begin{proof}
 Let~$\zeta\in C^\infty_{\mathrm{c}}(\mathcal{B}_R)$ be a cut-off
 function such that~$\zeta = 1$ in~$\mathcal{B}_{R/2}$
 and~$\abs{\d\zeta} \lesssim R^{-1}$ in~$\mathcal{B}_R$.
 There holds
 \begin{equation} \label{Cacc1}
  \begin{split}
   \ip{\d\omega}{\d(\zeta^2\omega)}
   = \ip{\d\omega}{\d\zeta\wedge\zeta\omega + \zeta\d(\zeta\omega)}
   &= \ip{\zeta\d\omega}{\d\zeta\wedge\omega + \d(\zeta\omega)}\\
   &= \ip{\d(\zeta\omega) - \d\zeta\wedge\omega}
    {\d\zeta\wedge\omega + \d(\zeta\omega)}\\
   &= \abs{\d(\zeta\omega)}^2 - \abs{\d\zeta\wedge\omega}^2
  \end{split}
 \end{equation}
 and similarly,
 \begin{equation} \label{Cacc2}
  \begin{split}
   \ip{\d^*\omega}{\d^*(\zeta^2\omega)}
   = \ip{\d(\star\omega)}{\d(\zeta^2\star\omega)}
   &= \abs{\d(\zeta\star\omega)}^2 - \abs{\d\zeta\wedge\star\omega}^2\\
   &= \abs{\d^*(\zeta\omega)}^2 - \abs{\d\zeta\wedge\star\omega}^2
  \end{split}
 \end{equation}
 Therefore, by testing the equation~$-\Delta\omega = 0$
 against~$\zeta^2\omega$, we obtain
 \[
  \begin{split}
   \int_{\mathcal{B}_R(x_0)} \left(\abs{\d(\zeta\omega)}^2 
    + \abs{\d^*(\zeta\omega)}^2\right) \vol_g
   &= \int_{\mathcal{B}_R(x_0)} \left(\abs{\d\zeta\wedge\omega}^2 
    + \abs{\d\zeta\wedge\star\omega}^2\right) \vol_g \\
   &\lesssim R^{-2} \int_{\mathcal{B}_R(x_0)} \abs{\omega}^2 \vol_g
  \end{split}  
 \]
 By applying the Gaffney inequality (in~$M$, \cite[Proposition~4.10]{Scott}),
 we obtain
 \begin{equation*}
  \norm{\zeta\omega}_{W^{1,2}(M)}
  \lesssim R^{-1} \norm{\omega}_{L^2(\mathcal{B}_R)} + \norm{\zeta\omega}_{L^2(M)}
  \lesssim R^{-1} \norm{\omega}_{L^2(\mathcal{B}_R)}
 \end{equation*}
 and the lemma follows.
\end{proof}

Thus, harmonic forms in geodesic balls satisfy a Caccioppoli-type inequality, 
exactly as in the Euclidean case. Moreover, exactly as in the case of harmonic 
functions on balls of $\R^n$  (see, e.g., \cite[Lemma~4.11]{Beck}), 
Caccioppoli inequality allows to prove a decay property of 
$L^2$-norm of harmonic forms in geodesic balls.
\begin{theorem}\label{thm:decay-harmonic}
 Let~$x_0\in M$, $0 < R < \inj(M)$
 and let~$\omega\in W^{1,2}(\mathcal{B}_R(x_0), \, \Lambda^k\T^*M)$ 
 be such that~$-\Delta\omega = 0$ in~$\mathcal{B}_R(x_0)$. Then,
 \begin{equation}\label{eq:decay-harmonic}
 	\int_{\mathcal{B}_r(x_0)} \abs{\omega}^2 \, \vol_g
 	\lesssim \frac{r^n}{R^n} \int_{\mathcal{B}_R(x_0)} \abs{\omega}^2 \, \vol_g,
 \end{equation}
 where the implicit constant in front of the
 right-hand side is independent of~$x_0$, $R$.
\end{theorem}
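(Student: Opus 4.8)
The plan is to deduce the $L^2$-decay estimate \eqref{eq:decay-harmonic} from the Caccioppoli inequality of Lemma~\ref{lemma:Caccioppoli} by a standard Campanato-type iteration, exactly as one does for harmonic functions on Euclidean balls. The key observation is that harmonicity (in the Hodge sense) is preserved under differentiation in a way that lets us compare $L^2$-norms at different scales: since the coefficients of $\omega$ in normal coordinates satisfy an elliptic system, $\omega$ enjoys interior regularity, and in particular the derivatives of $\omega$ are themselves controlled by $\omega$ on a smaller ball. This is precisely what Lemma~\ref{lemma:Caccioppoli} encodes.

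\begin{proof}[Proof of Theorem~\ref{thm:decay-harmonic}]
 We may assume~$r < R/2$, the case~$R/2 \leq r < R$ being trivial
 (the estimate holds with a constant depending only on~$2^n$).
 Write~$\mathcal{B}_\rho := \mathcal{B}_\rho(x_0)$ for brevity.
 Fix~$\rho \leq R/2$. We work in normal geodesic coordinates
 centered at~$x_0$, in which~$\mathcal{B}_R$ is identified with a
 Euclidean ball~$B_R \subseteq \R^n$ endowed with the metric~$g$;
 on this ball the equation~$-\Delta\omega = 0$ reads, component-wise,
 as a second-order linear elliptic system for the coefficients
 of~$\omega$ with smooth coefficients depending on~$g$.
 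Since~$M$ is compact and smooth, the ellipticity constants and
 the $C^1$-norms of the coefficients of this system are bounded
 uniformly with respect to~$x_0$ and~$R < \inj(M)$.
 By standard interior regularity for elliptic systems, $\omega$
 is smooth in the interior of~$\mathcal{B}_R$, and each partial
 derivative~$\partial_i\omega$ of its coefficients again solves
 an elliptic system of the same type on~$\mathcal{B}_{R/2}$,
 with right-hand side involving~$\omega$ and its first derivatives
 and with constants controlled uniformly in~$x_0$, $R$.
 It follows, by the Caccioppoli inequality of
 Lemma~\ref{lemma:Caccioppoli} applied on~$\mathcal{B}_{2\rho}$,
 together with the uniform interior estimates, that
 \[
  \int_{\mathcal{B}_\rho} \abs{\nabla\omega}^2 \, \vol_g
  \lesssim \rho^{-2}\int_{\mathcal{B}_{2\rho}} \abs{\omega}^2 \, \vol_g
 \]
 for every~$\rho \leq R/4$, the implicit constant being independent
 of~$x_0$, $R$, $\rho$.

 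Next, set~$m_\rho := \frac{1}{\vol(\mathcal{B}_\rho)}
 \int_{\mathcal{B}_\rho}\omega \, \vol_g$ (the coordinate-wise average
 in the chart), and apply the Poincar\'e inequality on~$\mathcal{B}_\rho$
 (uniform in~$x_0$, $R$, $\rho$ by the uniform control on~$g$):
 \[
  \int_{\mathcal{B}_\rho}\abs{\omega - m_\rho}^2 \, \vol_g
  \lesssim \rho^2 \int_{\mathcal{B}_\rho}\abs{\nabla\omega}^2 \, \vol_g.
 \]
 A now-classical mean-oscillation argument (see
 e.g.~\cite[Lemma~III.2.1]{Giaquinta-MultipleIntegrals}
 or~\cite[Lemma~B.3]{Beck}) shows that combining the last two
 displays with the trivial bound
 $\int_{\mathcal{B}_\rho}\abs{\omega}^2 \lesssim
 \int_{\mathcal{B}_\rho}\abs{\omega - m_\rho}^2 + \rho^n\abs{m_\rho}^2$
 and the almost-monotonicity of~$\rho \mapsto \rho^{-n}
 \int_{\mathcal{B}_\rho}\abs{\omega}^2$ yields a decay of the form
 \[
  \int_{\mathcal{B}_r}\abs{\omega}^2 \, \vol_g
  \lesssim \frac{r^n}{R^n}\int_{\mathcal{B}_R}\abs{\omega}^2 \, \vol_g
 \]
 for all~$0 < r < R/2$; more directly, one may iterate
 the estimate~$\int_{\mathcal{B}_{\rho/2}}\abs{\omega - m_{\rho/2}}^2
 \leq \tfrac12 \int_{\mathcal{B}_\rho}\abs{\omega - m_\rho}^2$,
 which follows from the first two displays upon choosing dyadic
 radii and absorbing constants, to obtain geometric decay of the
 oscillation and hence of the full $L^2$-norm. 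Finally, transferring
 back from coordinates to~$M$ costs only a uniform constant, since~$g$
 and~$\vol_g$ are comparable to their Euclidean counterparts on~$B_R$
 with constants depending only on~$M$. This proves~\eqref{eq:decay-harmonic}.
\end{proof}

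**The main obstacle** I anticipate is keeping all the constants genuinely uniform in~$x_0$ and~$R$: one must check that the ellipticity constants and coefficient bounds of the system satisfied by~$\omega$ (and by~$\nabla\omega$) in normal coordinates do not degenerate as~$x_0$ varies or as~$R \uparrow \inj(M)$. This is where compactness and smoothness of~$(M,g)$ enter, and it is precisely the point the excerpt flags repeatedly ("the implicit constant in front of the right-hand side is independent of~$x_0$, $R$"). Everything else is the textbook Campanato iteration, and Lemma~\ref{lemma:Caccioppoli} has already done the substantive analytic work.
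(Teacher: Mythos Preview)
Your Campanato-style iteration has a genuine gap. The step you label ``more directly, one may iterate the estimate $\int_{\mathcal{B}_{\rho/2}}\abs{\omega - m_{\rho/2}}^2 \leq \tfrac{1}{2}\int_{\mathcal{B}_\rho}\abs{\omega - m_\rho}^2$'' does \emph{not} follow from your two displays. Combining Poincar\'e on~$\mathcal{B}_{\rho/2}$ with Caccioppoli on~$\mathcal{B}_\rho$ gives
\[
 \int_{\mathcal{B}_{\rho/2}}\abs{\omega - m_{\rho/2}}^2
 \lesssim \rho^2\int_{\mathcal{B}_{\rho/2}}\abs{\nabla\omega}^2
 \lesssim \int_{\mathcal{B}_\rho}\abs{\omega}^2,
\]
with~$\abs{\omega}^2$ on the right, not~$\abs{\omega - m_\rho}^2$, and no smallness constant. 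To replace~$\abs{\omega}$ by~$\abs{\omega - m_\rho}$ in Caccioppoli you would need~$\omega - m_\rho$ to be Hodge-harmonic, but a constant-coefficient form in normal coordinates is \emph{not} annihilated by the Hodge Laplacian on a curved manifold (the Weitzenb\"ock curvature term acts on it). Consequently the oscillation does not contract, the iteration does not close, and your invocation of the Giaquinta/Beck lemma has no established hypothesis to feed on. The earlier appeal to ``almost-monotonicity of~$\rho\mapsto\rho^{-n}\int_{\mathcal{B}_\rho}\abs{\omega}^2$'' is likewise circular: that is essentially the conclusion you are after.

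The paper takes a different route that sidesteps this issue entirely. It cuts off~$\omega$ by~$\zeta\in C^\infty_{\mathrm{c}}(\mathcal{B}_{R/2})$, solves~$-\Delta\tau = -\Delta(\zeta\omega)$ on all of~$M$, and uses the Green-form bound of Proposition~\ref{prop:Green} together with Lemma~\ref{lemma:Caccioppoli} to obtain the pointwise estimate $\norm{\omega}_{L^\infty(\mathcal{B}_{R/8})} \lesssim R^{-n/2}\norm{\omega}_{L^2(\mathcal{B}_R)}$; the decay~\eqref{eq:decay-harmonic} is then immediate by integrating this $L^\infty$ bound over~$\mathcal{B}_r$. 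If you want to salvage a Campanato-type proof, you would need to freeze the coefficients of the elliptic system at~$x_0$, prove the full decay for the frozen (constant-coefficient) problem, and control the perturbation error --- a substantially longer argument than what you wrote.
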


\begin{proof}
Let $\zeta\in C^\infty_{\mathrm{c}}(\mathcal{B}_{R/2})$ be a cut-off function,
such that~$\zeta = 1$ in~$\mathcal{B}_{R/4}$
and~$\abs{\nabla\zeta} \lesssim R^{-1}$, $\abs{\nabla^2\zeta}\lesssim R^{-2}$.
Let~$\tau$ be the unique solution of
\[
 -\Delta\tau = -\Delta(\zeta\omega) \quad \textrm{in } M,
 \qquad H(\tau) = 0.
\]
($\tau$~exists, because~$\Delta(\zeta\omega)$ is orthogonal 
to all harmonic forms.) Then, $\zeta\omega = \tau + H(\zeta\omega)$.
Let~$x\in \mathcal{B}_{R/8}$. By Proposition~\ref{prop:Green},
we have
\begin{equation} \label{sketch-decay2}
 \abs{\tau(x)} 
 \lesssim \int_M \frac{\abs{\Delta(\zeta\omega)(y)}}
  {\dist(x, \, y)^{n-2}} \vol_g(y)
 \lesssim R^{2 - n} \norm{\Delta(\zeta\omega)}_{L^1(\mathcal{B}_{R/2}\setminus \mathcal{B}_{R/4})}
\end{equation}
because~$\Delta(\zeta\omega) = \Delta\omega = 0$ in~$\mathcal{B}_{R/4}$
and~$\zeta\omega = 0$ out of~$\mathcal{B}_{R/2}$
(and~$\dist(x, \, y) \geq R/8$ for any~$x\in \mathcal{B}_{R/8}$, 
$y\in \mathcal{B}_{R/2}\setminus \mathcal{B}_{R/4}$).
By the H\"older inequality,
\begin{equation} \label{sketch-decay3}
 \abs{\tau(x)}
 \lesssim R^{2 - n/2} \norm{\Delta(\zeta\omega)}_{L^2(\mathcal{B}_{R/2}\setminus \mathcal{B}_{R/4})}
\end{equation}
Now, 
we have
\[
 \Delta(\zeta\omega) = \d^*(\d\zeta\wedge\omega + \zeta\d\omega)
  + \d\left(\pm \star(\d\zeta\wedge\star\omega) + \zeta\d^*\omega)\right)
 = \ldots = \zeta\Delta\omega + \mathrm{l.o.t.},
\]
where~l.o.t. is a sum of terms that do not 
contain second derivatives of~$\omega$
(but do contain derivatives of~$\zeta$). In particular, 
as~$\Delta\omega = 0$,
\begin{equation} \label{sketch-decay4}
 \norm{\Delta(\zeta\omega)}_{L^2(\mathcal{B}_{R/2})}
 \lesssim R^{-1} \norm{\omega}_{W^{1,2}(\mathcal{B}_{R/2})}
  + R^{-2} \norm{\omega}_{L^2(\mathcal{B}_{R/2})}
 \stackrel{\eqref{sketch-decay1}}{\lesssim}
  R^{-2} \norm{\omega}_{L^2(\mathcal{B}_R)}
\end{equation}
(the factors of~$R^{-1}$, $R^{-2}$ come from the derivatives of~$\zeta$).
Combining~\eqref{sketch-decay3} with~\eqref{sketch-decay4},
we deduce
\begin{equation} \label{sketch-decay5}
 \norm{\tau}_{L^\infty(\mathcal{B}_{R/8})}
 \lesssim R^{- n/2} \norm{\omega}_{L^2(\mathcal{B}_R)}
\end{equation}
The estimate of the $L^\infty$-norm
of~$H(\zeta\omega)$ is immediate
because~$H(\zeta\omega)$ is harmonic ---
and all norms are equivalent on~$\Harm^k(M)$.
In particular,
\[
 \norm{H(\zeta\omega)}_{L^\infty(M)}
  \lesssim \norm{H(\zeta\omega)}_{L^2(M)} 
  \lesssim \norm{\omega}_{L^2(M)}
  \lesssim R^{-n/2} \norm{\omega}_{L^2(M)}
\]
(because~$R$ is bounded from above).
Overall, we have proved
\begin{equation} \label{sketch-decay6}
 \norm{\omega}_{L^\infty(\mathcal{B}_{R/8})}
 \lesssim R^{- n/2} \norm{\omega}_{L^2(\mathcal{B}_R)}
\end{equation}
which immediately implies decay (cf., for instance, 
\cite[Lemma~4.11]{Beck}.)
\end{proof}

In Proposition~\ref{prop:decay-RHS} below, we extend the decay estimate in 
Theorem~\ref{thm:decay-harmonic} to forms solving Poisson-type equations with 
$L^2$-integrable sources.
\begin{prop} \label{prop:decay-RHS}
 There exists a number~$R_*> 0$, depending only on~$M$,
 such that the following statement holds.
 If~$x_0\in M$, $0 < R < R_*$ and~$\omega$, $f$ 
 are $k$-form on~$\mathcal{B}_R(x_0)$ such that
 $-\Delta\omega = f$ in~$\mathcal{B}_R(x_0)$, then
 \begin{equation}\label{eq:decay-RHS}
  \int_{\mathcal{B}_r(x_0)} \left(\abs{\d\omega}^2 + \abs{\d^*\omega}^2\right)\vol_g 
  \lesssim \frac{r^n}{R^n} \int_{\mathcal{B}_R(x_0)}
   \left(\abs{\d\omega}^2 + \abs{\d^*\omega}^2\right)\vol_g 
   + R^2 \int_{\mathcal{B}_R(x_0)} \abs{f}^2 \vol_g
 \end{equation}
 for every~$r\in (0, \, R)$.
\end{prop}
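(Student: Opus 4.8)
The plan is to run the classical comparison (freezing/harmonic replacement) argument for elliptic systems, adapted to differential forms via the Gaffney inequality. We may assume $\omega\in W^{1,2}(\mathcal{B}_R(x_0),\Lambda^k\T^*M)$ and $f\in L^2(\mathcal{B}_R(x_0),\Lambda^k\T^*M)$, since otherwise the right-hand side of~\eqref{eq:decay-RHS} is infinite. First I would dispose of the trivial range: if $R/4\le r<R$ then $\mathcal{B}_r(x_0)\subseteq\mathcal{B}_R(x_0)$ and $r^n/R^n\ge 4^{-n}$, so~\eqref{eq:decay-RHS} holds with constant $4^n$; hence from now on one may assume $0<r<R/4$. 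Fix $R<R_*$, with $R_*$ the radius furnished by Lemma~\ref{lemma:Poincare-R}, and write $\mathcal{B}_\rho:=\mathcal{B}_\rho(x_0)$.

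The core step is to split $\omega=h+v$ on the half-ball $\mathcal{B}_{R/2}$, where $v\in W^{1,2}_0(\mathcal{B}_{R/2},\Lambda^k\T^*M)$ is the Lax--Milgram solution of the weak Hodge--Poisson problem
\[
 \int_{\mathcal{B}_{R/2}}\left(\ip{\d v}{\d\phi}+\ip{\d^* v}{\d^*\phi}\right)\vol_g
 =\int_{\mathcal{B}_{R/2}}\ip{f}{\phi}\vol_g
 \qquad\textrm{for all }\phi\in W^{1,2}_0(\mathcal{B}_{R/2},\Lambda^k\T^*M),
\]
and $h:=\omega-v$. Coercivity of the bilinear form on $W^{1,2}_0$ is where Lemma~\ref{lemma:Poincare-R} enters: extending by zero and combining the Gaffney inequality on $M$ (e.g.~\cite[Proposition~4.10]{Scott}) with~\eqref{eq:Poincare-R} at radius $R/2<R_*$ gives $\norm{\phi}_{W^{1,2}(\mathcal{B}_{R/2})}^2\lesssim\norm{\d\phi}_{L^2}^2+\norm{\d^*\phi}_{L^2}^2$, uniformly in $x_0$ and $R$. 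Testing the weak equation with $\phi=v$ and using~\eqref{eq:Poincare-R} once more yields the energy bound
\[
 \int_{\mathcal{B}_{R/2}}\left(\abs{\d v}^2+\abs{\d^* v}^2\right)\vol_g
 \lesssim R^2\int_{\mathcal{B}_{R/2}}\abs{f}^2\vol_g\le R^2\int_{\mathcal{B}_R}\abs{f}^2\vol_g.
\]
By construction $-\Delta v=f$ in $\mathcal{B}_{R/2}$ in the sense of distributions, hence $-\Delta h=0$ there; by interior elliptic regularity for the Hodge Laplacian $h$ is smooth in $\mathcal{B}_{R/2}$, and since $\Delta$ commutes with $\d$ and $\d^*$ (as $\d\d=\d^*\d^*=0$), both $\d h$ and $\d^* h$ are harmonic forms on $\mathcal{B}_{R/2}$, smooth on $\overline{\mathcal{B}_{R/4}}$.

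Now I would apply the harmonic decay estimate of Theorem~\ref{thm:decay-harmonic} on the ball $\mathcal{B}_{R/4}$, separately to $\d h$ and to $\d^* h$: for $0<r<R/4$,
\[
 \int_{\mathcal{B}_r}\left(\abs{\d h}^2+\abs{\d^* h}^2\right)\vol_g
 \lesssim\frac{r^n}{R^n}\int_{\mathcal{B}_{R/2}}\left(\abs{\d h}^2+\abs{\d^* h}^2\right)\vol_g.
\]
Writing $\d h=\d\omega-\d v$ and $\d^* h=\d^*\omega-\d^* v$, combining the last two displays with the triangle inequality and $(a+b)^2\le 2a^2+2b^2$, then enlarging the integration domain from $\mathcal{B}_{R/2}$ to $\mathcal{B}_R$ and using $r^n/R^n\le 1$ to absorb the lower-order $R^2\!\int\abs{f}^2$ contribution, one arrives at~\eqref{eq:decay-RHS} with a constant depending only on $M$ and $g$. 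The only genuinely delicate point is the solvability and \emph{uniform} energy control of the auxiliary problem defining $v$: one must ensure the Poincar\'e--Gaffney constant in the coercivity estimate is independent of the centre $x_0$ and of $R\in(0,R_*)$, which is precisely the content of Lemma~\ref{lemma:Poincare-R} (with $R_*$ taken as in that lemma); the remainder is bookkeeping.
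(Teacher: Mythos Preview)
Your proposal is correct and follows essentially the same comparison/harmonic-replacement strategy as the paper: decompose $\omega$ into a harmonic part and a zero-trace Poisson part, apply Theorem~\ref{thm:decay-harmonic} to $\d$ and $\d^*$ of the harmonic part, and control the Poisson part via testing against itself together with Lemma~\ref{lemma:Poincare-R}. The only cosmetic differences are that the paper performs the decomposition on~$\mathcal{B}_R$ itself (rather than on~$\mathcal{B}_{R/2}$, avoiding your preliminary case $r\ge R/4$) and bounds the energy of the harmonic replacement by minimality of the Dirichlet-type functional instead of the triangle inequality plus the $v$-estimate; neither change affects the substance of the argument.
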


\begin{proof}
	The proof follows a very well-known pattern, typical for analogous 
	estimates for elliptic systems in the Euclidean setting (see, e.g., 
	\cite[Lemma~4.13]{Beck}). 
	We provide details for completeness, as for differential forms on manifolds 
	they are apparently lacking in the literature (to the best to our 
	knowledge, at least).
	
	\medskip
	\noindent
	\emph{Step~1.} Fix arbitrarily $x_0 \in M$. For the time being, let 
	$R_* = \inj(M)$, and pick any $R \in (0,\,R_*)$. During the whole proof, 
	we omit to indicate the center of the balls, as it will be always 
	$x_0$.

	As a preliminary step, we write 
	$\omega = \omega_1 + \omega_2$, where $\omega_1$ satisfies 
	\begin{equation}\label{eq:decay-RHS-compu1}
	\begin{cases}
		\Delta \omega_1 = 0 & \mbox{in } \mathcal{B}_R,\\
		\omega_1 = \omega & \mbox{on } \partial \mathcal{B}_R,
	\end{cases} 
	\end{equation}
	while $\omega_2$ is the solution to
	\begin{equation}\label{eq:decay-RHS-compu2}
	\begin{cases}
		-\Delta \omega_2 = f & \mbox{in } \mathcal{B}_R,\\
		\omega_2 = 0 & \mbox{on } \partial \mathcal{B}_R.
	\end{cases} 
	\end{equation}
	In the next steps, we first obtain suitable estimates for $\omega_1$ and 
	$\omega_2$, and then we combine them to yield~\eqref{eq:decay-RHS}. 
	
	\medskip
	\noindent
	\emph{Step~2: Estimate for $\omega_1$.} Since $\Delta \omega_1 = 0$ in 
	$\mathcal{B}_R$, we have $\Delta(\d \omega_1) = 0$ and 
	$\Delta(\d^* \omega_1) = 0$ in $\mathcal{B}_R$ as well. Thus, the decay 
	estimate~\eqref{eq:decay-harmonic} holds for both $\d \omega_1$ and 
	$\d^* \omega_1$ and yields
	\begin{equation}\label{eq:decay-RHS-compu3}
		\int_{\mathcal{B}_r} \left( \abs{\d\omega_1}^2 + \abs{\d^*\omega_1}^2 \right)\, \vol_g \lesssim 
		\frac{r^n}{R^n} \int_{\mathcal{B}_R} \left( \abs{\d\omega_1}^2 + \abs{\d^*\omega_1}^2 \right) \,\vol_g 
	\end{equation}
	for any $r \in (0,\,R)$, where the implicit constant at right-hand-side
	depends only on $M$ and $g$. On the other hand, $\omega_1$ minimises the 
	functional
	\begin{equation}\label{eq:dirichlet-energy}
		\omega \mapsto \int_{\mathcal{B}_R} \left( \abs{\d \omega}^2 + \abs{\d^*\omega}^2 \right) \,\vol_g
	\end{equation}
	among all $k$-forms $\omega$ such that $\omega = \omega_1$ on 
	$\partial\mathcal{B}_R$. This fact, along 
	with~\eqref{eq:decay-RHS-compu3}, yields
	\begin{equation}\label{eq:decay-RHS-compu4}
		\int_{\mathcal{B}_r} \left( \abs{\d\omega_1}^2 + \abs{\d^* \omega_1}^2 \right)\,\vol_g 
		\lesssim \frac{r^n}{R^n} \int_{\mathcal{B}_R} \left( \abs{\d\omega_1}^2 + \abs{\d^* \omega_1}^2 \right)\,\vol_g  .
	\end{equation}
	
	\medskip
	\noindent
	\emph{Step~3: Estimate for $\omega_2$.} The estimate for $\omega_2$ follows 
	by testing~\eqref{eq:decay-RHS-compu2} against $\omega_2$. We obtain 
	\begin{equation}\label{eq:decay-RHS-compu5}
		\int_{\mathcal{B}_R} \left( \abs{\d \omega_2}^2 + \abs{\d^* \omega_2}^2 \right)\,\vol_g 
		\leq \norm{f}_{L^2(\mathcal{B}_R)} \norm{\omega_2}_{L^2(\mathcal{B}_R)}.
	\end{equation}
	Notice that there are no boundary terms in~\eqref{eq:decay-RHS-compu5} 
	because $\omega_2 = 0$ on $\partial \mathcal{B}_R$. By 
	Lemma~\ref{lemma:Poincare-R}, up to shrink $R_*$ if necessary (but still 
	according only to $M$ and $g$), we get
	\begin{equation}\label{eq:decay-RHS-compu6}
		\norm{\omega_2}_{L^2(\mathcal{B}_R)} \lesssim 
		R \left( \norm{\d \omega_2}_{L^2(\mathcal{B}_R)} + \norm{\d^* \omega_2}_{L^2(\mathcal{B}_R)}\right),
	\end{equation}
	and therefore
	\begin{equation}\label{eq:decay-RHS-compu7}
		\norm{\d \omega_2}_{L^2(\mathcal{B}_R)} + \norm{\d^* \omega_2}_{L^2(\mathcal{B}_R)} \lesssim R \norm{f}_{L^2(\mathcal{B}_R)},
	\end{equation}
	up to a constant depending only on $M$ and $g$.
	
	\medskip
	\noindent
	\emph{Step~4: Conclusion.} We can now estimate
	\begin{multline*}
		\int_{\mathcal{B}_r} \left( \abs{\d \omega}^2 + \abs{\d^* \omega}^2 \right)\,\vol_g \leq \int_{\mathcal{B}_r} \left( \abs{\d \omega_1}^2 + \abs{\d^* \omega_1}^2 +\abs{\d \omega_2}^2 + \abs{\d^* \omega_2}^2 \right)\,\vol_g \\
	\stackrel{{\eqref{eq:decay-RHS-compu4}, \eqref{eq:decay-RHS-compu7}}}{ \lesssim} \frac{r^n}{R^n} \int_{\mathcal{B}_R} \left( \abs{\d \omega_1}^2 + \abs{\d^* \omega_1}^2 \right)\,\vol_g + R^2 \int_{\mathcal{B}_R} \abs{f}^2 \,\vol_g \\
	 \lesssim  \frac{r^n}{R^n} \int_{\mathcal{B}_R} \left( \abs{\d \omega}^2 + \abs{\d^* \omega}^2 \right)\,\vol_g + R^2 \int_{\mathcal{B}_R} \abs{f}^2 \,\vol_g,
	\end{multline*}
	where the last line (which provides the desired inequality) follows because 
	$\omega_1$ minimises the functional~\eqref{eq:dirichlet-energy} among 
	$k$-forms with the same value on the boundary of $\mathcal{B}_R$. 
	Then, the conclusion follows by the arbitrariness of $x_0$ and the 
	compactness of $M$, which allows to choose a uniform critical radius $R_*$. 
\end{proof}

We conclude this section by giving the proof
of a few Caccioppoli-type inequalities, which appeared in
the proof of Proposition~\ref{prop:small-ball-small-energy}.

\begin{lemma} \label{lemma:CaccioppoliL1}
 There exists a number~$R_*> 0$, depending only on~$M$,
 such that the following statement holds.
 Let~$x_0\in M$, $0 < r <  R < R_*$, $a\in L^\infty(\mathcal{B}_R(x_0))$
 and~$a_0 > 0$ be such that $a_0^{-1} \leq a(x) \leq a_0$
 for a.e.~$x\in\mathcal{B}_R(x_0)$. Let~$f\in L^2(\mathcal{B}_R(x_0), \, \T^*M)$
 and let~$\theta\in W^{1,2}(\mathcal{B}_R(x_0))$ be a weak solution
 of the equation
 \begin{equation} \label{elliptic1}
  \d^*\left(a \, \d\theta\right) = \d^* f
  \qquad \textrm{in } \mathcal{B}_R(x_0).
 \end{equation}
 Then,
 \begin{equation} \label{elliptic2}
  \norm{\d\theta}_{L^2(\mathcal{B}_{r}(x_0))}
   \lesssim (R-r)^{-\frac{n+2}{2}} \norm{\theta}_{L^1(\mathcal{B}_R(x_0))}
    + \norm{f}_{L^2(\mathcal{B}_R(x_0))}
 \end{equation}
\end{lemma}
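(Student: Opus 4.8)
The plan is to combine a Caccioppoli estimate, a scale-invariant interpolation inequality that converts an $L^2$-norm of $\theta$ into an $L^1$-norm at the expense of a power of the gradient, and the classical iteration lemma. I abbreviate $\mathcal{B}_\rho := \mathcal{B}_\rho(x_0)$ and fix $R_* \in (0, \inj(M))$ small enough (depending only on $M$) that every $\mathcal{B}_\rho$ with $\rho < R_*$ sits inside a normal coordinate chart; in such charts the $L^p$-norms of scalar functions and of $\d\theta$, together with the Sobolev and interpolation inequalities on $\mathcal{B}_\rho$ that I will use, coincide with the Euclidean ones up to multiplicative constants depending only on $M$ and $g$. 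Recall that~\eqref{elliptic1} means $\int_{\mathcal{B}_R}\ip{a\,\d\theta}{\d\phi}\,\vol_g = \int_{\mathcal{B}_R}\ip{f}{\d\phi}\,\vol_g$ for every compactly supported $\phi\in W^{1,2}(\mathcal{B}_R)$.

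First I would prove a Caccioppoli inequality: for $r \le t < s \le R$ pick $\zeta\in C^\infty_{\mathrm{c}}(\mathcal{B}_s)$ with $\zeta\equiv 1$ on $\mathcal{B}_t$ and $\abs{\d\zeta}\lesssim (s-t)^{-1}$, and test the weak formulation against $\phi := \zeta^2\theta$. Expanding $\d(\zeta^2\theta) = 2\zeta\theta\,\d\zeta + \zeta^2\,\d\theta$, using $a_0^{-1}\le a\le a_0$, and applying Young's inequality to absorb $\tfrac12\int a\zeta^2\abs{\d\theta}^2$ into the left-hand side, I obtain
\[
 \int_{\mathcal{B}_t}\abs{\d\theta}^2\,\vol_g \le C_1\left[(s-t)^{-2}\int_{\mathcal{B}_s}\abs{\theta}^2\,\vol_g + \int_{\mathcal{B}_s}\abs{f}^2\,\vol_g\right]
\]
with $C_1 = C_1(M, a_0)$.

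The second ingredient is the interpolation inequality: for every $u\in W^{1,2}(\mathcal{B}_\rho)$, every $\rho < R_*$, and every $\sigma\in(0,1)$,
\[
 \int_{\mathcal{B}_\rho}\abs{u}^2\,\vol_g \le \sigma\,\rho^2\int_{\mathcal{B}_\rho}\abs{\d u}^2\,\vol_g + C_M\,\sigma^{-n/2}\,\rho^{-n}\left(\int_{\mathcal{B}_\rho}\abs{u}\,\vol_g\right)^2,
\]
which I would deduce from the Gagliardo--Nirenberg inequality $\norm{u}_{L^2(B_1)}\lesssim \norm{\nabla u}_{L^2(B_1)}^{n/(n+2)}\norm{u}_{L^1(B_1)}^{2/(n+2)} + \norm{u}_{L^1(B_1)}$ on the Euclidean unit ball — the exponent $n/(n+2)$ being the one dictated by scaling — via Young's inequality, which generates exactly the weight $\sigma^{-n/2}$, followed by rescaling to $\mathcal{B}_\rho$ and the metric comparison. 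Substituting this (with $u = \theta$, $\rho = s$) into the Caccioppoli estimate and choosing $\sigma := (s-t)^2/(2C_1 s^2)$, so that the gradient term over $\mathcal{B}_s$ reappears with coefficient exactly $\tfrac12$, yields for all $r \le t < s \le R$
\[
 \Phi(t) \le \tfrac12\,\Phi(s) + \frac{C_2}{(s-t)^{n+2}}\norm{\theta}_{L^1(\mathcal{B}_R)}^2 + C_1\norm{f}_{L^2(\mathcal{B}_R)}^2, \qquad \Phi(\rho) := \int_{\mathcal{B}_\rho}\abs{\d\theta}^2\,\vol_g,
\]
with $C_2 = C_2(M, a_0)$. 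Since $\Phi$ is bounded and non-decreasing on $[r, R]$, the iteration lemma \cite[Lemma~III.2.1]{Giaquinta-MultipleIntegrals} (equivalently \cite[Lemma~B.3]{Beck}) with exponent $n+2$ gives $\Phi(r) \lesssim (R-r)^{-(n+2)}\norm{\theta}_{L^1(\mathcal{B}_R)}^2 + \norm{f}_{L^2(\mathcal{B}_R)}^2$, and taking square roots produces~\eqref{elliptic2}.

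The only point requiring care is the precise dependence of the constant in the interpolation inequality on $\sigma$ and $\rho$: it must scale as $\sigma^{-n/2}\rho^{-n}$ so that the choice $\sigma\sim (s-t)^2/s^2$ turns the harmless $(s-t)^{-2}$ of Caccioppoli into the sharp power $(s-t)^{-(n+2)}$ demanded by the statement; a less careful bookkeeping would degrade the exponent $\tfrac{n+2}{2}$ in~\eqref{elliptic2}. The normal-coordinate comparison of norms is routine for $R < R_*$ given the compactness of $M$, and the iteration lemma is applied verbatim.
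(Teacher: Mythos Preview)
Your proof is correct and follows essentially the same route as the paper: Caccioppoli inequality from testing against~$\zeta^2\theta$, combined with the Gagliardo--Nirenberg/Nash interpolation to trade the~$L^2$-norm of~$\theta$ for its~$L^1$-norm at the cost of a fractional power of~$\norm{\d\theta}_{L^2}$, then Young's inequality and iteration. One minor slip: the iteration lemma you need is the ``hole-filling'' type (\cite[Lemma~V.3.1]{Giaquinta-MultipleIntegrals} or~\cite[Lemma~B.1]{Beck}), not the decay-type lemma~\cite[Lemma~III.2.1]{Giaquinta-MultipleIntegrals} / \cite[Lemma~B.3]{Beck} that you cite, which handles inequalities of the form~$\phi(\rho)\le A(\rho/R)^\alpha\phi(R)+B\rho^\beta$ rather than~$\Phi(t)\le\tfrac12\Phi(s)+C(s-t)^{-\alpha}+B$.
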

 The estimate~\eqref{elliptic2} is a Caccioppoli inequality,
 except that the right-hand side contains the~$L^1$-norm of~$\theta$
 instead of the usual~$L^2$ norm. Therefore, we include a proof of 
 Lemma~\ref{lemma:CaccioppoliL1}, for the reader's convenience.
\begin{proof}[Proof of Lemma~\ref{lemma:CaccioppoliL1}]
 In the proof, we will apply the following inequality:
 for any~$x_0\in M$, $0 < \sigma < \inj(M)$
 and~$\theta\in W^{1,2}(\mathcal{B}_\sigma(x_0))$,
 there holds
 \begin{equation} \label{elliptic1,0}
  \norm{\theta}_{L^2(\mathcal{B}_\sigma(x_0))}
  \lesssim \norm{\d\theta}_{L^2(\mathcal{B}_\sigma(x_0))}^{\frac{n}{n+2}}
   \norm{\theta}_{L^1(\mathcal{B}_\sigma(x_0))}^{\frac{2}{n+2}}
   + \sigma^{-\frac{n}{2}}\norm{\theta}_{L^1(\mathcal{B}_\sigma(x_0))}
 \end{equation}
 where the implicit constant in front of the right-hand side
 does not depend on~$\sigma$, $x_0$.
 In case~$\mathcal{B}_\sigma(x_0)$ is a ball in~$\R^n$,
 equipped with the Euclidean metric, the
 estimate~\eqref{elliptic1,0} 
 is a special case of Gagliardo and Nirenberg's
 interpolation inequality, also known as Nash's inequality 
 (see~\cite{Nash1958}).
 A scaling argument shows that the implicit
 constant in front of the right-hand side is independent of~$\sigma$.
 When~$\mathcal{B}_\sigma(x_0)$ is a geodesic ball on~$M$,
 of radius~$\sigma$ smaller than the injectivity radius of~$M$,
 \eqref{elliptic1,0} follows from
 its Euclidean counterpart, up to composition with
 local (e.g., geodesic) coordinates.
 
 Let~$\rho$, $\sigma$ be positive numbers 
 such that~$r < \rho < \sigma < R$.
 We consider the Caccioppoli inequality
 \begin{equation} \label{elliptic1,1}
  \begin{split}
   \int_{B_\rho} \abs{\d\theta}^2\vol_g
   \lesssim (\sigma - \rho)^{-2} \int_{\mathcal{B}_\sigma} \theta^2 \vol_g
   + \int_{\mathcal{B}_\sigma} \abs{f}^2 \vol_g
  \end{split}
 \end{equation}
 For simplicity of notation, we have dropped the dependence 
 on~$x_0$ in all balls.
 The estimate~\eqref{elliptic1,1} is obtained
 by considering a suitable cut-off
 function~$\zeta\in C^\infty_{\mathrm{c}}(\mathcal{B}_\sigma)$
 and testing Equation~\eqref{elliptic1} against~$\zeta^2\theta$.
 The estimate~\eqref{elliptic1,1}, combined
 with~\eqref{elliptic1,0} and the Young inequality, implies
 \begin{equation} \label{elliptic1,2}
  \begin{split}
   \norm{\d\theta}_{L^2(\mathcal{B}_\rho)} 
   &\lesssim (\sigma - \rho)^{-1} 
    \norm{\d\theta}_{L^2(\mathcal{B}_\sigma)}^{\frac{n}{n+2}}
    \norm{\theta}_{L^1(\mathcal{B}_\sigma)}^{\frac{2}{n+2}}
     + (\sigma - \rho)^{-\frac{n+2}{2}} \norm{\theta}_{L^1(\mathcal{B}_\sigma)}
    + \norm{f}_{L^2(\mathcal{B}_R)} \\
   &\lesssim \delta \norm{\d\theta}_{L^2(\mathcal{B}_\sigma)}
    + C_\delta (\sigma - \rho)^{-\frac{n+2}{2}} 
    \norm{\theta}_{L^1(\mathcal{B}_R)}
    + \norm{f}_{L^2(\mathcal{B}_R)} 
  \end{split}
 \end{equation}
 for an arbitrary~$\delta\in (0, \, 1)$ and a constant~$C_\delta$
 that depends only on~$\delta$, $a_0$ and~$(M, \, g)$.
 As the inequality~\eqref{elliptic1,2} is valid for 
 arbitrary values of~$\rho$, $\sigma$ with~$r < \rho < \sigma < R$,
 it can be applied iteratively. The lemma follows by 
 an iteration argument (see~\cite[Lemma~V.3.1]{Giaquinta-MultipleIntegrals} 
 or~\cite[Lemma~B.1]{Beck}).
\end{proof}

There is another Caccioppoli-type inequality we need,
which follows along the same lines of Lemma~\ref{lemma:CaccioppoliL1}
but applies to~$k$-forms instead of scalar functions.
As a preliminary result, we first provide a version
of the Nash inequality~\eqref{elliptic1,0} for differential forms.

\begin{lemma} \label{lemma:Nash}
 For any~$x_0\in M$, any~$0 < r < R < \inj(M)/2$
 and any~$\omega\in W^{1,2}(\mathcal{B}_R(x_0), \, \Lambda^k\T^*M)$,
 there holds
 \begin{equation*}
  \begin{split}
   \norm{\omega}_{L^2(\mathcal{B}_r(x_0))}
   &\lesssim \norm{\d\omega}_{L^2(\mathcal{B}_R(x_0))}^{\frac{n}{n+2}}
    \norm{\omega}_{L^1(\mathcal{B}_R(x_0))}^{\frac{2}{n+2}}\\
   &\qquad + \norm{\d^*\omega}_{L^2(\mathcal{B}_R(x_0))}^{\frac{n}{n+2}}
    \norm{\omega}_{L^1(\mathcal{B}_R(x_0))}^{\frac{2}{n+2}}
    + (R - r)^{-\frac{n}{2}}\norm{\omega}_{L^1(\mathcal{B}_R(x_0))} 
  \end{split}
 \end{equation*}
 where the implicit constant in front of the right-hand side
 does not depend on~$r$, $R$, $x_0$.
\end{lemma}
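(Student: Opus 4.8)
The plan is to reduce the statement to the scalar Nash inequality~\eqref{elliptic1,0} applied componentwise, in the same spirit in which~\eqref{elliptic1,0} entered the proof of Lemma~\ref{lemma:CaccioppoliL1}. Fix $x_0\in M$ once and for all and drop it from the notation, writing $\mathcal{B}_\rho$ for the geodesic ball of radius $\rho$ centred at $x_0$. For radii $r\le\rho<\sigma\le R$, choose a cut-off $\zeta=\zeta_{\rho,\sigma}\in C^\infty_{\mathrm{c}}(\mathcal{B}_\sigma)$ with $0\le\zeta\le 1$, $\zeta\equiv 1$ on $\mathcal{B}_\rho$ and $\abs{\d\zeta}\lesssim(\sigma-\rho)^{-1}$, and extend $\zeta\omega$ by zero to an element of $W^{1,2}(M,\,\Lambda^k\T^*M)$. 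The argument then has three steps: a gradient bound for $\zeta\omega$ via the Gaffney inequality on the closed manifold $M$; the componentwise scalar Nash inequality in normal coordinates; and a final absorption/iteration over the radii.

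For the first step, the Leibniz rules for $\d$ and $\d^*$ give the pointwise bounds $\abs{\d(\zeta\omega)}\lesssim\abs{\d\zeta}\,\abs{\omega}+\abs{\zeta}\,\abs{\d\omega}$ and $\abs{\d^*(\zeta\omega)}\lesssim\abs{\d\zeta}\,\abs{\omega}+\abs{\zeta}\,\abs{\d^*\omega}$, so that, applying the Gaffney inequality on $M$ (e.g.~\cite[Proposition~4.10]{Scott} or~\cite[Theorem~4.8]{IwaniecScottStroffolini}) to the compactly supported form $\zeta\omega$,
\[
 \norm{\nabla(\zeta\omega)}_{L^2(M)}\lesssim\norm{\d(\zeta\omega)}_{L^2(M)}+\norm{\d^*(\zeta\omega)}_{L^2(M)}+\norm{\zeta\omega}_{L^2(M)}\lesssim(\sigma-\rho)^{-1}\norm{\omega}_{L^2(\mathcal{B}_\sigma)}+\norm{\d\omega}_{L^2(\mathcal{B}_R)}+\norm{\d^*\omega}_{L^2(\mathcal{B}_R)},
\]
the bare term $\norm{\zeta\omega}_{L^2(M)}\le\norm{\omega}_{L^2(\mathcal{B}_\sigma)}$ being absorbed into the first term on the right because $\sigma-\rho<R<\inj(M)$ keeps $(\sigma-\rho)^{-1}$ bounded below by a constant depending only on $M$. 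For the second step, I would work in normal geodesic coordinates centred at $x_0$, write $\zeta\omega=\sum_\alpha(\zeta\omega)_\alpha\,\d x^\alpha$, extend each component by zero to $\R^n$, and apply the Euclidean scalar Nash inequality $\norm{h}_{L^2}\lesssim\norm{\nabla h}_{L^2}^{n/(n+2)}\norm{h}_{L^1}^{2/(n+2)}$ to $h=(\zeta\omega)_\alpha$; summing over $\alpha$ with Hölder's inequality and the concavity of $t\mapsto t^{n/(n+2)}$, and using that the metric, its volume form and the Christoffel symbols are uniformly comparable to the Euclidean ones in such charts (by compactness of $M$), so that $\abs{\partial(\zeta\omega)_\alpha}\lesssim\abs{\nabla(\zeta\omega)}+\abs{\omega}$ on $\mathcal{B}_\sigma$, one gets together with the first step
\[
 \norm{\omega}_{L^2(\mathcal{B}_\rho)}=\norm{\zeta\omega}_{L^2(\mathcal{B}_\rho)}\lesssim\Bigl((\sigma-\rho)^{-1}\norm{\omega}_{L^2(\mathcal{B}_\sigma)}+\norm{\d\omega}_{L^2(\mathcal{B}_R)}+\norm{\d^*\omega}_{L^2(\mathcal{B}_R)}\Bigr)^{\!n/(n+2)}\norm{\omega}_{L^1(\mathcal{B}_R)}^{2/(n+2)},
\]
with all implicit constants depending only on $(M,g)$.

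For the last step, I would use the subadditivity $(a+b+c)^{n/(n+2)}\le a^{n/(n+2)}+b^{n/(n+2)}+c^{n/(n+2)}$ together with Young's inequality, with conjugate exponents $\tfrac{n+2}{n}$ and $\tfrac{n+2}{2}$, applied to the summand $\bigl((\sigma-\rho)^{-1}\norm{\omega}_{L^2(\mathcal{B}_\sigma)}\bigr)^{n/(n+2)}\norm{\omega}_{L^1(\mathcal{B}_R)}^{2/(n+2)}$, in order to split off a term $\tfrac12\norm{\omega}_{L^2(\mathcal{B}_\sigma)}$ at the cost of a term $C(\sigma-\rho)^{-n/2}\norm{\omega}_{L^1(\mathcal{B}_R)}$ (the exponent $n/2=\tfrac{n/(n+2)}{1-n/(n+2)}$ coming from the conjugacy). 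This yields, for all $r\le\rho<\sigma\le R$,
\[
 \norm{\omega}_{L^2(\mathcal{B}_\rho)}\le\tfrac12\norm{\omega}_{L^2(\mathcal{B}_\sigma)}+C(\sigma-\rho)^{-n/2}\norm{\omega}_{L^1(\mathcal{B}_R)}+C\Bigl(\norm{\d\omega}_{L^2(\mathcal{B}_R)}^{n/(n+2)}+\norm{\d^*\omega}_{L^2(\mathcal{B}_R)}^{n/(n+2)}\Bigr)\norm{\omega}_{L^1(\mathcal{B}_R)}^{2/(n+2)};
\]
since $\rho\mapsto\norm{\omega}_{L^2(\mathcal{B}_\rho)}$ is finite and nondecreasing, the classical iteration lemma (\cite[Lemma~B.1]{Beck} or~\cite[Lemma~V.3.1]{Giaquinta-MultipleIntegrals}) absorbs the term $\tfrac12\norm{\omega}_{L^2(\mathcal{B}_\sigma)}$ and produces the assertion with $\rho=r$, $\sigma=R$. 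I expect the only genuinely delicate point to be the componentwise reduction: one has to keep the constant in the scalar Nash inequality uniform in $x_0$, $r$ and $R$, and to check that the Christoffel-type error terms are really of lower order and can be swept into the $(\sigma-\rho)^{-1}\norm{\omega}_{L^2(\mathcal{B}_\sigma)}$ term — everything else is routine bookkeeping with the cut-offs and the iteration lemma.
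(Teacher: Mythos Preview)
Your proposal is correct and follows essentially the same approach as the paper: scalar Nash inequality applied componentwise in normal coordinates, Gaffney's inequality on the closed manifold~$M$ applied to~$\zeta\omega$, Young's inequality with conjugate exponents~$\tfrac{n+2}{n},\tfrac{n+2}{2}$, and the iteration lemma of~\cite[Lemma~V.3.1]{Giaquinta-MultipleIntegrals}. The only differences are cosmetic: the paper applies the scalar Nash inequality~\eqref{elliptic1,0} directly to~$\omega$ on~$\mathcal{B}_\rho$ (which produces an extra additive term~$\rho^{-n/2}\norm{\omega}_{L^1}$, handled via the harmless reduction~$r\ge R/2$) and only then invokes the cut-off and Gaffney to bound~$\norm{\omega}_{W^{1,2}(\mathcal{B}_\rho)}$, whereas you apply Nash to the compactly supported form~$\zeta\omega$ and thereby avoid that reduction; your ordering is arguably slightly cleaner, but the substance is identical.
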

\begin{proof}
 All the balls we consider are centered at~$x_0$,
 so we write~$\mathcal{B}_r$, $\mathcal{B}_R$
 instead of~$\mathcal{B}_r(x_0)$, $\mathcal{B}_R(x_0)$ and so on.
 Without loss of generality, we can assume that~$r \geq R/2$.
 Let~$\rho$, $\sigma$ be positive numbers,
 such that~$r < \rho < \sigma < R$.
 Let~$\omega\in W^{1,2}(\mathcal{B}_R, \, \Lambda^k\T^*M)$.
 Using geodesic normal coordinates~$(x^1, \, \ldots, \, x^n)$
 centered at~$x_0$, we can write~$\omega$ component-wise 
 as~$\omega = \sum_\alpha\omega_\alpha \, \d x^\alpha$,
 where the sum is taken over all multi-indices~$\alpha$
 of order~$k$.
 By applying the inequality~\eqref{elliptic1,0} 
 (on the ball~$\mathcal{B}_\rho$) to each component
 of~$\omega$, we obtain
 \begin{equation} \label{Nash1}
  \norm{\omega}_{L^2(\mathcal{B}_\rho)}
  \lesssim \norm{\omega}_{W^{1,2}(\mathcal{B}_\rho)}^{\frac{n}{n+2}}
   \norm{\omega}_{L^1(\mathcal{B}_\rho)}^{\frac{2}{n+2}}
   + \rho^{-\frac{n}{2}}\norm{\omega}_{L^1(\mathcal{B}_\rho)}
 \end{equation}
 The implicit constant in front of the right-hand side
 depend on the metric of~$g$, but it can be estimated uniformly
 with respect to~$x_0$ and~$\rho$. Now, let~$\zeta\in C^\infty_{\mathrm{c}}(\mathcal{B}_\sigma)$ be a cut-off function, such that~$\zeta = 1$
 in~$\mathcal{B}_\rho$ and~$\abs{\nabla\zeta} \lesssim (\sigma - \rho)^{-1}$.
 By applying the Gaffney inequality \cite[Proposition~4.10]{Scott}
 to~$\zeta\omega\in W^{1,2}(M, \, \Lambda^k\T^*M)$, we obtain
 \begin{equation} \label{Nash2}
  \begin{split}
   \norm{\omega}_{W^{1,2}(\mathcal{B}_\rho)}
   &\lesssim \norm{\d(\zeta\omega)}_{L^2(M)}
    + \norm{\d^*(\zeta\omega)}_{L^2(M)} + \norm{\zeta\omega}_{L^2(M)} \\
   &\lesssim \norm{\d\omega}_{L^2(\mathcal{B}_\sigma)}
    + \norm{\d^*\omega}_{L^2(\mathcal{B}_\sigma)} 
    + (\sigma - \rho)^{-1}\norm{\omega}_{L^2(\mathcal{B}_\sigma)}  
  \end{split}
 \end{equation}
 By combining~\eqref{Nash1} with~\eqref{Nash2}
 (and observing that~$\rho \geq \sigma - \rho$ because we 
 have assumed that~$r\geq R/2$), we obtain
 \begin{equation*}
  \begin{split}
   \norm{\omega}_{L^2(\mathcal{B}_\rho)}
   &\lesssim 
    \norm{\d\omega}_{L^2(\mathcal{B}_R)}^{\frac{n}{n+2}} 
    \norm{\omega}_{L^1(\mathcal{B}_R)}^{\frac{2}{n+2}}
    + \norm{\d^*\omega}_{L^2(\mathcal{B}_R)}^{\frac{n}{n+2}} 
    \norm{\omega}_{L^1(\mathcal{B}_R)}^{\frac{2}{n+2}} \\
   &\qquad + (\sigma - \rho)^{-\frac{n}{n+2}}
    \norm{\omega}_{L^2(\mathcal{B}_\sigma)}^{\frac{n}{n+2}}  
    \norm{\omega}_{L^1(\mathcal{B}_R)}^{\frac{2}{n+2}}
    + (\sigma - \rho)^{-\frac{n}{2}} \norm{\omega}_{L^1(\mathcal{B}_R)}
  \end{split}
 \end{equation*}
 By applying the Young inequality at the right-hand side,
 for each~$\delta > 0$ we find a constant~$C_\delta$ such that
 \begin{equation} \label{Nash3}
  \begin{split}
   \norm{\omega}_{L^2(\mathcal{B}_\rho)}
   &\lesssim 
    \norm{\d\omega}_{L^2(\mathcal{B}_R)}^{\frac{n}{n+2}} 
    \norm{\omega}_{L^1(\mathcal{B}_R)}^{\frac{2}{n+2}}
    + \norm{\d^*\omega}_{L^2(\mathcal{B}_R)}^{\frac{n}{n+2}} 
    \norm{\omega}_{L^1(\mathcal{B}_R)}^{\frac{2}{n+2}} \\
   &\qquad + C_\delta (\sigma - \rho)^{-\frac{n}{2}}
    \norm{\omega}_{L^1(\mathcal{B}_R)} 
    + \delta \norm{\omega}_{L^2(\mathcal{B}_\sigma)}
  \end{split}
 \end{equation}
 If we choose~$\delta$ small enough (uniformly with respect to~$r$,
 $R$, $x_0$), the term~$\delta \norm{\omega}_{L^2(\mathcal{B}_\sigma)}$
 at the right-hand side can be `absorbed into the left-hand side',
 in a manner of speaking, by means of an iteration argument
 (see~\cite[Lemma~V.3.1]{Giaquinta-MultipleIntegrals} 
 or~\cite[Lemma~B.1]{Beck}). The lemma follows.
\end{proof}

\begin{lemma} \label{lemma:CaccioppoliL1bis}
 There exists a number~$R_*> 0$, depending only on~$M$,
 such that the following statement holds.
 Let~$x_0\in M$, $0 < r <  R < R_*$ 
 and~$c\in L^\infty(\mathcal{B}_R(x_0))$
 be such that~$c\geq 0$.
 Let~$f\in L^2(\mathcal{B}_R(x_0), \, \Lambda^k\T^*M)$
 and let~$\omega\in W^{1,2}(\mathcal{B}_R(x_0))$ be a weak solution
 of the equation
 \begin{equation} \label{elliptic3}
  -\Delta\omega + c \, \omega = f
  \qquad \textrm{in } \mathcal{B}_R(x_0).
 \end{equation}
 Then,
 \begin{equation*} 
  \norm{\omega}_{W^{1,2}(\mathcal{B}_{r}(x_0))}
   \lesssim (R - r)^{-\frac{n+2}{2}} 
   \norm{\omega}_{L^1(\mathcal{B}_R(x_0))}
    + R \norm{f}_{L^2(\mathcal{B}_R(x_0))}
 \end{equation*}
\end{lemma}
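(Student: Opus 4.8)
The plan is to follow the pattern of the proof of Lemma~\ref{lemma:CaccioppoliL1}, replacing the scalar Nash inequality~\eqref{elliptic1,0} by its form-valued counterpart, Lemma~\ref{lemma:Nash}, and exploiting the favourable sign of the zeroth-order coefficient: since $c\geq 0$, the term $\int c\,\zeta^2\abs{\omega}^2$ produced when testing~\eqref{elliptic3} against $\zeta^2\omega$ is nonnegative and can simply be discarded. Throughout I will work with the quantity $G(s):=\norm{\d\omega}_{L^2(\mathcal{B}_s(x_0))}+\norm{\d^*\omega}_{L^2(\mathcal{B}_s(x_0))}+\norm{\omega}_{L^2(\mathcal{B}_s(x_0))}$, the Gaffney inequality on $M$ being used at the end to pass from $G$ to the full $W^{1,2}$-norm on a ball.

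First I would establish a standard Caccioppoli-type estimate. Fix $r\leq\rho<\sigma\leq R$ and a cut-off $\zeta\in C^\infty_{\mathrm c}(\mathcal{B}_\sigma(x_0))$ with $\zeta\equiv 1$ on $\mathcal{B}_\rho(x_0)$ and $\abs{\nabla\zeta}\lesssim(\sigma-\rho)^{-1}$. Testing the weak form of~\eqref{elliptic3} against $\zeta^2\omega$ and using the pointwise identities already recorded in the proof of Lemma~\ref{lemma:Caccioppoli} --- namely $\ip{\d\omega}{\d(\zeta^2\omega)}=\abs{\d(\zeta\omega)}^2-\abs{\d\zeta\wedge\omega}^2$ and $\ip{\d^*\omega}{\d^*(\zeta^2\omega)}=\abs{\d^*(\zeta\omega)}^2-\abs{\d\zeta\wedge\star\omega}^2$ --- dropping $\int c\,\zeta^2\abs{\omega}^2\geq 0$, and applying Young's inequality together with $\sigma-\rho<R$, one obtains
\[
 \norm{\d\omega}_{L^2(\mathcal{B}_\rho(x_0))}^2+\norm{\d^*\omega}_{L^2(\mathcal{B}_\rho(x_0))}^2
 \lesssim (\sigma-\rho)^{-2}\norm{\omega}_{L^2(\mathcal{B}_\sigma(x_0))}^2+R^2\norm{f}_{L^2(\mathcal{B}_R(x_0))}^2 .
\]

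Next I would combine this with Lemma~\ref{lemma:Nash} via an interposed third radius. Introducing $\tau$ with $\sigma<\tau\leq R$ and choosing $\sigma$ to be the midpoint of $[\rho,\tau]$, Lemma~\ref{lemma:Nash} bounds $\norm{\omega}_{L^2(\mathcal{B}_\sigma(x_0))}$ by $\big(\norm{\d\omega}_{L^2(\mathcal{B}_\tau(x_0))}^{n/(n+2)}+\norm{\d^*\omega}_{L^2(\mathcal{B}_\tau(x_0))}^{n/(n+2)}\big)\,\norm{\omega}_{L^1(\mathcal{B}_R(x_0))}^{2/(n+2)}$ plus $(\tau-\rho)^{-n/2}\norm{\omega}_{L^1(\mathcal{B}_R(x_0))}$. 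Plugging this into the Caccioppoli estimate, applying Young's inequality in the form $a^{n/(n+2)}b^{2/(n+2)}\leq\delta\,a+C_\delta\,b$, and absorbing the remaining (bounded) powers of $\tau-\rho$ into $(\tau-\rho)^{-(n+2)/2}$, I would arrive at the hole-filling inequality
\[
 G(\rho)\leq\tfrac12\,G(\tau)+C\,(\tau-\rho)^{-(n+2)/2}\norm{\omega}_{L^1(\mathcal{B}_R(x_0))}+C\,R\,\norm{f}_{L^2(\mathcal{B}_R(x_0))},
\]
valid for all $r\leq\rho<\tau\leq R$, after fixing $\delta$ small. The iteration lemma \cite[Lemma~V.3.1]{Giaquinta-MultipleIntegrals} (or \cite[Lemma~B.1]{Beck}) then yields $G(r)\lesssim(R-r)^{-(n+2)/2}\norm{\omega}_{L^1(\mathcal{B}_R(x_0))}+R\norm{f}_{L^2(\mathcal{B}_R(x_0))}$, and applying the Gaffney inequality on $M$ (\cite[Proposition~4.10]{Scott}) to $\zeta\omega$, for a cut-off $\zeta\equiv 1$ on $\mathcal{B}_r(x_0)$ supported in $\mathcal{B}_{(r+R)/2}(x_0)$, together with the bound just obtained for $G$ at the intermediate radius, gives the claimed $W^{1,2}(\mathcal{B}_r(x_0))$-estimate.

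The main obstacle is purely the bookkeeping of radii and the verification that the iteration closes: the bare Caccioppoli estimate carries a factor $(\sigma-\rho)^{-1}$ --- not a small constant --- in front of $\norm{\omega}_{L^2(\mathcal{B}_\sigma)}$, so it is essential to insert the Nash step, which trades that factor for an arbitrarily small $\delta$ at the price of controllable lower-order terms; this is exactly the device used for Lemma~\ref{lemma:CaccioppoliL1}. One minor point worth flagging: interpreting the $W^{1,2}$-norm through the Gaffney inequality one reaches the stated power $(R-r)^{-(n+2)/2}$, whereas insisting on the full connection-gradient $\nabla\omega$ is cleanest if one runs the whole Caccioppoli/Nash argument directly on $\nabla\omega$ via the Weitzenböck identity $-\Delta=\nabla^*\nabla+(\text{curvature})$ (the curvature term and the nonnegative $c\,\omega$ being harmless after absorption); in any case the precise exponent is immaterial for the application made of the lemma in Section~\ref{sec:clearing-out}, where $R$ is fixed.
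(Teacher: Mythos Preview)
Your proposal is correct and follows essentially the same approach as the paper: test~\eqref{elliptic3} against $\zeta^2\omega$ using the identities~\eqref{Cacc1}--\eqref{Cacc2} and discard the nonnegative $c$-term, feed the resulting Caccioppoli estimate into the form-valued Nash inequality (Lemma~\ref{lemma:Nash}) via an intermediate radius, apply Young to obtain a hole-filling inequality, iterate, and finish with Gaffney. The only cosmetic difference is that you package $\norm{\omega}_{L^2}$ into $G$ from the outset, whereas the paper first iterates on $\norm{\d\omega}_{L^2}+\norm{\d^*\omega}_{L^2}$ alone and recovers $\norm{\omega}_{L^2}$ afterwards by one more application of Lemma~\ref{lemma:Nash}; both organisations give the same bound.
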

\begin{proof}
 Once again, we write~$\mathcal{B}_\rho$
 for a generic (geodesic) ball of radius~$\rho$ and center~$x_0$.
 Let~$r^\prime := (r+R)/2$,
 let~$\rho$, $\sigma$ be positive numbers 
 such that~$r^\prime < \rho < \sigma < R$, 
 and let~$\tau := (\rho + \sigma)/2$.
 Let~$\zeta\in C^\infty_{\mathrm{c}}(\mathcal{B}_{\tau})$ 
 be a cut-off function such that~$\zeta=1$ 
 in~$\mathcal{B}_\rho$ and
 $\abs{\nabla\zeta}\lesssim (\tau-\rho)^{-1}$.
 By testing~\eqref{elliptic3} against~$\zeta^2 \, \omega$, 
 and keeping~\eqref{Cacc1}, \eqref{Cacc2} into account, we deduce
 \begin{equation*} 
  \int_{\mathcal{B}_\rho} \left(\abs{\d\omega}^2
   + \abs{\d^*\omega}^2 \right) \vol_g
  \lesssim (\tau - \rho)^{-2} 
   \int_{\mathcal{B}_\tau} \abs{\omega}^2 \, \vol_g
   + \tau^2 \int_{\mathcal{B}_\tau} \abs{f}^2 \, \vol_g
 \end{equation*}
 By applying Lemma~\ref{lemma:Nash} and the Young inequality,
 we obtain
 \begin{equation*}
  \begin{split}
   \norm{\d\omega}_{L^2(\mathcal{B}_\rho)}
   + \norm{\d\omega}_{L^2(\mathcal{B}_\rho)}
   &\lesssim (\sigma - \rho)^{-1} 
    \norm{\d\omega}_{L^2(\mathcal{B}_\sigma)}^{\frac{n}{n+2}}
    \norm{\omega}_{L^1(\mathcal{B}_\sigma)}^{\frac{2}{n+2}} \\
   &\qquad + (\sigma - \rho)^{-1} 
    \norm{\d^*\omega}_{L^2(\mathcal{B}_\sigma)}^{\frac{n}{n+2}}
    \norm{\omega}_{L^1(\mathcal{B}_\sigma)}^{\frac{2}{n+2}} \\
   &\qquad + (\sigma - \rho)^{-\frac{n+2}{2}} 
   \norm{\omega}_{L^1(\mathcal{B}_\sigma)}
    + R \norm{f}_{L^2(\mathcal{B}_R)} \\
   &\lesssim \delta \norm{\d\omega}_{L^2(\mathcal{B}_\sigma)}
   + \delta \norm{\d\omega}_{L^2(\mathcal{B}_\sigma)} \\ 
   &\qquad + C_\delta (\sigma - \rho)^{-\frac{n+2}{2}} 
   \norm{\omega}_{L^1(\mathcal{B}_\sigma)}
    + R\norm{f}_{L^2(\mathcal{B}_R)}  
  \end{split}
 \end{equation*}
 for an arbitrary~$\delta > 0$ and some constant~$C_\delta$
 depending on~$\delta$, but not on~$\rho$, $\sigma$.
 An iteration argument (see~\cite[Lemma~V.3.1]{Giaquinta-MultipleIntegrals} 
 or~\cite[Lemma~B.1]{Beck}) now gives
 \begin{equation} \label{elliptic3,2}
  \begin{split}
   \norm{\d\omega}_{L^2(\mathcal{B}_{r^\prime})}
   + \norm{\d\omega}_{L^2(\mathcal{B}_{r^\prime})}
   &\lesssim (R - r^\prime)^{-\frac{n+2}{2}} 
   \norm{\omega}_{L^1(\mathcal{B}_R)}
    + R \norm{f}_{L^2(\mathcal{B}_R)} 
  \end{split}
 \end{equation}
 where, we recall, $r^\prime = (r+R)/2$. 
 On the other hand, setting~$r^{\prime\prime} := (r + r^\prime)/2$, 
 Lemma~\ref{lemma:Nash} and the (weighted) Young inequality imply
 \begin{equation} \label{elliptic3,3}
  \begin{split}
   \norm{\omega}_{L^2(\mathcal{B}_{r^{\prime\prime}})}
   \lesssim (r^\prime - r^{\prime\prime})
    \norm{\d\omega}_{L^2(\mathcal{B}_{r^\prime})}
   + (r^\prime - r^{\prime\prime})
    \norm{\d\omega}_{L^2(\mathcal{B}_{r^\prime})}
   + (r^\prime - r^{\prime\prime})^{-\frac{n}{2}} 
    \norm{\omega}_{L^1(\mathcal{B}_{r^\prime})}^{2}
  \end{split}
 \end{equation}
 Finally, the Gaffney-type inequality~\eqref{Nash2} gives
 \begin{equation} \label{elliptic3,4}
  \begin{split}
   \norm{\omega}_{W^{1,2}(\mathcal{B}_r)}
   \lesssim \norm{\d\omega}_{L^2(\mathcal{B}_{r^{\prime\prime}})}
    + \norm{\d^*\omega}_{L^2(\mathcal{B}_{r^{\prime\prime}})} 
    + ({r^{\prime\prime}} - r)^{-1}\norm{\omega}_{L^2(\mathcal{B}_{r^{\prime\prime}})}  
  \end{split}
 \end{equation}
 Combining~\eqref{elliptic3,2} with~\eqref{elliptic3,3}
 and~\eqref{elliptic3,4}, the lemma follows.
\end{proof}

\paragraph{Acknowledgements}
F.~L.~D. has been supported by the project \textsc{Star Plus 2020 - Linea~1 
(21-UNINA-EPIG-172)} ``New perspectives in the Variational modelling of 
Continuum Mechanics''. G.~C. and F.~L.~D. thank the Hausdorff research Institute for Mathematics (HIM) for the warm hospitality during the Trimester 
Program ``Mathematics of complex materials'', funded by the Deutsche Forschungsgemeinschaft (DFG, German Research Foundation) under Germany's Excellence Strategy – EXC-2047/1 – 390685813, when part of this work was 
carried out. The authors have been supported by GNAMPA-INdAM.

\bibliographystyle{plain}
\bibliography{GL-bundles}

\newcommand{\noop}[1]{}
\begin{thebibliography}{10}

\bibitem{ABO1}
G.~Alberti, S.~Baldo, and G.~Orlandi.
\newblock Functions with prescribed singularities.
\newblock {\em J. Eur. Math. Soc. (JEMS)}, 5(3):275--311, 2003.

\bibitem{ABO2}
G.~Alberti, S.~Baldo, and G.~Orlandi.
\newblock Variational convergence for functionals of {G}inzburg-{L}andau type.
\newblock {\em Indiana Univ. Math. J.}, 54(5):1411--1472, 2005.

\bibitem{Almgren}
F.~J. Almgren.
\newblock {\em The theory of varifolds}.
\newblock Mimeographed notes. Princeton {U}niversity {P}ress, {P}rinceton,
  1965.

\bibitem{AmbrosioSoner}
L.~Ambrosio and H.-M. Soner.
\newblock A measure-theoretic approach to higher codimension mean curvature
  flows.
\newblock {\em Annali della Scuola Normale Superiore di Pisa - Classe di
  Scienze}, Ser. 4, 25(1-2):27--49, 1997.

\bibitem{Baraket}
S.~Baraket.
\newblock Critical points of the {G}inzburg-{L}andau system on a {R}iemannian
  surface.
\newblock {\em Asymptotic Anal.}, 13(3):277--317, 1996.

\bibitem{Beck}
L.~Beck.
\newblock {\em Elliptic regularity theory --- A first course}.
\newblock Lecture Notes of the Unione Matematica Italiana. Springer, 2016.

\bibitem{BBH}
F.~Bethuel, H.~Brezis, and F.~H{\'e}lein.
\newblock {\em Ginzburg-{L}andau {V}ortices}.
\newblock Progress in Nonlinear Differential Equations and their Applications,
  13. Birkh\"auser Boston Inc., Boston, MA, 1994.

\bibitem{BethuelBrezisOrlandi}
F.~Bethuel, H.~Brezis, and G.~Orlandi.
\newblock Asymptotics for the {G}inzburg-{L}andau equation in arbitrary
  dimensions.
\newblock {\em J. Funct. Anal.}, 186(2):432--520, 2001.

\bibitem{BethuelOrlandiSmets-Annals}
F.~Bethuel, G.~Orlandi, and D.~Smets.
\newblock Convergence of the parabolic {G}inzburg–{L}andau equation to motion
  by mean curvature.
\newblock {\em Ann. Math.}, 163(1):37--163, 2006.

\bibitem{BethuelRiviere}
F.~Bethuel and T.~Rivi{\`e}re.
\newblock Vortices for a variational problem related to superconductivity.
\newblock {\em Ann. Inst. H. Poincar\'e Anal. Non Lin\'eaire}, 12(3):243--303,
  1995.

\bibitem{BethuelZheng}
F.~Bethuel and X.~Zheng.
\newblock Density of smooth functions between two manifolds in {S}obolev
  spaces.
\newblock {\em J. Funct. Anal.}, 80(1):60 -- 75, 1988.

\bibitem{BrezisMironescu-book}
Haïm Brezis and Mironescu Petru.
\newblock {\em Sobolev Maps to the Circle: From the Perspective of Analysis,
  Geometry, and Topology}, volume~96.
\newblock 2021.

\bibitem{CDO1}
G.~Canevari, F.~Dipasquale, and G.~Orlandi.
\newblock The {Y}ang-{M}ills-{H}iggs functional on complex line bundles:
  ${\Gamma}$-convergence and the {L}ondon equation.
\newblock Preprint arXiv~2204.06491, 2022.

\bibitem{Cheng2020}
D.~R. Cheng.
\newblock Instability of solutions to the {G}inzburg-{L}andau equation on
  $\mathbb{S}^n$ and $\mathbb{C} \mathbb{P}^n$.
\newblock {\em J. Funct. Anal.}, 279(8):108669, 2020.

\bibitem{ColinetJerrardSternberg}
A.~Colinet, R.~Jerrard, and P.~Sternberg.
\newblock Solutions of the {G}inzburg-{L}andau equations with vorticity
  concentrating near a nondegenerate geodesic.
\newblock Preprint arXiv~2101.03575, 2021.

\bibitem{DePhilippisPigati}
G.~De~Philippis and A.~Pigati.
\newblock Non-degenerate minimal submanifolds as energy concentration sets: a
  variational approach.
\newblock Preprint arXiv~2205.12389, 2022.

\bibitem{deRhamKodaira}
G.~de~Rham and K.~Kodaira.
\newblock {\em Harmonic Integrals}.
\newblock Mimeographed notes, Institute for Advanced Study, 1954.

\bibitem{Giaquinta-MultipleIntegrals}
M.~Giaquinta.
\newblock {\em Multiple Integrals in the Calculus of Variations and Nonlinear
  Elliptic Systems}.
\newblock Princeton University Press, 1983.

\bibitem{IwaniecScottStroffolini}
T.~Iwaniec, C.~Scott, and B.~Stroffolini.
\newblock Nonlinear {H}odge theory on manifolds with boundary.
\newblock {\em Mat. Ann. Pura Appl.}, 177(1):37--115, 1999.

\bibitem{JaffeTaubes}
A.~Jaffe and C.~Taubes.
\newblock {\em Vortices and monopoles}, volume~2 of {\em Progress in Physics}.
\newblock Birkh\"{a}user, Boston, Mass., 1980.
\newblock Structure of static gauge theories.

\bibitem{JerrardSternberg}
R.~Jerrard and P.~Sternberg.
\newblock Critical points via {$\Gamma$}-convergence: general theory and
  applications.
\newblock {\em J. Eur. Math. Soc. (JEMS)}, 11(4):705--753, 2009.

\bibitem{JerrardSoner-GL}
R.~L. Jerrard and H.~M. Soner.
\newblock The {J}acobian and the {G}inzburg-{L}andau energy.
\newblock {\em Cal. Var. Partial Differential Equations}, 14(2):151--191, 2002.

\bibitem{LinRiviere2}
F.-H. Lin and T.~Rivi{\`e}re.
\newblock A quantization property for static {G}inzburg-{L}andau vortices.
\newblock {\em Comm. Pure Appl. Math.}, 54(2):69--84, 2001.

\bibitem{Monclair2021}
D.~Monclair.
\newblock Groups and geometry.
\newblock
  \url{https://www.imo.universite-paris-saclay.fr/~daniel.monclair/poly\{_}groupes\{_}geo.pdf},
  2021.

\bibitem{Nash1958}
J.~Nash.
\newblock Continuity of solutions of parabolic and elliptic equations.
\newblock {\em Am. J. Math}, 80(4):931--954, 1958.

\bibitem{Orlandi}
G.~Orlandi.
\newblock Asymptotic behavior of the {G}inzburg-{L}andau functional on complex
  line bundles over compact {R}iemann surfaces.
\newblock {\em Rev. Math. Phys.}, 08(03):457--486, 1996.

\bibitem{PigatiStern}
A.~Pigati and D.~Stern.
\newblock Minimal submanifolds from the abelian {H}iggs model.
\newblock {\em Invent. Math.}, 223(3):1027--1095, 2021.

\bibitem{PigatiStern-Integral}
A.~Pigati and D.~Stern.
\newblock Quantization and non-quantization of energy for higher-dimensional
  {G}inzburg--{L}andau vortices.
\newblock Preprint arXiv~2204.06491, 2022.

\bibitem{Pitts}
J.~Pitts.
\newblock {\em Existence and regularity of minimal surfaces in {R}iemannian
  manifolds}, volume~27 of {\em Mathematical Notes}.
\newblock Princeton {U}niversity {P}ress, {P}rinceton, N.J.; {U}niversity of
  {T}okyo {Press}, {T}okyo, 1981.

\bibitem{Qing}
J.~Qing.
\newblock Renormalized energy for {G}inzburg-{L}andau vortices on closed
  surfaces.
\newblock {\em Math. Z.}, 225(1):1--34, 1997.

\bibitem{SS-book}
{\'E}.~Sandier and S.~Serfaty.
\newblock {\em Vortices in the magnetic {G}inzburg-{L}andau model}.
\newblock Progress in Nonlinear Differential Equations and their Applications,
  70. Birkh\"auser Boston, Inc., Boston, MA, 2007.

\bibitem{SandierSerfaty-book}
{\'E}.~Sandier and S.~Serfaty.
\newblock {\em Vortices in the magnetic {G}inzburg-{L}andau model}.
\newblock Progress in Nonlinear Differential Equations and their Applications,
  70. Birkh\"auser Boston, Inc., Boston, MA, 2007.

\bibitem{Scott}
C.~Scott.
\newblock ${L}^p$ theory of differential forms on manifolds.
\newblock {\em Trans. Amer. Math. Soc.}, 347(6):2075--2096, 1995.

\bibitem{Simon-GMT}
L.~Simon.
\newblock {\em {L}ectures in {G}eometric {M}easure {T}heory}.
\newblock Centre for Mathematical Analysis, Australian National University,
  Canberra, 1984.

\bibitem{SmithUhlenbeck}
P.~Smith and K.~Uhlenbeck.
\newblock Removeability of a codimension four singular set for solutions of a
  yang mills higgs equation with small energy.
\newblock In {\em Surveys in differential geometry 2019. Differential geometry,
  Calabi-Yau theory, and general relativity. Part 2}, pages 257--291. Int.
  Press, Boston, MA, 2019.

\bibitem{Stern2020}
D.~Stern.
\newblock $p$-{H}armonic maps to $\mathbb{S}^1$ and stationary varifolds of
  codimension two.
\newblock {\em Calc. Var.}, 59(187), 2020.

\bibitem{Stern2021}
D.~Stern.
\newblock Existence and limiting behavior of min--max solutions of the
  {G}inzburg--{L}andau equations on compact manifolds.
\newblock {\em J. Diff. Geom.}, 118(2), 2021.

\end{thebibliography}

\Addresses

\end{document}